\def\ge{\geqslant}
\def\le{\leqslant}
\def\a{\alpha}
\def\b{\beta}
\def\g{\gamma}
\def\G{\Gamma}
\def\L{\Lambda}
\def\s{\sigma}
\def\t{\tau}
\def\k{\kappa}
\def\l{\lambda}
\def\i{^{-1}}
\def\<{\langle}
\def\>{\rangle}
\newcommand{\fka}{\ensuremath{\mathfrak{a}}\xspace}
\def\brF{\breve F}
\def\brW{\breve W}
\def\brI{\breve \CI}
\def\brK{\breve \CK}
\def\brS{\breve S}
\def\brT{\breve T}
\def\brbS{\breve{\mathbb S}}
\newcommand{\BF}{\ensuremath{\mathbb {F}}\xspace}
\newcommand{{\BG}}{\ensuremath{\mathbb {G}}\xspace}
\newcommand{{\BK}}{\ensuremath{\mathbb {K}}\xspace}
\newcommand{\BN}{\ensuremath{\mathbb {N}}\xspace}
\newcommand{\BQ}{\ensuremath{\mathbb {Q}}\xspace}
\newcommand{\BZ}{\ensuremath{\mathbb {Z}}\xspace}
\newcommand{\CI}{\ensuremath{\mathcal {I}}\xspace}
\newcommand{\CK}{\ensuremath{\mathcal {K}}\xspace}
\newcommand{\CO}{\ensuremath{\mathcal {O}}\xspace}
\newcommand{\CU}{\ensuremath{\mathcal {U}}\xspace}
\newcommand{\Ad}{{\mathrm{Ad}}}
\newcommand{\ad}{{\mathrm{ad}}}
\DeclareMathOperator{\Aut}{Aut}
\DeclareMathOperator{\Adm}{Adm}
\DeclareMathOperator{\Gal}{Gal}
\DeclareMathOperator{\Lie}{Lie}
\DeclareMathOperator{\Pic}{Pic}
\DeclareMathOperator{\Spec}{Spec}
\def\kk{\mathbf k}
\newtheorem{theorem}{Theorem}
\newtheorem{proposition}[theorem]{Proposition}
\newtheorem{lemma}[theorem]{Lemma}
\newtheorem{corollary}[theorem]{Corollary}
\theoremstyle{definition}
\newtheorem{definition}[theorem]{Definition}
\newtheorem{remark}[theorem]{Remark}
\numberwithin{equation}{section}
\numberwithin{theorem}{section}
\renewcommand{\to}{%
   \ifbool{@display}{\longrightarrow}{\rightarrow}%
   }
\let\shortmapsto\mapsto
\renewcommand{\mapsto}{%
   \ifbool{@display}{\longmapsto}{\shortmapsto}%
   }
\newlength{\olen}
\newlength{\ulen}
\newlength{\xlen}
\newcommand{\xra}[2][]{%
   \ifbool{@display}%
      {\settowidth{\olen}{$\overset{#2}{\longrightarrow}$}%
       \settowidth{\ulen}{$\underset{#1}{\longrightarrow}$}%
       \settowidth{\xlen}{$\xrightarrow[#1]{#2}$}%
       \ifdimgreater{\olen}{\xlen}%
          {\underset{#1}{\overset{#2}{\longrightarrow}}}%
          {\ifdimgreater{\ulen}{\xlen}%
             {\underset{#1}{\overset{#2}{\longrightarrow}}}
             {\xrightarrow[#1]{#2}}}}%
      {\xrightarrow[#1]{#2}}
   }
\newcommand{\xyra}[2][]{%
   \settowidth{\xlen}{$\xrightarrow[#1]{#2}$}%
   \ifbool{@display}%
      {\settowidth{\olen}{$\overset{#2}{\longrightarrow}$}%
       \settowidth{\ulen}{$\underset{#1}{\longrightarrow}$}%
       \ifdimgreater{\olen}{\xlen}%
          {\mathrel{\xymatrix@M=.12ex@C=3.2ex{\ar[r]^-{#2}_-{#1} &}}}%
          {\ifdimgreater{\ulen}{\xlen}%
             {\mathrel{\xymatrix@M=.12ex@C=3.2ex{\ar[r]^-{#2}_-{#1} &}}}
             {\mathrel{\xymatrix@M=.12ex@C=\the\xlen{\ar[r]^-{#2}_-{#1} &}}}}}%
      {\mathrel{\xymatrix@M=.12ex@C=\the\xlen{\ar[r]^-{#2}_-{#1} &}}}%
   }
\newcommand{\xla}[2][]{%
   \ifbool{@display}%
      {\settowidth{\olen}{$\overset{#2}{\longleftarrow}$}%
       \settowidth{\ulen}{$\underset{#1}{\longleftarrow}$}%
       \settowidth{\xlen}{$\xleftarrow[#1]{#2}$}%
       \ifdimgreater{\olen}{\xlen}%
          {\underset{#1}{\overset{#2}{\longleftarrow}}}%
          {\ifdimgreater{\ulen}{\xlen}%
             {\underset{#1}{\overset{#2}{\longleftarrow}}}
             {\xleftarrow[#1]{#2}}}}%
      {\xleftarrow[#1]{#2}}
   }
\newcommand{\isoarrow}{%
   \ifbool{@display}{\overset{\sim}{\longrightarrow}}{\xrightarrow\sim}%
   }
\begin{document}

\title[On the connected components of affine Deligne-Lusztig varieties]{On the connected components of affine Deligne-Lusztig varieties}
\author[X. He]{Xuhua He}
\address{Department of Mathematics, University of Maryland, College Park, MD 20742 and Institute for Advanced Study, Princeton, NJ 08540}
\email{xuhuahe@math.umd.edu}
\author[R. Zhou]{Rong Zhou}
\address{Department of Mathematics, Harvard University, Cambridge, MA 02138}
\email{rzhou@math.harvard.edu}
\thanks{X. H. was partially supported by NSF DMS-1463852.}

\keywords{Affine Deligne-Lusztig varieties, Shimura varieties}
\subjclass[2010]{14G35, 20G25}

\date{\today}

\begin{abstract}
We study the set of connected components of certain unions of affine Deligne-Lusztig varieties arising from the study of Shimura varieties. We determine the set of connected components for basic $\s$-conjugacy classes. As an application, we verify the Axioms in \cite{HR} for certain PEL type Shimura varieties. We also show that for any nonbasic $\s$-conjugacy class in a residually split group,  the set of connected components is ``controlled'' by the set of straight elements associated to the $\s$-conjugacy class, together with the obstruction from the corresponding Levi subgroup. Combined with \cite{Zhou}, this allows one to verify in the residually split case, the description of the mod-$p$ isogeny classes on Shimura varieties conjectured by Langland and Rapoport. Along the way, we determine the Picard group of the Witt vector affine Grassmannian of \cite{BS} and \cite{Zhu} which is of independent interest.
\end{abstract}

\maketitle


\section*{Introduction}

\subsection{} Let $F$ be a non-archimedean local field with valuation ring $\CO_F$ and residue field $\BF_q$ and $\brF$ be the completion of the maximal unramified extension of $F$. Let $G$ be a connected reductive group over $F$ and let $\s$ be the Frobenius automorphism on $G(\brF)$. Let $\{\mu\}$ be a geometric conjugacy class of cocharacters of $G$. Let $b \in G(\breve F)$ and $\breve K$ be a $\s$-invariant parahoric subgroup of $G(\breve F)$. The (union of) affine Deligne-Lusztig varieties associated to $(G, \{\mu\}, b, \breve K)$ is defined to be $$X(\{\mu\}, b)_K=\{g \in G(\breve F)/\breve K; g \i b \s(g) \in \breve K \Adm(\{\mu\})_K \breve K\},$$ where $\Adm(\{\mu\})_K$ is the admissible set associated to $\{\mu\}$. 

The motivation to study $X(\{\mu\}, b)_K$ comes from the theory of Shimura varieties. If $(G, X)$ is a Shimura datum with $\{\mu\}$ the conjugacy class of the inverse of the Hodge cocharacter and $F=\BQ_p$, the set $X(\{\mu\}, b)_K$ is closely related to the $\overline{\mathbb{F}}_p$-points of the corresponding Shimura variety with parahoric level structure $\breve K$, and in special cases to the $\overline{\mathbb{F}}_p$ points of a moduli space of $p$-divisible groups defined by Rapoport and Zink \cite{RZ}. 

In the equal characteristic case, $X(\{\mu\}, b)_K$ is the set of $\overline{\mathbb{F}}_q$-valued points of a locally closed, locally of finite type subscheme of the partial affine flag variety $G(\breve F)/\breve K$. Thanks to the recent work of Zhu \cite{Zhu} and Bhatt-Scholze \cite{BS}, we may, even in the mixed characteristic case, endow $X(\{\mu\}, b)_K$ with the structure of a perfect scheme over $\BF_q$. More precisely, it is a locally closed subscheme of the Witt vector partial affine flag variety $Gr_{\mathcal{G}}$ (here $\mathcal{G}$ is the group scheme associated to $\brK$). Therefore topological notions such as irreducible and connected components make sense.

If the group $G$ is unramified, $\mu$ is minuscule and $\breve K$ is a hyperspecial parahoric subgroup, $\breve K \Adm(\{\mu\})_K \breve K$ is a single double coset of $\breve K$.  The set of connected components of $X(\{\mu\}, b)_K$ for split group $G$, hyperspecial parahoric $\CK$ and arbitrary cocharacter $\mu$ is determined by Viehmann \cite{Vi}. For other unramified groups with hyperspecial parahoric $\CK$, Chen, Kisin and Viehmann \cite{CKV} determine the set of connected components  under the assumption that $\mu$ is minuscule and Nie \cite{Nie} extends the result to non-minuscule cocharacters. 

\subsection{}The aim of this paper is to study the set of connected components of $X(\{\mu\}, b)_K$ for any reductive group $G$ and any parahoric subgroup $\breve K$. 

Let us first recall the nonemptiness pattern of $X(\{\mu\}, b)_K$. Let $B(G, \{\mu\})$ be the set of neutral acceptable elements, a certain subset of the $\s$-conjugacy classes of $G(\breve F)$ (see \S\ref{2} for the precise definitions). Then it is known that $X(\{\mu\}, b)_K$ is nonempty if and only if the $\s$-conjugacy class of $b$ lies in $B(G, \{\mu\})$. This was conjectured by Kottwitz and Rapoport in \cite{KR} and proved for quasi-split groups by Wintenberger \cite{Wi} and in the general case by the first author in \cite{He00}. 

Note that the set of connected components of the partial affine flag $G(\breve F)/\breve K$ is isomorphic to $\pi_1(G)_{\G_0}$, where $\pi_1(G)_{\G_0}$ is the set of coinvariants of the fundamental group of $G$ under the actions of the Galois group $\G_0=\Gal(\bar F/\breve F)$. This gives the first obstruction to connecting points of $X(\{\mu\}, b)_K \subset G(\breve F)/\breve K$. Our first main result is that for basic $b$, this is the only obstruction except in trivial  cases. See Theorem \ref{conn}. 

\begin{theorem}\label{0.1}
If $\mu$ is essentially noncentral, then for basic $b \in B(G, \{\mu\})$, $$\pi_0(X(\{\mu\}, b)_K) \cong \pi_1(G)_{\G_0}^\s.$$
\end{theorem}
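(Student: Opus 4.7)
The plan is to construct, via the Kottwitz homomorphism, a natural surjective map
\[
\bar\omega : \pi_0\bigl(X(\{\mu\},b)_K\bigr) \twoheadrightarrow \pi_1(G)_{\Gamma_0}^{\s}
\]
and then to prove that its fibres are singletons. For the construction, recall that the Kottwitz homomorphism $\kappa_G : G(\breve F)\to \pi_1(G)_{\Gamma_0}$ is trivial on every parahoric subgroup, so it descends to $G(\breve F)/\breve K$ and identifies the set of connected components of the partial affine flag variety with $\pi_1(G)_{\Gamma_0}$. For $g\in X(\{\mu\},b)_K$, the relation $g^{-1}b\s(g)\in \breve K\,\Adm(\{\mu\})_K\,\breve K$ together with the fact that every element of $\Adm(\{\mu\})$ has Kottwitz image $\mu^{\natural}$ yields $(\s-1)\kappa_G(g)=\mu^{\natural}-\kappa_G(b)$. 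Since $b$ is basic in $B(G,\{\mu\})$ we have $\kappa_G(b)=\mu^{\natural}$, so $\kappa_G(g)\in \pi_1(G)_{\Gamma_0}^{\s}$. As $\kappa_G$ is locally constant on $G(\breve F)/\breve K$, this defines $\bar\omega$.

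Surjectivity comes from the left action of the $\s$-centralizer $J_b(F)$ on $X(\{\mu\},b)_K$. Because $b$ is basic, $J_b$ is an inner form of $G$, and the restriction of $\kappa_G$ to $J_b(F)$ surjects onto $\pi_1(G)_{\Gamma_0}^{\s}$; combined with the nonemptiness theorem of \cite{He00} this immediately gives that $\bar\omega$ is surjective.

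The main content of the theorem is injectivity, i.e.\ connectedness of the fibres of $\bar\omega$. The strategy has four steps. (i) Replace $b$ by a length-zero element $\tau\in \Omega\subset \widetilde W$ in the same basic $\s$-conjugacy class. (ii) Lift the question to Iwahori level via the surjection $X(\{\mu\},\tau)_{\breve I}\to X(\{\mu\},\tau)_{\breve K}$, which preserves Kottwitz invariants and has connected fibres (Schubert cells in $\breve K/\breve I$); hence connectedness of the fibres of $\bar\omega$ at Iwahori level implies the same at level $\breve K$. (iii) Decompose $X(\{\mu\},\tau)_{\breve I}=\bigsqcup_{w\in \Adm(\{\mu\})} X_w(\tau)$ and apply the Deligne--Lusztig reduction lemma: for simple affine reflections $s$, passing from $w$ to $sws$ (or $sw$) produces $\mathbb A^1$- or $\mathbb G_m$-bundles between the corresponding affine Deligne--Lusztig varieties, giving chains of connections that preserve $\bar\omega$; the essentially noncentral hypothesis on $\mu$ is what ensures that $\Adm(\{\mu\})$ is large enough for such reduction moves to generate all desired identifications. (iv) Use the $P$-alcove technique of G\"ortz--Haines--Kottwitz--Reuman and its refinement by He--Nie to reduce further to a superbasic $\s$-conjugacy class inside a Levi subgroup $M$, where the analogous statement $\pi_0\cong\pi_1(M)_{\Gamma_0}^{\s}$ is known from Viehmann \cite{Vi} for split groups and Chen--Kisin--Viehmann \cite{CKV} in the unramified minuscule case; then transport the identification back to $G$.

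The principal obstacle is step (iii)--(iv). In the previously treated settings ($G$ unramified, $\breve K$ hyperspecial, $\mu$ minuscule) the combinatorics of $\Adm(\{\mu\})$ is comparatively rigid, and the necessary curves can be exhibited rather directly. Here, for an arbitrary reductive group $G$ and an arbitrary parahoric $\breve K$, one must construct enough explicit curves inside $X(\{\mu\},b)_K$ at parahoric level to connect all points sharing a Kottwitz invariant, which requires extending the Deligne--Lusztig reduction and $P$-alcove arguments to this generality and carefully controlling the interaction between the Iwahori stratification, the admissible set, and the action of $J_b(F)$. This is the technical heart of the proof.
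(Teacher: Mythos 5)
Your steps (i)--(ii) match the paper (one first proves the Iwahori case and then descends to $\brK$ using the surjection of Theorem \ref{KR-He}(2) together with the fact that $\pi_0(\mathcal{FL})\cong\pi_0(Gr_{\brK})\cong\pi_1(G)_{\G_0}$), and your step (iii) is in the spirit of the first reduction theorem, though you omit the mechanism that makes it work: the Deligne--Lusztig reduction alone does not force every point into the stratum $X_{\t_{\{\mu\}}}(\dot\t_{\{\mu\}})$; one also needs that irreducible components of affine Deligne--Lusztig varieties are quasi-affine (proved via the computation of $\Pic\mathcal{FL}$ and explicit ample line bundles), so that a component which is closed in $X_C(b)$, being both projective and quasi-affine, is $0$-dimensional and hence indexed by a $\s$-straight element.

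The genuine gap is your step (iv). For basic $b$ the Newton point $\bar\nu_b$ is central, so $M_{\bar\nu_b}=G$ and there is no proper Levi to reduce to via the $P$-alcove machinery; that reduction is the mechanism behind the \emph{nonbasic} Theorem \ref{0.2}, not this one. Moreover the base case you want to quote from Viehmann and Chen--Kisin--Viehmann is proved only for unramified groups with hyperspecial level, whereas here $G$ may be ramified and $\brK$ is an arbitrary parahoric, so there is no known result to transport back. After step (iii) one is left with the discrete set $X_{\t_{\{\mu\}}}(\dot\t_{\{\mu\}})$, on which $J_b$ acts transitively, and the real content of the theorem is to connect, inside $X(\{\mu\},\dot\t_{\{\mu\}})$, any two of these points with the same Kottwitz invariant. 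The paper does this by producing explicit generators of $J_b$ (Theorem \ref{Jb}): the $\t$-fixed Iwahori, length-zero elements, and unipotent elements $u_{-J}$ attached to $\t$-orbits $J$ on $\brbS$; and then, for each $u_{-J}$, exhibiting a connected curve --- a classical Deligne--Lusztig variety in $\breve\CK_J/\brI$, connected by G\"ortz's theorem --- joining $u_{-J}\brI$ to $\brI$ (Proposition \ref{uJ}). The hypothesis that $\mu$ is essentially noncentral enters precisely here, through Lemma \ref{s-tau}, which guarantees $s_j\t_{\{\mu\}}\in\Adm(\{\mu\})$ for every simple affine reflection $s_j$ and hence that these curves actually lie in $X(\{\mu\},\dot\t_{\{\mu\}})$; it is not a statement about the admissible set being ``large enough for reduction moves.'' Without this construction (or a substitute for it) your argument does not close.
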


\subsection{} Now we discuss some application. In  joint work \cite{HR} of the first author with Rapoport, there is the formulation of five axioms on  Shimura varieties with parahoric level structure. These axioms allow us to apply group-theoretic techniques to study many questions on certain characteristic subsets (Newton stratification, the Ekedahl-Oort stratification and the Kottwitz-Rapoport stratification, etc.) in the reduction modulo p of a general Shimura variety with parahoric level structure. It is shown in \cite{HR} that if the axioms are satisfied, then the Newton strata are nonempty in their natural range, and that the Grothendieck conjecture on the closure relations of Newton strata is true. We refer to \cite{HR} for more details and for other consequences of the axioms. These axioms are also used in an essential way in the work \cite{GHN2} on a family of Shimura varieties with nice geometric properties on both the basic locus and on all the non-basic Newton strata and the forthcoming work \cite{HN2} on the density problem of the $\mu$-ordinary locus of Shimura varieties. 

One major difficulty in verifying the axioms is to show that for Shimura varieties with Iwahori level structure, the minimal Kottwitz-Rapoport stratum intersects every connected component of it. As an application of Theorem \ref{0.1}, we show in Proposition \ref{10.1} that for certain PEL type Shimura varieties (unramified of type A and C and odd ramified unitary groups), every point in the basic locus is connected to a point in the minimal KR stratum. In \S\ref{10}, we verify the axioms on these Shimura varieties. In particular, combined with \cite{HR}, we finish the proof of the nonemptiness pattern and closure relations on the Newton strata of these Shimura varieties. 

\subsection{} Now we come to the second main result.
Let $\breve W$ be the Iwahori-Weyl group of $G(\breve F)$. The Frobenius morphism $\s$ on $G(\breve F)$ induces a group automorphism $\s$ on $\breve W$. The embedding $\breve W \to G(\breve F)$ induces a map from the $\s$-conjugacy classes of $\breve W$ to the set of $\s$-conjugacy classes of $G(\breve F)$. This map is not injective. However, it is proved by the first author that there is a natural bijection between the set of straight $\s$-conjugacy classes of $\breve W$ and the set of $\s$-conjugacy classes on $G(\breve F)$ (see \cite{He99}), and for any $[b] \in B(G, \{\mu\})$, there exists a straight representative in the admissible set of $\mu$ (see \cite{He00}). 

For basic $[b]$, there is only one straight element in the associated straight $\s$-conjugacy class in $\breve W$ which lies in $\Adm(\{\mu\})$. For nonbasic $[b]$, the straight elements of $\Adm(\{\mu\})$ in the associated straight $\s$-conjugacy class in $\breve W$ is not unique anymore. However, if $G$ is residually split, then for $[b] \in B(G, \{\mu\})$, all the straight representatives of $[b]$ lie in the admissible set of $\mu$. 

As mentioned above, the set of connected components of $G(\breve F)/\breve K$ give the first obstruction to the connectedness of $X(\{\mu\}, b)_K$. For basic $b$, as we have proved in Theorem \ref{0.1}, this is the only obstruction in general. For nonbasic $[b] \in B(G, \{\mu\})$, there is another obstruction, coming from the non-uniqueness of the straight representatives of $[b]$. If there are more than one straight representatives of $[b]$, then one cannot expect in general that the points associated to different straight elements are connected. For example in the case of split $G$ and minuscule $\mu$, when $b$ is a translation element, the only non-empty strata in $X(\{\mu\},b)$ are the ones corresponding to translation elements. In particular these are all open in $X(\{\mu\},b)$ and one cannot connect points in different strata.

Our second main result is that in the residually split case, these are all the obstructions to the connectedness of $X(\{\mu\}, b)_K$.

\begin{theorem}\label{0.2}
Let $G$ be residually split and $[b] \in B(G, \{\mu\})$. Assume that $[b]$ is essentially nontrivial in the associated Levi subgroup. Then we have a surjection $$\coprod_{w \in \brW \text{ is a straight element with } \dot w \in [b]} \pi_1(M_{v_w})_{\Gamma_0} \twoheadrightarrow \pi_0(X(\{\mu\}, b))_K.$$ 
\end{theorem}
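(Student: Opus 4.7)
The plan is to reduce the nonbasic case to the basic case of Theorem \ref{0.1}, applied inside the Levi subgroups attached to straight representatives of $[b]$. The residually split hypothesis plays two crucial roles. First, as recalled in the paragraph preceding the statement, it ensures that \emph{every} straight representative of $[b]$ lies in $\Adm(\{\mu\})$, so the index set of the disjoint union on the left is rich enough to have a chance of covering $\pi_0$. Second, it forces the $\s$-action on $\pi_1$-coinvariants of standard Levi subgroups to be trivial, so the $\s$-fixed points appearing in Theorem \ref{0.1} coincide with the full coinvariants.

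The first step is to stratify $X(\{\mu\},b)_K$ by straight elements. Using the description of the admissible set in terms of its minimal-length representatives together with the $\brK$-double coset structure, one obtains a locally closed decomposition
\[
X(\{\mu\}, b)_K \;=\; \bigcup_{w} X_w(b)_K,
\]
where $w$ runs over the straight representatives of $[b]$ inside $\Adm(\{\mu\})$. Every geometric point of $X(\{\mu\},b)_K$ lies in some such stratum, so it suffices to produce, for each $w$, a surjection $\pi_1(M_{v_w})_{\G_0} \twoheadrightarrow \pi_0(X_w(b)_K)$ and then to assemble them through the natural map $\pi_0(X_w(b)_K) \to \pi_0(X(\{\mu\},b)_K)$.

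The second step is the Levi reduction for a fixed $w$. Let $v_w$ be the Newton cocharacter of $w$ and $M = M_{v_w}$ the standard Levi subgroup in which $\dot w$ is basic. A Hodge-Newton-type analysis produces a natural $J_b$-equivariant map from $X_w(b)_K$ to the Levi affine Deligne-Lusztig variety $X^M(\{\mu_w\}, \dot w)$ attached to $(M, \dot w)$, whose fibers are iterated affine-space bundles (coming from the unipotent radical directions) and are therefore connected. Thus $\pi_0(X_w(b)_K)$ is identified with $\pi_0$ of a basic Levi ADLV. The hypothesis that $[b]$ is essentially nontrivial in the associated Levi is exactly the essentially-noncentral hypothesis of Theorem \ref{0.1} transported to $M$; applying that theorem to $M$ yields
\[
\pi_0\bigl(X^M(\{\mu_w\}, \dot w)\bigr) \;\cong\; \pi_1(M)_{\G_0}^{\s} \;=\; \pi_1(M_{v_w})_{\G_0},
\]
the last equality because $M$ inherits residual splitness from $G$ and so $\s$ acts trivially on its $\pi_1$-coinvariants. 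Combining these identifications across all straight $w$ gives the desired surjection.

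The main obstacle is the Hodge-Newton-style reduction in the second step: one must verify that the comparison morphism $X_w(b)_K \to X^M(\{\mu_w\},\dot w)$ is well defined on geometric points, that it is compatible with the $J_b$-action, and, most importantly, that its fibers are connected so that it induces a bijection on $\pi_0$. The natural approach is to work first in the Iwahori case, where the fine ADLV stratum decomposes as a homogeneous bundle $J_b \times^{J_{\dot w}} (\text{connected piece})$ over the Levi version, and then to transfer the conclusion to a general parahoric $K$ using that the projection from the Iwahori level has connected fibers. The hypothesis that $[b]$ is essentially nontrivial in the Levi is precisely what avoids the pathological case of Theorem \ref{0.1} and ensures that the Levi-level $\pi_0$ really is $\pi_1(M_{v_w})_{\G_0}$ and not something smaller.
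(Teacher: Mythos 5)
Your overall strategy---reduce to the basic case of Theorem \ref{0.1} applied inside the Levi $M_{\nu_w}$ attached to each straight representative $w$ of $[b]$---is the same as the paper's, but your first step contains a genuine gap that conceals the main technical content of the theorem. You assert that $X(\{\mu\},b)_K$ admits a locally closed decomposition indexed by the straight representatives of $[b]$, so that every geometric point lies in a straight stratum. This is false: the decomposition $X(\{\mu\},b)_K=\sqcup_{w\in\Adm(\{\mu\})_K}X_{K,w}(b)$ runs over the whole admissible set, and the non-straight strata are nonempty in general (already in the basic case the straight stratum $X_{\tau_{\{\mu\}}}(\dot\tau_{\{\mu\}})$ is discrete, while $X(\{\mu\},\dot\tau_{\{\mu\}})$ has positive dimension). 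What is true, and what must be proved, is that every point is \emph{connected in the Zariski topology} to a point of a straight stratum. This is exactly the First Reduction Theorem (Theorem \ref{theorem2} and Corollary \ref{cor1}), whose proof is substantial: it combines the Deligne--Lusztig reduction method with the quasi-affineness of irreducible components of $X_w(b)$, which in turn rests on the construction of ample line bundles on the (Witt vector) affine flag variety and the Picard group computation of Section \ref{3}. Your proposal silently assumes this step and thereby skips the heart of the argument.

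The Levi comparison in your second step is also not correct as stated. For a $\sigma$-straight $w$, the stratum $X_w(b)$ is discrete with transitive $J_b$-action (Theorem \ref{jb-X}), so a map $X_w(b)_K\to X^{M_{\nu_w}}(\{\tilde\l_w\}_{M_{\nu_w}},\dot w)$ with connected fibers cannot induce a bijection on $\pi_0$: the target is in general non-discrete and has far fewer connected components than the discrete source. The paper's mechanism runs in the opposite direction: one first checks $\Adm^{M_{\nu_w}}(\{\tilde\l_w\}_{M_{\nu_w}})\subset\Adm(\{\mu\})$ (compatibility of Bruhat orders, \S\ref{adm-levi}), so that the entire Levi variety maps \emph{into} $X(\{\mu\},b)$; then $X_w(\dot w)\cong X^{M_{\nu_w}}_w(\dot w)$ by \cite[Theorem 2.1.4]{GHKR} shows the discrete straight stratum lies in the image, and distinct points of that stratum get connected inside the larger Levi variety, whose $\pi_0$ is $\pi_1(M_{\nu_w})_{\Gamma_0}^{\sigma}$ by Theorem \ref{conn} (here the essential nontriviality hypothesis enters, as you correctly note, along with triviality of the $\sigma$-action in the residually split case). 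With the First Reduction Theorem supplied and the direction of the Levi comparison corrected, your outline becomes the paper's proof.
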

We refer to \S\ref{8} to the notations in the theorem and the more general cases where the ``essentially nontrivial'' assumption is dropped. 

In fact, for an unramified group and its hyperspecial parahoric subgroup, it is shown in \cite{Vi}, \cite{CKV} and \cite{Nie} that the precise description of the connected components for nonbasic elements is given by the Hodge-Newton decomposability. It is a challenging problem to determine explicitly the connected components for general $G$ and $\CK$. However, the above result is enough for some important application, which we discuss in the next subsection. 

\subsection{} In the seminal work \cite{Ki}, Kisin showed that the mod-$p$ points on a Shimura variety of abelian type with hyperspecial level structure agrees with the conjectural description given in the Langlands-Rapoport conjecture.\footnote{More precisely the description of the isogeny classes differ up to conjugation by a certain group element}. The key part of the proof is to define a certain map from $X(\{\mu\},b)_K$ into a mod-$p$ isogeny class. His strategy consists of a local and a global part: first use a deformation theoretic argument to show that a map from the affine Deligne-Lusztig variety to the Shimura variety is well-defined on a connected component once it is defined on a point; then show that every connected component of the affine Deligne-Lusztig variety contains a point where the map is well-defined using isogenies which lift to characteristic $0$. The last part uses in a crucial way the explicit description of the set of connected components of $X(\{\mu\},b)_K$ determined in \cite{CKV}. 

For hyperspecial level structure, only a single affine Deligne-Lusztig variety occurs; for other parahoric level structure, one has to consider the union of affine Deligne-Lusztig varieties. This is one of the major new difficulties in the study of arbitrary parahoric level structure. In \cite{Zhou}, the second author generalizes the deformation theoretic argument to the integral models of Shimura varieties of Hodge type  with arbitrary parahoric level structure constructed in \cite{KP} (these models are constructed under a tameness hypothesis on the group at $p$). One innovative result is that one may move between different Levi subgroups using isogenies which lift to characteristic $0$. In other words, for the application to the Langlands-Rapoport conjecture for residually split groups, one does not need to know whether any two straight elements in Theorem \ref{0.2} are connected. Thus our Theorem \ref{0.2} provides enough local information for this purpose. Combining the global argument \cite{Zhou} together with the local result here, one deduces that the mod-$p$ isogeny classes have the form predicted in \cite{LR} for those Shimura varieties associated to groups which are residually split and have arbitrary parahoric level at $p$.

\subsection{} Our proof of Theorem \ref{0.1} and Theorem \ref{0.2} is different from the proof of  \cite{Vi}, \cite{CKV} and \cite{Nie} in the case of hyperspecial level structure. At the time, the scheme structure on the mixed characteristic affine Deligne Lusztig varieties was not known and the authors worked with an ad hoc definition of connected components. A consequence of this was that their proofs were completely combinatorial. By contrast, we work in the Zariski topology, and our proofs use both geometry and combinatorics. 

More precisely, we rely on the following key ingredients:

\begin{itemize}
\item The relation between the straight elements in the admissible set $\Adm(\{\mu\})$ and the $\s$-conjugacy classes in the neutral acceptable set $B(G, \{\mu\})$ established in \cite{He00} and the reduction method introduced in \cite{He99};

\item The line bundles on the affine flag varieties and the quasi-affineness of irreducible components of affine Deligne-Lusztig varieties;

\item The structure of the $\s$-centralizer $J_b$ of $b$ and the construction of explicit curves in $X(\{\mu\}, b)$ for each affine root subgroup of $J_b$;

\end{itemize}

\subsection{} Now let us provide more details for the Iwahori case. We simply write $X(\{\mu\}, b)$ for $X(\{\mu\}, b)_I$, where $\breve I$ is the Iwahori subgroup. 

Note that $X(\{\mu\}, b)=\sqcup_{w \in \Adm(\{\mu\})} X_w(b)$ is a union of affine Deligne-Lusztig varieties, where $X_w(b)=\{g \in G(\breve F)/\breve I; g \i b \s(g) \in \brI \dot w \brI\}$. In general, the structure of $X_w(b)$ is very complicated. However, it is proved in \cite{He99} that if $w$ is $\s$-straight, then $X_w(b)$ is either empty or discrete with transitive action of $J_b$. 

Our first major step is to show that every point in $X(\{\mu\}, b)$ is connected to a point in $X_w(b)$ for some $\s$-straight element $w \in \Adm(\{\mu\})$. This is the first reduction theorem, established in \S\ref{4}. It is based on the reduction method in \cite{He99} and the quasi-affineness of irreducible components of affine Deligne-Lusztig varieties. 

The quasi-affineness is obtained by constructing certain ample line bundles on affine Deligne-Lusztig varieties. This requires the knowledge of the Picard group and ample line bundles on the affine flag varieties. In the equal characteristic case, the description is due to Pappas and Rapoport in \cite{PR}. In the mixed characteristic case, we work with the Witt vector affine flag variety $\mathcal{FL}=L^+G/L^+\brI$ of Zhu \cite{Zhu} and Bhatt-Scholze \cite{BS}. In this case we have the following result which is of independent interest.

  \begin{theorem}\label{theorem1}Assume $G$ is simply connected. We have $$\text{Pic}\mathcal{FL}\cong \bigoplus_{\iota\in\brbS}\mathbb{Z}[\frac{1}{p}]\mathscr{L}(\epsilon_\iota).$$ Moreover, a line bundle $\mathscr{L}$ corresponding to $\sum\lambda_\iota\epsilon_\iota$ is ample if and only if $\lambda_\iota>0$ for all $\iota\in\brbS$.
   \end{theorem}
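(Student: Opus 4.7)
The plan is to reduce the computation to Pappas--Rapoport's \cite{PR} calculation in the equal characteristic setting, and to account for the $\mathbb{Z}[\tfrac{1}{p}]$-coefficients by exploiting the perfect scheme structure of the Witt vector affine flag variety of Zhu \cite{Zhu} and Bhatt--Scholze \cite{BS}.

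First I would construct the line bundles $\mathscr{L}(\epsilon_\iota)$. For each $\iota \in \brbS$, let $\brI_\iota$ denote the parahoric containing $\brI$ generated by $\brI$ together with the affine root subgroup associated to $-\iota$; its reductive quotient carries a fundamental character $\epsilon_\iota$. The associated homogeneous line bundle on $L^+G/L^+\brI_\iota$ pulls back along the natural projection $\mathcal{FL} \to L^+G/L^+\brI_\iota$ to the desired $\mathscr{L}(\epsilon_\iota)$ on $\mathcal{FL}$. This produces a homomorphism $\bigoplus_{\iota\in\brbS}\mathbb{Z}[\tfrac{1}{p}]\,\mathscr{L}(\epsilon_\iota)\to\text{Pic}\,\mathcal{FL}$ (the target is a $\mathbb{Z}[\tfrac{1}{p}]$-module by the Frobenius discussion below).

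Next I would compute $\text{Pic}\,\mathcal{FL}$ by writing $\mathcal{FL}$ as the filtered union of its perfect projective Schubert varieties $\overline{\mathcal{FL}}_w$, so that $\text{Pic}\,\mathcal{FL}=\varprojlim_w \text{Pic}(\overline{\mathcal{FL}}_w)$. The crucial structural observation is that on any perfect $\mathbb{F}_p$-scheme $X$ the absolute Frobenius $F\colon X\to X$ is an isomorphism while $F^*\mathscr{L}\cong\mathscr{L}^{\otimes p}$, so $\text{Pic}(X)$ is canonically a $\mathbb{Z}[\tfrac{1}{p}]$-module. I would then compare each $\overline{\mathcal{FL}}_w$ with the corresponding equal characteristic Schubert variety through its Demazure resolution: under the simply connected hypothesis on $G$, Pappas--Rapoport show that the equal characteristic analogue has torsion-free Picard group generated freely by those $\mathscr{L}(\epsilon_\iota)$ with $s_\iota\le w$. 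Combining this with the $p$-divisibility observation and taking the inverse limit yields the asserted description.

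For the ampleness criterion I would invoke the standard ind-projective criterion: $\mathscr{L}$ is ample on $\mathcal{FL}$ iff its pullback to each $\overline{\mathcal{FL}}_w$ is ample. On each such Schubert variety, a Nakai--Moishezon-type argument reduces positivity of $\mathscr{L}$ to positivity of the intersection numbers against the $1$-dimensional Schubert subvarieties $\overline{\mathcal{FL}}_{s_\iota}\cong(\mathbb{P}^1)^{\text{perf}}$. An explicit computation, mirroring the equal characteristic calculation, identifies this intersection with $\lambda_\iota$ up to a positive power of $p$, giving the equivalence.

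The hardest part will be rigorously bridging the Witt vector and equal characteristic settings. Unlike a literal perfection of an equal characteristic object, the Witt vector affine flag variety is intrinsically mixed characteristic; one must match the tautological line bundles $\mathscr{L}(\epsilon_\iota)$ across the two worlds and carefully track how the $p$-divisibility of $\text{Pic}$ on perfect schemes interacts with Pappas--Rapoport's description, while verifying the torsion-freeness that ensures $\text{Pic}\,\mathcal{FL}$ is a free $\mathbb{Z}[\tfrac{1}{p}]$-module (this is exactly where the simply connected hypothesis on $G$ is used).
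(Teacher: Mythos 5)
There is a genuine gap, and it sits exactly where you flag ``the hardest part'': your plan is to transport Pappas--Rapoport's equal characteristic computation to the Witt vector setting by ``comparing each Schubert variety with the corresponding equal characteristic Schubert variety through its Demazure resolution,'' but no such comparison is available. The Witt vector Schubert varieties $S_w$ are not (known to be) perfections of equal characteristic Schubert varieties, and the mechanism underlying the Pappas--Rapoport computation --- the big cell coming from the open $G(\kk[t\i])$-orbit --- has no mixed characteristic analogue; this is precisely the obstruction raised in \cite[Question 11.6 (iv)]{BS} and the reason the paper develops a different, intrinsic strategy. What the paper actually does is compute $\Pic(D_{\underline w})$ directly in mixed characteristic (deperfecting the iterated $\mathbb{P}^{1,p^{-\infty}}$-bundle via a Frobenius-twist argument), prove $R\Psi_*\CO_{D_{\underline w}}=\CO_{S_w}$, and then apply the Bhatt--Scholze $h$-descent criterion (\cite[Theorem 7.7]{BS}: a line bundle descends iff it is trivial on geometric fibres) together with the injectivity of $\Psi^*$ from \cite[Proposition 7.1]{BS} to identify exactly which classes on $D_{\underline w}$ come from $S_w$. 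Your proposal contains no substitute for this descent step, so the core of the theorem is not proved.

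Two further problems. First, your construction of $\mathscr{L}(\epsilon_\iota)$ as the pullback along $\mathcal{FL}\to L^+G/L^+\brI_\iota$, where $\brI_\iota$ is the minimal parahoric attached to $\iota$, produces the wrong bundle: the curve $S_{s_\iota}=L^+\mathcal{P}_\iota/L^+\brI$ is contracted to a point by that projection, so the pullback has degree $0$ on $S_{s_\iota}$ rather than $1$. (The correct partial flag variety would be $Gr_{\brK}$ with $K=\brbS\setminus\{\iota\}$, but the computation of $\Pic Gr_{\brK}$ in the paper is itself deduced from the theorem you are proving, so one cannot start there without a circularity.) Second, the ampleness argument cannot be a routine Nakai--Moishezon computation: the Schubert varieties here are perfect, hence non-noetherian and not of finite type, and the paper instead follows \cite[\S 9.3]{BS}, proving the bundle is strictly nef and semiample (the latter via Keel's theorem \cite[Theorem 1.9]{K} applied to the exceptional locus of the pullback to $D_{\underline w}$, with an induction on $\ell(w)$), and concluding ampleness from those two properties.
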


Here $\brbS$ is the set of simple reflections in the Iwahori-Weyl group $\brW$. We refer to \S\ref{3} for the definitions and other notations. This is analogous to the result of \cite[Theorem 10.3]{PR} in equal characteristic, however it is proved in a different way by using the method of $h$-descent developed in \cite{BS}. The idea is to descend line bundles from a suitable Demazure resolution of $\mathcal{FL}$, whose Picard group has a simple description. By analyzing this resolution and using \cite[Theorem 7.7]{BS}, one shows that the correct line bundles descend.

Using the above description of $\Pic\mathcal{FL}$ and applying the descent result to the fibration $LG/L^+\brI\rightarrow LG/L^+\brK$, we also get a description of the Picard group of the partial affine flag varieties. In particular, we show that if $\breve K$ is hyperspecial, then $\text{Pic}(G(\breve F)/\breve K) \cong \BZ[\frac{1}{p}]$. This answers a question of Bhatt and Scholze in \cite{BS}.

\subsection{} Our next major step is to connect certain points in $X_w(b)$ for a $\s$-straight element $w$ inside $X(\{\mu\}, b)$. As $J_b$ acts transitively on $X_w(b)$, we only need to connected certain elements of $J_b$. To do this we find an explicit set of generators for $J_b$, this is Theorem \ref{Jb}. Roughly speaking, $J_b$ is generated by a certain subgroup of $\brI$,  and the elements $u_{-J}$. 

We then construct a curve inside $X(\{\mu\}, b)$ that connects $p$ and $u_{-J} \cdot p$, where $p$ is a point in $X_w(b)$. This is based on the comparison of the admissible sets of the (non-standard) Levi subgroups and the whole reductive group, and on G\"ortz's result on the connectedness of classical Deligne-Lusztig varieties. 

Finally in the Appendix we show that the notion of connected components as defined in \cite{CKV} agrees with the notion in the Zariski topology, thus there is no ambiguity when we talk about connected components. Moreover the notion in \cite{CKV} is useful for applications to Shimura varieties and Rapoport-Zink spaces, so it useful to know the two notions coincide. 

\subsection{Acknowledgments} 
We thank M. Kisin for his encouragement and discussions in an early stage of the project. We thank U. G\"ortz for his valuable suggestions on a preliminary version of the paper, which leads to several simplifications. We also thank B. Bhatt, T. Haines, P. Hamacher, S. Nie, A. Patel, M. Rapoport, P. Scholze, B. Smithling, E. Viehmann and X. Zhu for their valuable comments and suggestions. 

\section{Preliminaries}

\subsection{} Let $\BF_q$ be a finite field with $q$ elements. Let $F$ be a non-archimedean local field with valuation ring $\CO_F$ and residue field $\BF_q$. Let $\brF$ be the completion of the maximal unramified extension of $F$ with valuation ring $\CO_{\brF}$ and residue field $\kk=\bar \BF_q$. Let $\G=\Gal(\bar F/F)$ be the absolute Galois group and $\G_0=\Gal(\bar F/F^{un})$ be the inertia subgroup. Let $\s$ be the Frobenius of $\brF$ over $F$.

Let $G$ be a connected reductive group over $F$.  We also write $\s$ for the Frobenius automorphism on $G(\brF)$. Let $\brS \subset G$ be a maximal $\breve F$-split torus defined over $F$ and let $\brT$ be its centralizer. Let $\brI$ be the Iwahori subgroup fixing a $\sigma$-invariant alcove $\breve \fka$ in the apartment $V$ attached to $\brS$. The relative Weyl group $\brW_0$ and the Iwahori-Weyl group $\brW$ are defined by 
\begin{gather*} 
\brW_0=\breve N(\brF)/\brT(\brF), \qquad \brW=N(\brF)/\brT(\brF) \cap \brI,
\end{gather*}
where $\breve N$ denotes the normalizer of $\brS$ in $G$.

The Iwahori-Weyl group $\brW$ is a split extension of $\brW_0$ by the  subgroup $X_*(\brT)_{\G_0}$. The splitting depends on the choice of a special vertex of $\breve{\mathfrak{s}}$. When considering an element $\lambda \in X_*(\brT)_{\G_0}$ as an element of $\brW$, we write $t^\l$. We fix such a special vertex $\breve{\mathfrak{s}}$ which also allows us to identify $V\cong X_*(\brT)_{\Gamma_0}\otimes_{\mathbb{Z}}\mathbb{R}$.
For any $w \in \brW$, we denote by $\dot w$ a representative of $w$ in $\breve N(\brF)$. 

Let $\brW_a$ be the associated affine Weyl group and $\brbS$ be the set of simple reflections associated to $\breve{\mathfrak{a}}$.  Since $\breve{\mathfrak{a}}$ is $\sigma$-invariant, there is a natural action of $\sigma$ on $\brbS$. The Iwahori-Weyl group $\brW$ contains the affine Weyl group $\brW_a$ as a normal subgroup and we have a natural splitting $$\brW=\brW_a \rtimes \Omega,$$ where $\Omega$ is the normalizer of $\breve \fka$ and is isomorphic to $\pi_1(G)_{\G_0}$. 

The length function $\ell$ and the Bruhat order $\le$ on the Coxeter group $\brW_a$ extend in a natural way to $\brW$. 

\subsection{} For any $b \in G(\breve F)$, we denote by $[b]=\{g \i b \s(g); g \in G(\breve F)\}$ its $\s$-conjugacy class. Let $B(G)$ be the set of $\s$-conjugacy classes of $G(\breve F)$. The $\s$-conjugacy classes are classified by Kottwitz in \cite{Ko85} and \cite{Ko97}. We denote by $\bar \nu$ the Newton map $$\bar \nu: B(G) \to (X_*(\brT)^+_{\G_0, \BQ})^\s,$$ where $X_*(\brT)^+_{\G_0, \BQ}$ is the intersection of $X_*(\brT)_{\G_0} \otimes \BQ$ with the set of dominant (rational) coweight in $X_*(\brT) \otimes \BQ$.  Recall Kottwitz \cite{Ko97} has defined a map $\tilde{\kappa}:G(L)\rightarrow \pi_1(G)_{\Gamma_0}$. We denote by $\k$ the map $$\k: B(G) \to \pi_1(G)_\G$$ induced by $\tilde{\k}$ and the composition $\pi_1(G)_{\G_0}\rightarrow \pi_1(G)_\G$. By \cite[\S 4.13]{Ko97}, the map $$(\bar \nu, \k): B(G) \to (X_*(\brT)^+_{\G_0, \BQ})^\s \times \pi_1(G)_\G$$ is injective. 

The maps $\bar \nu$ and $\k$ can be described in an explicit way via the inclusion map $\brW \to G(\brF), w \mapsto \dot w$. For any $w \in \brW$, there exists a positive integer $n$ such that $\s^n$ acts trivially on $\brW$ and such that $w \s(w) \cdots \s^{n-1}(w)=t^\l$ for some $\l \in X_*(\brT)_{\G_0}$. We set $\nu_w=\frac{\l}{n} \in X_*(\brT)_{\G_0, \BQ}$. We denote by $\bar \nu_w$ the unique dominant rational coweight in the $\brW_0$-orbit of $\nu_w$. We denote by $\k(w)$ the image of $w$ under the natural projection map $\brW \to \brW/\brW_a \cong \pi_1(G)_{\G_0} \to \pi_1(G)_\G$. 

We denote by $B(\brW, \s)$ the set of $\s$-conjugacy classes of $\brW$. The inclusion map $\brW \to G(\brF), w \mapsto \dot w$ induces a map $\Psi: B(\brW, \s) \to B(G)$, which is independent of the choice of representatives $\dot w$. By \cite{He99}, the map $\Psi$ is surjective we have a commutative diagram 
\[\xymatrix{B(\brW, \s) \ar@{->>}[rr]^{\Psi} \ar[dr] & & B(G) \ar@{^{(}->}[ld] \\ & (X_*(\brT)^+_{\G_0, \BQ})^\s \times \pi_1(G)_\G &}.\]

The map $B(\brW, \sigma) \to B(G)$ is not injective. However, there exists a canonical lifting to the set of straight conjugacy classes. 

By definition, an element $w \in \brW$ is {\it $\sigma$-straight} if for any $n \in \mathbb N$, $$\ell(w \sigma(w) \cdots \sigma^{n-1}(w))=n \ell(w).$$ It is equivalent to the condition that $\ell(w)=\<\bar \nu_w, 2 \rho\>$, where $\rho$ is the half sum of all positive roots in the reduced root system associated to $\brW_a$. A $\sigma$-conjugacy class is {\it straight} if it contains a $\sigma$-straight element. It is easy to see that the minimal length elements in a given straight $\s$-conjugacy class are exactly the $\s$-straight elements. When the action of $\s$ on $\brW$ is trivial, we will call these straight elements instead of $\sigma$-straight. 

It is proved in \cite[Theorem 3.7]{He99} that 

\begin{theorem}\label{str-bg}
The restriction of $\Psi: B(\brW, \sigma) \to B(G)$ gives a bijection from the set of straight $\sigma$-conjugacy classes of $\brW$ to $B(G)$. 
\end{theorem}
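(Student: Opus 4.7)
The plan is to exploit the commutative diagram just displayed. Since the map $(\bar\nu,\kappa)\colon B(G)\hookrightarrow (X_*(\brT)^+_{\G_0,\BQ})^\s\times\pi_1(G)_\G$ is injective, the desired bijection is equivalent to the assertion that the invariants $(\bar\nu_w,\kappa(w))$ separate $\sigma$-straight conjugacy classes of $\brW$ and hit precisely the image of $B(G)$. Thus the proof naturally splits into a surjectivity and an injectivity statement, for which I would proceed as follows.

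For surjectivity, I would develop a reduction procedure on $\brW$: given any $w$, repeatedly apply length-non-increasing $\sigma$-conjugations $w\mapsto sws^{-1}$ or $w\mapsto sw\sigma(s)^{-1}$ by suitable simple reflections $s\in\brbS$. I would show that this procedure terminates at a $\sigma$-straight element: length-minimal elements in a $\sigma$-conjugacy class automatically satisfy $\ell(w\sigma(w)\cdots\sigma^{n-1}(w))=n\ell(w)$ by a Bruhat-order additivity argument, so length-minimal and straight coincide. Since $\Psi\colon B(\brW,\s)\to B(G)$ is already known to be surjective, and the reduction preserves the image in $B(G)$, every class in $B(G)$ is represented by a straight class.

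For injectivity, let $w,w'$ be $\sigma$-straight with the same invariants $(\bar\nu,\kappa)$ and set $M=M_{\bar\nu}$, the standard $\sigma$-stable Levi subgroup centralizing $\bar\nu$. The key claim is that after $\sigma$-conjugation we may assume $w,w'\in\brW_M$. This is motivated by the length identity $\ell(w)=\langle\bar\nu,2\rho\rangle$: a translation $t^\lambda$ for $\lambda$ a positive multiple of $\bar\nu$ (central in $M$, dominant in $G$) realizes exactly this length entirely via the roots of $G$ outside $M$, so a straight element with Newton point $\bar\nu$ has no ``slack'' outside the parabolic associated to $M$. Inside $\brW_M$, where $\bar\nu$ is central, $w$ becomes a length-zero element of the affine Weyl group of $M$, hence of the form $t^\lambda\tau$ with $\tau\in\Omega_M$ and $\lambda$ in the $\sigma$-coinvariants of the translation lattice. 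Two such elements with the same $\bar\nu$ and same image in $\pi_1(M)_{\G_0}$ differ only by an element in the image of $1-\sigma$, hence become equal in $B(\brW_M,\sigma)$, and a fortiori in $B(\brW,\sigma)$.

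The main obstacle I expect is the reduction of an arbitrary $\sigma$-straight $w$ into $\brW_M$. Although the length equality $\ell(w)=\langle\bar\nu_w,2\rho\rangle$ is the ``right'' quantitative condition, translating it into an actual $\sigma$-conjugation whose image lies in $\brW_M$ requires delicate combinatorial control of how elements of $\brW$ interact with the affine parabolic $\brW_M$ under iterated $\sigma$-conjugation; this is where most of the technical work will go. A secondary technical point is to ensure that the reduction procedure used for surjectivity is well-defined on $\sigma$-conjugacy classes of $\brW$, which amounts to showing that any two maximal chains of length-decreasing moves between elements of the same class terminate at elements connected by length-preserving $\sigma$-conjugations; this is a combinatorial exchange-type argument internal to the Coxeter-group structure of $\brW_a$ extended by $\Omega$.
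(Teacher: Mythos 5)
First, a remark on the comparison: the paper does not prove this theorem itself but quotes it from \cite[Theorem 3.7]{He99}, so your proposal should be measured against the argument there.

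The surjectivity half of your plan rests on a false claim: length-minimal elements of an arbitrary $\sigma$-conjugacy class of $\brW$ are \emph{not} automatically $\sigma$-straight. Take $\sigma$ acting trivially on $\brW$ and $w=s$ a reflection of the finite Weyl group $\brW_0$: it is of minimal length in its conjugacy class, yet $\nu_w=0$, so $\<\bar\nu_w,2\rho\>=0<\ell(w)$, and indeed $\ell(w\cdot w)=0\neq 2\ell(w)$. Not every class of $\brW$ is straight; the equivalence ``minimal length $\Leftrightarrow$ straight'' holds only \emph{within} straight classes. More fundamentally, the mechanism you propose --- $\sigma$-conjugation by simple reflections inside $\brW$ --- never leaves the given class in $B(\brW,\sigma)$, so it cannot turn a non-straight class into a straight one. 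In the example above, $\Psi$ sends both the class of $s$ and the class of $1$ to the identity class of $B(G)$, and the straight representative is the class of $1$; passing from $\dot s$ to $1$ requires $\sigma$-conjugation in $G(\brF)$, not in $\brW$. This is exactly what the Deligne--Lusztig reduction method of \cite{He99} provides: when $sw\sigma(s)<w$, every element of $\brI\dot w\brI$ is $\sigma$-conjugate in $G(\brF)$ to an element of $\brI\dot w'\brI$ with $w'\in\{sw\sigma(s),\,sw\}$, and the second option changes the class in $B(\brW,\sigma)$ while preserving the class in $B(G)$. Surjectivity onto straight classes needs this group-level reduction (or, alternatively, a direct construction of a straight element with prescribed invariants, namely a length-zero element of $\brW_{M_{\bar\nu}}$ with the right image in $\Omega_{M_{\bar\nu}}$).

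The injectivity half is in the right spirit and matches the structure of the proof in \cite{He99}: a straight element whose Newton point $v$ is dominant and $\sigma$-invariant lies in $\brW_v$ and has length zero there (this is the content of Theorem \ref{fundamental} via the $(v,\sigma)$-alcove condition), reducing the problem to elements of $\Omega_{M_v}\cong\pi_1(M_v)_{\Gamma_0}$. But your final step is misstated: the two elements are only known to share their image in $\pi_1(G)_\Gamma$ together with their Newton point, not their image in $\pi_1(M_v)_{\Gamma_0}$ --- if the latter held they would be literally equal in $\Omega_{M_v}$ and there would be nothing to prove. One must first deduce equality in $\pi_1(M_v)_{\Gamma}$ from the pair $(\bar\nu,\kappa_G)$ (this is where Kottwitz's classification of $B(M_v)$ enters), and only then does $\sigma$-conjugation by $\Omega_{M_v}$, i.e.\ modification by the image of $1-\sigma$, conclude. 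In sum, the plan identifies the correct reduction to the Levi for injectivity but leaves that step with a real gap, and the surjectivity argument fails as stated and cannot be repaired by $\brW$-internal conjugation alone.
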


\subsection{} Now we recall some remarkable properties on the minimal length elements in a conjugacy class of $\brW$. 

For $w, w' \in \brW$ and $s \in \brbS$, we write $w \xrightarrow{s}_\s w'$ if $w'=s w \s(s)$ and $\ell(w') \le \ell(w)$. We write $w \to_\s w'$ if there is a sequence $w=w_0, w_1, \cdots, w_n=w'$ of elements in $\brW$ such that for any $k$, $w_{k-1} \xrightarrow{s}_\s w_k$ for some $s \in \brbS$. We write $w \approx_\s w'$ if $w \to_\s w'$ and $w' \to_\s w$. It is easy to see that $w \approx_\s w'$ if $w \to_\s w'$ and $\ell(w)=\ell(w')$. If the action of $\s$ on $\brW$ is trivial, then we will omit $\s$ in the subscript. 

It is proved in \cite[Theorem 2.9 \& Theorem 3.8]{HN14} that

\begin{theorem}\label{HN-min}
Let $\CO$ be a $\s$-conjugacy class of $\brW$. Then 

(1) For any $w \in \CO$, there exists a minimal length element $w'$ of $\CO$ such that $w \to_\s w'$. 

(2) If $\CO$ is straight, then all the $\s$-straight elements in $\CO$ form a single $\approx_\s$-equivalence class. 
\end{theorem}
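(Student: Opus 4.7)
My plan is to prove (1) by induction on $\ell(w)-\ell_{\min}(\CO)$, where $\ell_{\min}(\CO)$ denotes the minimal length of an element in $\CO$, and then deduce (2) from (1) together with a structural analysis of $\s$-straight elements. A first reduction uses the decomposition $\brW = \brW_a \rtimes \Omega$: since $\Omega$ preserves $\breve{\fka}$ and hence length, $\s$-conjugation by elements of $\brbS$ respects the coset $\brW_a \tau$ for each $\tau \in \Omega$, so the whole problem can be phrased inside the Coxeter group $\brW_a$ with the twist modified to $\Ad(\tau) \circ \s$. This places us in a setting where a twisted analogue of the Geck--Pfeiffer cyclic-shift argument should apply.

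For (1), given $w$ not of minimal length, I want to exhibit a sequence $w = w_0 \xrightarrow{s}_\s w_1 \xrightarrow{s}_\s \cdots \xrightarrow{s}_\s w_n = w'$ with $\ell(w')<\ell(w)$. The combinatorial core is the claim: either some $s \in \brbS$ already satisfies $\ell(sw\s(s)) < \ell(w)$ (a ``good'' reflection), or we can produce such a good reflection after finitely many length-preserving $\s$-cyclic shifts. Concretely, I would fix a reduced expression for $w$ and argue, as in the finite case, that starting the cyclic shifts from the left and the right produces two reduced expressions; if no reduction ever occurs along the way then the element stays in its $\s$-conjugacy ``cyclic shift'' orbit, and I would compare this to the Newton invariant $\bar\nu_w$ and the image of $w$ in $\pi_1(G)_{\G_0}$ to derive a contradiction unless $w$ is already of minimum length. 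The subtlety versus the finite Geck--Pfeiffer case is that $\brW_a$ is infinite, so ``finiteness of the cyclic shift orbit'' must be replaced by boundedness of lengths, using that inside any $\s$-conjugacy class the length is bounded below by $\<\bar\nu_w, 2\rho\>$.

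For (2), assume $\CO$ is straight. A $\s$-straight element satisfies $\ell(w) = \<\bar\nu_w, 2\rho\>$, and since $\bar\nu$ is a $\s$-conjugation invariant, every $\s$-straight element in $\CO$ has the same length, equal to $\ell_{\min}(\CO)$; conversely any minimum length element must be $\s$-straight. So the set of $\s$-straight elements in $\CO$ is precisely the set of minimum length elements, and it suffices to show that any two minimum length elements $w_1, w_2 \in \CO$ are connected by length-preserving $\s$-conjugations. I would use the explicit form of straight elements: up to $\approx_\s$, each $w_i$ may be written as $t^{\lambda_i} u_i$ with $u_i$ fixing the facet corresponding to $\bar\nu$, hence lies in the finite Weyl group of the Levi centralizing $\bar\nu$. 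Inside this finite stabilizer the classical Geck--Pfeiffer theorem (in its twisted version) produces a length-preserving chain of $\s$-conjugations from $w_1$ to $w_2$.

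The main obstacle will be the length-reduction lemma in (1): showing in the extended affine setting that iterated length-preserving $\s$-cyclic shifts eventually expose a reduction whenever $\ell(w)>\ell_{\min}(\CO)$. Once this is established, part (2) is largely formal, following from the structural description of $\s$-straight elements and a comparison with the Newton vector and the $\pi_1$-invariant that already determine the $\s$-conjugacy class in $G(\brF)$ by Theorem \ref{str-bg}.
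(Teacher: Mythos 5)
The paper does not prove this theorem; it cites \cite{HN14}, so your proposal must stand on its own, and it has a genuine gap at precisely the point you flag as ``the main obstacle.'' For (1), the assertion that iterated length-preserving $\s$-cyclic shifts must eventually expose a length-decreasing conjugation \emph{is} the theorem, and your proposed mechanism for forcing this cannot work: you suggest deriving a contradiction by comparing with $\bar\nu_w$ and the image in $\pi_1(G)_{\G_0}$, but both are invariants of the entire $\s$-conjugacy class $\CO$, hence identical for $w$ and for any minimal length element of $\CO$; they cannot detect whether a given $w$ has minimal length. Likewise the bound $\ell(w)\ge\<\bar\nu_w,2\rho\>$ does not compute $\ell_{\min}(\CO)$ except for straight classes (already in a finite Weyl group, where $\bar\nu=0$, nontrivial classes have positive minimal length). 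The classical two-sided cyclic-shift manipulation of reduced words from the finite Geck--Pfeiffer setting does not transfer; the actual proof in \cite{HN14} is a geometric induction on the position of the base alcove under $w\s$ relative to a region in $V$ determined by $\nu_w$, combined with partial conjugation by proper parahoric subgroups, and none of that is supplied or replaced here.

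Part (2) has a similar circularity. The claim that, up to $\approx_\s$, a straight element can be written as $t^{\l}u$ with $u$ in the stabilizer of $\bar\nu$ is essentially the ``fundamental $(v,\s)$-alcove'' property (Theorem \ref{fundamental} in this paper), which is itself established using Theorem \ref{HN-min}; you cannot assume it. Even granting it, invoking twisted Geck--Pfeiffer ``inside the finite stabilizer'' is not valid: the elements carry translation parts, the relevant length function is that of $\brW$ (not of the stabilizer, whose simple reflections $\brbS_v$ need not lie in $\brbS$), and the length-preserving conjugations connecting two straight elements generally involve reflections outside the stabilizer. Your first reduction (to $\brW_a$ with twist $\Ad(\t)\circ\s$) and the observation that straight elements are exactly the minimal length elements of a straight class are both correct, but the core of both parts remains unproved.
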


\section{The affine Deligne-Lusztig variety $X(\{\mu\}, b)_K$}\label{2}

\subsection{} Let $\brK$ be a standard $\sigma$-invariant parahoric subgroup of $G(\brF)$, i.e., a $\s$-invariant parahoric subgroup that contains $\brI$. We denote by $K \subset \brbS$ the corresponding set of simple reflections. Then $\s(K)=K$. We denote by $W_K \subset \brW$ the subgroup generated by $K$. We have $$G(\brF)=\sqcup_{w \in W_K \backslash \brW/W_K} \brK \dot w \brK.$$

The affine Deligne-Lusztig variety is introduced by Rapoport in \cite{R:guide}. For any $w \in W_K \backslash \brW/W_K$ and $b \in G(\brF)$, we set $$X_{K, w}(b)=\{g \breve \CK \in G(\breve F)/\breve \CK; g \i b \s(g) \in \breve \CK \dot w \breve \CK\}.$$ If $\brK=\brI$, we simply write the corresponding affine Deligne-Lusztig variety as $X_w(b)$.

In the equal characteristic case, it is known that the affine Deligne-Lusztig varieties $X_{K,w}(b)$ could be endowed with a scheme structure by viewing it as a locally closed subvariety of a partial affine flag variety. Thanks to \cite[Theorem 10.6]{BS} (see also \cite{Zhu}), we may, even in mixed characteristic, endow $X_{K,w}(b)$ with a scheme structure in the same way. In this case it is a perfect scheme, hence generally not of finite type over $\mathbb{F}_q$ so that the term "variety" is still not completely justified. However all topological notions are well-defined using the Zariski topology, therefore we have notions of dimension, irreducible components and connected components for $X_{K,w}(b)$.

\subsection{} Let $\{\mu\}$ be a conjugacy class of cocharacters of $G$ and $\underline \mu$ be the image in $X_*(\brT)_{\G_0}$ of a dominant (for a choice of Borel defined over $\brF$) representative $\mu$ in $X_*(\brT)$ of the conjugacy class  $\{\mu\}$. The admissible set is defined by $$\Adm(\{\mu\})=\{w \in \brW; w \le t^{x(\underline \mu)} \text{ for some } x \in \brW_0\}.$$ Note that $\Adm(\{\mu\})$ has a unique minimal element with respect to the Bruhat order $\le$, i.e., the unique element $\t_{\{\mu\}}$ in $\Omega$ with $\t_{\{\mu\}} \in t^{\underline \mu} \brW_a$. 

More generally, for any standard parahoric subgroup $\brK$, we set \begin{gather*}
\Adm(\{\mu\})^K=W_K \Adm(\{\mu\}) W_K \subset \brW, \\ \Adm(\{\mu\})_K=W_K \backslash \Adm(\{\mu\})^K/W_K \subset W_K \backslash \brW/W_K.
\end{gather*}

Let \begin{align*} X(\{\mu\}, b)_K &=\{g \in G(\brF)/\brK; g \i b \s(g) \in \cup_{w \in \Adm(\{\mu\})_K} \brK \dot w \brK\} \\ &=\sqcup_{w \in \Adm(\{\mu\})_K} X_{K, w}(b).\end{align*} This is a union of affine Deligne-Lusztig varieties in $G(\brF)/\brK$. If $\brK=\brI$, we simply write  $X(\{\mu\}, b)$. 

In this paper, we are mainly interested in these unions of affine Deligne-Lusztig varieties. They play a crucial role in the study of the reduction of Shimura varieties. 

\subsection{} We recall the nonemptiness pattern of $X(\{\mu\}, b)_K$. 

To do this, we recall the definition of neutral acceptable set $B(G, \{\mu\})$ in \cite{RV}. Note that there is a partial order on the set of dominant elements in $X_*(\brT) \otimes \BQ$ (namely, the {\it dominance order}) defined as follows. For $\l, \l' \in X_*(\brT) \otimes \BQ$, we write $\l \leq \l'$ if $\l'-\l$ is a non-negative rational linear combination of positive relative coroots. Set $$B(G, \{\mu\})=\{ [b]\in B(G); \kappa([b])=\mu^\natural, \bar \nu_b\leq \mu^\diamond \}.$$
 Here $\mu^\natural$ denotes the common image of $\mu\in\{\mu\}$ in $\pi_1(G)_\Gamma$, and $\mu^\diamond$ denotes the Galois average of a dominant representative of the image of an element  of $\{\mu\}$ in $X_*(T)_{\Gamma_0, \BQ}$  with respect to the L-action of $\sigma$ on $(X_*(T)_{\Gamma_0,\BQ})^+$.   The set $B(G, \{\mu\})$ inherits a partial order from $X_*(\brT) \otimes \BQ$. It has a unique minimal element, the $\s$-conjugacy class $[\dot \t_{\{\mu\}}]$. It is proved in \cite{HNx} that $B(G, \{\mu\})$ has a unique maximal element. 

The following result is conjectured by Kottwitz and Rapoport in \cite{KR} and \cite{R:guide} and proved by the first author in \cite{He00}. 

\begin{theorem}\label{KR-He}
Let $\brK$ be a standard parahoric subgroup of $G(\brF)$ and $b \in G(\brF)$. Then 

(1) The set $X(\{\mu\}, b)_K \neq \emptyset$ if and only if $[b] \in B(G, \{\mu\})$. 

(2) The natural projection $G(\brF)/\brI \to G(\brF)/\brK$ induces a surjection $$X(\{\mu\}, b) \twoheadrightarrow X(\{\mu\}, b)_K.$$ 
\end{theorem}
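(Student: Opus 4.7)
The plan is to reduce both statements to the production of $\sigma$-straight representatives of each $\sigma$-conjugacy class $[b] \in B(G, \{\mu\})$ inside the admissible set $\Adm(\{\mu\})$. The easy direction of (1), namely $X(\{\mu\}, b)_K \neq \emptyset \implies [b] \in B(G, \{\mu\})$, is essentially formal: for $g \in X(\{\mu\}, b)_K$ with $g^{-1} b \sigma(g) \in \brK \dot w \brK$ and $w \in \Adm(\{\mu\})^K$, one has $\kappa(b) = \kappa(w) = \mu^\natural$ since $\kappa$ is constant on $\Adm(\{\mu\})^K \subset \t_{\{\mu\}} \brW_a$, and the inequality $\bar\nu_b \leq \mu^\diamond$ follows from a convexity estimate applied to the admissible set.

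For the converse direction of (1), Theorem \ref{str-bg} attaches to each $[b] \in B(G, \{\mu\})$ a unique straight $\sigma$-conjugacy class $\CO \subset \brW$ with $\Psi(\CO) = [b]$. The central input---and the main obstacle of the proof---is to establish
\[
\CO \cap \Adm(\{\mu\}) \neq \emptyset,
\]
i.e.\ that some $\sigma$-straight representative of $[b]$ already lies in the admissible set. My plan for this is to proceed by induction on $\langle \bar\nu_b, 2\rho \rangle$ (which equals $\ell(w)$ for any $\sigma$-straight $w \in \CO$ by definition of straightness), with base case $[b] = [\dot\t_{\{\mu\}}]$, for which $\t_{\{\mu\}} \in \Adm(\{\mu\})$ tautologically. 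In the inductive step, I would separate cases: when $[b]$ is Hodge--Newton decomposable with respect to a proper $\sigma$-stable Levi $M$ of $G$, one reduces to the analogous statement for $(M, \{\mu_M\}, [b]_M)$ using the containment $\Adm_M(\{\mu_M\}) \subset \Adm_G(\{\mu\})$; otherwise, one argues directly on minimal length elements of affine Weyl conjugacy classes using Theorem \ref{HN-min}(2) and the explicit shape of $\Adm(\{\mu\})$ as a union of Bruhat intervals $[\t_{\{\mu\}}, t^{x(\underline\mu)}]$. Once a straight $w \in \CO \cap \Adm(\{\mu\})$ is produced, replacing $b$ by $\dot w$ in its $\sigma$-conjugacy class exhibits $e \cdot \brI \in X_w(b) \subseteq X(\{\mu\}, b)$, which projects to a point of $X(\{\mu\}, b)_K$.

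For (2), the inclusion $\brI \dot w \brI \subset \brK \dot w \brK$ shows that the projection $\pi: G(\brF)/\brI \to G(\brF)/\brK$ sends $X(\{\mu\}, b)$ into $X(\{\mu\}, b)_K$. For surjectivity, given $g \brK \in X(\{\mu\}, b)_K$ with $g^{-1} b \sigma(g) \in \brK \dot v \brK$ for some $v \in \Adm(\{\mu\})^K$, the task is to find $k \in \brK$ with $k^{-1} g^{-1} b \sigma(g) \sigma(k) \in \brI \dot w \brI$ for some $w \in \Adm(\{\mu\})$. The fibre of $\pi$ over $g\brK$ is $\brK/\brI$, and the assignment $k \mapsto \,$(Iwahori double coset of $k^{-1} g^{-1} b \sigma(g) \sigma(k)$) stratifies it by elements of $W_K v W_K$. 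Applying the reduction method of Theorem \ref{HN-min} to the $\sigma$-twisted $W_K$-conjugation action on $W_K v W_K$ produces a $k$ carrying $g^{-1} b \sigma(g)$ into an Iwahori double coset indexed by a $\sigma$-minimal length element; that such a minimal element can be arranged to lie in $\Adm(\{\mu\})$ follows from the definition $\Adm(\{\mu\})^K = W_K \Adm(\{\mu\}) W_K$ together with the fact that $\Adm(\{\mu\})$ is closed under the Bruhat order and compatible with the $\approx_\s$-equivalence supplied by Theorem \ref{HN-min}(2).
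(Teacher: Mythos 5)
First, a point of reference: the paper does not prove this theorem. It is stated as a result conjectured in \cite{KR}, \cite{R:guide} and proved in \cite{He00}, so there is no internal proof to compare against. Judged on its own, your proposal correctly identifies the architecture one would use (produce a $\s$-straight representative of the straight class $\CO_{[b]}$ inside $\Adm(\{\mu\})$ for (1); a partial $\s$-conjugation argument by $W_K$ for (2)), and the easy direction of (1) is fine in outline. But the two steps that carry all the weight are asserted rather than proved, so the proposal has genuine gaps.

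For (1), the claim $\CO_{[b]}\cap\Adm(\{\mu\})\neq\emptyset$ for every $[b]\in B(G,\{\mu\})$ is precisely \cite[Proposition 4.1]{He00} and is the main theorem of that paper. Your inductive scheme does not establish it: in the Hodge--Newton indecomposable branch you write that ``one argues directly on minimal length elements,'' which is exactly where the work lies (in \cite{He00} this involves reduction to adjoint simple groups, the quasi-split case, and a delicate analysis of alcove elements and maximal Newton points), and the induction parameter $\<\bar\nu_b,2\rho\>$ does not visibly decrease in that branch. In the decomposable branch, the containment $\Adm^{M}(\{\mu_M\})\subset\Adm(\{\mu\})$ for a standard Levi $M$ is itself a nontrivial statement about Bruhat orders and choices of $M$-dominant representatives of $\{\mu\}$, not a formality; the paper only records the special case needed for $M_{\nu_w}$ in \S\ref{adm-levi}, and the general version again rests on \cite[Proposition 6.1]{He00}.

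For (2), the step ``that such a minimal element can be arranged to lie in $\Adm(\{\mu\})$ follows from the definition $\Adm(\{\mu\})^K=W_K\Adm(\{\mu\})W_K$ together with closure under the Bruhat order and compatibility with $\approx_\s$'' does not hold as stated. Minimal length elements of a $\s$-twisted $W_K$-conjugation orbit inside $W_K v W_K$ are in general not the minimal length elements of the double coset, so closure of $\Adm(\{\mu\})$ under the Bruhat order does not apply; and $\Adm(\{\mu\})$ is not closed under length-preserving $\s$-conjugation by simple reflections --- Lemma \ref{Tom} gives this only for \emph{straight} elements, via Theorem \ref{HN-min}(2) and \cite[Lemma 4.5]{Tom}, whereas the elements of $W_K v W_K$ you must handle are arbitrary. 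The correct input is the compatibility $\Adm(\{\mu\})^K\cap {}^K\brW=\Adm(\{\mu\})\cap {}^K\brW$ combined with the partial conjugation method (see \cite[\S 6]{HR}, \cite{HH}, resting on \cite[Proposition 6.1]{He00}); this is a theorem, not a consequence of the definitions. In short, your sketch defers to assertion exactly the results that \cite{He00} and \cite{HH} were written to prove.
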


\section{Affine flag varieties in mixed characteristics and line bundles}\label{3}
\subsection{}The aim of the section is to give a description of the Picard group of the mixed characteristic affine flag variety of \cite{BS}. In the equal characteristic setting, the description of the Picard group is well-known, see, e.g. \cite[Proposition 10.1]{PR}. An essential component of the proof there is the  existence of the big cell corresponding to the open $G(\kk[t \i])$-orbit on the affine flag variety. It is pointed out by Bhatt and Scholze in \cite[Question 11.6 (iv)]{BS} that such approach seems to break down in mixed characteristic. Here we develop a different strategy to study the Picard group, based on the relation between the Picard groups of Demazure varieties and of the affine flag variety and the method of $h$-descent developed in \cite{BS}. This approach works for both mixed characteristic and equal characteristic (after taking perfections).

\subsection{Line bundles in equal characteristic} We briefly recall the construction of the affine flag variety in equal characteristic.

Let $\mathbf{k}$ be an algebraically closed field of characteristic $p$. For $G$ a reductive group over $k((t))$, which splits over a tamely ramified extension and $\brI$ an Iwahori subgroup of $G$, Pappas and Rapoport construct an associated ind-projective ind-scheme over $\mathbf{k}$ called the affine flag variety which is defined to be the fpqc quotient $$\mathcal{FL}=L^+G/L^+\brI.$$

For simplicity assume $G$ is simply connected. Then $\mathcal{FL}$ is  connected and we have an isomorphism \cite[Proposition 10.1]{PR}$$\Pic(\mathcal{FL})\cong \bigoplus_{\iota\in\brbS}\mathbb{Z}.$$

The line bundles are constructed by identifying $\mathcal{FL}$ as an inductive limit of the Schubert varieties appearing in Kac-Moody theory \cite[9.27]{PR}. Under this identification, to each affine weight $\lambda$, \cite[XVIII, Proposition 28]{Ma} constructs a line bundle $\mathscr{L}(\lambda)$ on $\mathcal{FL}$. If $\brK_\iota$ is the parahoric subgroup corresponding to $\iota\in\brbS$, the degree of the restriction $\lambda$ to $\mathbb{P}^1\cong L^+\brK_{i}/L^+\brI$ is given by the coefficient of the fundamental weight $\epsilon_\iota$ in $\lambda$, and these degrees suffice to characterize $\mathscr{L}(\lambda)$.

\subsection{} In the rest of this section, $F$ denotes a discrete valuation field  of {\it mixed characteristic} with perfect residue field $k$ and $\mathbf{k}$ denotes an algebraic closure of $k$. 

We recall the Witt vector affine Grassmannian of \cite{BS}, see also \cite{Zhu}. 

Let $\mathcal{G}$ be a smooth affine group scheme over $\CO_F$ with generic fibre $G$. For a $k$-algebra $R$, we define the relative Witt vectors $$W_{\CO_F}(R)=W(R)\otimes_{W(k)}\CO_F.$$

The $p$-adic loop group is the functor on perfect $k$-algebras $R$ given by $$LG(R)=G(W_{\CO_F}(R)[\frac{1}{p}]).$$ 

The positive $p$-adic loop group is the functor on perfect $k$-algebras $R$ given by $$L^+\mathcal{G}(R)=\mathcal{G}(W_{\CO_F}(R)).$$

Clearly these are functors valued in groups and it is known that $L^+\mathcal{G}$ is representable by  a perfect affine scheme and $LG$ is a strict ind-perfect affine scheme. 
In fact the functor $L^+\mathcal{G}$ is the perfection of the scheme $L^+_p\mathcal{G}:=\varprojlim L^h_p\mathcal{G}$ where $L^h_p\mathcal{G}$ is the finite type scheme over $\mathbf{k}$ given by the Greenberg realization of $\mathcal{G}$ over $\mathcal{O}_{\brF}/\pi^h$.

Let $\text{Perf}$ denote the category of perfect schemes over $k$. The affine Grassmannian associated to $\mathcal{G}$ is the functor on $\text{Perf}$ given by the fpqc quotient
$$Gr_{\mathcal{G}}=LG/L^+\mathcal{G}.$$

It is proved in \cite[Corollary 10.6]{BS}  that $Gr_{\mathcal{G}}$ is an increasing union of perfections of quasi-projective schemes over $k$.

We are particularly interested in $Gr_\mathcal{G}$ when $\mathcal{G}$ is a (connected) parahoric group scheme. In this case it is known that $Gr_\mathcal{G}$ is representable by an inductive limit of perfections of projective schemes, cf. \cite[Corollary 10.6]{BS}.
 
We also know that in this case, the Kottwitz homomorphism induces bijections $$
\pi_0(LG)\cong\pi_0(Gr_\mathcal{G})\cong\pi_1(G)_\Gamma,$$
 cf. \cite[Proposition 1.21]{Zhu}. 
  When $\mathcal{G}$ is the connected smooth affine group scheme corresponding to the Iwahori $\breve{\mathcal{I}}$, we will call $LG/L^+\mathcal{G}$ the {\it affine flag variety} for $\mathcal{G}$ and denote it by $\mathcal{FL}.$
  
The main result of this section is the following:
  
  \begin{theorem}\label{theorem1} Assume that $G$ is simply connected and $\text{char}(F)=0$.
  
   (1) There is an isomorphism $$\text{Pic}\mathcal{FL}\cong \bigoplus_{\iota\in\brbS}\mathbb{Z}[\frac{1}{p}],$$ where the isomorphism is given by taking $\mathscr{L}$ to the degree of its restriction to $L\mathcal{P}_\iota^+/L^+\brI\cong\mathbb{P}^{1,p^{-\infty}}$.
   
   (2) A line bundle $\mathscr{L}$ corresponding to $(\lambda_\iota)_{\iota \in \brbS}$ is ample if and only if $\lambda_\iota>0$ for all $\iota\in\brbS$.
   \end{theorem}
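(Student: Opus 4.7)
The plan is to reduce the computation of $\mathrm{Pic}(\mathcal{FL})$ to the Picard group of Demazure (Bott--Samelson) resolutions, where the structure is transparent, and then to use the $h$-descent theorem of \cite{BS} to identify which line bundles on the Demazure variety come from $\mathcal{FL}$. Write $\mathcal{FL} = \varinjlim_w \mathcal{FL}_{\leq w}$ for the Schubert stratification. For $w \in \brW$ with a reduced expression $\underline{w} = (s_{i_1}, \ldots, s_{i_n})$, the Demazure resolution $\pi_{\underline{w}} \colon D(\underline{w}) \to \mathcal{FL}_{\leq w}$ is a proper birational map whose source is an iterated $\mathbb{P}^{1,p^{-\infty}}$-bundle. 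Since Frobenius is an isomorphism on perfect schemes and multiplies the degree of a line bundle on $\mathbb{P}^{1,p^{-\infty}}$ by $p$, one has $\mathrm{Pic}(\mathbb{P}^{1,p^{-\infty}}) \cong \mathbb{Z}[\tfrac{1}{p}]$, and the projective bundle formula gives $\mathrm{Pic}(D(\underline{w})) \cong \bigoplus_{j=1}^n \mathbb{Z}[\tfrac{1}{p}]\, L_j$, one generator $L_j$ per level of the tower.

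The line bundles $\mathscr{L}(\epsilon_\iota)$ on $\mathcal{FL}$ should be constructed from the character of $L^+\brI$ determined by the fundamental weight $\epsilon_\iota$, factoring through $L^+\brI \to L^+\brT \to \mathbb{G}_m$ and its perfection. The induced associated bundle $LG \times^{L^+\brI} \mathbb{A}^{1,p^{-\infty}}$ descends to a line bundle on $\mathcal{FL}$ whose restriction to $\mathbb{P}^{1,p^{-\infty}}_{\iota'} = L^+\mathcal{P}_{\iota'}/L^+\brI$ has degree $\delta_{\iota\iota'}$ by a direct computation on the corresponding $\mathrm{SL}_2$-subgroup. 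This shows that the map $\mathrm{Pic}(\mathcal{FL}) \to \bigoplus_{\iota \in \brbS} \mathbb{Z}[\tfrac{1}{p}]$ sending a bundle to its tuple of restriction degrees is surjective, and that the $\mathscr{L}(\epsilon_\iota)$ are $\mathbb{Z}[\tfrac{1}{p}]$-linearly independent.

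The main obstacle is injectivity: any line bundle on $\mathcal{FL}_{\leq w}$ with trivial restriction to every $\mathbb{P}^{1,p^{-\infty}}_\iota$ must itself be trivial. I would pull back along $\pi_{\underline{w}}$ and apply \cite[Theorem 7.7]{BS}: as $\pi_{\underline{w}}$ is a proper surjection, hence an $h$-cover, a line bundle on $D(\underline{w})$ descends iff its two pullbacks along $D(\underline{w}) \times_{\mathcal{FL}_{\leq w}} D(\underline{w}) \rightrightarrows D(\underline{w})$ coincide, which reduces by birationality to triviality on each non-trivial geometric fiber. Such fibers are themselves iterated $\mathbb{P}^{1,p^{-\infty}}$-bundles indexed by Deodhar-type subwords of $\underline{w}$ that collapse, and restricting the generators $L_1, \ldots, L_n$ to these fibers yields linear constraints forcing the coefficient at position $j$ to depend only on the simple reflection $s_{i_j} \in \brbS$. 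Combined with the vanishing of degrees on the $\mathbb{P}^{1,p^{-\infty}}_\iota$'s, this forces $\pi_{\underline{w}}^*\mathscr{L} = 0$, hence $\mathscr{L} = 0$ by surjectivity of $\pi_{\underline{w}}$. Passing to the colimit in $w$ gives part (1).

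For ampleness, necessity is immediate by restriction to each $\mathbb{P}^{1,p^{-\infty}}_\iota$. For sufficiency, I would argue that for $\lambda_\iota > 0$ and $N \gg 0$ the bundle $\mathscr{L}(\sum \lambda_\iota \epsilon_\iota)^{\otimes N}$ admits enough global sections on each $\mathcal{FL}_{\leq w}$ to furnish a closed embedding into projective space, by transporting the analogous equal-characteristic result \cite[Proposition 10.1]{PR} across the perfection: the combinatorics governing the Schubert stratification and the fundamental-weight line bundles depend only on the local Dynkin diagram, and ampleness is preserved and reflected under perfection of a projective variety (since it can be tested on any finite-type model by deperfecting $\mathscr{L}^{\otimes p^k}$). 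The technical burden I anticipate lies in the fiber analysis of the injectivity step: one must uniformly identify the $\mathbb{P}^{1,p^{-\infty}}$-fiber structure of $\pi_{\underline{w}}$ across reduced expressions and carefully transport it through the $h$-descent criterion, since the non-trivial fibers need not be irreducible or smooth before perfection.
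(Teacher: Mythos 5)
Your skeleton---compute $\Pic$ of the Demazure resolution as $\bigoplus_j\mathbb{Z}[\frac{1}{p}]$ and use the $h$-descent theorem of \cite{BS} to decide which classes live on the Schubert variety---is the paper's strategy, but there is a genuine gap exactly at the hard half of part (1), namely surjectivity of $\Pic(\mathcal{FL})\to\bigoplus_{\iota\in\brbS}\mathbb{Z}[\frac{1}{p}]$. You propose to build $\mathscr{L}(\epsilon_\iota)$ as the associated bundle of a character of $L^+\brI$ ``determined by the fundamental weight $\epsilon_\iota$.'' No such character exists: every character of $L^+\brI$ factors through $\brT$, and the affine fundamental weights are not characters of $\brT$ but of the Cartan of the Kac--Moody central extension, which is precisely what is unavailable in mixed characteristic (this is the reason the paper cannot imitate \cite{PR}, cf.\ \cite[Question 11.6]{BS}). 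Concretely, for $\lambda\in X^*(\brT)$ the tuple of degrees of the associated bundle on the curves $L^+\mathcal{P}_\iota/L^+\brI$ is $(\langle\lambda,a_\iota^\vee\rangle)_\iota$, and the affine simple coroots of a simple simply connected group satisfy a nontrivial linear relation, so these tuples lie in a proper sublattice; already for $SL_2$ one only obtains $\{(\lambda,-\lambda)\}$, never $(1,0)$. Surjectivity must instead be proved by showing directly that any class $(\lambda_1,\dots,\lambda_n)$ on $D_{\underline{w}}$ with $\lambda_r=\lambda_s$ whenever $j_r=j_s$ descends to $S_w$. The paper does this by factoring $D_{\underline w}\to S_w$ through the intermediate spaces $D_{\underline w_i}\times^{L^+\brI}S_{s_{j_{i+1}}\cdots s_{j_n}}$ and checking fibral triviality at each stage, the key fiber being the antidiagonal $p\mapsto(p,p^{-1})$ in $L^+\mathcal{P}_{j}/L^+\brI\times^{L^+\brI}L^+\mathcal{P}_{j}/L^+\brI$, whose triviality follows because the multiplication map contracts it to a point. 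Your fiber analysis, which you deploy only to record the constraint $\lambda_r=\lambda_s$ as necessary for descent, is precisely the argument that must be upgraded to prove sufficiency.

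There is a second gap in (2): one cannot ``transport'' \cite[Proposition 10.1]{PR} across perfection, because no identification of a deperfection of the Witt vector Schubert variety with the equal-characteristic one is available---matching Dynkin-diagram combinatorics does not produce an isomorphism of schemes, and ampleness is not a purely combinatorial condition. The paper instead argues intrinsically in positive characteristic, following \cite[\S 9.3]{BS}: $\Psi^*\mathscr{L}$ is big and nef with exceptional locus in the boundary $\partial D_{\underline w}$, $\mathscr{L}$ is strictly nef because each $\mathscr{L}(a_i)$ has a section non-vanishing on the open cell while a weighted product of them is ample on $D_{\underline w}$, and semiampleness follows by induction on $\ell(w)$ together with Keel's theorem \cite[Theorem 1.9]{K}.
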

   \begin{remark}Recall that the Picard group of a perfect $\mathbb{F}_q$ scheme is always a $\mathbb{Z}[\frac{1}{p}]$ module. This follows since the $q$-Frobenius is an isomorphism and it induces multiplication by $q$ on the Picard group.

  \end{remark}

 \subsection{}We first explain how this theorem can be used to describe the Picard group of the affine flag variety for non-simply connected $G$. Recall that $\mathbf{k}$ is an algebraically closed field. In this case the Kottwitz homomorphism $G(\brF)\rightarrow \pi_1(G)_{\Gamma_0}$ is surjective. For an element $\t\in\pi_1(G)_{\Gamma_0}$, we may take an element $g\in LG(\mathbf{k})$ whose image in $\pi_1(G)_{\Gamma_0}$ is $\t$. Then multiplication by $g$ induces an isomorphism over $\mathbf{k}$ of the connected component of $\mathcal{FL}$ corresponding to $\t$ and the neutral component $\mathcal{FL}^0$ corresponding to $0$.
  
  Let $G_{\text{der}}$ be the derived group of $G$ and $\tilde{G}$ the simply connected covering of $G_{\text{der}}$. Let $\widetilde{\mathcal{FL}}$ be flag variety for $\tilde{G}$ and the corresponding Iwahori subgroup of $\tilde{G}$.
  
 \begin{proposition}\label{prop3}
 The map $L\tilde{G}\rightarrow LG$ identifies $\widetilde{\mathcal{FL}}$ with $\mathcal{FL}^0$.

 \end{proposition}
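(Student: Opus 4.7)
Since $\tilde{G}$ is simply connected, the Kottwitz map gives $\pi_0(\widetilde{\mathcal{FL}}) \cong \pi_1(\tilde{G})_{\G_0} = 0$, so $\widetilde{\mathcal{FL}}$ is connected. The base point of $\widetilde{\mathcal{FL}}$ maps to the base point of $\mathcal{FL}$, so the induced morphism $\widetilde{\mathcal{FL}} \to \mathcal{FL}$ factors through the neutral component $\mathcal{FL}^0$, yielding a morphism $\varphi\colon \widetilde{\mathcal{FL}} \to \mathcal{FL}^0$.

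The plan is to show $\varphi$ is an isomorphism by comparing Bruhat decompositions. Fix compatible Iwahori subgroups $\widetilde{\brI} \subset \tilde{G}(\brF)$ and $\brI \subset G(\brF)$ attached to the same alcove $\breve \fka$ in the common reduced building, which depends only on the adjoint group. Since $\tilde{G}$ is simply connected, its Iwahori-Weyl group equals the affine Weyl group $\brW_a$ (the $\Omega$-part vanishes), so $\widetilde{\mathcal{FL}}$ stratifies as $\bigsqcup_{w \in \brW_a} \widetilde{\mathcal{FL}}_w$. On the other side, $\mathcal{FL}^0$ consists of exactly those Schubert cells of $\mathcal{FL}$ indexed by $w \in \brW$ whose image in $\pi_1(G)_{\G_0} \cong \Omega$ is trivial, namely those $w \in \brW_a$. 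Thus $\varphi$ restricts to a map of Schubert cells $\widetilde{\mathcal{FL}}_w \to \mathcal{FL}^0_w$ for each $w \in \brW_a$.

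The main step is to show each of these cell maps is an isomorphism. Each Schubert cell admits a product decomposition, with respect to an ordering of the positive affine roots inverted by $w$, in terms of affine root subgroups $U_\alpha$; explicitly, fixing lifts $\dot w$, one has $\prod_\alpha U_\alpha \isoarrow \mathcal{FL}^0_w$ via $u \mapsto u \dot w$, and similarly for $\widetilde{\mathcal{FL}}_w$. Because the relative affine root system and root subgroups of $\tilde{G}$ and $G$ coincide (they are determined by the derived, equivalently the adjoint, group), the isogeny $\tilde{G} \to G_{\text{der}} \hookrightarrow G$ identifies each $\widetilde{U}_\alpha$ with $U_\alpha$. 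Thus each Schubert cell is identified, and since the Bruhat order on $\brW_a$ is intrinsic, the Schubert varieties (closures) match up cell-by-cell. Passing to the inductive limit yields $\varphi\colon \widetilde{\mathcal{FL}} \isoarrow \mathcal{FL}^0$.

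The main obstacle I anticipate is the mixed-characteristic bookkeeping in Bruhat-Tits theory needed to identify the affine root subgroups of $\tilde{G}$ with those of $G$ as perfect group schemes; once that is in place, the Bruhat-cell comparison and its assembly into an isomorphism of ind-projective perfect ind-schemes are straightforward and compatible with taking perfections.
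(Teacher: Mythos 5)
Your setup (connectedness of $\widetilde{\mathcal{FL}}$, factoring through $\mathcal{FL}^0$, the matching Bruhat stratifications indexed by $\brW_a$, and the identification of the affine root subgroups of $\tilde G$ and $G$ via the central isogeny) is fine, and it is a genuinely different route from the paper, which simply invokes the argument of Pappas--Rapoport (Proposition 6.6 of \cite{PR}) to see that $\widetilde{\mathcal{FL}}\to\mathcal{FL}^0$ is a universal homeomorphism and then cites \cite[Corollary A.16]{Zhu}. But your final step is a genuine gap: from ``$\varphi$ restricts to an isomorphism on each Bruhat cell'' and ``the cells match up'' you cannot conclude that the induced maps on Schubert \emph{varieties} (the closures), let alone on the ind-scheme, are isomorphisms. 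A morphism of schemes can be bijective and restrict to an isomorphism over every stratum of a stratification of the target without being an isomorphism --- the normalization of the cuspidal cubic is an isomorphism over the smooth locus and over the cusp point, yet is not an isomorphism. What your cell-by-cell comparison actually yields is that $\varphi$ is a proper bijective monomorphism on points, i.e.\ a universal homeomorphism; the scheme structure of $S_w$ along the boundary strata is extra data not captured by the cells.

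The missing ingredient is precisely the one the paper uses to close the same gap: in the category of perfect schemes a (perfectly finitely presented) universal homeomorphism is automatically an isomorphism, which is \cite[Corollary A.16]{Zhu}. With that cited, your argument goes through; note also that this is where perfectness is essential --- in equal characteristic the analogous statement requires passing to reduced loci, as the paper remarks after the proposition. Finally, the ``main obstacle'' you anticipate (identifying the affine root subgroups of $\tilde G$ and $G$) is not the real difficulty; that identification is standard since the relative affine root data and the root subgroups depend only on the adjoint group and a central isogeny induces isomorphisms on unipotent root subgroups.
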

\begin{proof}
The same proof as \cite[Proposition 6.6]{PR} shows that $\widetilde{\mathcal{FL}}\rightarrow\mathcal{FL}^0$ is a universal homeomorphism. That this is an isomorphism follows from \cite[Corollary A.16]{Zhu}.
\end{proof}

It follows that $\Pic{\mathcal{FL}}=(\Pic\widetilde{\mathcal{FL}})^{\oplus |\pi_1(G)_{\Gamma}|}$, thus in order to describe the Picard group of $\mathcal{FL}$, we may and do assume that $G$ is simply connected.
 
 \subsection{} We now introduce the Demazure resolutions for the affine flag variety $\mathcal{FL}$. 
 
Recall that $G$ is simply connected. So $\brW=\brW_a$ is a coxeter group. For any $j \in \brbS$, let $s_j$ be the corresponding simple reflection and $\mathcal{P}_j$ be the corresponding parahoric subgroup. For $w\in\breve{W}$, let $S_w$ denote the closure of $L^+\brI \dot w L^+\brI/L^+\brI$ in $\mathcal{FL}$. This is the Schubert variety corresponding to $w$. Let $\underline w=(s_{j_1}, \cdots, s_{j_n})$ be a reduced expression of $w$. Let $\mbox{Supp}(w)=\{j_1,\cdots,j_n\}\subset\brbS$ be the support of $w$. It is known that $\mbox{Supp}(w)$ is independent of the reduced word decomposition for $w$. For $i=0, \cdots, n$, define $\underline w_i=(s_{j_1}, \cdots, s_{j_i})$. We form the Demazure variety 
 
 $$D_{\underline w}=L^+\mathcal{P}_{j_1}\times^{L^+\brI}L^+\mathcal{P}_{j_2}\times^{L^+\brI}\cdots\times^{L^+\brI}L^+\mathcal{P}_{j_n}/L^+\brI$$
 as an object in $\text{Perf}$. 
 
For any $j\in \brbS$, the quotient $L^+\mathcal{P}_j/L^+\brI$ is  isomorphic to $\mathbb{P}^{1,p^{\infty}}$. Thus we have a sequence of morphisms $$D_{\underline w}=D_{\underline w_n}\rightarrow D_{\underline w_{n-1}}\rightarrow\cdots\rightarrow D_{\underline w_0},$$ given by forgetting the last coordinate, and where each map is a locally trivial fibration with fiber isomorphic to $\mathbb{P}^{1,p^{-\infty}}$, see for example \cite[Prop. 8.8]{PR}, \cite[1.5.2]{Zhu}. In particular $D_{\underline w}$ is perfectly finitely presented over $\mathbf{k}$, cf.  \cite[Corollary A.23]{Zhu}. 
 
 \begin{proposition}\label{Pic}
$D_{\underline{w}}$ is the perfection of a smooth projective $\mathbf{k}$ scheme and there is an isomorphism  $$\text{Pic}(D_{\underline{w}})\cong \bigoplus_{i=1}^n\mathbb{Z}[\frac{1}{p}],$$
given by taking a line bundle $\mathscr{L}$ to the degree of its restriction to a fibre of $D_{\underline{w}_i}\rightarrow D_{\underline{w}_{i-1}}$.
\end{proposition}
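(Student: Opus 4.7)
The proof proceeds by induction on $n = \ell(w)$, with base case $n=0$ giving $D_{\underline{w}_0} = \Spec(\kk)$ and trivial Picard group. My plan is to realize $D_{\underline{w}}$ as the perfection of a classical smooth projective $\kk$-scheme $D^{cl}_{\underline{w}}$, compute $\Pic(D^{cl}_{\underline{w}})$ via the iterated $\BP^1$-bundle structure, and then pass to perfection to account for the $\BZ[\tfrac{1}{p}]$ factor.

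For the classical model, I would observe that for $h$ sufficiently large the $L^+\brI$-action used in the twisted product factors through the Greenberg realization $L^h_p\brI$, since $L^+\brI$ acts on each $L^+\mathcal{P}_{j_k}/L^+\brI \cong \BP^{1,p^{-\infty}}$ through a finite-dimensional quotient. The resulting quotient
\[
D^{cl}_{\underline{w}} := L^h_p\mathcal{P}_{j_1} \times^{L^h_p\brI} \cdots \times^{L^h_p\brI} L^h_p\mathcal{P}_{j_n}/L^h_p\brI
\]
is then a finite-type classical $\kk$-scheme whose perfection recovers $D_{\underline{w}}$. By induction on $n$, $D^{cl}_{\underline{w}}$ inherits the structure of a tower of Zariski-locally trivial $\BP^1$-bundles over $\Spec(\kk)$, and is therefore smooth and projective.

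The standard theory of Zariski-locally trivial $\BP^1$-bundles then yields, for each $i$, a split short exact sequence
\[
0 \to \Pic(D^{cl}_{\underline{w}_{i-1}}) \to \Pic(D^{cl}_{\underline{w}_i}) \to \BZ \to 0,
\]
the surjection being restriction to a fibre and the splitting given by a tautological $\mathcal{O}(1)$. Iteration yields $\Pic(D^{cl}_{\underline{w}}) \cong \bigoplus_{i=1}^n \BZ$ via fibre degrees. To pass to the perfection, I would invoke the fact that for any $\BF_p$-scheme $X$ the absolute Frobenius acts by multiplication by $p$ on $\Pic(X)$, since $F^*\mathscr{L} = \mathscr{L}^{\otimes p}$; hence $\Pic(X^{\mathrm{perf}}) = \varinjlim_F \Pic(X) = \Pic(X) \otimes_{\BZ} \BZ[\tfrac{1}{p}]$. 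Applied to $D^{cl}_{\underline{w}}$ this gives the claimed isomorphism, and the identification via fibre degrees persists.

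The main obstacle is the construction of the classical model in mixed characteristic. In equal characteristic the twisted product is classically known to be a smooth projective scheme, but here one must verify carefully that the $L^+\brI$-action factors through a suitable Greenberg realization at each stage and that the quotient descends compatibly along the tower to produce a genuine finite-type $\kk$-scheme whose perfection is $D_{\underline{w}}$. Once this is in place, the Picard computation on $D^{cl}_{\underline{w}}$ and the passage to perfection are routine.
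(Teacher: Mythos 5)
Your overall architecture---induct on $n$, produce a classical smooth projective model, apply the $\Pic$ formula for Zariski-locally trivial $\mathbb{P}^1$-bundles, then invert $p$ upon perfection via $F^*\mathscr{L}\cong\mathscr{L}^{\otimes p}$---is the same as the paper's, and the last two steps are indeed routine (the paper quotes \cite[Lemma 3.5]{BS} for the perfection step). Where you genuinely diverge is on the one hard point, namely deperfecting the $\mathbb{P}^{1,p^{-\infty}}$-bundle structure. The paper never builds a global classical model of the whole tower: at each inductive step it trivializes $D_{\underline w_i}\to D_{\underline w_{i-1}}$ on a finite Zariski cover, records the transition functions as a \v{C}ech cochain in $PGL_2$, and spreads these out to an honest cocycle on a Frobenius twist $X^{(m)}$ of the already-constructed classical model $X$ of $D_{\underline w_{i-1}}$; this yields a classical $\mathbb{P}^1$-bundle $Y\to X^{(m)}$ perfecting to the given one, and the Frobenius twist of the base is invisible to $\Pic$. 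Your Greenberg-realization model is more concrete and, if established, slightly stronger (it gives one canonical finite-type model of all of $D_{\underline w}$ rather than a base changing by Frobenius twists at each stage); the paper's route avoids having to control the group actions at finite level, at the price of the cocycle bookkeeping.

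The issue is that the step you defer---``the $L^+\brI$-action factors through $L^h_p\brI$ and the quotient descends compatibly along the tower''---is essentially the entire content of the proposition, and your one-line justification does not yet establish it. To close your argument you need: (i) that the congruence kernel $K_h=\ker\bigl(L^+\brI\to (L^h_p\brI)^{\mathrm{perf}}\bigr)$ is normal in each $L^+\mathcal{P}_{j_k}$ for $h\ge 1$, so that it acts trivially on every partial twisted product $L^+\mathcal{P}_{j_k}\times^{L^+\brI}\cdots\times^{L^+\brI}L^+\mathcal{P}_{j_n}/L^+\brI$ (not merely on a single fibre $L^+\mathcal{P}_{j_k}/L^+\brI$) and the action genuinely descends to finite level; (ii) the identification $L^h_p\mathcal{P}_j/L^h_p\brI\cong\mathbb{P}^1_{\mathbf{k}}$ for $h\gg 0$, which is exactly Remark \ref{remark1} of the paper and is not automatic in mixed characteristic; and (iii) that the finite-level $L^h_p\brI$-torsors $L^h_p\mathcal{P}_j\to \mathbb{P}^1$ are Zariski-locally trivial (the group is special, being an extension of a Borel by a unipotent group), so that perfection commutes with the contracted products and your $D^{cl}_{\underline w}$ really does perfect to $D_{\underline w}$. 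All three points are true and provable, so your route can be completed; but as written, asserting that the action ``factors through a finite-dimensional quotient'' restates the proposition rather than proving it, since a priori an action on the perfection $\mathbb{P}^{1,p^{-\infty}}$ need not arise from an action on $\mathbb{P}^1$ without allowing Frobenius twists.
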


\begin{proof}
We prove this statement for $D_{\tilde{w}_i}$ by induction on $i$. Suppose $D_{\tilde{w}_{i-1}}$ is the perfection of a smooth projective scheme $X$. Then we claim  the $\mathbb{P}^{1,p^{-\infty}}$ bundle $D_{\tilde{w}_i}\rightarrow D_{\tilde{w}_{i-1}}$ arises from a locally trivial $\mathbb{P}^1$ bundle over $Y\rightarrow X^{(m)}$ for some $m$ (here $X^{(m)}$ denotes the $m^{th}$ Frobenius twist of $X$). Indeed since $D_{\tilde{w}_{i-1}}$ is quasi-compact, we can find a  covering $\{U_i\}$ of $D_{\tilde{w}_{i-1}}$ consisting of finitely many Zariski opens such that $D_{\tilde{w}}|_{U_i}\cong \mathbb{P}^{1,p^{-\infty}}\times U_i$. Then we obtain a Cech cocycle $f_{ij}\in PGL_2(U_{ij})$ where $U_{ij}=U_i\cap U_j$. Since $D_{\tilde{w}_{i-1}}\rightarrow X$ is a universal homeomorphism, the $U_i$ descend to Zariski opens $U_{i}'\subset X$. Let $U_{ij}'=U_{i}'\cap U_j'$, the $f_{ij}$ arises from $f_{ij}'\in PGL_2(U_{ij}^{(m)})$ for some $m$ sufficiently large. Twisting by a further power of Frobenius we can assume $f_{ij}'f_{jk}'f_{ki}'=1\in PGL_2(U_{ijk}')$, i.e. $f_{ij}'$ is a cocycle. This defines a $\mathbb{P}^1$ bundle $Y\rightarrow X^{(m)}$ whose perfection is $D_{\tilde{w}}\rightarrow D_{\tilde{w}_{i-1}}$.

Since $X^{(m)}$ is smooth, $Y$ is smooth projective and we have $\Pic(Y)=\Pic(X^{(m)})\oplus\mathbb{Z}$ where the $\mathbb{Z}$ factor is given by the degree of the restriction of a line bundle to a fibre of $Y\rightarrow X^{(m)}$. Taking perfections and using \cite[Lemma 3.5]{BS}, it follows  that $\Pic(D_{\tilde{w}_i})=\Pic(D_{\tilde{w}_{i-1}})\oplus\mathbb{Z}[\frac{1}{p}]$, hence the result follows by induction.
\end{proof}

\begin{remark}\label{remark1}Note the following subtlety. For each $i$ we have maps $$\phi_i: S_{s_{j_i}}=L^+\mathcal{P}_{j_i}/L^+\brI\rightarrow D_{\tilde{w}}$$
given by $\phi_i(x)=(1,\cdots,x,\cdots,1)$ where the $x$ appears in the $i^{th}$ position. The map $\text{Pic}(D_{\underline{w}})\to \bigoplus_{i=1}^n\mathbb{Z}[\frac{1}{p}]$ is given by sending $\mathscr{L}$ to the degree of $\phi_i^*\mathscr{L}$, which is a line bundle on $S_{s_{j_i}}\cong\mathbb{P}^{1,p^{-\infty}}$. However the degree will depend on the isomorphism $\mathbb{P}^{1,p^{-\infty}}\cong S_{s_{j_i}}$, namely  twisting by a power of Frobenius will change the isomorphism by a power of $p$ in one of the factors. It was pointed out to us by G\"ortz that for $h \gg 0$, we have an isomorphism $L_p^h\mathcal{P}_{j_i}/L_p^h\brI\cong \mathbb{P}^1_{\mathbf{k}}$ and taking perfection gives an isomorphism $S_{j_i}\cong \mathbb{P}_{\mathbf{k}}^{1,p^-\infty}$. This isomorphism is well-defined up the action of $\Aut(\mathbb{P}^1_{\mathbf{k}})$, hence gives a canonical identification of $\Pic S_{s_{j_i}}\cong \mathbb{Z}[\frac{1}{p}]$; from now on we fix this isomorphism and  use this in the definition of the isomorphism Theorem \ref{theorem1}.
\end{remark}

 \subsection{} 
 As in \cite{PR}, there is a proper surjective map $$\Psi: D_{\underline w}\rightarrow S_w.$$ Since $S_w$ is perfectly finitely presented over $\mathbf{k}$, so is $D_{\underline w}\rightarrow S_w$. It follows that $$S_w=\bigcup_{v\leq w}L^+\brI vL^+\brI/L^+\brI.$$ In this situation, we may apply the results of \cite{BS} to descend line bundles from $D_{\underline w}$ to $S_w$. The general strategy is given by Theorem 7.7 of loc. cit.
 
 \begin{theorem}[\cite{BS} Theorem 7.7]\label{descent}\footnote{It was communicated to us by Bhatt and Scholze that in the newest version of their paper, this condition can be replaced by the weaker condition that the fibers of $f:X\rightarrow Y$; are geometrically connected.} Let $f : X \rightarrow  Y$ be a proper surjective perfectly finitely presented map in $\mbox{Perf}$ such that $Rf_*\CO_X=\CO_Y$ in
 particular, all geometric fibres of f are connected. Let $\mathscr{L}$ be a vector bundle on $X$. Then $\mathscr{L}$ descends to Y if and only if
 for all geometric points $\overline{y}$ of $Y$, $\mathscr{L}_{\overline{y}}$ is trivial on the fibre $X_{\overline{y}}$.
 \end{theorem}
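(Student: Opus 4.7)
The plan is to produce the descended bundle explicitly as $f_*\mathscr{L}$ and verify descent via cohomology and base change. The only-if direction is immediate: if $\mathscr{L} = f^*\mathscr{M}$, then the restriction of $\mathscr{L}$ to the fibre over a geometric point $\bar y$ is the pullback of the vector bundle $\mathscr{M}|_{\bar y}$ on a point, hence is trivial.

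For the converse, let $n$ be the rank of $\mathscr{L}$ and set $\mathscr{M} := f_*\mathscr{L}$. The central step is to show that $\mathscr{M}$ is locally free of rank $n$ on $Y$ and that its formation commutes with arbitrary base change. The hypothesis $Rf_*\CO_X = \CO_Y$ implies in particular that the cohomology of $\CO$ on each geometric fibre collapses to $\kappa(\bar y)$ in degree zero, so the triviality of $\mathscr{L}|_{X_{\bar y}} \cong \CO_{X_{\bar y}}^n$ forces $H^0(X_{\bar y}, \mathscr{L}|_{X_{\bar y}}) \cong \kappa(\bar y)^n$ and $H^i(X_{\bar y}, \mathscr{L}|_{X_{\bar y}}) = 0$ for $i > 0$, uniformly in $\bar y$. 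A Grauert-type argument then delivers the local freeness of $\mathscr{M}$ of rank $n$, together with the base-change property. The second step is to verify that the adjunction counit $f^*\mathscr{M} \to \mathscr{L}$ is an isomorphism; it is a morphism of rank $n$ vector bundles which restricts on each geometric fibre to the evaluation map $\kappa(\bar y)^n \otimes \CO_{X_{\bar y}} \to \mathscr{L}|_{X_{\bar y}} \cong \CO_{X_{\bar y}}^n$, which is visibly an isomorphism. By Nakayama, the counit is then an isomorphism globally.

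The main obstacle is that Grauert-type cohomology and base change is classical for finitely presented proper morphisms of ordinary schemes, whereas $f$ lives in $\mathrm{Perf}$. My plan is the standard approximation argument: since $f$ is perfectly finitely presented, it is the perfection of some proper finitely presented morphism $f_0 \colon X_0 \to Y_0$ of ordinary schemes, and after a Frobenius twist the vector bundle $\mathscr{L}$ descends to a vector bundle $\mathscr{L}_0$ on $X_0$. Classical cohomology and base change applied to $f_0$ produces $(f_0)_*\mathscr{L}_0$ as a vector bundle on $Y_0$; perfecting, and using that pushforward intertwines with perfection for the class of coherent sheaves at hand, yields $\mathscr{M}$ on $Y$, and the fibrewise adjunction argument transports over verbatim. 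The delicate verification is that the hypothesis $Rf_*\CO_X = \CO_Y$ can be arranged to hold at a finite level $f_0$, possibly only after further Frobenius twisting, and that pushforward of the relevant coherent sheaves is compatible with perfection; these points rely on the framework of perfect schemes and their coherent cohomology developed earlier in \cite{BS}.
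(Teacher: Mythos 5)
First, note that the paper does not prove this statement at all: it is quoted verbatim from Bhatt--Scholze as \cite[Theorem 7.7]{BS}, so there is no internal proof to compare against. Judged against the actual argument in \cite{BS}, the skeleton of your proposal is the right one and is essentially theirs: take $\mathscr{M}=f_*\mathscr{L}$, show it is a vector bundle of rank $n$ whose formation commutes with base change, and check that the counit $f^*\mathscr{M}\to\mathscr{L}$ is an isomorphism by restricting to geometric fibres, where it becomes the evaluation map for a trivial bundle. The "only if'' direction is also fine.

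The genuine gap is in how you propose to justify cohomology and base change. Deperfecting to a proper finitely presented $f_0:X_0\to Y_0$ and invoking classical Grauert does not work as stated: the two hypotheses you need --- $R(f_0)_*\CO_{X_0}=\CO_{Y_0}$ and triviality of $\mathscr{L}_0$ on the fibres of $f_0$ --- are conditions that hold only after passing to the perfection (they are colimits over Frobenius twists), and there is in general no finite level at which they can be arranged; moreover Grauert requires $\mathscr{L}_0$ flat over $Y_0$ and $Y_0$ reduced with locally constant fibre dimensions, none of which is supplied by the perfect-level hypotheses. The correct tool, and the reason the argument works in $\mathrm{Perf}$ without any deperfection, is the unconditional derived base change for quasi-coherent complexes on perfect schemes (\cite[Lemma 3.18]{BS}, used elsewhere in this paper in the proof of Proposition \ref{directimage}): derived tensor products of perfect rings are discrete, so $Rf_*$ commutes with arbitrary base change. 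Combined with the fact that $Rf_*$ of a vector bundle under a proper perfectly finitely presented map is pseudo-coherent/perfect locally on $Y$ (again from \cite{BS}), the fibrewise computation $R\Gamma(X_{\overline{y}},\mathscr{L}|_{X_{\overline{y}}})\cong \kappa(\overline{y})^n$ in degree $0$ immediately gives that $Rf_*\mathscr{L}=f_*\mathscr{L}$ is locally free of rank $n$ and that its formation commutes with base change; note that even your very first step, deducing $R\Gamma(X_{\overline{y}},\CO_{X_{\overline{y}}})=\kappa(\overline{y})$ from $Rf_*\CO_X=\CO_Y$, already silently uses this base change lemma. With that substitution the rest of your argument (Nakayama on the counit) goes through.
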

 
 Now we show that $D_{\underline w}\rightarrow S_w$ satisfies the conditions in Theorem \ref{descent}.
 
 \begin{proposition}\label{directimage}
 Let $w \in \brW$ and $\underline w$ be a reduced expression of $w$. Then $$R\Psi_*\CO_{D_{\underline w}}=\CO_{S_w}.$$
 \end{proposition}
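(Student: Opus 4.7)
The plan is to prove this by induction on $n = \ell(w)$, but with a strengthened inductive hypothesis that applies to all partial affine flag varieties simultaneously: for any reduced expression $\underline u = (s_{j_1}, \dots, s_{j_n})$ in $\brW$ and any standard parahoric $\mathcal{P}_K \supset L^+\brI$, the multiplication map $\Psi^K_{\underline u}\colon D_{\underline u} \to \mathcal{FL}^K := LG/L^+\mathcal{P}_K$ satisfies $R\Psi^K_{\underline u,*}\mathcal{O}_{D_{\underline u}} = \mathcal{O}_{S^K_u}$, where $S_u^K$ denotes the set-theoretic image. The proposition is then the case $K = \emptyset$, and the base case $n = 0$ is immediate.

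For the inductive step, set $s = s_{j_n}$ and $\underline v = (s_{j_1}, \dots, s_{j_{n-1}})$. If $s \in K$, then $L^+\mathcal{P}_s \subset L^+\mathcal{P}_K$, so $\Psi^K_{\underline u}$ factors through the forgetful $\mathbb{P}^{1,p^{-\infty}}$-bundle $q\colon D_{\underline u} \to D_{\underline v}$. Since $Rq_*\mathcal{O}_{D_{\underline u}} = \mathcal{O}_{D_{\underline v}}$ and $S_u^K = S_v^K$, the Leray spectral sequence together with the inductive hypothesis for $(\underline v, K)$ yields the claim.

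If instead $s \notin K$, put $K' = K \cup \{s\}$ and consider the projection $\pi\colon \mathcal{FL}^K \to \mathcal{FL}^{K'}$, a Zariski-locally trivial $\mathbb{P}^{1, p^{-\infty}}$-bundle. A standard Bruhat-order argument gives $\pi^{-1}(S_v^{K'}) = S_u^K$, making $\pi|_{S_u^K}\colon S_u^K \to S_v^{K'}$ a $\mathbb{P}^{1, p^{-\infty}}$-bundle as well. A point-by-point verification, using surjectivity of $L^+\mathcal{P}_s \to L^+\mathcal{P}_{K'}/L^+\mathcal{P}_K$ with kernel $L^+\brI$, shows that the square
\[
\xymatrix{
D_{\underline u} \ar[r]^q \ar[d]_{\Psi^K_{\underline u}} & D_{\underline v} \ar[d]^{\Psi^{K'}_{\underline v}} \\
S_u^K \ar[r]_{\pi|_{S_u^K}} & S_v^{K'}
}
\]
is Cartesian. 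Applying flat base change along $\pi|_{S_u^K}$ and then the inductive hypothesis for $(\underline v, K')$ gives $R\Psi^K_{\underline u,*}\mathcal{O}_{D_{\underline u}} = (\pi|_{S_u^K})^* R\Psi^{K'}_{\underline v,*}\mathcal{O}_{D_{\underline v}} = (\pi|_{S_u^K})^*\mathcal{O}_{S_v^{K'}} = \mathcal{O}_{S_u^K}$.

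The main obstacle I anticipate is the combinatorial verification that $\pi^{-1}(S_v^{K'}) = S_u^K$ and that the restriction is a $\mathbb{P}^1$-bundle on the nose rather than merely on a dense open. A secondary technical point is the legitimacy of flat base change in the perfect-scheme setting; this is handled by descending the $\mathbb{P}^1$-bundles in question to ordinary $\mathbb{P}^1$-bundles on finite-type models (following the strategy used in the proof of Proposition \ref{Pic}), performing the base change there, and then taking perfections.
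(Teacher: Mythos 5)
Your inductive step breaks down, for two separate reasons.

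First, the subset $K$ you carry along the induction is the set of distinct letters occurring in the suffix of $\underline w$ already processed, and your step requires $K'=K\cup\{s\}$ to correspond to a parahoric subgroup, i.e.\ requires $W_{K'}$ to be finite. This fails almost immediately: already for $G=\SL_2$ and $w=s_0s_1$, the second step of the induction asks for $K'=\{s_0,s_1\}=\brbS$, which generates the infinite dihedral group, so there is no parahoric $\mathcal{P}_{K'}$ and no $\mathcal{FL}^{K'}$ to map to. In general your induction survives only as long as the support of every suffix of $\underline w$ generates a finite group, which fails for any $w$ of full support and typically much sooner.

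Second, even when $W_{K'}$ is finite, the projection $\pi\colon \mathcal{FL}^{K}\to\mathcal{FL}^{K'}$ is not a $\mathbb{P}^{1,p^{-\infty}}$-bundle once $K\neq\emptyset$: its fibre is $L^+\mathcal{P}_{K'}/L^+\mathcal{P}_{K}$, the perfection of the partial flag variety of the reductive quotient of $\mathcal{P}_{K'}$ modulo the parabolic attached to $K$, whose dimension is the difference of the lengths of the longest elements of $W_{K'}$ and $W_K$ and hence exceeds $1$ in general. Correspondingly $L^+\mathcal{P}_s\to L^+\mathcal{P}_{K'}/L^+\mathcal{P}_{K}$ is \emph{not} surjective --- on Weyl groups this would require $W_{\{s\}}W_K=W_{K'}$, which already fails for $\langle s_1,s_2\rangle\cong S_3$ with $K=\{s_1\}$, $s=s_2$, since $s_1s_2\notin\{1,s_2\}\{1,s_1\}$ --- so your square is not Cartesian; relatedly, $\pi^{-1}(S_v^{K'})$ strictly contains $S_u^{K}$ in general, because the preimage of the cell of $v$ picks up cells $vx$ with $x\in W_{K'}$ of length larger than $\ell(u)$. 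The only sound instance of your step is the outermost one, $K=\emptyset$. The paper avoids all of this by peeling off the \emph{first} letter instead: it factors $\Psi$ as $D_{\underline w}=L^+\mathcal{P}_{j_1}\times^{L^+\brI}D_{\underline w'}\to L^+\mathcal{P}_{j_1}\times^{L^+\brI}S_{w'}\to S_w$, handles the first map by induction and base change, and checks the fibrewise criterion of \cite[Lemma 7.8]{BS} for the second map directly, using that it is an isomorphism away from a closed locus and has fibres $\mathbb{P}^{1,p^{-\infty}}$ over that locus, both of which have trivial structure-sheaf cohomology.
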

 
 \begin{proof}We use induction on the length of $w$; for $\ell(w)=0$, $\Psi$ is an isomorphism so this holds.
 
 Suppose that $\underline w=(s_{j_1}, \cdots, s_{j_n})$ with $n>0$. Let $w=s_{j_1}w'$ and $\underline w'=(s_{j_2}, \cdots, s_{j_n})$. Then $\underline w'$ is a reduced expression of $w'$ and $\Psi$ factors as $$D_{\underline w}=L^+\mathcal{P}_{j_1}\times^{L^+\brI}D_{\underline w'}\rightarrow L^+\mathcal{P}_{j_1}\times^{L^+\brI}S_{w'}\rightarrow S_w.$$
 
 The first map satisfies the condition on direct images by induction.  This follows from either the base change result \cite[Lemma 3.18]{BS}, or alternatively, it can be checked using the fiberwise criterion \cite[Lemma 7.8]{BS}. The second map is a proper surjective perfectly finitely presented map, and so by \cite[Lemma 7.8]{BS}, it suffices to check for all geometric points $\overline{y}$ of $S_w$ with residue field $k(\overline{y})$ and fibre $D_{\underline w,\overline{y}}$, that $R\Gamma(D_{\underline w,\overline{y}},\CO_{D_{\underline w,\overline{y}}})=k(\overline{y})$.
 
 Let $Z=\bigcup_{w''}S_{w''}\subset S_w$ where the union runs over $w''<w'$, with $s_{j_1}w''<w''$. Then the second map is an isomorphism away from $Z$ and the fibre over a point $\overline{y}$ in $Z$ is given by $\mathbb{P}_{k(\overline{y})}^{1,p^{-\infty}}$. The result then follows.
 \end{proof}

Note that $\Psi \circ \phi_{j_i}$ is the natural embeddings of $$S_{s_{j_i}}=L\mathcal{P}_{j_i}^+/L^+\brI\hookrightarrow S_w.$$ Then

 \begin{lemma}\label{lemma4} Let $\mathscr{L}$ be  a line bundle on $S_w$ and suppose $\Psi^*\mathscr{L}$ corresponds to $(\lambda_i)$ under the isomorphism in Proposition \ref{Pic}. Then $\l_i=\deg(\mathscr{L} \mid_{S_{s_{j_i}}})$. 
 \end{lemma}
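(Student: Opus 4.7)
The plan is to reduce the statement to pure functoriality of pullback, since Remark \ref{remark1} already encodes the isomorphism $\Pic(D_{\underline{w}}) \cong \bigoplus_i \mathbb{Z}[\tfrac{1}{p}]$ of Proposition \ref{Pic} in terms of the embeddings $\phi_i : S_{s_{j_i}} \hookrightarrow D_{\underline{w}}$. Concretely, by that remark the $i$-th coordinate of the class of a line bundle $\mathscr{M}$ on $D_{\underline{w}}$ is $\deg(\phi_i^*\mathscr{M})$, where the degree is computed under the canonical identification $\Pic(S_{s_{j_i}}) \cong \mathbb{Z}[\tfrac{1}{p}]$ arising from the isomorphism $S_{s_{j_i}} \cong \mathbb{P}^{1,p^{-\infty}}_{\mathbf{k}}$.

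First I would apply this description to $\mathscr{M} = \Psi^*\mathscr{L}$, obtaining $\lambda_i = \deg(\phi_i^*\Psi^*\mathscr{L})$. Next I would invoke functoriality of pullback to rewrite $\phi_i^*\Psi^*\mathscr{L} = (\Psi \circ \phi_i)^*\mathscr{L}$. Finally I would use the observation recorded just before the lemma, namely that $\Psi \circ \phi_{j_i}$ is precisely the natural closed embedding $S_{s_{j_i}} \hookrightarrow S_w$, so that $(\Psi \circ \phi_i)^*\mathscr{L} = \mathscr{L}|_{S_{s_{j_i}}}$. Taking degrees under the same canonical identification $\Pic(S_{s_{j_i}}) \cong \mathbb{Z}[\tfrac{1}{p}]$ then yields $\lambda_i = \deg(\mathscr{L}|_{S_{s_{j_i}}})$, as required.

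There is no real obstacle in this argument; the only point requiring some care is making sure that the identifications of $\Pic(S_{s_{j_i}})$ with $\mathbb{Z}[\tfrac{1}{p}]$ used on the two sides of the asserted equality genuinely coincide. This is exactly why Remark \ref{remark1} was inserted: it fixes a single, $\Aut(\mathbb{P}^1_{\mathbf{k}})$-canonical isomorphism $S_{s_{j_i}} \cong \mathbb{P}^{1,p^{-\infty}}_{\mathbf{k}}$, and both the definition of the isomorphism in Theorem \ref{theorem1} (and therefore of the isomorphism in Proposition \ref{Pic}) and the degree appearing on the right-hand side of the lemma are normalized with respect to this same choice. Once this bookkeeping is in place, the lemma is immediate from functoriality.
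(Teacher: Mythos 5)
Your argument is correct and is exactly the one the paper intends: the lemma is stated without a separate proof precisely because it follows from Remark \ref{remark1} (the $i$-th coordinate of a class in $\Pic(D_{\underline w})$ is $\deg(\phi_i^*(\cdot))$) together with the observation immediately preceding the lemma that $\Psi\circ\phi_{j_i}$ is the natural embedding $S_{s_{j_i}}\hookrightarrow S_w$, plus functoriality of pullback. Your remark about using the same normalization of $\Pic(S_{s_{j_i}})\cong\mathbb{Z}[\frac{1}{p}]$ on both sides is the right point of care and matches the paper's own bookkeeping.
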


 \begin{proposition}\label{prop1}
 We have a isomorphism
 $$\mbox{Pic}(S_w) \xrightarrow{\cong}\bigoplus_{\iota\in\mbox{Supp(w)}}\mathbb{Z}[\frac{1}{p}], \qquad \mathscr{L} \mapsto (\deg(\mathscr{L} \mid_{S_{s_\iota}}))_{\iota \in \mbox{Supp(w)}}.$$ 
 \end{proposition}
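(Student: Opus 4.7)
The plan is to apply the Bhatt--Scholze descent criterion (Theorem \ref{descent}) to the Demazure resolution $\Psi:D_{\underline w}\to S_w$, using Proposition \ref{Pic} to convert the calculation of $\Pic(S_w)$ into a question of which line bundles on $D_{\underline w}$ descend.

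First, by Proposition \ref{directimage} we have $R\Psi_*\CO_{D_{\underline w}}=\CO_{S_w}$, so the projection formula gives $\Psi_*\Psi^*\mathscr L\cong \mathscr L$ for every $\mathscr L\in\Pic(S_w)$, and hence $\Psi^*$ is injective on Picard groups. Composing with the isomorphism of Proposition \ref{Pic} and applying Lemma \ref{lemma4} yields an injection
\[
\Pic(S_w)\hookrightarrow \bigoplus_{i=1}^n\mathbb{Z}[\tfrac{1}{p}],\qquad \mathscr L\mapsto \bigl(\deg(\mathscr L|_{S_{s_{j_i}}})\bigr)_{i=1}^n.
\]
Since $\deg(\mathscr L|_{S_{s_{j_i}}})$ depends only on $j_i\in\mbox{Supp}(w)$ and not on $i$, this image lies in the diagonal subgroup $\bigoplus_{\iota\in\mbox{Supp}(w)}\mathbb{Z}[\tfrac{1}{p}]$ embedded via $(\lambda_\iota)_\iota\mapsto(\lambda_{j_i})_i$, which proves the claimed injection.

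For surjectivity onto this diagonal, fix $(\lambda_\iota)_{\iota\in\mbox{Supp}(w)}$, let $\mathscr M$ denote the line bundle on $D_{\underline w}$ with coordinates $(\lambda_{j_1},\ldots,\lambda_{j_n})$ under Proposition \ref{Pic}, and show $\mathscr M$ descends along $\Psi$. Proposition \ref{directimage} supplies precisely the hypothesis needed to invoke Theorem \ref{descent}, so descent reduces to verifying that $\mathscr M|_{\Psi^{-1}(\bar y)}$ is trivial on every geometric fibre. Once achieved, Lemma \ref{lemma4} identifies the descended bundle with the element of the diagonal we started from.

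The fibre analysis is the main obstacle. For $\bar y$ lying in the Schubert cell indexed by some $v\le w$, the fibre $\Psi^{-1}(\bar y)$ is (the perfection of) a projective variety whose combinatorics is controlled by subexpressions of $\underline w$ reducing to $v$, built from iterated $\mathbb{P}^{1,p^{-\infty}}$-pieces coming from braid and commutation ambiguities in $\underline w$. I would proceed by induction on $\ell(w)$ via the factorization $D_{\underline w}\to L^+\mathcal P_{j_1}\times^{L^+\brI}S_{w'}\to S_w$ with $w'=s_{j_2}\cdots s_{j_n}$: the inductive hypothesis handles the fibres inherited from $D_{\underline{w}'}\to S_{w'}$, while on any additional $\mathbb{P}^{1,p^{-\infty}}$-factor arising from a braid or commutation move exchanging positions $i,i'$ of $\underline w$, the restriction of $\mathscr M$ has degree $\lambda_{j_i}-\lambda_{j_{i'}}=0$ because such a move forces $j_i=j_{i'}$. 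Verifying this carefully at the level of perfect schemes, and tracking the Frobenius twists appearing in Proposition \ref{Pic} and Remark \ref{remark1}, is where the substantive work of the proof lies.
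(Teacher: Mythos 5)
Your skeleton matches the paper's proof exactly: injectivity of $\Psi^*$ from $R\Psi_*\CO_{D_{\underline w}}=\CO_{S_w}$, identification of the map via Lemma \ref{lemma4}, and surjectivity by descending the line bundle with ``diagonal'' coordinates along $\Psi$ using Theorem \ref{descent}, reduced by induction through the factorization $D_{\underline w}\to L^+\mathcal{P}_{j_1}\times^{L^+\brI}S_{w'}\to S_w$. The problem is that the one step you yourself identify as ``where the substantive work lies'' is not carried out, and the sketch you give of it rests on a misconception. The positive-dimensional fibres of the map $L^+\mathcal{P}_{j_1}\times^{L^+\brI}S_{w'}\to S_w$ have nothing to do with braid or commutation ambiguities in $\underline w$: the map is an isomorphism away from the locus $\bigcup S_{w''}$ with $w''<w'$ and $s_{j_1}w''<w''$, i.e.\ the degeneration is caused by a \emph{repeated occurrence of the same simple reflection} $s_{j_1}$, and the fibre over such a point is a single $\mathbb{P}^{1,p^{-\infty}}$, namely the antidiagonal $p\mapsto(p,p^{-1})$ inside $L^+\mathcal{P}_{j_1}/L^+\brI\times^{L^+\brI}L^+\mathcal{P}_{j_1}/L^+\brI$. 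Moreover your assertion that a braid or commutation move ``forces $j_i=j_{i'}$'' is false on its face --- such moves by definition exchange \emph{distinct} simple reflections --- so the claimed vanishing $\lambda_{j_i}-\lambda_{j_{i'}}=0$ has no basis.

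What actually makes the fibre restriction trivial is precisely the diagonal hypothesis applied to the two positions carrying the \emph{same} letter $j_1$: the restriction of the bundle to $L^+\mathcal{P}_{j_1}/L^+\brI\times^{L^+\brI}L^+\mathcal{P}_{j_1}/L^+\brI$ then has coordinates $(\lambda_1,\lambda_1)$, hence is the pullback of $\CO(\lambda_1)$ along the multiplication map $m$ to $L^+\mathcal{P}_{j_1}/L^+\brI$; since $m$ contracts the antidiagonal to a point, the restriction to the fibre is trivial. This computation (together with the intermediate spaces $X_i=D_{\underline w_i}\times^{L^+\brI}S_{s_{j_{i+1}}\cdots s_{j_n}}$ and the covering of $X_i$ by the sections $\alpha$, which reduce each descent step to the single-multiplication case) is the actual content of the surjectivity argument, and your proposal does not supply it.
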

 
 \begin{proof} Let $\underline w=(s_{j_1}, \cdots, s_{j_n})$ be a reduced expression of $w$. The map $\Psi: D_{\underline w} \to S_w$ induces a map $\Psi^*: \mbox{Pic}(S_w) \to \mbox{Pic}(D_{\underline w}) \cong \oplus_{i=1}^n \BZ[\frac{1}{p}]$. By \cite[Proposition 7.1]{BS}, $\Psi^*$ is injective. By Lemma \ref{lemma4}, the map is given by $\mathscr{L} \mapsto (\deg(\mathscr{L} \mid_{S_{j_i}}))_{i=1, \cdots, n}$. Therefore the map in the statement of the proposition is an injection.
 
 It remains to show that the map  is surjective. In other words, let $\l_i \in \BZ[\frac{1}{p}]$ for $i=1, \cdots, n$ such that $\lambda_r=\lambda_s$ for $j_r=j_s$ and $\tilde {\mathscr{L}} \in \mbox{Pic}(D_{\underline w})$ that corresponds to $(\l_1, \cdots, \l_n)$, we need to show that $\tilde{\mathscr{L}}$ descents to a line bundle on $S_w$. 

We argue by induction on $l(w)$. It is clearly true when $l(w)=1$, i.e. $w$ is a simple reflection. Suppose it is surjective for all $w'$ with $l(w')< w$. 
 For $i=0,1,\cdots,n$, define $X_i=D_{\underline w_i}\times^{L^+\brI}S_{s_{j_{i+1}}\cdots s_{j_n}}$. We have $X_n=D_{\underline w}$ and $X_0=S_w$ and there is a sequence of morphisms $$X_n\rightarrow X_{n-1}\rightarrow \cdots\rightarrow X_0$$ induced by multiplication $L\mathcal{P}_{j_i}\times^{L^+\brI}S_{s_{j_{i+1}}\cdots s_{j_n}}\rightarrow S_{s_{j_i}\cdots s_{j_n}}$.
 
We show that $\tilde{\mathscr{L}}$ descends to a line bundle $\mathscr{L}_i$ on $X_i$ for each $i$ by descending induction. This is tautologically true for $i=n$ so suppose $\tilde{\mathscr{L}}$ descends to $X_{i+1}$. To show $\tilde{\mathscr{L}}$ descends to $X_i$, it suffices by Theorem \ref{descent} to  check  the restriction of $\mathscr{L}_{i+1}$ to any fiber of $X_{i+1}\rightarrow X_i$ is trivial.  
 
 Given $(p_1,\cdots ,p_i)\in L^+\mathcal{P}_{j_1}\times^{L^+\brI}\cdots \times^{L^+\brI}L^+\mathcal{P}_{j_i}$, we  define a map $$\alpha:S_{s_{j_{i+1}}\cdots s_{j_n}}\rightarrow X_i, \qquad s\mapsto (p_1,\cdots ,p_i,s).$$
 
 Upon pulling back $X_{i+1}\rightarrow X_i$ along $\alpha$ we obtain a Cartesian diagram:
 \[\xymatrix{Y \ar[r]^-\gamma \ar[d]_-\b & S_{s_{j_{i+1}}\cdots s_{j_n}} \ar[d]^-\a\\
 X_{i+1} \ar[r] & X_i}
 \]
where $\g$ can be identified with the multiplication map  $L\mathcal{P}_{j_{i+1}}\times^{L^+\brI}S_{s_{j_{i+2}}\cdots s_{j_n}}\rightarrow S_{s_{j_{i+1}}\cdots s_{j_n}}$.

As $(p_1,\cdots ,p_i)$ varies, the maps $\alpha$ cover $X_i$, hence it suffices to prove that $\beta^*\mathscr{L}_{i+1}$ descends along $\gamma$. Upon relabelling we reduce to the case $$f:X_1=L^+\mathcal{P}_{j_1}\times^{L^+\brI}S_{w'}\rightarrow S_w=X_0$$
where $s_{j_1}w'=w$ and $l(w)>l(w')$. The projection $X_1\rightarrow L^+\mathcal{P}_{j_1}/L^+\brI=S_{j_1}\cong\mathbb{P}^{1,p^{-\infty}}$ exhibits $X_1$ as an $S_{w'}$ bundle over $\mathbb{P}^{1,p^{-\infty}}$, hence we have an isomorphism $\Pic(X_1)\cong \Pic(S_{w'})\oplus \mathbb{Z}[\frac{1}{p}]$, see \cite[Theorem 5]{Mag}. By induction hypothesis on $l(w')$, we have $$\mbox{Pic}(X_1) \cong \left(\bigoplus_{\iota\in\text{Supp}(w')}\mathbb{Z}[\frac{1}{p}]\right)\oplus\mathbb{Z}[\frac{1}{p}].$$ Note that by our assumption on $\mathscr{L}_1$, if $j_i=j_1$ for some $i>1$, then $$\deg(\mathscr{L}_1 \mid_{L^+\mathcal{P}_{j_1}\times^{L^+\brI} L^+\brI})=\l_1=\l_i=\deg(\mathscr{L}_1 \mid_{L^+\brI \times^{L^+\brI} S_{j_i}}).$$
 
 The map $X_{1}\rightarrow X_0$ is proper perfectly finitely generated and by the proof of Proposition \ref{directimage} we have $$Rf_*\CO_{X_{1}}\cong \CO_{X_0}.$$
 
 Therefore by Theorem \ref{descent}, to show that $\mathscr{L}_1$ descends, it suffices to check that the restriction of $\mathscr{L}_1$ to each geometric fibre is trivial. As in  Proposition \ref{directimage}, $f$ is an isomorphism away from $S_{w''}$ where $w''<w'$ and $s_{j_1}w''<w''$, so the condition is satisfied away from this locus. We have the following fibre diagram:
\[\xymatrix{
L^+\mathcal{P}_{j_1}\times^{L^+\brI}S_{w''} \ar[r]^-h \ar[d]_g & S_{w''} \ar[d]^-i\\
L^+\mathcal{P}_{j_1}\times^{L^+\brI}S_{w'} \ar[r]^-f & S_w
}\] 

We will show that $g^*\mathscr{L}_1$ descends to a line bundle on $S_{w''}$. 

By induction we know $$\mbox{Pic}S_{w''}\cong \bigoplus_{\iota\in\text{Supp}(w'')}\mathbb{Z}[\frac{1}{p}](\epsilon_{\iota}).$$

Furthermore since $h: L^+\mathcal{P}_{j_1}\times^{L^+\brI}S_{w''} \rightarrow S_{w''}$ is a $\mathbb{P}^{1,p^{-\infty}}$ bundle, it follows that $$\mbox{Pic} (L^+\mathcal{P}_{j_1}\times^{L^+\brI}S_{w''})\cong \left(\bigoplus_{\iota\in\text{Supp}(w'')}\mathbb{Z}[\frac{1}{p}]\right)\oplus\mathbb{Z}[\frac{1}{p}].$$

By our construction, the map $h^*:\mbox{Pic}S_{w''}\rightarrow\mbox{Pic}L^+\mathcal{P}_{j_1}^+\times^{L^+\brI^+}S_{w''}$ is given by
 $$\bigoplus_{\iota\in\text{Supp}(w'')}\mathbb{Z}[\frac{1}{p}]\rightarrow\left(\bigoplus_{\iota\in\text{Supp}(w'')}\mathbb{Z}[\frac{1}{p}]\right)\oplus\mathbb{Z}[\frac{1}{p}], \quad (\lambda_\iota) \mapsto((\lambda_\iota), 0).$$
We need to check that $g^*\mathscr{L}_1$ is of this form, ie. its restriction to a fiber of $h$ is trivial. The fiber of $h$ over the base point in $e\in S_{w''}$ is given by the image of the map $$ \alpha:L^+\mathcal{P}_{j_1}/L^+\brI\rightarrow L^+\mathcal{P}_{j_1}/L^+\brI\times^{L^+\brI}L^+\mathcal{P}_{j_1}/L^+\brI$$ given by $p\mapsto (p,p^{-1})$. As above we have an isomorphism $\Pic L^+\mathcal{P}_{j_1}/L^+\brI\times^{L^+\brI}L^+\mathcal{P}_{j_1}/L^+\brI\cong \mathbb{Z}[\frac{1}{p}]\oplus\mathbb{Z}[\frac{1}{p}]$. By our assumption $g^*\mathscr{L}$ corresponds to $(\lambda_1,\lambda_1)\in\mathbb{Z}[\frac{1}{p}]\oplus\mathbb{Z}[\frac{1}{p}]$ under the above isomorphism. 

Now if we consider the line bundle $\mathcal{O}(\lambda_1)\in \Pic(\mathbb{P}^{1,p^{\infty}})\cong \Pic(L^+\mathcal{P}_{j_1}/L^+\brI)$, then this pulls back to $g^*\mathscr{L}$ under the multiplication map $m:L^+\mathcal{P}_{j_1}/L^+\brI\times^{L^+\brI}L^+\mathcal{P}_{j_1}/L^+\brI$. Since the composition $m\circ\alpha$ contracts $L+\mathcal{P}_{j_1}/L^+\mathcal{P}_{j_1}$ to a point, the restrction of $g^*\mathscr{L}$ to the fiber of $h$ over $e$ is trivial.
 
 It follows that the restrictions of $\mathscr{L}_1$ to all geometric fibres are trivial, and hence $\mathscr{L}_1$ descends to $S_w$. The surjectivity of the map in the Proposition is proved. 
 \end{proof}
 
 \subsection{Proof of Theorem \ref{theorem1} (1)}
 Since $\mathcal{FL}$ is an increasing union of $S_w$ as $w$ ranges over $\brW$, in order to prove Theorem \ref{theorem1} it suffices to show that the description of the Picard groups of $S_w$ in Proposition \ref{prop1}  is compatible with the natural inclusions $S_{w'}\rightarrow S_w$ for $w'<w$. This follows from the following commutative diagram
\[\xymatrix{\coprod_{\iota\in\text{Supp}(w')}S_{s_\iota} \ar[r] \ar[d]_-h & S_{w'} \ar[d]\\
\coprod_{\iota\in\text{Supp}(w) }S_{s_\iota} \ar[r] & S_w}\]
where the horizontal maps induce the isomorphisms of Picard groups and $h^*$ is the natural map $\bigoplus_{{\iota}\in\text{Supp(w)}}\mathbb{Z}[\frac{1}{p}]\rightarrow\bigoplus_{\iota\in\text{Supp(w')}}\mathbb{Z}[\frac{1}{p}]$.

\subsection{}For the proof of Theorem \ref{theorem1} (2), we follow the strategy of \cite[\S 9.3]{BS}. The proofs of loc. cit. goes through in our situation with some minor changes using the Demazure resolution $D_{\underline w}\rightarrow S_w$.

We denote by $(\mathscr{L}(\epsilon_\iota))_{\iota \in \brbS}$ the basis elements of $\text{Pic}(\mathcal{FL})$ obtained by the isomorphism in Theorem \ref{theorem1} (1). Similarly, for any $w \in \brW$ and its reduced expression $\underline w$, we denote by $(\mathscr{L}(a_i))_{i=1, \cdots, n}$ the basis element of $\text{Pic}(D_{\underline w})$ obtained by the isomorphism in Proposition \ref{Pic}. 

For $w\in\brW$, let $O_w$ be the $L^+\brI$ orbit of the point $\dot{w}L^+\brI$ in $\mathcal{FL}$. For $i=1,\cdots ,n$, let $C_i$ be the closed subscheme of $D_{\underline w}$ where the $i^{th}$ term is in $L^+\brI $. Let $\partial D_{\underline w}=\bigcup_{i=1}^n C_i$ be the boundary of $D_{\underline w}$. Then the map $D_{\underline w}\backslash \partial D_{\underline w}\rightarrow O_w$ is an isomorphism.

\begin{lemma}\label{lemma2} (1) The line bundle $\otimes_{i=1}^n \mathscr{L}(a_i)^{\lambda_i}$ is ample if $\l_1\gg \l_2\gg\cdots \gg \l_n\gg 0$.

(2) For $x\in D_{\underline w}\backslash\partial D_{\underline w}$, there exists a section $s\in \mbox{H}^0(D_{\underline w},\mathscr{L}(a_i))$ such that $s(x)\neq 0$.
\end{lemma}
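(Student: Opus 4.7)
The plan is to use the tower of $\mathbb{P}^{1,p^{-\infty}}$-bundles $D_{\underline w} = D_{\underline w_n} \xrightarrow{f_n} D_{\underline w_{n-1}} \xrightarrow{f_{n-1}} \cdots \xrightarrow{f_1} D_{\underline w_0} = \mathrm{pt}$, together with the fact that $\mathscr{L}(a_i)$ is pulled back from $D_{\underline w_i}$ and restricts to a line bundle of degree one on each fiber of $f_i$. For part (1), I would proceed by induction on $i$, showing that $\bigotimes_{j=1}^i \mathscr{L}(a_j)^{\lambda_j}$ is ample on $D_{\underline w_i}$ whenever $\lambda_1 \gg \cdots \gg \lambda_i \gg 0$. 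The base case $i=1$ reduces to ampleness of $\mathcal{O}(\lambda)$ on $\mathbb{P}^{1,p^{-\infty}}$ for $\lambda > 0$. For the inductive step, the proof of Proposition \ref{Pic} identifies $f_i$ as the perfection of a Zariski-locally trivial $\mathbb{P}^1$-bundle over a Frobenius twist of $D_{\underline w_{i-1}}$; under this identification $\mathscr{L}(a_i)$ becomes the perfection of the relative $\mathcal{O}(1)$ and is hence relatively ample. Given an ample bundle $\mathscr{N} = \bigotimes_{j<i}\mathscr{L}(a_j)^{\mu_j}$ on $D_{\underline w_{i-1}}$ from the induction hypothesis, the standard fact that a relatively ample line bundle tensored with a sufficiently high pullback of an ample line bundle is ample (applied on a finite-type model and then transferred to perfections via \cite[Lemma 3.5]{BS}) yields ampleness of $\mathscr{L}(a_i) \otimes f_i^*\mathscr{N}^N$ for $N \gg 0$; relabelling produces the desired inequalities on the exponents.

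For part (2), the plan is to realize $C_i$ as the zero locus of a section of a line bundle isomorphic to $\mathscr{L}(a_i)$ up to a pullback from the base. Observe that $C_i = \pi_i^{-1}(\sigma_i(D_{\underline w_{i-1}}))$, where $\pi_i : D_{\underline w} \to D_{\underline w_i}$ is the natural projection and $\sigma_i : D_{\underline w_{i-1}} \hookrightarrow D_{\underline w_i}$ is the zero section of $f_i$; hence $\mathcal{O}(C_i) = \pi_i^*\mathcal{O}(\sigma_i(D_{\underline w_{i-1}}))$ has degree one on each fiber of $f_i$ and degree zero on fibers of $f_j$ for $j > i$. It therefore differs from $\mathscr{L}(a_i)$ only by a line bundle $\pi_{i-1}^*\mathscr{N}'$ for some $\mathscr{N}' \in \Pic(D_{\underline w_{i-1}})$. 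The tautological section of $\mathcal{O}(C_i)$ vanishes exactly on $C_i$, hence is nonzero at $x \notin \partial D_{\underline w}$. Multiplying by a section of $\pi_{i-1}^*\mathscr{N}'$ that is nonzero at $\pi_{i-1}(x)$, which exists after further tensoring by an ample line bundle on $D_{\underline w_{i-1}}$ produced from part (1) to ensure global generation at the given point, yields the required section of $\mathscr{L}(a_i)$.

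The main technical issue is the bookkeeping of the correction bundle $\mathscr{N}'$: its isomorphism class is governed by the normal bundle of $\sigma_i$ together with the chosen trivialization in Remark \ref{remark1}, and computing its degrees on the curves $\phi_j(S_{s_{j_j}})$ for $j < i$ requires the inductive description of $\Pic(D_{\underline w_{i-1}})$ from Proposition \ref{Pic}. With these normalizations fixed, the argument can be threaded inductively together with part (1) to construct the required non-vanishing sections.
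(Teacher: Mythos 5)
Part (1) of your proposal is essentially the paper's own argument: induct up the tower of $\mathbb{P}^{1,p^{-\infty}}$-fibrations, use that $\mathscr{L}(a_m)$ is relatively ample for $D_{\underline w_m}\to D_{\underline w_{m-1}}$, and invoke the standard fact that a relatively ample bundle twisted by a large power of the pullback of an ample bundle is ample. Nothing to add there.

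Part (2) takes a more explicit, divisor-theoretic route than the paper, which simply pulls back along $D_{\underline w}\to D_{\underline w_i}$ the section of $\mathcal{O}_i(1)$ cutting out the unit section $D_{\underline w_{i-1}}\hookrightarrow D_{\underline w_i}$. You correctly isolate the genuine subtlety: $\mathcal{O}(C_i)$ agrees with $\mathscr{L}(a_i)$ only up to a twist $\pi_{i-1}^*\mathscr{N}'$ from the base, and this twist is really there — for $\underline w=(s_j,s_i)$ the computation in the proof of Proposition \ref{lemma3} exhibits $D_{\underline w}$ as the perfection of $F_2$ with $C_2$ the directrix, so $\mathcal{O}(C_2)\cong\mathscr{L}(a_2)\otimes\mathscr{L}(a_1)^{-2}$. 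But your way of absorbing the twist does not work. First, to pass from the tautological section of $\mathcal{O}(C_i)$ to a section of $\mathscr{L}(a_i)$ you must multiply by a section of $\pi_{i-1}^*(\mathscr{N}')^{-1}$, not of $\pi_{i-1}^*\mathscr{N}'$. More seriously, ``tensoring by an ample line bundle to ensure global generation at the point'' produces a section of $(\mathscr{N}')^{-1}\otimes\mathscr{A}$ for some ample $\mathscr{A}$, and the resulting product is then a section of $\mathscr{L}(a_i)\otimes\pi_{i-1}^*\mathscr{A}$ rather than of $\mathscr{L}(a_i)$; since the lemma concerns the specific basis element $\mathscr{L}(a_i)$, you are not free to change the bundle. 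What the argument actually requires is the positivity statement that $(\mathscr{N}')^{-1}=\bigotimes_{j<i}\mathscr{L}(a_j)^{c_j}$ with all $c_j\ge 0$, i.e.\ that $\mathcal{O}(C_i)$ has non-positive degree on the curves $\phi_j(S_{s_{j_j}})$ for $j<i$ (these curves lie inside $C_i$, and the relevant degree is that of the normal bundle of the unit section; in the $F_2$ example it is the self-intersection $-2$ of the directrix). Granting this, a section of $(\mathscr{N}')^{-1}$ non-vanishing at $\pi_{i-1}(x)$ — indeed one with divisor supported on the boundary — comes from applying part (2) inductively on $D_{\underline w_{i-1}}$. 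Without that non-negativity, $(\mathscr{N}')^{-1}$ could fail to have any nonzero global section and the construction collapses. So the missing ingredient is precisely this intersection-theoretic non-positivity of the unit section, which your last paragraph gestures at but does not supply; the paper's one-line proof elides the same normalization by identifying $\pi_i^*\mathcal{O}_i(1)$ with $\mathscr{L}(a_i)$ outright.
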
 
\begin{proof}
(1) Recall we have the sequence of $\mathbb{P}^{1.p^{-\infty}}$ fibrations $$D_{\underline w_n}\rightarrow D_{\underline w_{n-1}}\rightarrow \cdots \rightarrow D_{\tilde{w_0}}.$$

We assume inductively there exists $\mu_1\gg\mu_2\gg\cdots \gg \mu_{m-1}\gg 0$ such that $\otimes_{i=1}^{m-1}\mathscr{L}(a_i)^{\mu_i}$ is ample on $D_{\underline w_{m-1}}.$ Since $D_{\underline w_{m}}\rightarrow D_{\underline w_{m-1}}$ is a $\mathbb{P}^{1,p^{-\infty}}$ fibration and $\mathscr{L}(a_m)$ is relatively ample for this morphism, we have $\mathscr{L}(a_m)\otimes(\otimes_{i=1}^{m-1}\mathscr{L}(a_i)^{\mu_i})^N$ is ample for $N\gg 0$. The result then follows by induction.

(2) Recall we have a $\mathbb{P}^{1,p^{-\infty}}$ fibration $D_{\underline w_i}\rightarrow D_{\underline w_{i-1}}$ and hence a line bundle $\CO_i(1)$ on $D_{\underline w_i}$. The line bundle $\mathscr{L}(a_i)$ is given by the pullback of $\CO_i(1)$ along the map $D_{\underline w_n}\rightarrow D_{\underline w_{i}}$.

We have a section of $\CO_i(1)$ corresponding to the section $D_{\underline w_{i-1}}\rightarrow D_{\underline w_i}$ defined by $(x_1,\cdots ,x_{i-1})\rightarrow (x_1,\cdots ,x_{i-1},1)$. The pullback of this section along $D_{\underline w_n}\rightarrow D_{\underline w_i}$ gives a section of $\mathscr{L}(a_i)$ which does not vanish on $D_{\underline w}\backslash\partial D_{\underline w}$.
\end{proof}

\subsection{Proof of Theorem \ref{theorem1} (2)} Let $\mathscr{L}=\otimes\mathscr{L}(\epsilon_\iota)^{\lambda_\iota}$ be a line bundle on $S_w$ with $\lambda_\iota>0$ for all $\iota$ and let $\mathscr{M}$ be its pullback to $D_{\underline w}$. We prove that $\mathscr{L} \mid_{S_w}$ is ample by induction on $\ell(w)$. 

We show that 

(a) \quad $\mathscr{L}$ is strictly nef.

 This can be proved in the same way as \cite[Lemma 9.10]{BS}. Indeed let $C_{\text{perf}}$ be the perfection of a smooth projective curve over $k$ and let $f:C_{\text{perf}}\rightarrow S_w$ be a non-constant map. Wlog. we may assume $f$ maps the generic point of $C_{\text{perf}}$ into $O_w$. As $D_{\underline w}\rightarrow S_w$ is surjective, the map $C_{\text{perf}}\rightarrow S_w$ lifts generically to $D_{\underline w}$, and since $D_{\underline w}\rightarrow S_w$ is proper, $f$ lifts to $\tilde{f}:C_{\text{perf}}\rightarrow D_{\underline w}$.

Since $C_{\text{perf}}$ meets $D_{\underline w}\backslash \partial D_{\underline w}$, the pullback of $\mathscr{L}(a_i)^{\lambda_{j_i}}$ to $C_{\text{perf}}$ has a non-vanishing section, hence it is effective and therefore has non-negative degree. Since $f^*\mathscr{L}=\otimes_{i=1}^n\tilde{f}^*\mathscr{L}(a_i)^{\lambda_{j_i}}$, if this has degree 0, then $\tilde{f}^*\mathscr{L}(a_i)^{\lambda_{j_i}}$ has degree 0 for all $i$, hence it is trivial. However by \ref{lemma2} a weighted product of $\mathscr{L}(a_i)^{\lambda_{j_i}}$ is ample on $D_{\underline w}$, hence has positive degree on $C_{\text{perf}}$. Therefore the pullback of $\mathscr{L}$ to $C_{\text{perf}}$ has positive degree. This proves (a).

We show that 

(b) \quad $\mathscr{L}$ is semiample.

As in the proof of \cite[Lemma 9.11]{BS} we have $\mathscr{M}$ is big with exceptional locus $E(\mathscr{M})$ contained in the boundary $\partial D_{\underline w}=\Psi^{-1}(\bigcup_{w'<w}S_{w'})$. Since $\mathscr{M}$ is nef, by \cite[Theorem 1.9]{K} it suffices to show $\mathscr{M}|_{E(\mathscr{M})}$ semiample. By  induction we have that $\mathscr{L}|_{S_{w'}}$ is ample for all $w'<w$. By \cite[Lemma 1.8]{K}, we have $\mathscr{L}|_{\bigcup_{w'<w}S_{w'}}$ is ample and hence $\mathscr{M}|_{\partial D_{\underline w}}$ is semiample. Since $D_{\underline w}\rightarrow S_w$ has connected fibres, $\mathscr{L}$ is semiample. This proves (b).

Combining (a) and (b), we have that $\mathscr{L}$ is ample.

\subsection{Line bundles on partial affine flag varieties}

Let $K\subset\brbS$ be any subset and let ${\brK}$ denote the corresponding parahoric subgroup. We can use the above results to get a description of the Picard groups of the partial affine flag varieties $Gr_{\brK}$ corresponding to ${\brK}$. We continue to assume  $G$ is simply connected. 

As in \cite[Proposition 8.7 a)]{PR}  we have a closed immersion $L^+\brI\rightarrow L^+{\brK}$. The identity map on $G(\brF)$ induces a surjection: $$\Phi:\mathcal{FL}\rightarrow Gr_{{\brK}}.$$

Let $\brW^K$ be the set of minimal length representatives of $\brW/\brW_K$. For $w\in \brW^K$, let $S^{{\brK}}_w\subset Gr_{{\brK}}$ denote the closure of $O_w^{{\brK}}=L^+\brI\dot{w}L^+{\brK}/L^+{\brK}$ in $Gr_{{\brK}}$. Since $S^{{\brK}}_w$ is the image of $S_w$ under $\Phi$, we have $$S^{{\brK}}_w=\bigcup_{w'\in\brW^K, w'\leq w}L^+\brI\dot{w}L^+{\brK}/L^+{\brK}$$
and $Gr_{{\brK}}$ is the rising union of the $S_w^{{\brK}}$. For $w\in \brW^K$, the map $$h:\bigcup_{u\in \brW_K}S_{wu}\rightarrow S_{w}^{{\brK}}$$ is a fibration with fiber $L^+{\brK}/L^+\brI$, which admits sections locally for the \'etale topology, see eg. \cite[Proof of Lemma 1.3]{Zhu}.

Let $\overline{{\brK}}^{\,red}$ denote the reductive quotient of the special fibre of ${\brK}$. As in \cite[Proposition 8.7 b)]{PR} and \cite[Corollary 1.24]{Zhu}, we have $L_p^+\brI$ is the preimage in $L_p^+{\brK}$ of a (perfection of a) Borel subgroup in $\overline{B}$ in $\overline{{\brK}}^{\,red}$. Thus the quotient $L_p^+{\brK}/L_p^+\brI$ can be identified with $\overline{\brK}^{red}/\overline{B}$ a finite flag variety. Taking perfections, we obtain an identification of $$L^+{\brK}/ L^+\brI$$ with the perfection of a finite flag variety.

\begin{proposition} (1) We have an isomorphism:
$$\mbox{Pic}(Gr_{{\brK}})\cong \bigoplus_{\iota\in\brbS-K}\mathbb{Z}[\frac{1}{p}],$$
where the isomorphism is given taking $\mathscr{L}$ to the  degree of its restriction along $\mathbb{P}^{1,p^{-\infty}}\cong S_{s_\iota}\cong S_{s_\iota}^{{\brK}}\rightarrow Gr_{{\brK}}$.

(2) A line bundle $\mathscr{L}=\otimes_{\iota\in\brbS-K}\mathscr{L}(\epsilon_\iota)^{\lambda_\iota}$ is ample if and only if $\lambda_\iota>0$ for all $\iota\in\brbS-K$.
\end{proposition}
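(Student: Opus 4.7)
The plan is to parallel the strategy used for $\mathcal{FL}$ in the previous subsections, exploiting the fibration $h : S_{ww_K} \to S_w^{\brK}$ (for $w \in \brW^K$, where $w_K$ denotes the longest element of $\brW_K$) whose fibers are the perfection of the finite flag variety $\overline{\brK}^{\,red}/\overline{B}$. First note that $\bigcup_{u \in \brW_K} S_{wu} = S_{ww_K}$, since $\ell(wu) = \ell(w) + \ell(u)$ for $u \in \brW_K$ when $w \in \brW^K$, and concatenating reduced expressions gives $\mbox{Supp}(ww_K) = \mbox{Supp}(w) \cup K$. Since $Gr_{\brK}$ is the filtered union of its Schubert varieties $S_w^{\brK}$, it suffices to describe $\Pic(S_w^{\brK})$ compatibly with the inclusions $S_{w'}^{\brK} \hookrightarrow S_w^{\brK}$ and to verify sufficiency for ampleness on each $S_w^{\brK}$.

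For part (1), the map $h$ is proper, surjective, and perfectly finitely presented with rational geometric fibers, and by the same inductive strategy as in Proposition \ref{directimage} combined with the fiberwise criterion \cite[Lemma 7.8]{BS}, one verifies $Rh_*\CO_{S_{ww_K}} = \CO_{S_w^{\brK}}$. Theorem \ref{descent} then characterizes descending line bundles as those trivial on geometric fibers. The perfection of $\overline{\brK}^{\,red}/\overline{B}$ has Picard group $\bigoplus_{\iota \in K}\mathbb{Z}[\frac{1}{p}]$, with generators arising from the restrictions of $\mathscr{L}(\epsilon_\iota)$ for $\iota \in K$, while the $\mathscr{L}(\epsilon_\iota)$ for $\iota \notin K$ restrict trivially. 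Combining this with Proposition \ref{prop1} applied to $S_{ww_K}$ gives
\[
\Pic(S_w^{\brK}) \;\cong\; \bigoplus_{\iota \in \mbox{Supp}(w) - K} \mathbb{Z}[\tfrac{1}{p}],
\]
and passing to the direct limit over $w \in \brW^K$ yields part (1). Compatibility under $S_{w'}^{\brK} \hookrightarrow S_w^{\brK}$ is checked as in the proof of Theorem \ref{theorem1}(1).

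For part (2), necessity follows by restricting $\mathscr{L}$ to $S_{s_\iota}^{\brK} \cong \mathbb{P}^{1,p^{-\infty}}$ for each $\iota \in \brbS - K$. For sufficiency, I would adapt the proof of Theorem \ref{theorem1}(2) to $S_w^{\brK}$ by induction on $\ell(w)$ in $\brW^K$, using the composite Demazure map $\Psi' = \Phi \circ \Psi : D_{\underline w} \to S_w \to S_w^{\brK}$. By Lemma \ref{lemma4} and Proposition \ref{prop1}, the pullback $(\Psi')^*\mathscr{L}$ corresponds to coefficients $(\lambda_{j_i})_{i=1}^n$ that vanish at positions with $j_i \in K$ and are strictly positive at positions with $j_i \notin K$. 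Strict nefness follows as in the argument for Theorem \ref{theorem1}(2): lift any non-constant $f : C_{\text{perf}} \to S_w^{\brK}$ with generic image in $O_w^{\brK}$ to $\tilde f : C_{\text{perf}} \to D_{\underline w}$, so $\tilde f(C_{\text{perf}})$ meets $D_{\underline w} \setminus \partial D_{\underline w}$; the factors with $j_i \notin K$ contribute via Lemma \ref{lemma2}(2) to a non-vanishing section, and their weighted combination from Lemma \ref{lemma2}(1) forces strictly positive degree. Semiampleness follows from the inductive hypothesis (ampleness on $S_{w'}^{\brK}$ for $w' < w$ in $\brW^K$), bigness of $(\Psi')^*\mathscr{L}$ with exceptional locus contained in $\partial D_{\underline w}$, and \cite[Theorem 1.9]{K}, exactly as in the proof of Theorem \ref{theorem1}(2). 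The main technical subtleties are the explicit identification of the restriction map $\Pic(S_{ww_K}) \to \Pic(L^+\brK/L^+\brI)$ used in part (1), and, in part (2), the fact that $(\Psi')^*\mathscr{L}$ is no longer ample on $D_{\underline w}$ (since its coefficients vanish in the $K$-directions), so one cannot pull ampleness back from the Demazure resolution and must rely on the inductive stratification of $S_w^{\brK}$.
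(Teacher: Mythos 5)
Your part (1) follows the paper's proof in essentially the same way: descent along the fibration $\bigcup_{u\in\brW_K}S_{wu}\to S_w^{\brK}$ with fiber the perfection of $\overline{\brK}^{\,red}/\overline{B}$, the fibral criterion together with Borel--Weil--Bott to get $Rh_*\CO=\CO$, and the identification of the descendable classes as those with $\lambda_\iota=0$ for $\iota\in K$. The one point you gloss over --- that triviality of the bundle on the fiber over the base point propagates to \emph{all} geometric fibers --- is handled in the paper by splitting the fibration \'etale-locally and using $H^1(X,\CO_X)=0$ to decompose the Picard group of the product; this is routine but is exactly the ``technical subtlety'' you defer, and it needs to be written out.

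In part (2) the strict-nefness step has a genuine gap. After lifting $f:C_{\text{perf}}\to S_w^{\brK}$ to $\tilde f:C_{\text{perf}}\to D_{\underline w}$, you know each $\tilde f^*\mathscr{L}(a_i)^{\lambda_{j_i}}$ with $j_i\notin K$ is effective, and if $\deg f^*\mathscr{L}=0$ they are all trivial. But the ``weighted combination from Lemma \ref{lemma2}(1)'' you invoke for the contradiction must, by that lemma, involve \emph{all} the $\mathscr{L}(a_i)$ with strictly positive weights; a weighted combination of only the non-$K$ factors restricts trivially to the fibers of $D_{\underline w_i}\to D_{\underline w_{i-1}}$ whenever $j_i\in K$, hence is never ample on $D_{\underline w}$. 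So triviality of the non-$K$ factors contradicts nothing: in the genuinely ample combination the $K$-indexed factors could carry all the positivity on $\tilde f(C_{\text{perf}})$. The same issue undermines the bigness of $(\Psi')^*\mathscr{L}$ that your semiampleness step requires. The missing ingredient, supplied at the start of the paper's proof of (2), is that $S_w^{\brK}$ is the perfection of a projective scheme and therefore carries \emph{some} ample line bundle $\mathscr{M}$, which by part (1) and the necessity direction must be $\otimes_{\iota\in\brbS-K}\mathscr{L}(\epsilon_\iota)^{\delta_\iota}$ with all $\delta_\iota>0$. With this comparison class one argues directly on $S_w^{\brK}$: if all $f^*\mathscr{L}(\epsilon_\iota)$, $\iota\in\brbS-K$, were of degree zero then $f^*\mathscr{M}$ would be trivial, contradicting non-constancy of $f$ (strict nefness); and raising $\mathscr{L}$ to a power so that $\lambda_\iota>\delta_\iota$ exhibits it as ample tensor effective, hence big, after which Keel's theorem and the induction on $\ell(w)$ in $\brW^K$ run on $S_w^{\brK}$ itself, with no need to return to the Demazure resolution.
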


\begin{proof} (1) We show $\mathscr{L}=\otimes_{\iota\in\brbS}\mathscr{L}(\epsilon)^{\lambda_\iota}\in\Pic(\mathcal{FL})$ descends to $Gr_{{\brK}}$ if and only if $\lambda_{\iota}=0$ for $\iota\in K$.

Let $w\in \brW^K$. We apply the fibral criterion for descent of vector bundles Theorem \ref{descent} to the fibration $\bigcup_{u\in \brW_K}S_{wu}\rightarrow S_{w}^{{\brK}}$. The map is proper surjective perfectly finitely presented since both $\bigcup_{\brW_K}S_{wu}$ and $S_{w}^{{\brK}}$ are, and the condition $Rh_*\mathcal{O}_{\bigcup_{u\in \brW_K}S_{wu}}=\mathcal{O}_{S_{w}^{{\brK}}}$ can be checked fiberwise by \cite[Theorem 7.8]{BS}. We need to check for $\overline{y}$ a geometric point of $S_{w}^{{\brK}}$ and $U_{\overline{y}}$ the fiber of $h$ over $\overline{y}$, that $R\Gamma(U_{\overline{y}},\mathcal{O}_{U_{\overline{y}}})=k(\overline{y})$. Since $U_{\overline{y}}$ is the perfection of a finite flag variety, this follows from the theorem of Borel-Weil-Bott.

Let $\mathscr{L}=\otimes_{\iota\in\brbS}\mathscr{L}(\epsilon)^{\lambda_\iota}$, by Theorem \ref{descent}, $\mathscr{L}$ descends to $Gr_{\brK}$ if and only if $\mathscr{L}|_{U_{\overline{y}}}$ is trivial. Thus if $\mathscr{L}|_{U_{\overline{y}}}$, descends to $Gr_{\brK}$, we have $ \lambda_{\iota}=0$ for $\iota\in K$ since $S_{s_\iota}$ maps to a point in $Gr_{\brK}$. 

For the converse, suppose $\lambda_\iota=0$ for all $\iota\in K$. We show that $\mathscr{L}|_{U_{\overline{y}}}$ for all $\overline{y}$ a geometric point of $X$.

Let $\pi:V\rightarrow S_w^{\brK}$ be an \'etale covering such that the fiber bundle $\bigcup_{u\in \brW_K}S_{wu}\rightarrow S_{w}^{{\brK}}$ splits. We have a pullback diagram:
 \[\xymatrix{X\times_{\mathbf{k}} V \ar[r] \ar[d]_q& V \ar[d]^-\a\\
 \bigcup_{u\in \brW_K}S_{wu} \ar[r] & S_w^{\brK},}
 \]
where $X$ is isomorphic to the perfection of a finite flag variety. Since $H^1(X,\mathcal{O}_X)=0$ by Borel-Weil-Bott, we have $\Pic(X\times_{\mathbf{k}}V)\cong\Pic(V)\times \Pic(X)$, the isomorphism is given by restricting  a line bundle to the fibers of the projections $p_1:X\times_{\mathbf{k}}V\rightarrow X$ and $p_2:X\times_{\mathbf{k}}V\rightarrow V$. Let $\overline{x}$ be the base point of $S_{w}^{\brK}$, i.e. corresponds to the image of 1 in $S_w^{\brK}$. By the description of the Picard group of finite flag varieties and our assumptions on $\mathscr{L}$, we have $\mathscr{L}|_{U_{\overline{x}}}$ is trivial. Thus by the above, the restriction of $q^*\mathscr{L}$ to any fiber of $X\times_{\mathbf{k}}V\rightarrow V$ is trivial. Since $V\rightarrow S^{\brK}_w$ is surjective, $\mathscr{L}|_{U_{\overline{y}}}$ is trivial for all geometric points $\overline{y}$ of $S_w^{\brK}$.

(2) Let $\mathscr{M}$ be any ample line bundle on $S_w^{{\brK}}$. Note that $\mathscr{M}=\otimes_{\iota\in\brbS-K}\mathscr{L}(\epsilon_\iota)^{\delta_\iota}$ with $\delta_\iota>0$. Indeed  $S_{s_\iota}^{{\brK}}\cong S_{s_\iota}\cong\mathbb{P}^{1,p^{-\infty}}$, and so the restriction of $\mathscr{M}$ to $S_{s_\iota}^{{\brK}}$ has positive degree, but this degree is just $\delta_\iota$.
  
  Now let $\mathscr{L}=\otimes_{\iota\in\brbS-K}\mathscr{L}(\epsilon)^{\lambda_\iota}$ where $\lambda_\iota>0$, we will show $\mathscr{L}$ is ample using the same strategy as the proof of Theorem \ref{theorem1} (2).
  
  (a) \quad $\mathscr{L}$ is strictly nef.
 
 Let $C_{\text{perf}}$ be the perfection of a smooth projective curve and  $C_{\text{perf}}\rightarrow S_{w}^{{\brK}}$ any map. As before we may assume $C_{\text{perf}}$ intersects $O_w^{{\brK}}$. Again $\mathscr{L}(\epsilon_\iota)$ has a non-vanishing section on $O_w^{{\brK}}$ since its pullback to $S_w$ has a non-vanishing section on $O_w$, hence $\mathscr{L}(\epsilon_\iota)|_{C_{\text{perf}}}$ is effective. Since a positive combination of the $\mathscr{L}(\epsilon_\iota)$ is ample, $\mathscr{L}\mid_{C_{\text{perf}}}$ has positive degree.
 
 (b) \quad $\mathscr{L}$ is semiample.
 
  We  prove that $\mathscr{L}|S^{{\brK}}_w$ is semiample for $w\in \brW^K$ by induction on $\ell(w)$. Indeed this true for $l(w)=1$, i.e. $w=s_\iota\in\brbS-K$. Now assume it's true for all $w'\in\brW^K$, with $l(w')<l(w)$. 
  
  Let $\mathscr{M}=\otimes\mathscr{L}(\epsilon)^{\delta_\iota}$ be an ample line bundle on $Gr_{{\brK}}$. Upon raising $\mathscr{L}$ to a sufficiently large power, we may assume $\lambda_\iota>\delta_\iota$ and hence $\mathscr{L}|_{S_w^{{\brK}}}$ is the tensor product of an ample line bundle with an effective one, i.e. $\mathscr{L}\mid_{S_w^{{\brK}}}$ is big. Moreover, since $\mathscr{L}$ has a non-vanishing section on $O_w^{{\brK}}$, the exceptional locus lies in $S_w^{{\brK}}\backslash O_w^{{\brK}}=\bigcup_{w'\in \brW^K,w'<w}S_{w'}^{{\brK}}$. By induction $\mathscr{L}$ is semiample on $S_{w'}^{{\brK}}$ for all $w' \in \brW^K$ with $w'<w$, hence by \cite[Lemma 1.8]{K}, $\mathscr{L}\mid_{\bigcup_{w'\in \brW^K,w'<w}}S_{w'}^{{\brK}}$ is semiample. Thus $\mathscr{L}$ is semiample.
  
The result now follows from (a) and (b).
\end{proof}

\begin{remark}
In the case when ${\brK}$ is a special parahoric subgroup, the proposition gives an isomorphism $\mbox{Pic}Gr_{\brK}\cong\mathbb{Z}[\frac{1}{p}]$. This answers a question of Bhatt and Scholze \cite[Question 11.6 (iii)]{BS}.
\end{remark}

\section{First reduction theorem}\label{4}

In this section we study the connected components of $X(\{\mu\},b)$. As in the introduction  $F$ will be a non-archimedean local field of {\it any} characteristic. In the equal characteristic case we make the assumption that the group $\G$ splits over a tamely ramified extension of $F$ and that char $k$ does not divide $\pi_1(G_{\text{ad}})$, where $G_{\text{ad}}$ is the adjoint group of $G$.

\subsection{} For $w\in\brW$ we write $$X_{\leq w}(b)=\bigcup_{w'\leq w}X_{w'}(b).$$
In general, for any finite subset $C$ of $\brW$ we  write $$X_C(b)=\bigcup_{w\in C}X_w(b).$$ 
If moreover $C$ is closed under the Bruhat order,  then $X_C(b)$ is a closed subscheme of the affine flag variety, in particular it is closed and hence projective. The main theorem of this section is the following:
\begin{theorem}\label{theorem2}
Let $C$ be a finite subset of $\breve W$ that is closed under the Bruhat order and $Y$ be an irreducible component of $X_C(b)$. Then $Y\cap X_x(b)\neq\emptyset$ for some $\sigma$-straight element $x$ in $C$.
\end{theorem}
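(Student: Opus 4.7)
The plan is to argue by induction on $\ell(w)$, where $w\in C$ is the unique element such that $Y\cap X_w(b)$ is open and dense in $Y$. Such a $w$ exists and is unique because $X_C(b)=\sqcup_{w\in C}X_w(b)$ is a stratification by locally closed subschemes and $Y$ is irreducible. By replacing $C$ with $\{w'\in C\colon w'\leq w\}$ (still Bruhat-closed), one may assume $w$ is the unique Bruhat-maximum of $C$, so $X_w(b)$ is open in $X_C(b)$ and $Y$ is the closure of an irreducible component of $X_w(b)$. If $w$ is $\sigma$-straight, take $x=w$.

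Suppose $w$ is not $\sigma$-straight. By Theorem~\ref{HN-min}(1), $w$ is not of minimal length in its $\sigma$-conjugacy class, so one can find $s\in\brbS$ with $\ell(sw\sigma(s))<\ell(w)$, forcing $\ell(sw)=\ell(w)-1$ and $\ell(sw\sigma(s))=\ell(w)-2$. Both $sw$ and $sw\sigma(s)$ lie in $C$ because $C$ is Bruhat-closed. The reduction method of \cite{He99} then provides a geometric decomposition of $X_w(b)$ into two locally closed pieces, one an $\mathbb{A}^1$-bundle over $X_{sw\sigma(s)}(b)$ and the other a $\mathbb{G}_m$-bundle over $X_{sw}(b)$, arising from the natural $\mathbb{P}^1$-fibration through the parahoric $P_s\supset\brI$. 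Since $Y\cap X_w(b)$ is irreducible, it must lie entirely in one of the two pieces; say it is the total space of an $\mathbb{A}^1$-bundle over an irreducible constructible subset $Z$ of $X_{w_1}(b)$ for some $w_1\in\{sw,sw\sigma(s)\}$ (the other case is symmetric).

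At this stage the quasi-affineness of irreducible components of $X_w(b)$ --- established earlier in this section via the ample line bundles of Theorem~\ref{theorem1} --- becomes decisive. The irreducible component $Y\cap X_w(b)$ is quasi-affine, while $Y$ itself is projective (closed in the projective scheme $X_C(b)$); since a positive-dimensional quasi-affine variety is never projective, $Y$ strictly contains $Y\cap X_w(b)$. Combining this with the explicit bundle structure of the reduction step, the closure $\overline{Z}$ of $Z$ inside $X_C(b)$ is forced to lie in $Y$, and $\overline{Z}$ is an irreducible closed subvariety of the strictly smaller union $X_{C_1}(b)$, where $C_1=\{w''\in C\colon w''\leq w_1\}\subsetneq C$. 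Applying the inductive hypothesis to the irreducible component of $X_{C_1}(b)$ containing $\overline{Z}$ should then yield a $\sigma$-straight element $x\in C_1\subset C$ meeting $Y$.

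The hard part will be the bookkeeping in this last step: the $\sigma$-straight element produced by induction lives in some irreducible component $Y''$ of $X_{C_1}(b)$ which \emph{a priori} may strictly contain $\overline{Z}$, so one must ensure that $Y''\cap X_x(b)$ actually intersects $\overline{Z}\subset Y$, not some part of $Y''$ disjoint from $Y$. The natural remedy is to strengthen the inductive statement to the form: \emph{every irreducible closed subvariety of $X_C(b)$ arising as the closure of its intersection with a single stratum $X_w(b)$ meets $X_x(b)$ for some $\sigma$-straight $x\in C$}. This stronger version can be applied directly to $\overline{Z}$ at each stage, letting the induction close without losing control of which irreducible subvariety we are tracking.
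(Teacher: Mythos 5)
Your strategy is recognizably in the spirit of the paper's (both rest on quasi-affineness of irreducible components plus the Deligne--Lusztig reduction method), but the inductive scheme as set up has a genuine logical gap at its core. You claim that if $w$ is not $\sigma$-straight then, by Theorem~\ref{HN-min}(1), $w$ is not of minimal length in its $\sigma$-conjugacy class, so that some $s$ strictly decreases the length. This inference is false: a minimal length element of a \emph{non-straight} $\sigma$-conjugacy class is not $\sigma$-straight, yet admits no length-decreasing $s$, so your reduction step has nothing to act on and the induction cannot proceed. This case is unavoidable and the paper handles it by a completely different mechanism, namely the dimension formula of \cite[Theorem 4.8]{He99}: for $w$ of minimal length in its class, every irreducible component of $X_w(b)$ has dimension $\ell(w)-\langle\bar\nu_b,2\rho\rangle$, which is $0$ exactly when $w$ is $\sigma$-straight (this is Proposition~\ref{prop2}). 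A second, related problem: even when $w$ is not of minimal length, Theorem~\ref{HN-min}(1) only guarantees a length-decreasing simple reflection after first replacing $w$ by some $w'\approx_\sigma w$ of the \emph{same} length; such $w'$ need not lie in $C$, and $X_{w'}(b)$ is only universally homeomorphic to $X_w(b)$ rather than a subscheme of $X_C(b)$, so your plan of descending to $C_1=\{w''\le w_1\}\subsetneq C$ inside the fixed ambient space $X_C(b)$ breaks down. (The bookkeeping difficulty you flag at the end, and the unproved containment $\overline{Z}\subset Y$, are further loose ends, but they are secondary to the two issues above.)

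The paper's actual proof avoids induction on $C$ entirely and is much shorter: take $x$ of \emph{minimal} length among the elements of $C$ whose stratum meets $Y$; then $Y$ contains an irreducible component $Y_1$ of $X_x(b)$, and by minimality $\overline{Y}_1$ meets no lower stratum, so $Y_1$ is closed in $X_C(b)$ and hence projective. Since $Y_1$ is also quasi-affine, $\dim Y_1=0$, and Proposition~\ref{prop2} then forces $x$ to be $\sigma$-straight. Note that all use of the reduction method is quarantined inside Proposition~\ref{prop2}, where one only compares dimensions, so universal homeomorphisms and leaving the set $C$ cause no harm. If you want to salvage your argument, the cleanest fix is to abandon the top-down induction and adopt this minimal-stratum argument; alternatively, you must (i) treat the minimal-length non-straight case via the dimension formula and (ii) reformulate the reduction step so that it never needs to stay inside $X_C(b)$.
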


In the rest of this paper, we will mainly use the following special case of Theorem \ref{theorem2}. 

\begin{corollary}\label{cor1}
Every point in $X(\{\mu\},b)$ lies in the same Zariski  connected component as a point in $X_x(b)$ for some $\s$-straight element $x\in \Adm(\{\mu\})$.
\end{corollary}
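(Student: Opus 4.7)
The plan is to argue by induction on $\max_{w \in C} \ell(w)$ using the Deligne-Lusztig-style reduction for affine Deligne-Lusztig varieties, combined with the quasi-affineness of irreducible components of $X_w(b)$ supplied by Theorem \ref{theorem1}. Since $Y$ is irreducible and $X_C(b) = \bigsqcup_{v \in C} X_v(b)$ is a finite locally closed stratification, the generic point of $Y$ lies in a unique stratum $X_{w_0}(b)$, forcing $Y \subseteq X_{\leq w_0}(b)$ with $w_0$ Bruhat-maximal in $\{v \in C : Y \cap X_v(b) \neq \emptyset\}$. If $w_0$ is $\sigma$-straight we are done; otherwise by Theorem \ref{HN-min}(1), $w_0$ admits a reduction chain to a $\sigma$-straight element in its $\sigma$-conjugacy class, whose first elementary step has the form $w_0 \xrightarrow{s}_\sigma sw_0\sigma(s)$ for some $s \in \brbS$. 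After possibly absorbing a few length-preserving steps (which induce universal homeomorphisms $X_w(b) \cong X_{sw\sigma(s)}(b)$) into the iteration, we may assume $\ell(sw_0\sigma(s)) < \ell(w_0)$, so that $sw_0$ and $sw_0\sigma(s)$ lie strictly below $w_0$ in the Bruhat order and hence in $C$.

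The Deligne-Lusztig reduction attaches to each point of $X_{w_0}(b)$ a canonical $\mathbb{P}^1$-curve inside $X_{\leq w_0}(b) \subseteq X_C(b)$, arising from the $\mathbb{P}^1$-fiber of the projection $\mathcal{FL} \to \mathcal{FL}_s$ onto the partial flag variety for the parahoric containing $s$, whose other intersections with the stratification lie only in $X_{sw_0}(b) \cup X_{sw_0\sigma(s)}(b)$. Running these curves in families over the open dense $Y \cap X_{w_0}(b)$ forces $Y$ itself to meet the lower strata $X_{sw_0}(b) \cup X_{sw_0\sigma(s)}(b)$, provided $Y$ is not confined to $X_{w_0}(b)$. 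This non-degeneracy is what Section 3 delivers: by Theorem \ref{theorem1} the ample line bundle on $\mathcal{FL}$ restricts to an ample line bundle on any Schubert variety containing $X_C(b)$, exhibiting each irreducible component of $X_{w_0}(b)$ as a quasi-affine scheme. A positive-dimensional projective irreducible subvariety of $X_C(b)$ therefore cannot be contained in such a component. In the remaining zero-dimensional case $Y = \{p\}$, the same quasi-affineness argument forces $p$ into a $\sigma$-straight stratum: otherwise $p$ would lie in a positive-dimensional irreducible component of $X_{w_0}(b)$, whose closure in $X_C(b)$ would be an irreducible closed subset strictly containing $\{p\}$, contradicting that $Y$ is an irreducible component.

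Selecting an irreducible component of $Y \cap X_{\leq sw_0\sigma(s)}(b)$ (or $Y \cap X_{\leq sw_0}(b)$, whichever is non-empty) as input to the induction hypothesis, applied to the Bruhat-closed subset $\{v \in \brW : v \leq sw_0\sigma(s)\}$ of strictly smaller maximal length, produces the required $\sigma$-straight $x \in C$ with $Y \cap X_x(b) \neq \emptyset$. The main obstacle is the bookkeeping that tracks an irreducible component of $Y$ through each reduction step so that the induction hypothesis can be cleanly invoked --- one must verify that the $\mathbb{P}^1$-curves really stay inside $X_C(b)$ and that the chosen component of the reduced intersection propagates information back to $Y$ itself. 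The Bruhat-closedness of $C$ is essential throughout to keep all intermediate reduction elements in $C$, and Theorem \ref{theorem1} is what upgrades the purely combinatorial reduction of \cite{He99} to the required Zariski-topological statement about irreducible components.
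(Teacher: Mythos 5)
Your two key ingredients are the right ones --- (i) an irreducible projective subset of a quasi-affine scheme is zero-dimensional, and (ii) Proposition \ref{prop2} says every irreducible component of $X_w(b)$ for non-straight $w$ has positive dimension --- and your ``zero-dimensional case'' paragraph is essentially the paper's entire proof. But the surrounding architecture has genuine gaps. First, quasi-affineness is not what Theorem \ref{theorem1} delivers: an ample line bundle on an ambient Schubert variety restricts to an ample line bundle on any subvariety, which says nothing about quasi-affineness (every projective variety carries an ample line bundle). What is actually needed is that a suitable ample line bundle restricts \emph{trivially} to $X_w(b)$; this uses the triviality of $p_1^*(w\mathscr{L})\otimes p_2^*\mathscr{L}^{-1}$ on the orbit $O(w)$ (Proposition \ref{lemma3}) and the invertibility of $x\s w\i-1$ on $\text{Pic}(\mathcal{FL})_\BQ$, neither of which your sketch supplies. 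Second, the ``absorbed'' length-preserving steps are not harmless: for a step $w\mapsto sw\s(s)$ with $\ell(sw\s(s))=\ell(w)$, the element $sw\s(s)$ need not be $\le w_0$ in the Bruhat order, hence need not lie in $C$; and the $\BP^1$-fibre of $\mathcal{FL}\to\mathcal{FL}_s$ through a point of $X_w(b)$ then meets a stratum of length $\ell(w)+1$, which lies outside $X_{\le w_0}(b)\supseteq Y$ and possibly outside $X_C(b)$. The universal homeomorphism $X_w(b)\cong X_{sw\s(s)}(b)$ is an abstract homeomorphism between two different loci of $\mathcal{FL}$ and carries no information about connectivity inside $X_C(b)$. (Moreover, if $w_0$ has minimal length in its $\s$-conjugacy class but is not straight, no length-decreasing step exists at all.) Third, and most seriously, the induction does not close: the inductive hypothesis concerns irreducible components of $X_{C'}(b)$, whereas you only produce an irreducible component of $Y\cap X_{C'}(b)$; the component of $X_{C'}(b)$ containing it may be strictly larger than anything related to $Y$, and the straight stratum that component meets need not meet $Y$. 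You flag this as ``bookkeeping,'' but it is precisely the missing mathematical content.

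The paper sidesteps all of this by looking at the \emph{minimal}-length stratum meeting $Y$ rather than the maximal one: if $x$ has minimal length with $Y\cap X_x(b)\ne\emptyset$, then $Y\cap X_x(b)=Y\cap X_{\le x}(b)$ is already closed in the projective scheme $X_C(b)$, hence projective; it lies inside a quasi-affine irreducible component of $X_x(b)$, hence is zero-dimensional; and Proposition \ref{prop2} then forces $x$ to be $\s$-straight --- one step, no induction, no conjugation chain, no curves. Note also that the $\BP^1$-curves are superfluous even on your own terms: since $Y\subseteq X_{\le w_0}(b)$, ``$Y$ is not confined to $X_{w_0}(b)$'' already means $Y$ meets a stratum $X_v(b)$ with $v<w_0$.
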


\subsection{Reduction to adjoint groups}\label{red-1} For $b\in G(\breve{F})$ and $\t\in \pi_1(G)_{\Gamma}$, we let $b_{\text{ad}}$ and $\t_{\text{ad}}$ denote their images in $G_{\text{ad}}(\brF)$ and $\pi_1(G_{\text{ad}})_{\Gamma}$ respectively. Choosing maximal tori in $G$ and $G_{\text{ad}}$ compatibly, we obtain a map $\brW\rightarrow \brW_{\text{ad}}$ denoted $x\mapsto x_{\text{ad}}$. The alcove $\breve{\mathbf{a}}$ determines an alcove $\breve{\mathbf{a}}_{\text{ad}}$ in the building of $G_{\text{ad}}$ and we let $\brI_{\text{ad}}$ be the corresponding Iwahori subgroup.

Let $\mathcal{FL}_{\text{ad}}$ be the flag variety for $G_{\text{ad}}$. For $\lambda\in\pi_1(G)_\Gamma$ let $\mathcal{FL}^\lambda$ be the connected component of $\mathcal{FL}$ corresponding to $\t$ and similarly for $\t_{\text{ad}}$. We have the following proposition which can be proved in the same way  as \cite[Proposition 2.2.1]{GHN}.

\begin{proposition}
The map $G\rightarrow G_{\text{ad}}$ induces an isomorphism $$\mathcal{FL}^{\t,\text{red}}\cong \mathcal{FL}_{\text{ad}}^{\t_{\text{ad}}},$$
where $\mathcal{FL}^{\t,{\text{red}}}$ is the reduced locus of $\mathcal{FL}^{\t}$.
\end{proposition}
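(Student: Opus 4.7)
The plan is to adapt the argument of \cite[Proposition 2.2.1]{GHN} and to reduce the statement to Proposition~\ref{prop3} applied to both $G$ and $G_{\text{ad}}$. First, I would reduce to the case $\t = 0$ by a translation argument. Pick a lift $g \in LG(\mathbf{k})$ of $\t$ under the Kottwitz homomorphism and let $g_{\text{ad}} \in LG_{\text{ad}}(\mathbf{k})$ denote its image, so that $g_{\text{ad}}$ is a lift of $\t_{\text{ad}}$. Left multiplication by $g$ (resp.\ by $g_{\text{ad}}$) defines an isomorphism $\mathcal{FL}^{0} \xrightarrow{\sim} \mathcal{FL}^{\t}$ (resp.\ $\mathcal{FL}^{0}_{\text{ad}} \xrightarrow{\sim} \mathcal{FL}^{\t_{\text{ad}}}_{\text{ad}}$), and these translations intertwine the natural map $\mathcal{FL} \to \mathcal{FL}_{\text{ad}}$. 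Hence it suffices to establish an isomorphism $\mathcal{FL}^{0,\text{red}} \cong \mathcal{FL}^{0}_{\text{ad}}$ induced by $G \to G_{\text{ad}}$.

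Next, let $\tilde{G}$ denote the simply connected cover of $G_{\text{der}}$. Since $(G_{\text{ad}})_{\text{der}} = G_{\text{ad}}$ shares the same root datum type as $G_{\text{der}}$, the group $\tilde{G}$ is simultaneously the simply connected cover of $G_{\text{ad}}$. The compatible choice of apartments and alcoves set up in the paragraph preceding the statement determines Iwahori subgroups of $\tilde{G}$, $G$, and $G_{\text{ad}}$ which are mapped to one another by the canonical morphisms. Applying Proposition~\ref{prop3} to $G$ produces an isomorphism $\widetilde{\mathcal{FL}} \xrightarrow{\sim} \mathcal{FL}^{0}$ induced by $L\tilde{G} \to LG$, and applying it to $G_{\text{ad}}$ produces an isomorphism $\widetilde{\mathcal{FL}} \xrightarrow{\sim} \mathcal{FL}^{0}_{\text{ad}}$ induced by $L\tilde{G} \to LG_{\text{ad}}$. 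Inverting the first and composing with the second yields an isomorphism $\mathcal{FL}^{0,\text{red}} \cong \mathcal{FL}^{0}_{\text{ad}}$ which, by construction, factors through $L\tilde{G} \to LG \to LG_{\text{ad}}$; hence it is the map induced by $G \to G_{\text{ad}}$, as desired.

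The main point requiring verification is the compatibility of Iwahori subgroups along $\tilde{G} \to G \to G_{\text{ad}}$ and the functoriality of the identification in Proposition~\ref{prop3}; both reduce to tracking the chosen alcove $\breve{\fka}$ through the three buildings, which is already baked into the setup. The presence of the superscript $\textrm{red}$ is also a minor subtlety: in the equal characteristic setting the appeal to \cite[Proposition 6.6]{PR} inside Proposition~\ref{prop3} yields only a universal homeomorphism, hence the need to reduce; in mixed characteristic the invocation of \cite[Corollary A.16]{Zhu} already gives an isomorphism of perfect schemes, so $\textrm{red}$ is automatic. I do not expect any serious obstacle here, as the argument is essentially a functorial repackaging of Proposition~\ref{prop3}; the hardest part will simply be keeping the bookkeeping of alcoves and Iwahori subgroups consistent across the three groups.
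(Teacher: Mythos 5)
Your proposal is correct, and it is essentially the intended argument: the paper itself gives no proof beyond the remark that the statement ``can be proved in the same way as [GHN, Proposition 2.2.1]'', and the route you take --- translate to the neutral components, then compose the two instances of Proposition~\ref{prop3} through the common simply connected cover $\tilde{G}$ of $G_{\text{der}}$ and $G_{\text{ad}}$ --- is the standard way that reduction is carried out. The only point I would tighten is your remark about equal characteristic: a universal homeomorphism onto the reduced locus is \emph{not} automatically an isomorphism (Frobenius is a universal homeomorphism between reduced schemes), so ``yields only a universal homeomorphism, hence the need to reduce'' does not by itself close the argument. What you actually need, and what [PR, Proposition 6.6] does provide under the running hypotheses of this section (tame splitting and $\mathrm{char}\,k \nmid |\pi_1(G_{\text{ad}})|$), is that $\widetilde{\mathcal{FL}} \to \mathcal{FL}^{0}$ is a closed immersion identifying $\widetilde{\mathcal{FL}}$ with the \emph{reduced} neutral component as schemes; in mixed characteristic the upgrade from universal homeomorphism to isomorphism is supplied by [Zhu, Corollary A.16], as in Proposition~\ref{prop3}. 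With that ingredient stated at full strength, your composition and the identification of the resulting map with the one induced by $G \to G_{\text{ad}}$ (which holds because the maps agree on all field-valued points of the reduced, perfect source) complete the proof.
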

\begin{remark}When $F$ is equi-characteristic and $G$ is not semisimple, $\mathcal{FL}$ is not reduced, so we must take  the reduced locus. In the mixed characteristic situation this issue does not arise since perfect rings are always reduced. Since we are only interested in connected components, the non-reducedness makes no difference to us.
\end{remark}
 
 For an affine Deligne Lusztig variety $X_w(b)$ we let $X_w(b)^{\t}=X_w(b)\cap\mathcal{FL}^\t$, and similarly for $G_{\text{ad}}$. The previous proposition implies 
 
 \begin{corollary}\label{adjoint}
 The isomorphism $\mathcal{FL}^{\t}\cong\mathcal{FL}^{\t_{\ad}}_{\text{ad}}$ induces an isomorphism $$X_w(b)^{\t}\cong X_{w_{\text{ad}}}(b_{\text{ad}})^{\t_{\text{ad}}}.$$
 \end{corollary}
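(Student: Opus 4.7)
The plan is to unwind the defining condition of the affine Deligne-Lusztig varieties on each side through the isomorphism $\varphi : \mathcal{FL}^{\t,\text{red}} \cong \mathcal{FL}_{\text{ad}}^{\t_{\text{ad}}}$ furnished by the preceding proposition. Recall that $\varphi$ is induced by the projection $\pi : G \to G_{\text{ad}}$, which (by the compatibility of the chosen alcoves $\breve{\mathbf{a}}$ and $\breve{\mathbf{a}}_{\text{ad}}$) sends $\breve I$ into $\breve I_{\text{ad}}$ and a representative $\dot w \in \breve N(\breve F)$ to a representative of $w_{\text{ad}}$ in $G_{\text{ad}}(\breve F)$.

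The inclusion $\varphi(X_w(b)^\t) \subseteq X_{w_{\text{ad}}}(b_{\text{ad}})^{\t_{\text{ad}}}$ is immediate by functoriality: if $g^{-1} b \sigma(g) \in \breve I \dot w \breve I$, then applying $\pi$ yields $g_{\text{ad}}^{-1} b_{\text{ad}} \sigma(g_{\text{ad}}) \in \breve I_{\text{ad}} \dot w_{\text{ad}} \breve I_{\text{ad}}$, and $g_{\text{ad}} \breve I_{\text{ad}}$ lies in the component $\mathcal{FL}_{\text{ad}}^{\t_{\text{ad}}}$ since $g \breve I \in \mathcal{FL}^\t$.

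For the reverse inclusion, suppose $g \breve I \in \mathcal{FL}^\t$ and $g_{\text{ad}}^{-1} b_{\text{ad}} \sigma(g_{\text{ad}}) \in \breve I_{\text{ad}} \dot w_{\text{ad}} \breve I_{\text{ad}}$. Then $g^{-1} b \sigma(g) \in \breve I \dot w' \breve I$ for some $w' \in \breve W$ whose image in $\breve W_{\text{ad}}$ equals $w_{\text{ad}}$. I would show $w' = w$ by a Kottwitz invariant comparison. Applying $\kappa : G(\breve F) \to \pi_1(G)_\Gamma$ to $g^{-1} b \sigma(g)$ gives $\kappa(b)$, by the $\sigma$-invariance of $\kappa$ on $\pi_1(G)_\Gamma$, hence $\kappa(w') = \kappa(b) = \kappa(w)$ (the last equality may be assumed, since otherwise the statement is vacuous). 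Because the affine Weyl groups $\breve W_a$ of $G$ and $G_{\text{ad}}$ coincide, the kernel of $\breve W \to \breve W_{\text{ad}}$ lies in $\Omega = \pi_1(G)_{\Gamma_0}$, so $w$ and $w'$ are determined by their common image in $\breve W_{\text{ad}}$ together with their common Kottwitz invariant in $\pi_1(G)_\Gamma$, forcing $w' = w$.

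The main obstacle I foresee is making the last uniqueness step rigorous, namely verifying injectivity of the natural map $\Omega \to \Omega_{\text{ad}} \times \pi_1(G)_\Gamma$ in full generality. If that turns out to be awkward, a cleaner alternative is to exploit the $\breve I$-equivariance of $\varphi$: since $\varphi$ is induced by a group homomorphism sending $\breve I$ into $\breve I_{\text{ad}}$, it must identify $\breve I$-orbits stratum by stratum, and the Schubert cell indexed by $w$ in $\mathcal{FL}^\t$ is then matched with the cell indexed by $w_{\text{ad}}$ in $\mathcal{FL}_{\text{ad}}^{\t_{\text{ad}}}$. The identification of affine Deligne-Lusztig varieties follows without further analysis of $\Omega$.
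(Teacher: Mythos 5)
The paper dispatches this corollary in one line from the preceding proposition, so your attempt to actually unwind the definitions is the right instinct, and the forward inclusion is fine. But the uniqueness step in your reverse inclusion has a genuine gap. The map $\Omega \to \Omega_{\text{ad}} \times \pi_1(G)_\Gamma$ is \emph{not} injective in general: for $G = \operatorname{Res}_{E/F}\mathbb{G}_m$ with $E/F$ unramified quadratic one has $\Omega = \breve W = \mathbb{Z}^2$, $\Omega_{\text{ad}}$ trivial, and $\pi_1(G)_\Gamma = \mathbb{Z}^2/(\sigma-1)\mathbb{Z}^2 \cong \mathbb{Z}$, so distinct translations can share both invariants. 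The root of the problem is that you passed to $\pi_1(G)_\Gamma$ and invoked ``$\sigma$-invariance,'' which discards exactly the information you need. The repair is to use $\tilde\kappa$ valued in $\pi_1(G)_{\Gamma_0}$ together with the restriction to a fixed component: on $\mathcal{FL}^{\tau}$ one has $\tilde\kappa(g)=\tau$, hence $\tilde\kappa(g^{-1}b\sigma(g)) = \tilde\kappa(b) - \tau + \sigma(\tau)$ is a \emph{constant} of $\pi_1(G)_{\Gamma_0}$, pinning down $\tilde\kappa(w')$ there. Then use $\breve W = \breve W_a \rtimes \Omega$ with $\Omega \cong \pi_1(G)_{\Gamma_0}$ and the fact that $\breve W \to \breve W_{\text{ad}}$ restricts to the identity on the common affine Weyl group $\breve W_a$: an element of $\breve W$ is determined by its image in $\breve W_{\text{ad}}$ (which recovers its $\breve W_a$-part) together with $\tilde\kappa(w)\in\Omega$. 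This forces $w'=w$ without any injectivity claim about $\Omega \to \Omega_{\text{ad}}$.

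Your fallback via $\breve I$-equivariance does not close the gap either: the defining condition of $X_w(b)$ constrains the double coset containing $g^{-1}b\sigma(g)$, not the Schubert cell containing $g$, and the preimage of $\breve I_{\text{ad}}\dot w_{\text{ad}}\breve I_{\text{ad}}$ in $G(\breve F)$ is a union of cosets $\breve I \dot w' \breve I$ over all $w'$ lying above $w_{\text{ad}}$, so matching $\breve I$-orbits stratum by stratum leaves you with the same question of which $w'$ occurs. Finally, note that the statement implicitly assumes the compatibility $\tilde\kappa(w) = \tilde\kappa(b) - \tau + \sigma(\tau)$; if it fails, $X_w(b)^{\tau}$ is empty while $X_{w_{\text{ad}}}(b_{\text{ad}})^{\tau_{\text{ad}}}$ need not be, since several components of $\mathcal{FL}$ map to the same component of $\mathcal{FL}_{\text{ad}}$. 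The paper glosses over this as well, and it is harmless for the intended application to the reduction to adjoint groups, but your $\kappa$-comparison is exactly where it surfaces.
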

 
Since $w\mapsto w_{\text{ad}}$ is compatible with the property of $\sigma$-straightness, it suffices to prove Theorem \ref{theorem2} for $G_{\text{ad}}$. 

\subsection{Reduction to simply connected groups}\label{red-2} We may write $w_{\text{ad}}$ as $w_{\text{ad}}=w' \t$ for $w' \in \brW_a$ and $\t \in \Omega_{\text{ad}}$. Let $H_{\text{ad}}$ be the inner form of $G_{\text{ad}}$ with $G_{\text{ad}}(\brF)=H_{\text{ad}}(\brF)$ and the Frobenius on $H_{\text{ad}}$ is given by $\sigma'=\text{int}(\dot{\tau})\circ\sigma$. Then we have an identification of buildings $B(G_{\text{ad}},\brF)\cong B(H_{\text{ad}},\brF)$. Since $\tau\in\Omega$, the alcove $\breve{\mathbf{a}}$ corresponds to a $\sigma'$-invariant alcove in $B(H_{\text{ad}},\brF)$ and it's corresponding Iwahori subgroup can be identified with $\breve{\mathcal{I}}_{\text{ad}}$. It is then straightforward to check that the natural identification $G_{\text{ad}}(\brF)=H_{\text{ad}}(\brF)$ induces an isomorphism $$X^{G_{\text{ad}}}_{w_{\text{ad}}}(b)\cong X^{H_{\text{ad}}}_{w'}(b \dot \t \i).$$

Let $H$ be the simply connected cover of $H_{\text{ad}}$ and $\pi: H \to H_{\text{ad}}$ be the projection. Since $w' \in \brW_a$, $X^{H_{\text{ad}}}_{w'}(b \dot \t \i)=\emptyset$ unless $\k_{H_{\text{ad}}} (b \dot \t \i)=1$. In the latter case, we may replace $b \dot \t \i$ by an element $\pi(b')$, where $b' \in H$. 

We have $X^{H_{\text{ad}}}_{w'}(\pi(b'))=\sqcup_{\g \in \Omega_{\text{ad}}} X^{H_{\text{ad}}}_{w'}(\pi(b'))^\g$. Moreover, $X^{H_{\text{ad}}}_{w'}(\pi(b'))^\g \cong X^H_{\g w' \g \i}(b')$. 

As a summary, we have the identification $X^G_w(b) \cong \sqcup_{\g} X^H_{\g w' \g \i}(b')$. It is easy to check that $w$ is $\s$-straight if and only if $\g w' \g \i$ is $\s'$-straight for some (or equivalently, any) $\g \in \Omega_{\text{ad}}$. 

Thus to prove Theorem \ref{theorem2}, we only need to consider simply connected groups. 

\

We have the following result on the dimension of irreducible components of affine Deligne-Lusztig varieties. 

\begin{proposition}\label{prop2}
Let $w \in \brW$ and $b \in G(\brF)$ such that $X_w(b) \neq \emptyset$. If $w$ is not $\s$-straight, then $\dim Y>0$ for any irreducible component $Y$ of $X_w(b)$.
\end{proposition}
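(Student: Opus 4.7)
The plan is to proceed by induction on $\ell(w)$, using the standard Deligne--Lusztig reduction for affine Deligne--Lusztig varieties together with Theorem \ref{HN-min}(1). The reduction lemma I will invoke (cf.\ the equal characteristic version in \cite{He99}, extended to mixed characteristic via the perfect scheme formalism of \cite{BS} and \cite{Zhu}) says: for any $s \in \brbS$, if $\ell(sw\s(s)) = \ell(w)$ then $X_w(b) \cong X_{sw\s(s)}(b)$, while if $\ell(sw\s(s)) = \ell(w)-2$ then $X_w(b)$ decomposes as a disjoint union of two locally closed pieces, each of which is a fibration of relative dimension one (with fiber $\BA^{1,\mathrm{perf}}$ or $\mathbb{G}_m^{\mathrm{perf}}$) over $X_{sw}(b)$ or $X_{sw\s(s)}(b)$.

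\textbf{Case A: $w$ is not of minimal length in its $\s$-conjugacy class.} By Theorem \ref{HN-min}(1) there is a chain $w = v_0 \xrightarrow{s_1}_\s v_1 \xrightarrow{s_2}_\s \cdots \xrightarrow{s_k}_\s v_k$ ending at a minimal length element, with at least one strict length drop. Let $j$ be the first index at which $\ell(v_j) < \ell(v_{j-1})$. Iterated use of the equal length case of the reduction lemma gives $X_w(b) \cong X_{v_{j-1}}(b)$, and applying the length dropping case to $v_{j-1}$ partitions $X_{v_{j-1}}(b)$ into two locally closed pieces, each a relative dimension one fibration over a (nonempty) ADLV at strictly smaller length. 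Every irreducible component of $X_w(b)$ is then either an irreducible component of the closed piece or the closure of a component of the open piece; in either case the one dimensional fiber forces $\dim Y \ge 1$.

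\textbf{Case B: $w$ is of minimal length in its $\s$-conjugacy class but not $\s$-straight.} Here I appeal to the structural description of $X_w(b)$ for minimal length $w$ given in \cite[\S 3]{He99}: up to the $J_b$ action, $X_w(b)$ admits an iterated affine space bundle structure over a classical Deligne--Lusztig variety attached to a Levi subgroup of the reductive quotient of a standard parahoric. In particular $X_w(b)$ is equidimensional of dimension $\ell(w) - \langle \bar\nu_w, 2\rho\rangle$, which is strictly positive precisely because $w$ is not $\s$-straight.

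The hard part will be Case B: Case A is mechanical once one accepts the reduction lemma, since both fibration types have relative dimension one and so a positive dimensional fiber is forced. Case B, by contrast, rests on the nontrivial structural classification of ADLVs at minimal length elements, and also requires verifying that the constructions of \cite{He99}, originally phrased in equal characteristic, transport cleanly to the Witt vector affine flag variety; granted the perfection setup of \cite{BS} and \cite{Zhu}, this transport is routine but must be spelled out.
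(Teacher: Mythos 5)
Your proposal is correct and follows essentially the same route as the paper: for $w$ not of minimal length, Theorem \ref{HN-min}(1) plus the Deligne--Lusztig reduction of \cite[Proposition 4.2]{He99} forces every component to dominate a component of a shorter ADLV with one-dimensional fibers, and for minimal length $w$ the dimension formula $\ell(w)-\langle\bar\nu_b,2\rho\rangle$ from \cite[Theorems 3.5 and 4.8]{He99} is positive exactly when $w$ is not $\s$-straight. The paper simply cites the equidimensionality statement of \cite[Theorem 4.8]{He99} in your Case B rather than re-deriving it from the fibration structure, but the content is identical.
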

\begin{proof}
We follow the strategy of \cite[Theorem 6.1]{He99}. If $w$ is not minimal length in it's $\sigma$-conjugacy class, then by Theorem \ref{HN-min} (1), there exists $w' \in \brW$ and $s\in\brbS$ such that $w \approx_\s w'$ and $sw'\sigma(s)<w'$. By the reduction method \`a la Deligne and Lusztig (\cite[Proposition 4.2]{He99}) we have:

\begin{itemize}
\item  $X_{w'}(b)$ is universally homeomorphic to $X_w(b)$.

\item For any irreducible component of $Y_1$ of $X_{w'}(b)$, there exists an irreducible component $Y_2$ of $X_{sw'}(b)$ or $X_{sw'\sigma(s)}(b)$, such that $\dim Y_1>\dim Y_2$.
\end{itemize}

Therefore $\dim Y>0$ for any irreducible component $Y$ of $X_w(b)$. 

Now suppose $w$ is of minimal length in its $\sigma$-conjugacy class. By \cite[Theorem 3.5]{He99}, we have $X_w(b)\neq\emptyset$ if and only if $\bar \nu_b=\overline{\nu}_w$ and $\kappa(b)=\kappa(w)$. By \cite[Theorem 4.8]{He99}, every irreducible component of $X_w(b)$ is of dimension $\ell(w)-\langle\overline{\nu}_b,2 \rho\rangle$. In particular, the dimension is $0$ if and only if $\ell(w)=\langle\overline{\nu}_b,2 \rho\rangle$,  i.e. $w$ is $\sigma$-straight. 
\end{proof}

\subsection{} Following \cite[\S 9.6]{DL}, we view $X_w(b)$ as in the intersection in $\mathcal{FL}\times \mathcal{FL}$ of the graph of the map $b\sigma$ and the $L^+G$ orbit $O(w)$ of the point $(w,1)$.
We write $p_1,p_2:\mathcal{FL}\times\mathcal{FL}\rightarrow \mathcal{FL}$ for the projection maps. Note that these projections give $O(w)$ the structure of an  $O_w$ bundle over $\mathcal{FL}$.

The Iwahori-Weyl group $\brW$ acts on $\Pic\mathcal{FL}$. In the equal characteristic situation this follows from the identification of $\Pic\mathcal{FL}$ with the set of fundamental affine weights for the Kac-Moody algebra.  In mixed characteristic, there is no such identification, however one may still define such an action using the Cartan matrix $(A_{ij})_{i,j\in\brbS}$ of the Iwahori-Weyl group. Explicitly, let $s_{i}$ be a simple reflection corresponding to $i\in\brbS$. We let 
$$s_i\mathscr{L}(\epsilon_i)=\mathscr{L}(\epsilon_i)-\sum_{j\in\brbS}A_{ij}\mathscr{L}(\epsilon_j).$$
We then extend it linearly to an action on $\Pic\mathcal{FL}$. We have the following result.

\begin{proposition}\label{lemma3} Let $\mathscr{L}$ be a line bundle on $\mathcal{FL}$ and $w\in\mathcal{FL}$.  Then the restriction to $O(w)$ of the line bundle $p_1^*(w\mathscr{L})\otimes p_2^*\mathscr{L}^{-1}$ on $\mathcal{FL}\times\mathcal{FL}$ is trivial.
\end{proposition}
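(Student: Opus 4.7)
My plan is to prove the Proposition by induction on $\ell(w)$, reducing the general case to a simple reflection $w = s_i$ via a convolution argument and then settling the base case by a Picard-group calculation on the $\mathbb{P}^{1,p^{-\infty}}$-bundle $\overline{O(s_i)} \to \mathcal{FL}$.

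For the inductive step, given $w = s \cdot w'$ with $s \in \brbS$ and $\ell(w') = \ell(w) - 1$, I will form the convolution
\[
C = \{(x_1,x_2,x_3) \in \mathcal{FL}^3 : (x_1,x_2) \in O(s),\ (x_2,x_3) \in O(w')\},
\]
which, because $\ell(s) + \ell(w') = \ell(w)$, maps isomorphically onto $O(w)$ via $(x_1,x_2,x_3) \mapsto (x_1,x_3)$. On $C$ the factorization
\[
p_1^*(w\mathscr{L}) \otimes p_3^*\mathscr{L}^{-1} \;\cong\; \bigl[p_1^*(s(w'\mathscr{L})) \otimes p_2^*(w'\mathscr{L})^{-1}\bigr] \otimes \bigl[p_2^*(w'\mathscr{L}) \otimes p_3^*\mathscr{L}^{-1}\bigr]
\]
displays the bundle as a tensor product of pullbacks from $O(s)$ and from $O(w')$ of line bundles of the form treated in the Proposition. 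These are trivial by, respectively, the base case (applied to $s$ and the line bundle $w'\mathscr{L}$) and the inductive hypothesis (applied to $w'$ and $\mathscr{L}$), so the product is trivial on $C \cong O(w)$.

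For the base case $w = s_i$, I will identify $\overline{O(s_i)}$ with the fiber product $\mathcal{FL} \times_{Gr_{\mathcal{P}_i}} \mathcal{FL}$, which is a Zariski-locally trivial $\mathbb{P}^{1,p^{-\infty}}$-bundle over $\mathcal{FL}$ via $p_2$ and carries the diagonal section $\Delta : y \mapsto (y,y)$. Arguing as in Proposition~\ref{Pic} yields a decomposition $\Pic(\overline{O(s_i)}) = p_2^*\Pic(\mathcal{FL}) \oplus \mathbb{Z}[\tfrac{1}{p}]\cdot [\Delta]$. Writing $\mathscr{L} = \mathscr{L}(\lambda)$, restriction to a fiber shows that $\mathcal{M} := p_1^*(s_i\mathscr{L}) \otimes p_2^*\mathscr{L}^{-1}$ has fiber degree $(s_i\lambda)(\alpha_i^\vee) = -\lambda(\alpha_i^\vee)$, while $[\Delta]$ has fiber degree $1$; hence $\mathcal{M} \otimes [\Delta]^{\lambda(\alpha_i^\vee)}$ lies in $p_2^*\Pic(\mathcal{FL})$. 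Its base class is extracted via $\Delta^*$: combining $\Delta^*\mathcal{M} = s_i\mathscr{L} \otimes \mathscr{L}^{-1} = \mathscr{L}(-\lambda(\alpha_i^\vee)\alpha_i)$ with $\Delta^*[\Delta] = N_{\Delta/\overline{O(s_i)}} = \mathscr{L}(\alpha_i)$ produces a trivial class. Thus $\mathcal{M} = [\Delta]^{-\lambda(\alpha_i^\vee)}$ in $\Pic(\overline{O(s_i)})$, and restriction to the open complement $O(s_i) = \overline{O(s_i)} \setminus \Delta$ kills $[\Delta]$, yielding the triviality.

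The main technical obstacle will be justifying the identification $N_{\Delta/\overline{O(s_i)}} \cong \mathscr{L}(\alpha_i)$ together with the $\mathbb{P}^1$-bundle Picard decomposition in the perfect scheme category of the Witt vector affine flag variety. Both should follow by tracking the root-space weight through $\brI$-equivariance and by reusing the Demazure-resolution style arguments from Section~\ref{3}, but the transition from the equi-characteristic picture to the Witt vector setting needs to be carried out carefully.
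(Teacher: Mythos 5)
Your inductive step is the same argument as the paper's: the paper phrases it as the twisted product $O(s_1)\tilde{\times}\cdots\tilde{\times}O(s_n)$ carrying the bundle $(\mathscr{L}_1,-\mathscr{L}_2)\tilde{\times}\cdots\tilde{\times}(\mathscr{L}_n,-\mathscr{L}_{n+1})$ with $\mathscr{L}_i=s_i\mathscr{L}_{i+1}$, which is exactly your telescoping factorization on the convolution space $C$. Your base case, however, takes a genuinely different route. The paper uses the isomorphism $p_2^*\colon\Pic\mathcal{FL}\cong\Pic O(s_i)$ and then checks, for each $j\in\brbS$, that $p_1^*\mathscr{L}$ and $p_2^*(s_i\mathscr{L})$ agree on $O(s_i)^{s_j}$: for $\langle s_i,s_j\rangle$ finite this is quoted from Deligne--Lusztig, and for $\langle s_i,s_j\rangle$ infinite it is done by an explicit coordinate computation identifying $S_{s_js_i}$ with the perfection of the Hirzebruch surface $F_2$ and reading off the intersection matrix. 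Your version, working globally on $\overline{O(s_i)}=\mathcal{FL}\times_{Gr_{\mathcal{P}_i}}\mathcal{FL}$ with the diagonal divisor class, is more uniform and avoids the case division; if completed it would be a cleaner write-up.

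The gap is exactly the step you flag: the identification $\Delta^*[\Delta]\cong\mathscr{L}(\alpha_i)$, i.e.\ the computation of the degrees $\deg\bigl(\Delta^*[\Delta]\mid_{S_{s_j}}\bigr)=A_{ij}$ for all $j$. First, this cannot be phrased via the conormal sheaf on the perfect scheme itself: for the perfection of a smooth divisor the ideal satisfies $I=I^2$ (each local generator $f^{1/p^n}$ is a $p$-th power of another), so $N_{\Delta/\overline{O(s_i)}}$ as literally written is zero, and both $[\Delta]$ and its self-intersection must be defined and computed on a compatible deperfection. Second, once on a deperfection, "tracking the root-space weight through $\brI$-equivariance" is not available in mixed characteristic: there is no Kac--Moody or Lie-algebra model of the Witt vector flag variety, which is precisely why the paper resorts to writing down coordinates on $L^+\mathcal{P}_j\times^{L^+\brI}L^+\mathcal{P}_i/L^+\brI$ and computing the self-intersection of the directrix to be $-2$ by hand (and notes in a remark that one must check the chosen deperfection is compatible with the fixed identifications $S_{s_i}\cong\mathbb{P}^{1,p^{-\infty}}$). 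Your needed equality $\deg\bigl(\Delta^*[\Delta]\mid_{S_{s_j}}\bigr)=A_{ij}$ is equivalent, over each affine rank-two subdiagram, to that Hirzebruch-surface calculation, so the content you have deferred is the actual mathematical core of the paper's proof rather than a routine verification. To close the gap you would need to carry out that coordinate computation (or an equivalent one for each type $A_1$ and $C$-$BC_1$ affine rank-two subsystem) on an explicit deperfection.
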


\begin{proof}
Let $s_1\cdots s_n$ be a reduced word decomposition for $w$. We use the interpretation of $O(w)$ as the twisted product $O(s_1)\tilde{\times}\cdots\tilde{\times}O(s_n)$ as in \cite[\S A.1.3]{Zhu}. Consider the line bundle $(\mathscr{L}_1,-\mathscr{L}_2)\tilde{\times}\cdots\tilde{\times}(\mathscr{L}_n,\-\mathscr{L}_{n+1})$ (here $(\mathscr{L}_i,-\mathscr{L}_{i+1})$ denotes the line bundles $p_1^*\mathscr{L}_i\otimes p_2^*\mathscr{L}^{-1}$ on $O(s_i)$). This corresponds to the line bundle $(\mathscr{L}_1,-\mathscr{L}_{n+1})=p_1^*\mathscr{L}_1\otimes p_2^*\mathscr{L}_{n+1}^{-1}$ on $O(w)$. Setting $\mathscr{L}_{i}=s_i\mathscr{L}_{i+1}$ and $\mathscr{L}_{n+1}=\mathscr{L}$ we obtain the line bundle $p_1^*(w\mathscr{L})\otimes p_2^*\mathscr{L}$ on $O(w)$. Thus it suffices to consider the case where $w=s_i$ is a simple reflection.

Since $O(s_i)\rightarrow \mathcal{FL}$ is fibration with fiber $\mathbb{A}^{1,p^{-\infty}}$, we have an isomorphism $p_2^*: \Pic\mathcal{FL}\cong \Pic O(s_i)$. Let $O(s_i)^{s_j}$ denote the pullback of $p_2:O(s_j)\rightarrow \mathcal{FL}$ along $S_{s_j}\rightarrow\mathcal{FL}$. Then it suffices to check for all $s_j\in\brbS$, the restriction of $p_1^*\mathscr{L}$ and $ p_2^*s_i\mathscr{L}$ to $O(s_i)^{s_j}$ are isomorphic. When  $s_j$ and $s_i$ generate a finite subgroup of $\brW$, this follows from \cite[\S9.6]{DL}. Indeed in this case all computations take place within the perfection of $\overline{\brK}^{red}/\overline{B}\times \overline{\brK}^{red}/\overline{B}\subset\mathcal{FL}\times\mathcal{FL}$. Here $\overline{\brK}^{red}$ is the reductive quotient of the special fiber of the parahoric corresponding to $\{s_i,s_j\}$ and $\overline{B}$ is the image of $\brI$, so that $\overline{\brK}^{red}/\overline{B}$ is finite flag variety.

Thus we only need to consider the case where $\{s_i,s_j\}$ generate an infinite subgroup of $\brW$. In this case the affine root system generated by $s_i,s_j$ is of type $A_1$ or $C$-$BC_1$ (here we use the notation of Tits' table). We do the calculation for type $A_1$, the case of type $C$-$BC_1$ is similar.

For the case of type $A_1$, we may reduce to the case $G=SL_2$. Let $\brI, \mathcal{P}_i,\mathcal{P}_j$ denote the subgroups of $SL_2(\mathcal{O}_{\brF})$ consisting of matrices of the form
$$\brI=\left(\begin{matrix}
 * & *\\ p* & *
\end{matrix}\right), \ \ \ \mathcal{P}_i=\left(\begin{matrix}
* & *\\ * & *
\end{matrix}\right),\ \ \ \mathcal{P}_j=\left(\begin{matrix}
* & p^{-1}*\\ p* & *
\end{matrix}\right)$$where $*$ denotes an element of $\mathcal{O}_{\brF}$.
All calculations take place within the product of Schubert varieties $S_{s_js_i}\times S_{s_j}$. We have an isormorphism $S_{s_js_i}\cong L^+\mathcal{P}_j\times^{L^+\mathcal{\brI}}L^+\mathcal{P}_i/L^+\brI$ and the projection onto $L^+\mathcal{P}_j/L^+\brI$ presents $L^+\mathcal{P}_j\times^{L^+\mathcal{\brI}}L^+\mathcal{P}_i/L^+\brI$ as a $\mathbb{P}^{1,p^{-\infty}}$ bundle over $\mathbb{P}^{1,p^{-\infty}}$. We prove that 

(a)  $L^+\mathcal{P}_j\times^{L^+\mathcal{\brI}}L^+\mathcal{P}_i/L^+\brI$ is isomorphic to the perfection of the Hirzebruch surface $F_2$. The section $L^+\mathcal{P}_j\rightarrow L^+\mathcal{P}_j\times^{L^+\mathcal{\brI}}L^+\mathcal{P}_i/L^+\brI$ is the directrix.

We verify (a) by direct computation using coordinates. Indeed we identify $L^+\mathcal{P}_j/L^+\brI$ with $\mathbb{P}^{1,p^{-\infty}}$ by using the coordinates: $$\left(\begin{matrix}
a & b \\ c & d
\end{matrix}\right)\mapsto [a\bmod p: c\bmod p].$$ 
For $(p_1,p_2)\in L^+\mathcal{P}_j\times^{L^+\mathcal{\brI}}L^+\mathcal{P}_i/L^+\brI$, the matrix $B=p_1p_2$ is well-defined up to left multiplication by an element of $\brI$. Let $x, y$ be the  entries $B_{2,1}$ and $B_{2,2}$. One checks over the open affine subset $c\neq 0$, the coordinates $$\frac{a}{c}\bmod p\times [py\bmod p, \frac{a}{c}y-x\bmod p ]$$ gives a well-defined trivialisation of this $\mathbb{P}^{1,p^{-\infty}}$ bundle. Similarly over $a\neq 0$, we have a trivialisation given by $$\frac{c}{a}\bmod p\times[px\bmod p, \frac{c}{a}x-y\bmod p].$$
The gluing isomorphism over the fiber of a point $[a:c]$ is given by $[s:t]\mapsto [s:\frac{a^2}{c^2}t]$, hence this is the perfection of the Hirszebruch surface $F_2$. The second part of the claim follows easily from the description using these coordinates.

Now (a) is proved.

We write $\mathscr{L}(\epsilon_i)$ (resp $\mathscr{L}(\epsilon_j)$) for the line bundle which has degree 1 (resp. 0) on $S_{s_i}$ and degree 0 (resp. 1) on $S_{s_j}$.

We first check the case $\mathscr{L}=\mathscr{L}(\epsilon_i)$. Then $p_2^*\mathscr{L}_{O(s_i)^{s_j}}$ is trivial, it remains to show that $p_1^*s_i\mathscr{L}(\epsilon_i)$ is trivial on $O(s_i)^{s_j}$. We have the two divisors $S_{s_i}$, $S_{s_j}$ inside the Hirzebruch surface $S_{s_js_i}$ which are generators for the divisor class group. The intersection pairing for this basis has matrix 
 $$\left(\begin{matrix} 0 & 1 \\ 1 & -2\end{matrix}\right).$$ 
We have $s_i\mathscr{L}(\epsilon_i)=-\mathscr{L}(\epsilon_i)+2\mathscr{L}(\epsilon_j)$. Since $\deg s_i\mathscr{L}(\epsilon_i)|_{S_{s_i}}=-1$ and $\deg s_i\mathscr{L}(\epsilon_i)|_{S_{s_i}}=2$, we have $s_i\mathscr{L}(\epsilon_i)=\mathcal{O}_{S_{s_js_i}}(-S_{s_j})$. Thus $s_i\mathscr{L}(\epsilon_i)$ has a non-zero section on $S_{s_js_i}-S_{s_j}$. Since $O(s_i)^{s_j}$ does not intersect $S_{s_j}\times S_{s_j}$, we have $p_1^*s_i\mathscr{L}(\epsilon_i)$ is trivial on $O(s_i)^{s_j}$.

We then check the case $\mathscr{L}=\mathscr{L}(\epsilon_j)$, then $\mathscr{L}(\epsilon_j)$ is the pullback of $\mathscr{L}(\epsilon_j)$ along $S_{s_js_i}\rightarrow S^\flat_{s_js_i}$. We have $s_i\mathscr{L}(\epsilon_j)=\mathscr{L}(\epsilon_j)$ and both $p_1^*\mathscr{L}(\epsilon_j)_{O(s_i)^{s_j}}$ and $p_2^*\mathscr{L}(\epsilon_j)_{O(s_i)^{s_j}}$ have sections vanishing on the divisor $O_{s_i}\times 1\subset O(s_i)^{s_j}$. Hence these line bundles are isomorphic. 

Since $\mathscr{L}(\epsilon)$ and $\mathscr{L}(\epsilon_j)$ form a basis for $\Pic \mathcal{FL}$, this proves the result for type $A_1$.
\end{proof}

\begin{remark}
To be precise, in order to carry out the calculation using intersection theory in the previous Lemma, one must work with a deperfection of the Schubert varieties $S_{s_js_i}$. It can be checked that the deperfections corresponding to the choice of coordinates in a) is compatible with the deperfections $S_{s_i}\cong \mathbb{P}^{1,p^{-\infty}}$ in Remark \ref{remark1}. In particular the line bundles $\mathscr{L}(\epsilon_i)$ and $\mathscr{L}(\epsilon_j)$ come from pullback from the deperfection $F_2$ of $S_{s_js_i}$. One may then perform the calculations over $F_2$ to obtain the result.
\end{remark}

\begin{proposition}\label{prop1}
Let $w \in \brW$ and $b \in G(\brF)$ such that $X_w(b) \neq \emptyset$. Let $Y$ be an irreducible component of $X_w(b)$. If $\text{char}(F)>0$, then $Y$ is a quasi-affine variety. If $\text{char}(F)=0$, then $Y$ is a the perfection of a quasi-affine variety.
\end{proposition}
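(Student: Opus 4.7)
The strategy is to construct an ample line bundle on an ambient projective scheme containing $X_w(b)$ that becomes trivial on $X_w(b)$. Since the restriction of an ample line bundle to any locally closed subscheme remains ample, this forces $\CO_{X_w(b)}$ to be ample, hence $X_w(b)$ to be quasi-affine; an irreducible component $Y$ of a quasi-affine scheme is automatically quasi-affine (being locally closed in an affine scheme), and in mixed characteristic the same argument carried out for perfect schemes yields $Y$ as the perfection of a quasi-affine scheme.

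First I would embed $X_w(b)$ as a locally closed subscheme of the projective Schubert variety $S_w\subset\mathcal{FL}$, using that $X_{\le w}(b)$ is closed in $S_w$; by Theorem \ref{theorem1} there are ample line bundles on $\mathcal{FL}$ with an explicit combinatorial description. Applying Proposition \ref{lemma3} to the identification $X_w(b)=O(w)\cap\Gamma_{b\sigma}$ via $g\mapsto(b\sigma(g),g)$, one obtains that $(b\sigma)^*(w\mathscr{L})\otimes\mathscr{L}^{-1}$ is trivial on $X_w(b)$ for every $\mathscr{L}\in\Pic(\mathcal{FL})$.

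The next step is to simplify this relation. Left-multiplication by $b$ acts trivially on the Picard group of each connected component of $\mathcal{FL}$ (a connected group acts trivially on the Picard group of a projective variety), so $(b\sigma)^*=\sigma^*$ on $\Pic$. Using the observation recorded in the remark following Theorem \ref{theorem1} that the $q$-Frobenius acts as multiplication by $q$ on the Picard group of a perfect $\mathbb{F}_q$-scheme, we identify $\sigma^*\mathscr{M}\cong q\mathscr{M}$ (additive notation). Hence $\mathscr{N}(\mathscr{L}):=q\cdot w\mathscr{L}-\mathscr{L}$ is trivial on $X_w(b)$ for every $\mathscr{L}\in\Pic(\mathcal{FL})$. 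One may then choose $\mathscr{L}\in\Pic(\mathcal{FL})\otimes\BQ$ so that $\mathscr{N}(\mathscr{L})$ has strictly positive coefficients in the basis $\{\mathscr{L}(\epsilon_\iota)\}_{\iota\in\brbS}$; by Theorem \ref{theorem1} this makes $\mathscr{N}(\mathscr{L})$ ample. This is possible because the linear operator $qw-I$ on the finite-dimensional $\BQ$-vector space $\Pic(\mathcal{FL})_{\BQ}$ is invertible for $q\ge 2$ (no eigenvalue of $w$ on $\Pic_{\BQ}$ equals $1/q$). Clearing denominators produces the required ample line bundle that is trivial on $X_w(b)$.

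The hard part will be rigorously justifying the simplification $(b\sigma)^*\cong q\cdot\mathrm{id}$ on $\Pic$---in particular, identifying the arithmetic Frobenius $\sigma$ coming from $\breve F/F$ with a relative or absolute $q$-Frobenius on $\mathcal{FL}$ as a perfect $\mathbb{F}_q$-scheme---together with the linear-algebra verification that $qw-I$ is invertible on $\Pic_{\BQ}$. Once these are in hand, the conclusion follows quickly, and each step of the plan holds verbatim in both the equal and mixed characteristic settings (in the latter case with the obvious adjustments to perfect schemes).
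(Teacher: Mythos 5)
Your overall strategy is the paper's: realize $X_w(b)$ inside $O(w)$ via the graph of $b\sigma$, use Proposition \ref{lemma3} to produce a class in $\Pic(\mathcal{FL})$ that restricts to the trivial bundle on $X_w(b)$, and then use the invertibility of a linear operator on $\Pic(\mathcal{FL})_{\BQ}$ together with the ampleness criterion of Theorem \ref{theorem1} to arrange that this class is ample, whence $\CO_{X_w(b)}$ is ample and $X_w(b)$ (hence each component $Y$) is quasi-affine. However, as written there are genuine gaps in the execution. First, you must begin by reducing to the case that $G$ is simply connected (the paper does this via \S\ref{red-1} and \S\ref{red-2}): Theorem \ref{theorem1} is only stated for simply connected $G$, and your claim that left multiplication by $b$ lies in a connected family of automorphisms uses $\pi_0(LG)\cong\pi_1(G)_\Gamma=0$, which fails otherwise (for general $G$, $L_b$ even permutes the components of $\mathcal{FL}$). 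Second, the identification $\sigma^*\mathscr{M}\cong q\mathscr{M}$ is false in general: $\sigma$ is the arithmetic Frobenius of $\brF/F$ and acts on $\Pic(\mathcal{FL})\cong\bigoplus_{\iota\in\brbS}\BZ[\frac{1}{p}]\epsilon_\iota$ as $q$ times the permutation of $\brbS$ induced by $\sigma$. This permutation is nontrivial precisely when $G$ is not residually split, a case the proposition must cover; the correct operator is $q\varsigma w-I$ (in your notation), not $qw-I$.

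Third, and most substantively, your justification of invertibility — ``no eigenvalue of $w$ on $\Pic_{\BQ}$ equals $1/q$'' — is asserted without proof and is not obvious: $w$ is an element of the affine Weyl group, typically of infinite order, so you cannot simply say its eigenvalues are roots of unity. The paper's device is to raise the relevant operator to a suitable power $n$ so that it becomes $\sigma^n$ times a translation element $t^{\mu}$; translations act unipotently on $\Pic(\mathcal{FL})_{\BQ}$, so the $n$-th power has all eigenvalues equal to $q^n\neq 1$, and invertibility of (operator)$-I$ follows. Some version of this computation is required in your argument, and it goes through verbatim for the corrected operator $q\varsigma w-I$. Finally, note that the paper does not invoke triviality of $L_b^*$ on $\Pic(\mathcal{FL})$ at all: it replaces $b$ by a representative $\dot x$ with $x\in\brW$ of its $\sigma$-conjugacy class and works directly with the operator $x\sigma w^{-1}$ on $\Pic(\mathcal{FL})$. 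If you keep your route, the assertion that a point of the connected ind-scheme $LG$ acts trivially on $\Pic(\mathcal{FL})=\varprojlim_v\Pic(S_v)$ needs an actual argument (a seesaw argument using $H^1(S_v,\CO_{S_v})=0$ and the fact that $b$ and $1$ lie in a common connected quasi-compact piece of $LG$); it is plausible but is an additional claim, not a standard citation.
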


\begin{proof}
By \S \ref{red-1} and \S \ref{red-2}, it suffices to consider the case where $G$ is simply connected. By \cite{PR} (for $\text{char}(F)>0$) and Theorem \ref{theorem1} (for $\text{char}(F)=0$), we have $$\text{Pic}(\mathcal{FL})=\begin{cases} \bigoplus_{\iota\in\brbS}\mathbb{Z}(\epsilon_i), & \text{ if } \text{char}(F)>0; \\ \bigoplus_{\iota\in\brbS}\mathbb{Z}[\frac{1}{p}](\epsilon_i), & \text{ if } \text{char}(F)=0.\end{cases}$$ The Frobenius $\sigma$ induces an action on $\mbox{Pic}\mathcal{FL}$ which can be identified with action of $\sigma$ on $\brbS$ multiplied by a factor of $q$. 

Since any $\s$-conjugacy class of $G$ is represented by an element in $\brW$, we may assume that $b=\dot x$ for some $x \in \brW$. Then in the group $\brW \rtimes \<\s\>$, we have $x \s w \i=x (\s w \i x) x \i$ and for sufficiently divisible $n$, we have $(x \s w \i)^n=x \s^n t^{n \nu_{w \i x}} x \i=\s^n t^{n x(\nu_{w \i x})}$. In particular, $(x \s w \i)^n$ acts on $\text{Pic}(\mathcal{FL})$ as $q^n t^{n x(\nu_{w \i x})}$ and therefore has no eigenvalue $1$. So the action of $x \s w \i-1$ on $\text{Pic}(\mathcal{FL})_{\BQ}$ is invertible. 

In particular, there exists $\mathscr{L}\in \mbox{Pic}\mathcal{FL}$ such that $(x \s w \i)(\mathscr{L})-\mathscr{L}$ is dominant and regular (i.e. it is of the form $\oplus\lambda_\iota\epsilon_\iota$ with $\lambda_\iota>0$ for all $\iota$). The restriction to $X_w(b)$ of the corresponding line bundle $p_1^*\mathscr{L}\otimes p_2^*(w^{-1}\mathscr{L})^{-1}$ is ample. The statement then follows from the Lemma \ref{lemma3}. 
\end{proof}

\subsection{Proof of Theorem \ref{theorem2}}
Let $x$ be a minimal length element in $\{w \in C; Y\cap X_{w}(b)\neq\emptyset\}$. Then $Y$ contains an irreducible component $Y_1$ of $X_x(b)$.

Let $\overline{Y}_1$ be the closure of $Y_1\in X_C(b)$. By minimality of $x$, we have $\overline{Y}_1\cap X_{<x}=\emptyset$, hence $\overline{Y}_1=Y_1$ and $Y_1$ is projective. By proposition \ref{prop1}, $Y_1$ is also quasi-affine. Thus $\dim Y=0$. It then follows from proposition \ref{prop2} that $x$ is a $\sigma$-straight element.

\section{Structure of $\s$-centralizer}

\subsection{} Let $b \in G(\brF)$ and $J_b=\{g \in G(\brF); g \i b \s(g)=b\}$ be the $\s$-centralizer of $b$. Then $J_b$ acts by left multiplication on $X_w(b)$ for any $w\in \brW$. It is proved in \cite[Theorem 3.5 \& Theorem 4.8]{He99} that

\begin{theorem}\label{jb-X}
Let $w \in \brW$ be a $\s$-straight element and $b \in G(\brF)$. Then $X_w(b) \neq \emptyset$ if and only if $\bar \nu_b=\bar \nu_w$ and $\k(b)=\k(w)$. In this case, $J_b$ acts transitively on $X_w(b)$. 
\end{theorem}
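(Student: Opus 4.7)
The plan is to reduce both assertions to a single fundamental property of $\sigma$-straight elements: for $w \in \brW$ $\sigma$-straight, every element of $\brI \dot w \brI$ is $\sigma$-conjugate under $\brI$ to $\dot w$. This property simultaneously pins down the Newton point on the double coset (since $\bar\nu$ is a $\sigma$-conjugation invariant, it is forced to equal $\bar\nu_w$ on all of $\brI \dot w \brI$) and is compatible with the Kottwitz invariant, which is already constant on $\brI \dot w \brI$ because $\tilde\kappa$ vanishes on $\brI$.

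Granting this property, the ``only if'' direction of the first assertion is immediate: given $g \in X_w(b)$, adjust it by a suitable $i \in \brI$ so that $(gi)^{-1} b \sigma(gi) = \dot w$, exhibiting $b$ as $G(\brF)$-$\sigma$-conjugate to $\dot w$, whence $\bar\nu_b = \bar\nu_w$ and $\kappa(b) = \kappa(w)$. For the ``if'' direction, the equality of invariants together with the injectivity of $(\bar\nu,\kappa)$ on $B(G)$ and Theorem \ref{str-bg} force $[b] = [\dot w]$ in $B(G)$, so some $G(\brF)$-$\sigma$-conjugate of $b$ equals $\dot w \in \brI \dot w \brI$, producing a point of $X_w(b)$. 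For the transitivity, take $g_1, g_2 \in X_w(b)$, use the fundamental property to find $i_1, i_2 \in \brI$ with $(g_k i_k)^{-1} b \sigma(g_k i_k) = \dot w$ for $k = 1,2$, and set $h := g_2 i_2 (g_1 i_1)^{-1}$. A direct manipulation of the defining relation then gives $h \in J_b$ and $h \cdot g_1 \brI = g_2 \brI$.

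The main obstacle is the fundamental property itself, essentially \cite[Theorem 3.5]{He99}. I would approach it by induction on $\ell(w)$ using the reduction method \`a la Deligne--Lusztig: if there exists $s \in \brbS$ with $s w \sigma(s) < w$, then the double cosets $\brI \dot w \brI$ and $\brI \dot s \dot w \sigma(\dot s)^{-1} \brI$ are matched up by $\sigma$-conjugation via the minimal parahoric generated by $\brI$ and $s$, reducing to a shorter element. By Theorem \ref{HN-min}, this reduction terminates at a minimal length element in the $\sigma$-conjugacy class, which must itself be $\sigma$-straight. For such a minimal $\sigma$-straight $w$, the defining equality $\ell(w \sigma(w) \cdots \sigma^{n-1}(w)) = n \ell(w)$ precludes any length cancellation when iterating $\sigma$-conjugation by the affine root subgroups of $\brI$; a direct Iwahori factorization argument in these root subgroups then constructs the required $\sigma$-conjugator in $\brI$ absorbing the two $\brI$-factors of $\brI \dot w \brI$ into $\dot w$.
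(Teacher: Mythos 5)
Your reduction of the theorem to the single claim that a $\sigma$-straight $w$ is \emph{fundamental} (i.e.\ that $\brI \dot w \brI$ is a single $\sigma$-conjugacy class of $\brI$) is correct and is essentially how this result is established in the literature; note that the paper itself does not reprove the theorem but cites \cite[Theorem 3.5 \& Theorem 4.8]{He99}, and the property you need is exactly the equivalence $(1)\Leftrightarrow(2)$ of Theorem \ref{fundamental}, which the paper in turn attributes to Nie. Granting that property, your deductions of nonemptiness (via the injectivity of $(\bar\nu,\kappa)$ on $B(G)$ and the compatibility of the combinatorial and group-theoretic invariants) and of transitivity of $J_b$ (via $h=g_2i_2(g_1i_1)^{-1}$) are both correct.

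The gap is in your proof of the fundamental property itself, which is the entire substance of the theorem. First, the inductive reduction is vacuous: a $\sigma$-straight element is automatically of minimal length in its $\sigma$-conjugacy class, so there is never an $s$ with $sw\sigma(s)<w$ and you are immediately in your ``base case.'' Second, and more seriously, ``no length cancellation plus Iwahori factorization absorbs the two $\brI$-factors into $\dot w$'' does not work as stated. The actual mechanism splits $\brI$ according to the semistandard parabolic $P_{\nu_w}=M_{\nu_w}N_{\nu_w}$: on the unipotent directions ($a$ with $\langle a,\nu_w\rangle\neq 0$) the automorphism $\Ad(\dot w)\circ\sigma$ is genuinely contracting (resp.\ expanding), which is what makes the successive-approximation absorption converge — and this uses the $(v,\sigma)$-alcove property $N_v\cap \dot w\brI\dot w^{-1}\subset N_v\cap\brI$, not merely the length identity. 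On the Levi part $\brI\cap M_{\nu_w}$, which $\Ad(\dot w)\circ\sigma$ \emph{preserves} (this is the $(v,\sigma)$-fundamental condition), nothing is contracted and no iteration absorbs anything; there one must invoke Lang's theorem for the connected pro-algebraic group $\brI\cap M_{\nu_w}$ with Frobenius $\Ad(\dot w)\circ\sigma$ to see that every element is of the form $i^{-1}\,\sigma_w(i)$. Your sketch omits both the passage to the $(v,\sigma)$-fundamental structure and the Lang's theorem step, so as written the key lemma is asserted rather than proved.
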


\subsection{}\label{Mv} In this section we study the structure of $\sigma$-centralizer group $J_b$. 

We first describe the standard Levi subgroup associated to a given $\s$-conjugacy class. It is based on the notion of $P$-alcove elements introduced by G\"ortz, Haines, Kottwitz and Reuman in \cite{GHKR} for split groups and generalized in \cite{GHN} for quasi-split groups. We use here the reformation given by Nie in \cite{Ni}. It is stated for unramified (and hence quasi-split groups), but works for arbitrary reductive groups as well. 

Let $\Phi$ be the set of (relative) roots of $\G$ over $\brF$ with respect to $\brS$ and $\Phi_a$ the set of affine roots. Let $\text{Aff}(V)$ be the group of affine transformations on the apartment $V$. The roots in $\Phi$ determine hyperplanes in $V$ and the relative Weyl group $\brW_0$ can be identified with the subgroup of $\text{Aff}(V)$ generated by the reflections through these hyperplanes. For $a \in \Phi$, we denote by $U_a \subset G$ the corresponding root subgroup.

The Frobenius morphism $\s$ on $G(\brF)$ induces an affine morphism on $V$ (see \cite[1.10]{Ti}), which we still denote by $\sigma$. We denote by $\varsigma \in GL(V)$ the linear part of $\sigma$ with respect to the decomposition $\text{Aff}(V)=V \rtimes GL(V)$. 

For any $v \in V$, we set $\Phi_{v, 0}=\{a \in \Phi; \<a, v\>=0\}$ and $\Phi_{v, +}=\{a \in \Phi; \<a, v\>>0\}$. Let $M_v \subset G(\brF)$ be the Levi subgroup generated by $\brT$ and $U_a(\brF)$ for $a \in \Phi_{v, 0}$ and $N_v \subset G(\brF)$ be the unipotent subgroup generated by $U_a(\brF)$ for $a \in \Phi_{v, +}$. Set $P_v=M_v N_v$. Then $P_v$ is a semistandard parabolic subgroup and $M_v$ is a Levi subgroup of $P_v$. Here semistandard means that the parabolic subgroup contains $\brT$. 

Now we give several definitions. Let $w \in \brW$. 

\begin{itemize}
\item We say that $w$ is {\it fundamental} if $\brI \dot w \brI$ is a single $\sigma$-conjugacy class of $\brI$. 

\item We say that $w$ is a {\it $(v, \sigma)$-alcove element} if 

(1) the linear part of $w \circ \sigma \in \text{Aff}(V)$ fixes $v$;

(2) $N_v \cap \dot w \brI \dot w \i \subset N_v \cap \brI$.\footnote{The second condition stated in \cite{Ni} is $w \breve \fka \ge_a \breve \fka$ for $a \in \Phi_{v, +}$. The equivalence of that condition with our condition (2) above is explained in \cite[\S 4.1]{GHN}.}

\item We say that $w$ is {\it $(v, \sigma)$-fundamental} if $w$ is a $(v, \sigma)$-alcove element and $\dot w \sigma(\breve I \cap M_v) \dot w \i=\breve I \cap M_v$. 
\end{itemize}

Note that the condition (1) implies that $\Ad(\dot w) \circ \s$ stabilizes $M_v$. Thus if $w$ is $(v, \s)$-fundamental, then $\Ad(\dot w)$ gives a group automorphism on $\brW_v$ that preserves the set of simple reflections of $\brW_v$. Here $\brW_v=\{w \in \brW; w(v)=v\}$ is the Iwahori-Weyl group of $M_v$. It is worth mentioning that the set of simple reflections $\brbS_v$ of $\brW_v$ is determined by the Iwahori subgroup $\brI \cap M_v$ of $M_v$ and in general, $\brbS_v \not\subset \brbS$.

\smallskip

We have the following result. 

\begin{theorem}\label{fundamental}
Let $w \in \brW$. The following conditions are equivalent:

(1) The element $w$ is $\sigma$-straight;

(2) The element $w$ is fundamental;

(3) The element $w$ is a $(v, \sigma)$-fundamental element for some $v \in V$. 
\end{theorem}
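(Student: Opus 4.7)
My plan is to establish the cycle $(3) \Rightarrow (2) \Rightarrow (1) \Rightarrow (3)$, handling the implications in order of increasing difficulty.

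For $(3) \Rightarrow (2)$, I would argue by reduction to $M_v$ using the $P$-alcove mechanism. Fix a $(v,\sigma)$-fundamental $w$ and write the Iwahori factorisation $\breve I = (\breve I \cap \bar N_v)\,(\breve I \cap M_v)\,(\breve I \cap N_v)$. For any $b \in \breve I \dot w \breve I$, the alcove condition $N_v \cap \dot w \breve I \dot w^{-1} \subset N_v \cap \breve I$ allows us to $\sigma$-conjugate $b$ by successive elements of $\breve I \cap N_v$ to kill the $N_v$-component; a dual argument using that the linear part of $w\sigma$ is dilating on $\bar N_v$ (since it fixes $v$ and $w$ is a $(v,\sigma)$-alcove element) clears the $\bar N_v$-component. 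This reduces everything to the statement that $(\breve I \cap M_v) \dot w (\breve I \cap M_v)$ is a single $\sigma$-conjugacy class in $M_v$ under $\breve I \cap M_v$. But the hypothesis $\dot w\,\sigma(\breve I \cap M_v)\,\dot w^{-1} = \breve I \cap M_v$ means that $w$ has length zero inside the Iwahori-Weyl group $\breve W_v$ of $M_v$, so this double coset equals $\dot w (\breve I \cap M_v)$, which is visibly a single $\sigma$-conjugacy class.

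For $(2) \Rightarrow (1)$, suppose $w$ is fundamental. I would combine this with Theorem~\ref{HN-min}(1): if $w$ were not of minimal length in its $\sigma$-conjugacy class, there would exist $s \in \brbS$ with $sw\sigma(s) < w$ after an $\approx_\sigma$-step; then the reduction \`a la Deligne--Lusztig (\cite[Prop.~4.2]{He99}) would split $\breve I \dot w \breve I$ into pieces of strictly smaller dimension under $\sigma$-conjugation by $\breve I$, contradicting fundamentality. Hence $w$ is of minimal length in its $\sigma$-conjugacy class, and by Theorem~\ref{jb-X} the dimension formula gives $\dim X_w(\dot w) = \ell(w) - \langle \bar\nu_w, 2\rho\rangle$. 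Fundamentality of $w$ forces $X_w(\dot w)$ to be a single point (namely the base point), hence $\ell(w) = \langle \bar\nu_w, 2\rho\rangle$, which is exactly the characterisation of $\sigma$-straightness.

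The main obstacle is $(1) \Rightarrow (3)$, and this is where I would spend the most effort. Let $w$ be $\sigma$-straight. The natural candidate is to take $v = \nu_w \in V$: by definition $\nu_w$ is fixed by the linear part of $w\sigma$, so condition~(1) of being a $(v,\sigma)$-alcove element is automatic. For the remaining conditions, after possibly replacing $w$ by a $\approx_\sigma$-equivalent $\sigma$-straight element (permitted by Theorem~\ref{HN-min}(2), which doesn't change any of the three properties), one is reduced to the case where $w$ lies in a good position with respect to the alcove $\breve{\mathfrak a}$. The length identity $\ell(w) = \langle \bar\nu_w, 2\rho \rangle$ expresses $\ell(w)$ as a sum of $\langle \alpha, \nu_w \rangle$ over positive roots $\alpha$ with $\langle \alpha, \nu_w \rangle > 0$; since any affine root $a = \alpha + k$ inverted by $w\sigma$ contributes at least $|\langle \alpha, \nu_w\rangle|$ to this length (when $\langle\alpha,\nu_w\rangle \neq 0$), the equality forces the inversion set of $w\sigma$ to contain \emph{no} affine root with finite part in $\Phi_{v,0}$. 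This simultaneously yields both the containment $N_v \cap \dot w \breve I \dot w^{-1} \subset N_v \cap \breve I$ and the fact that $\mathrm{Ad}(\dot w)\sigma$ permutes the set of affine roots defining $\breve I \cap M_v$, giving $\dot w\,\sigma(\breve I \cap M_v)\,\dot w^{-1} = \breve I \cap M_v$. The bookkeeping between the relative root system for $G$ and for $M_v$ (bearing in mind that $\brbS_v$ need not be contained in $\brbS$) is the delicate point; I would handle it by carrying out the analysis inside the affine root system and then transferring back via the standard dictionary between affine roots and Iwahori root subgroups.
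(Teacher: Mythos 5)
The first thing to note is that the paper does not prove this theorem at all: its entire ``proof'' is a citation to Nie's paper \cite{Ni} for the unramified case plus the assertion that the general case follows by the same argument. So your proposal is attempting strictly more than the paper, and the real comparison is with Nie's argument. Your plans for $(3)\Rightarrow(2)$ (the $P$-alcove reduction of \cite{GHKR}/\cite{GHN} to $\brI\cap M_v$, then Lang's theorem for the connected group $\brI\cap M_v$ with Frobenius $\Ad(\dot w)\circ\sigma$) and for $(2)\Rightarrow(1)$ (Deligne--Lusztig reduction to force minimal length, then the dimension formula of \cite[Theorem 4.8]{He99}) follow the standard route and are essentially sound. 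One correction in $(2)\Rightarrow(1)$: fundamentality does \emph{not} force $X_w(\dot w)$ to be a single point; it forces $X_w(\dot w)$ to be the single $J_{\dot w}$-orbit of the base point, hence a discrete set. That still suffices, since $\dim X_w(\dot w)=0$ together with $\dim X_w(\dot w)=\ell(w)-\langle\bar\nu_w,2\rho\rangle$ for minimal-length $w$ gives straightness.

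The genuine gap is in $(1)\Rightarrow(3)$. Your choice $v=\nu_w$ is the right one (and is what \S\ref{brW-t} of the paper implicitly uses), but the argument you sketch does not deliver the two conditions. The assertion that an affine root inverted by $w\sigma$ ``contributes at least $|\langle\alpha,\nu_w\rangle|$ to the length'' is not meaningful: each inverted affine root contributes exactly $1$ to $\ell(w)$. The correct mechanism compares the inversion set of $(w\sigma)^n$ with that of the translation $t^{n\nu_w}$ via the additivity $\ell(w\sigma(w)\cdots\sigma^{n-1}(w))=n\ell(w)=\ell(t^{n\nu_w})$: the concatenated gallery from $\breve{\mathfrak{a}}$ to $t^{n\nu_w}\breve{\mathfrak{a}}$ is then reduced, so it crosses no wall $H_{\alpha+k}$ with $\langle\alpha,\nu_w\rangle=0$ (such a wall is not crossed by $t^{n\nu_w}$, so any crossing would be recrossed) and crosses every wall with $\langle\alpha,\nu_w\rangle\neq 0$ at most once and in the direction of $\nu_w$. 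Moreover, your stated conclusion --- that the inversion set avoids roots with finite part in $\Phi_{\nu_w,0}$ --- only yields $\dot w\sigma(\brI\cap M_v)\dot w^{-1}=\brI\cap M_v$; the containment $N_v\cap\dot w\brI\dot w^{-1}\subset N_v\cap\brI$ is the separate, one-sided condition on walls with finite part in $\Phi_{\nu_w,+}$ (all crossings go the ``right way''), which your argument never addresses. Finally, the preliminary reduction ``after replacing $w$ by an $\approx_\sigma$-equivalent element'' is both unnecessary and unjustified: under $w\mapsto sw\sigma(s)$ the candidate $v$ moves to $s(v)$ while $\brI$ stays fixed, so preservation of $(v,\sigma)$-fundamentality is not automatic. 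As written, $(1)\Rightarrow(3)$ would not assemble into a proof without reworking it along the lines just indicated.
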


The result is proved by Nie in \cite{Ni} for unramified groups. The general case follows from the same argument. 

\subsection{}\label{brW-t}  Let $[b] \in B(G)$. By Theorem \ref{str-bg}, there exists a $\s$-straight element $w \in \brW$ with $\dot w \in [b]$. We may take $b=\dot w$. Set $M=M_{\nu_w}$ and $\brI_M=\brI \cap M(\brF)$. By Theorem \ref{fundamental}, $w$ is $(v, \s)$-fundamental. Set $\t=\Ad(\dot w) \circ \s$. Then $\t$ induces a group automorphism on $M$ and a length-preserving action on $\brW(M)$. In particular, $\brI_M^\t$ is the Iwahori subgroup of $M(\brF)^\t$. 

By \cite[Remark 6.5]{Ko85} and \cite[\S 4.3]{Ko97}, $J_b=M(\brF)^\t$. By \cite[Lemma 1.6]{Ri}, the Iwahori-Weyl group (over $F$) of $J_b$ is $\brW_{\nu_w}^\t$. For any $w \in \brW^\t$, we denote by $n_w \in G(\breve F)$ a $\t$-stable representative of $w$. 

We have $\brW_{\nu_w}=\brW_{a, \nu_w} \rtimes \breve \Omega_{\nu_w}$, where $\brW_{a, \nu_w}$ is the affine Weyl group associated to $M$ and $\breve \Omega_{\nu_w} \subset \brW_{\nu_w}$ is the subgroup of length-zero elements. Since $\t$ is length-preserving, we have $\t(\brW_{a, \nu_w})=\brW_{a, \nu_w}$ and $\t(\breve \Omega_{\nu_w})=\breve \Omega_{\nu_w}$. In particular, 

(a) \quad $\brW_{\nu_w}^\t=\brW_{a, \nu_w}^\t \rtimes \breve \Omega_{\nu_w}^\t.$ 

The following result is proved in \cite[Theorem A.8]{Lu}. 

\begin{theorem}\label{brWa}
Let $\brbS_{\nu_w}(\t)$ be the set of $\t$-orbits $J$ on $\brbS_{\nu_w}$ with $W_J$ finite. For any $J \in \brbS_{\nu_w}(\t)$, let $w^0_J$ be the longest element of the finite Coxeter group $W_J$. Then $\brW_{a, \nu_w}^\t$ is the affine Weyl group generated by simple reflections $w^0_J$ with $J \in \brbS_{\nu_w}(\t)$. 
\end{theorem}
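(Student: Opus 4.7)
My plan is to realize $\brW_{a,\nu_w}$ as an affine reflection group acting on the apartment $\CA$ associated to $M = M_{\nu_w}$ and to analyze the $\t$-fixed structure by combining a Coxeter-theoretic reduction in length with a geometric argument on $\CA^\t$. The easy inclusion is immediate: for each $J \in \brbS_{\nu_w}(\t)$ the automorphism $\t$ permutes the simple generators of the finite parabolic $W_J$ and preserves length, so the unique longest element $w_J^0$ is $\t$-invariant. Hence every $w_J^0$ lies in $\brW_{a,\nu_w}^\t$.

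The main step is to show every element of $\brW_{a,\nu_w}^\t$ is a product of these $w_J^0$; I would argue by induction on length. Given $w \in \brW_{a,\nu_w}^\t$ with $w \ne 1$, the right descent set $D(w) = \{s \in \brbS_{\nu_w} : \ell(ws) < \ell(w)\}$ is nonempty and, because $\t$ preserves length and fixes $w$, is $\t$-stable; thus it contains a full $\t$-orbit $J$. The crucial geometric lemma is that $W_J$ must then be finite: denoting by $\fka$ the fundamental alcove of $M$, the set of walls of $\CA$ separating $\fka$ from $w \cdot \fka$ is finite and $\t$-stable, and if $W_J$ were infinite, the $\t$-orbit $J$ would index pairwise parallel walls with empty common fixator, contradicting the structure of this finite separating set. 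Once $J \in \brbS_{\nu_w}(\t)$ is produced, the classical fact that $J \subseteq D(w)$ with $W_J$ finite yields a reduced factorization $w = w' \cdot w_J^0$ with $\ell(w') = \ell(w) - \ell(w_J^0)$; since $w$ and $w_J^0$ are $\t$-fixed, so is $w'$, and the induction applies.

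Finally, I would identify $\brW_{a,\nu_w}^\t$ as an affine Weyl group with the prescribed simple reflections via the geometric realization. For $J \in \brbS_{\nu_w}(\t)$ the affine subspace $\bigcap_{s \in J} H_s$ is the fixator of $W_J$ and has codimension $|J|$ exactly because $W_J$ is finite; its intersection with $\CA^\t$ is a codimension-$1$ affine hyperplane on which $w_J^0$ acts as the orthogonal reflection. The image of $\fka$ in $\CA^\t$ is then a simplex bounded by these hyperplanes and serves as a compact fundamental domain for the action of $\brW_{a,\nu_w}^\t$. By Bourbaki's classification of discrete groups of affine isometries with compact fundamental domain, $\brW_{a,\nu_w}^\t$ is an affine Weyl group on $\CA^\t$ with simple reflections $\{w_J^0\}_{J \in \brbS_{\nu_w}(\t)}$. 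The main obstacle is the finiteness lemma above: ruling out $\t$-orbits of pairwise parallel walls in the descent set of a $\t$-fixed element genuinely uses the affine (as opposed to arbitrary Coxeter) structure, and requires a careful analysis of how $\t$ permutes the walls separating $\fka$ from $w \cdot \fka$.
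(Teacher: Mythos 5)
Your outline is correct, and it essentially reconstructs the argument of the reference the paper relies on: the paper gives no proof of Theorem \ref{brWa} but cites \cite[Theorem A.8]{Lu}, whose proof likewise combines a length induction on $\t$-fixed elements with the geometric realization of $\brW_{a,\nu_w}^\t$ as a reflection group on the $\t$-fixed subspace of the apartment. Two remarks. First, your ``crucial geometric lemma'' is a general fact about Coxeter systems and needs no affine geometry: if $J$ is contained in the right descent set of $w$, write $w=w^J u$ with $u\in W_J$ and $w^J$ of minimal length in $wW_J$; then every $s\in J$ is a right descent of $u$, and a Coxeter group containing an element whose descent set is the whole generating set is finite with that element as its longest element (a standard fact). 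This gives $W_J$ finite and the reduced factorization $w=w^J w^0_J$ in one stroke, so the step you flag as the main obstacle is actually the routine one. The affine structure only enters in your last paragraph, where one must verify that $\fka^\t$ is a nonempty simplex in the fixed apartment whose walls are the traces of the subspaces $\bigcap_{s\in J}H_s$ for $J\in\brbS_{\nu_w}(\t)$, that each $w^0_J$ acts there as the reflection in the corresponding wall, and that $\brW_{a,\nu_w}^\t$ acts faithfully, before the classification of affine reflection groups applies; these checks are standard but should not be waved away. Second, as the remark following the theorem notes, only the generation statement is used in the paper, and that part of your argument (the descent set of a $\t$-fixed element is nonempty and $\t$-stable, hence contains a full orbit $J$ which is necessarily of finite type; peel off $w^0_J$ and induct on length) is complete as written.
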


\begin{remark}
The Braid relations between the simple reflections $w_0^J$ of $\brW_{a, \nu_w}^\t$ is given in \cite[\S A.7]{Lu}. In this paper, we only need the fact that $\brW_{a, \nu_w}^\t$ is generated by $w^0_J$. 
\end{remark}

Using the classification of Dynkin diagrams in \cite[4.2]{BT}, we see that $J \in \brbS_{\nu_w}(\t)$ can be only be of the following two forms:

A) $J=\{\alpha,\tau(\alpha),\cdots \}$ where no combination of any two roots add up to another affine root.

B) $J=\coprod_{i=1}^{n-1}\{\tau^i(\a),\tau^{n+i}(\a)\}$ decomposes into  disjoint union of pairs such that some combination of $\alpha,\tau^n(\a)$ add up to another affine root, but no combination of roots in disjoint pairs add up to another root.

Note the cyclic action of order $n$ on type $A_n$ does not occur since $W_J$ is infinite in that case.

\subsection{} For any affine root $\a$ of $M$, the corresponding affine root subgroup $\CU_\a$ is a group scheme over $\mathcal{O}_{\brF}$ and it follows from \cite[4.1]{BT} that $\mathcal{U}_\alpha(\mathcal{O}_{\brF})$ is a finite free $\mathcal{O}_{\brF}$-module. Similarly we let $\mathcal{U}_{\alpha+}$ denote the union of subgroups $\mathcal{U}_{\alpha+\epsilon}$ for all $\epsilon>0$. Since $\brF$ is discretely valued, $\mathcal{U}_{\alpha+}$ has the structure of a group scheme and we have $\mathcal{U}_\alpha(\mathcal{O}_{\brF})/\mathcal{U}_{\alpha+}(\mathcal{O}_{\brF})$ is a $1$-dimensional vector space over $\overline{\mathbb{F}}_p$. 

Now we construct certain element $u_{-J}$ in $J_b$ for $J \in\brbS_{\nu_w}(\t)$. 

We first assume $J$ is of type A) with $J=\{\alpha,\tau(\a),\cdots ,\tau^{n-1}(\a)\}$. Then $\t^n$ induces  $\sigma^n$-linear automorphisms of $\mathcal{U}_{-\alpha}(\mathcal{O}_{\brF})$ and $\mathcal{U}_{-\alpha+}(\mathcal{O}_{\brF})$, and we obtain a $\sigma^n$ linear automorphism of the quotient $\mathcal{U}_{-\alpha}(\mathcal{O}_{\brF})/\mathcal{U}_{-\alpha+}(\mathcal{O}_{\brF})$. Let $F_n$ denote the fixed field of $\brF$, then by \cite[5.1.17]{BT}, $\mathcal{U}_{-\alpha}$ (resp. $\mathcal{U}_{-\alpha+}$) arises from a group scheme $\mathcal{U}^{\tau^n}_{-\alpha}$ (resp. $\mathcal{U}^{\tau^n}_{-\alpha+}$) over $\mathcal{O}_{F_n}$ and moreover these are group schemes associated to a finite free $\mathcal{O}_{F_n}$ module, in particular $\mathcal{U}^{\tau^n}_{-\alpha}$ and $\mathcal{U}^{\tau^n}_{-\alpha}$ are connected. 

It follows from \cite[5.1.18]{BT}, that $\mathcal{U}^{\tau^n}_\alpha(\mathcal{O}_{F_n})$ surjects onto $[\mathcal{U}_{-\alpha}(\mathcal{O}_{\brF})/\mathcal{U}_{-\alpha+}(\mathcal{O}_{\brF})]^{\tau^n}$. So take $u\in \mathcal{U}^{\tau^n}_\alpha(\mathcal{O}_{F_n})$ mapping to a non-zero element in $[\mathcal{U}_{-\alpha}(\mathcal{O}_{\brF})/\mathcal{U}_{-\alpha+}(\mathcal{O}_{\brF})]^{\tau^n}$ and  define $u_{-J}$ to be the product $$u_{-J}=u\tau(u)\cdots \tau^{n-1}(u) $$

Since no combination of $\tau^i(\a),\tau^j(\a)$ add up to another root for $j\neq i,$ it follows that $\tau^iu$ and $\tau^ju$ commute, hence $u_{-J}$ is fixed by $\tau$, i.e. $u_{-J}\in J_b$.

For the case of type B), let $J=\{\alpha,\tau(\a), \cdots,\tau^{2n-1}(\a)\}$ where $\tau^i(\a)+\tau^{n+i}(\a)$ is an affine root. By the classification in \cite[4.2]{Ti}, this can only occur when the connected component of the Dynkin diagram containing $\alpha$ is of type $A_n$ for $n>2$, $B_n,\ BC_n,\ C_n,\ CB_n$ or $D_n$ for $n\geq 4$. We may apply the same consideration as before to the group scheme $\mathcal{U}_{-\alpha-\tau^n(\a)}$. We obtain a group scheme $\mathcal{U}_{-\alpha-\tau^n(\a)}^{\tau^n}$ which is connected and we let $u\in\mathcal{U}_{-\alpha-\tau^n(\a)}^{\tau^n}(\mathcal{O}_{F_n})$ which maps to a non-zero element in $[\mathcal{U}_{-\alpha-\tau^n(\a)}(\mathcal{O}_{\brF})/\mathcal{U}_{-\alpha-\tau^n(\a)+}(\mathcal{O}_{\brF})]^{\tau^n}$. We define $u_{-J}$ to be $u\tau(u)\cdots\tau^{n-1}(u)$ and as before we see $u_{-J}\in J_b$.

\smallskip

Now we describe the structure of $J_b$. 

\begin{theorem}\label{Jb} Let $w$ be a $\s$-straight element in $\brW$ and $\t=\Ad(\dot w) \circ \s$ be the corresponding group automorphism on $M(\brF)$. Then the $\s$-centralizer $J_{\dot w}$ is generated by $(\brI \cap M_{\nu_w}(\brF))^\t$, $u_{-J}$ for $J \in \brbS_{\nu_w}(\t)$ and $n_w$ for $w \in \breve \Omega_{\nu_w}^\t$.
\end{theorem}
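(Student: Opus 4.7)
The plan is to reduce the theorem to the identification of Bruhat cells in $J_{\dot w}$, and then produce the required coset representatives one by one. By \cite[Remark 6.5]{Ko85} and \cite[\S 4.3]{Ko97} we have $J_{\dot w}=M(\brF)^\t$, and by \cite[Lemma 1.6]{Ri} the group $J_{\dot w}$, viewed as a reductive group over $F^\t$, has Iwahori-Weyl group $\brW_{\nu_w}^\t$ with associated Iwahori subgroup $(\brI\cap M(\brF))^\t$. The Bruhat-Iwahori decomposition for this inner form then reads
\[
J_{\dot w}\ =\ \bigsqcup_{v\in\brW_{\nu_w}^\t}(\brI\cap M(\brF))^\t\cdot n_v\cdot(\brI\cap M(\brF))^\t,
\]
so $J_{\dot w}$ is generated by $(\brI\cap M(\brF))^\t$ together with any collection of $\t$-stable lifts $n_v$ of a generating set of $\brW_{\nu_w}^\t$.

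In view of the semidirect decomposition $\brW_{\nu_w}^\t=\brW_{a,\nu_w}^\t\rtimes\breve\Omega_{\nu_w}^\t$ recalled in \S\ref{brW-t}(a), the length-zero part is handled directly by the lifts $n_w$ with $w\in\breve\Omega_{\nu_w}^\t$, which already appear in the statement. By Theorem \ref{brWa}, the affine part $\brW_{a,\nu_w}^\t$ is generated as a Coxeter group by the longest elements $w^0_J$ of $W_J$, with $J$ ranging over $\brbS_{\nu_w}(\t)$. Thus the whole theorem reduces to the claim that, for every $J\in\brbS_{\nu_w}(\t)$, the element $u_{-J}$ constructed in \S 5.4 lies in the Iwahori double coset of $w^0_J$, that is
\[
u_{-J}\in(\brI\cap M(\brF))^\t\,\dot w^0_J\,(\brI\cap M(\brF))^\t.
\]

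To verify this last claim I would treat the two combinatorial types of $J$ separately. In type A), where $J=\{\a,\t(\a),\dots,\t^{n-1}(\a)\}$ consists of pairwise orthogonal affine roots, the simple reflections $s_{\t^i(\a)}$ pairwise commute and $w^0_J=s_\a s_{\t(\a)}\cdots s_{\t^{n-1}(\a)}$. The rank-one calculation inside the subgroup generated by $\mathcal U_{\pm\a}$ shows that any element of $\mathcal U_{-\a}(\mathcal O_{\brF})$ mapping to a nonzero vector in $\mathcal U_{-\a}/\mathcal U_{-\a+}$ lies in $\brI_M\dot s_\a\brI_M$; multiplying the mutually commuting (hence also commuting through Iwahori factors) contributions $\t^i(u)$ yields $u_{-J}\in\brI_M\dot w^0_J\brI_M$, and taking $\t$-fixed points (using that $u_{-J}$ is already $\t$-fixed by construction) refines this to the required statement over $(\brI\cap M)^\t$. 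In type B), the longest element $w^0_J$ agrees up to Iwahori with the affine reflection along the long affine root $\a+\t^n(\a)$ of the rank-two subsystem spanned by $\{\a,\t^n(\a)\}$, and the same rank-one argument now applied to $\mathcal U_{\pm(\a+\t^n(\a))}$ identifies $u_{-J}$ with $\dot w^0_J$ modulo $\brI_M^\t$.

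The main obstacle is the passage from the Bruhat decomposition of $M(\brF)$ to the Bruhat decomposition of $M(\brF)^\t$ with respect to $(\brI\cap M)^\t$: one needs the existence of $\t$-stable lifts $n_v$ indexed by $\brW_{\nu_w}^\t$ together with the cellular decomposition above. This rests on a Lang–Steinberg type surjectivity argument for the pro-$p$ radical and the reductive quotient of $\brI\cap M$ under the finite-order automorphism $\t$, as in the Bruhat–Tits framework. Once this foundational step is in hand, the type-by-type verification described above is routine, being in each case a one-parameter-subgroup computation in a rank-one or rank-two reductive subgroup.
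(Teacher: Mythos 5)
Your overall strategy coincides with the paper's: establish the twisted Bruhat decomposition $J_{\dot w}=\bigsqcup_{v\in\brW_{\nu_w}^\t}\brI_M^\t n_v\brI_M^\t$ via a Lang-type argument, invoke the decomposition $\brW_{\nu_w}^\t=\brW_{a,\nu_w}^\t\rtimes\breve\Omega_{\nu_w}^\t$ together with Theorem \ref{brWa}, and reduce everything to the claim $u_{-J}\in\brI_M^\t n_{w^0_J}\brI_M^\t$. Two points of comparison. First, for the decomposition step the paper applies Lang's theorem to the connected group $\brI_v=n_v^{-1}\brI_M n_v\cap\brI_M$ (connectedness from \cite[3.1]{BT}); note that $\t=\Ad(\dot w)\circ\s$ is a Frobenius-twisted automorphism rather than a finite-order one, and it is precisely the Frobenius twist that makes Lang--Steinberg applicable, so you should correct that phrasing. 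Second, for the key claim the paper argues more softly than you propose: by construction $u_{-J}$ lies in the parahoric $\brK_J=\bigsqcup_{v\in W_J}\brI_M\dot v\brI_M$ and is $\t$-fixed, hence lies in $\bigsqcup_{v\in W_J^\t}\brI_M^\t n_v\brI_M^\t$; since $W_J^\t=\{1,w^0_J\}$ and $u_{-J}\notin\brI_M$, it is forced into the cell of $w^0_J$. This counting argument avoids your rank-one and rank-two computations entirely, and in particular sidesteps the type B) identification of $w^0_J$ with the product of reflections $s_{\t^i(\a)+\t^{n+i}(\a)}$ --- an identification that requires each pair $\{\t^i(\a),\t^{n+i}(\a)\}$ to span an $A_2$-type subsystem, a point your sketch asserts without justification. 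Both routes lead to the theorem, but the paper's final step is shorter and more robust.
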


\begin{proof}
We first obtain the Bruhat decomposition of $J_{\dot w}$. 

(a) \quad $J_{\dot w}=\sqcup_{w \in \brW_M^\t} \brI_M^\t n_w \brI_M^\t$. 

Recall that $M(\brF)=\sqcup_{w \in \brW_M} \brI_M \dot w \brI_M$. For any $w \in \brW_M$, $\t(\brI_M \dot w \brI_M)=\brI_M \dot \t(w) \brI_M$. Thus $$J_b=M(\brF)^\t=\sqcup_{w \in \brW_M^\t} (\brI_M \dot w \brI_M)^\t.$$

Let $w \in \brW_M^\t$ and $j \in (\brI_M n_w \brI_M)^\t$. We write $j$ as $j=h_1 n_w h_2$ for some $h_1,h_2\in \brI_M$. 

Let $\brI_w=n_w^{-1}\brI_M n_w\cap \brI_M$. Then $h_2$ is determined up to multiplication by an element of $\brI_w$, hence its image in $\brI_M/\brI_w$ is well defined. Thus $\tau(h_2)h_2^{-1}\in \brI_w$. By \cite[3.1]{BT}, $\brI_w$ is connected. By Lang's theorem on $\brI_w$, upon modifying $h_2$ by an element of $\brI_w$, we may assume $h_2$ is fixed by $\tau$. It follows that $h_1$ is also fixed by $\tau$. Thus $(\brI_M n_w \brI_M)^\t=\brI_M^\t n_w \brI_M^\t$. (a) is proved. 

By (a), $J_{\dot w}$ is generated by $\brI_M^\tau$ and $n_w$ for $w \in \brW_M^\t$. By our construction, $$u_{-J} \in \sqcup_{w \in W_J} \brI_M \dot w \brI_M \cap M(\brF)^\t=\sqcup_{w \in W_J^\t} \brI_M^\t n_w \brI_M^\t$$ Since $W_J^\t=\{1, w^0_J\}$ and $u_{-J} \notin \brI_M$, we have that $u_{-J} \in \brI_M^\t n_{w^0_J} \brI_M^\t$ Therefore $n_{w^0_J}$ is contained in the subgroup of $J_b$ generated by $\brI_M^\t$ and $u_{-J}$. The theorem then follows from \S\ref{brW-t}(a) and Theorem \ref{brWa}.
\end{proof}

\section{Connected components for basic $\s$-conjugacy class}

The first obstruction to connect points in $X(\{\mu\}, b)$ is given by the Kottwitz homomorphism which parametrizes the connected components of $\mathcal{FL}$. 

Let $[b]\in B(G,\{\mu\})$, then $\kappa(g)=\mu^\natural$ in $\pi_1(G)_\G$, hence there exists $c_{b,\{\mu\}}\in \pi_1(G)_{\G_0}$ such that $c_{b,\{\mu\}}-\sigma(c_{b,\{\mu\}})=[\mu]-\tilde{\k}(g)$, where $[\mu]$ is the class of $\{\mu\}$ in $\pi_1(G)_{\G_0}$. Note that the choice of $c_{b,\{\mu\}}$ is determined up to multiplication by an element of $\pi_1(G)_{\G_0}^\sigma$, hence the coset $c_{b,\{\mu\}}\pi_1(G)_{\G_0}^\sigma$ depends only on the class $[b]\in B(G,\{\mu\})$ and $\{\mu\}$. 
\begin{lemma}\label{1-obstr}
Let $[b] \in B(G, \{\mu\})$. Then the image of the map $\kappa:X(\{\mu\},b)\rightarrow \pi_1(G)_{\Gamma_0}$ equals $c_{b,\{\mu\}}\pi_1(G)_{\Gamma_0}^\sigma$. 
\end{lemma}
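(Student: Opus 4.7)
The plan is to prove the two inclusions separately, using the Kottwitz homomorphism for one direction and a right multiplication action by $\Omega^\s$ to cover the whole coset for the other.

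First, I would show $\k(X(\{\mu\},b)) \subseteq c_{b,\{\mu\}} \pi_1(G)_{\G_0}^\s$. For any $g \in X(\{\mu\},b)$ there exists $w \in \Adm(\{\mu\})$ with $g\i b \s(g) \in \brI \dot w \brI$. Since $\Adm(\{\mu\}) \subset \t_{\{\mu\}} \brW_a$, its image under $\brW \twoheadrightarrow \brW/\brW_a \cong \pi_1(G)_{\G_0}$ is the single element $[\mu]$, so $\tilde\k(\dot w) = [\mu]$. Applying $\tilde\k$ (which is trivial on $\brI$) and using that $\tilde\k$ is $\s$-equivariant, one obtains
\[
\s(\tilde\k(g)) - \tilde\k(g) = [\mu] - \tilde\k(b) = c_{b,\{\mu\}} - \s(c_{b,\{\mu\}}),
\]
so $\tilde\k(g) + c_{b,\{\mu\}}$ (up to a sign convention not affecting the coset) is fixed by $\s$, yielding the desired containment.

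For the reverse inclusion, by Theorem \ref{KR-He} we may pick any $g_0 \in X(\{\mu\},b)$. I would then produce a right action of $\Omega^\s \cong \pi_1(G)_{\G_0}^\s$ on $X(\{\mu\},b)$ which shifts $\k$ by an arbitrary element of $\pi_1(G)_{\G_0}^\s$. Given $\t \in \Omega^\s$, any lift $\dot\t_0 \in N_G(\brT)(\brF)$ satisfies $\s(\dot\t_0) = \dot\t_0 t$ for some $t \in \brT(\brF)$; Lang's theorem applied to the connected torus $\brT$ then furnishes $s \in \brT(\brF)$ so that $\dot\t := s\dot\t_0$ is $\s$-fixed. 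Since $\dot\t$ normalizes $\brI$ and is $\s$-fixed, the map $g \mapsto g\dot\t\i$ sends $X_w(b)$ to $X_{\t w \t\i}(b)$. A direct computation gives $\t t^{x\underline\mu} \t\i = t^{y\underline\mu}$ for some $y \in \brW_0$ (explicitly, if $\t = t^\eta v$ with $v \in \brW_0$, then $y = vx$), so together with the $\Omega$-invariance of length and of the Bruhat order this yields $\t \Adm(\{\mu\}) \t\i = \Adm(\{\mu\})$. Hence $g \mapsto g\dot\t\i$ preserves $X(\{\mu\},b)$ and satisfies $\tilde\k(g\dot\t\i) = \tilde\k(g) - \t$. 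As $\t$ ranges over $\Omega^\s$, the coset $\tilde\k(g_0) + \pi_1(G)_{\G_0}^\s$ is completely traced out.

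The main technical point is the $\Omega$-stability $\t\Adm(\{\mu\})\t\i = \Adm(\{\mu\})$ together with the existence of the $\s$-fixed lift $\dot\t$; both ultimately reduce to standard facts (the explicit conjugation formula above and Lang's theorem on $\brT$). An alternative completion of the second step avoids the $\Omega^\s$-action and instead uses the left multiplication action of $J_b$ on $X(\{\mu\},b)$: by Theorem \ref{Jb} the generators of $J_b$ coming from $(\brI \cap M_{\nu_w}(\brF))^\t$ and from the unipotent elements $u_{-J}$ lie in $\ker \tilde\k$, so computing $\tilde\k(J_b)$ reduces to the image of the length-zero elements $n_w$ for $w \in \breve\Omega_{\nu_w}^\t$, again yielding $\pi_1(G)_{\G_0}^\s$. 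The $\Omega^\s$-approach is cleaner in that it bypasses this structural input.
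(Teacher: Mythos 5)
Your proof is correct and follows the same two-step strategy as the paper: the Kottwitz homomorphism computation $\s(\tilde\k(g))-\tilde\k(g)=[\mu]-\tilde\k(b)$ for the containment, and right multiplication by a $\s$-fixed lift of an element of $\Omega^\s$ (using that $\Adm(\{\mu\})$ is stable under $\Omega$-conjugation and that $X(\{\mu\},b)\neq\emptyset$ by Theorem \ref{KR-He}) to sweep out the full coset. You merely supply more detail than the paper does — the Lang's theorem argument for the $\s$-fixed representative $\dot\t$ and the explicit verification of $\t\Adm(\{\mu\})\t\i=\Adm(\{\mu\})$ — and these added details are sound.
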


\begin{remark}
Recall that we have proved in Theorem \ref{KR-He} that $X(\{\mu\}, b) \neq \emptyset$ if and only if $[b] \in B(G, \{\mu\})$. 
\end{remark}

\begin{proof}
For $g \brI \in X(\{\mu\},b)$, we have $\tilde{\k}(g)-\sigma(\tilde{\kappa}(g))=[\mu]-\tilde{\kappa}(b)=c_{b,\{\mu\}}-\s(c_{b,\{\mu\}})$
 thus $\tilde{\k}(X(\{\mu\},b)) \subset c_{b,\{\mu\}}\pi_1(G)_{\Gamma_0}^\sigma$. 

On the other hand, for $\g \in \Omega$ with $\s(\g)=\g$ and $g \brI \in X_w(b)$, we have $g \dot \g \brI \in X_{\g \i w \g}(b)$. Since $\Adm(\{\mu\})$ is stable under the conjugation action of $\Omega$, the right multiplication of $\Omega^\s$ on $\mathcal{FL}$ stabilizes $X(\{\mu\}, b)$. Since $X(\{\mu\}, b) \neq \emptyset$, we have $\k(X(\{\mu\},b))=c_{b,\{\mu\}}\pi_1(G)_{\Gamma_0}^\sigma$. 
\end{proof}

\subsection{}\label{6.1} Recall that the set $B(G, \{\mu\})$ contains a unique minimal element, the basic $\s$-conjugacy class $[\dot \t_{\{\mu\}}]$. In this section we given a description of the connected components of $X(\{\mu\},\dot \t_{\{\mu\}})$. We show that, except the trivial cases, the first obstruction (given in Lemma \ref{1-obstr}) is the only obstruction to connect points in $X(\{\mu\}, \dot \t_{\{\mu\}})$.  In this case $\tilde{\k}(\dot{\tau}_{\mu})=[\mu]\in\pi_1(G)_{\G_0}$ so we may take $c_{\dot{\tau}_{\{\mu\}},\{\mu\}} =1$.

We may write $G_{\text{ad}}$ as $G_{\text{ad}}=G_1\times\cdots\times G_n$, where $G_i$ is are the simple factors of $G_{\text{ad}}$, in other words, the action of $\s$ on the affine Dynkin diagram of $G_i$ is transitive for each $i$. Let $\mu_{\text{ad}}$ be the composition of $\mu$ with $G\rightarrow G_{\text{ad}}$, and $\mu_i$ be the projection of $\mu_{\text{ad}}$ to the factor $G_i$. Then it is easy to see that 
$$X(\{\mu\}_{\text{ad}},\dot \t_{\{\mu\}, \text{ad}})\cong X(\{\mu_1\}, \dot \t_{\{\mu_1\}})\times \cdots\times X(\{\mu_n\}, \dot \t_{\{\mu_n\}}),$$
where $\mu_i$ is the projection of $\mu_{\text{ad}}$ to the factor $G_i$. 

We say that $\mu$ is {\it essentially noncentral} if for each $i$, $\mu_i$ is not central in $G_i$. 

\begin{theorem}\label{conn} Suppose that $\mu$ is essentially noncentral. Then the Kottwitz homomorphism induces an isomorphism  $$\pi_0(X(\{\mu\}, \dot \t_{\{\mu\}}))\cong \pi_1(G)_{\Gamma_0}^\sigma.$$
\end{theorem}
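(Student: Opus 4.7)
The plan is to combine Lemma \ref{1-obstr} with the first reduction theorem (Corollary \ref{cor1}) and the structure of the $\s$-centralizer (Theorem \ref{Jb}). Surjectivity of the induced map $\k:\pi_0(X(\{\mu\},\dot\t_{\{\mu\}}))\to\pi_1(G)_{\G_0}^\s$ is already given by Lemma \ref{1-obstr}, where $c_{\dot\t_{\{\mu\}},\{\mu\}}=1$. What remains is injectivity. The proof of Lemma \ref{1-obstr} also shows that the right $\Omega^\s$-action on $X(\{\mu\},\dot\t_{\{\mu\}})$ permutes the fibers of $\k$ simply transitively, so it suffices to show that a single fiber is connected. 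First I would reduce, via the product decomposition in \S\ref{6.1}, to the case where $\s$ permutes the simple factors of $G_\ad$ transitively.

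Next, fix a base point $p\in X_{\t_{\{\mu\}}}(\dot\t_{\{\mu\}})$ and let $Y_p\subset X(\{\mu\},\dot\t_{\{\mu\}})$ denote its Zariski connected component. By Corollary \ref{cor1}, every point in $X(\{\mu\},\dot\t_{\{\mu\}})$ is connected to a point in $X_x(\dot\t_{\{\mu\}})$ for some $\s$-straight $x\in\Adm(\{\mu\})$; since $\dot\t_{\{\mu\}}$ is basic, $\t_{\{\mu\}}$ is the unique such $x$ whose straight class equals $[\dot\t_{\{\mu\}}]$. Combined with Theorem \ref{jb-X}, which says $J_b$ (for $b=\dot\t_{\{\mu\}}$) acts transitively on $X_{\t_{\{\mu\}}}(\dot\t_{\{\mu\}})$, the problem reduces to showing that $g\cdot p\in Y_p$ for every $g\in J_b$ with $\k_G(g)=0$. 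Since $b$ is basic, the Newton vector $\nu_b$ is central, which forces $M_{\nu_b}=G$, $\brbS_{\nu_b}=\brbS$, and $\breve\Omega_{\nu_b}^\t=\Omega^\s$. Theorem \ref{Jb} then presents $J_b$ as generated by $\brI^\t$, by the root elements $u_{-J}$ for $J\in\brbS_{\nu_b}(\t)$, and by chosen lifts $n_w$ of $w\in\Omega^\s$. The first two families lie in the parahoric subgroup of $J_b$ and so map to $0$ under $\k_G|_{J_b}$, while $n_w\mapsto w$ in $\pi_1(G)_{\G_0}^\s$; hence $\ker(\k_G|_{J_b})$ is generated by $\brI^\t$ together with the $u_{-J}$.

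For the generators coming from $\brI^\t$, I would connect $p$ and $g\cdot p$ via a one-parameter family $\tilde g_t\in L^+\brI$ joining $1$ to $g$ inside the pro-connected pro-algebraic group $L^+\brI$; since $\dot\t_{\{\mu\}}$ normalizes $\brI$, a direct calculation shows the translates $\tilde g_t\cdot p$ remain in $X(\{\mu\},\dot\t_{\{\mu\}})$, giving the desired curve in $Y_p$. The main obstacle --- and the place where the essentially noncentral hypothesis is crucial --- is to connect $p$ and $u_{-J}\cdot p$ for each $J\in\brbS_{\nu_b}(\t)$. The strategy is to embed $u_{-J}$ into the affine root subgroup indexed by $-J$, which at the level of perfect schemes is a copy of $\mathbb{A}^{1,p^{-\infty}}$, producing a morphism $\phi:\mathbb{A}^{1,p^{-\infty}}\to\mathcal{FL}$ with $\phi(0)=p$ and $\phi(1)=u_{-J}\cdot p$. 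The hard part will be checking that $\phi$ factors through $X(\{\mu\},\dot\t_{\{\mu\}})$, i.e.\ that $\phi(t)^{-1}\dot\t_{\{\mu\}}\s(\phi(t))\in\brI\Adm(\{\mu\})\brI$ for all $t$. This will be reduced to a combinatorial comparison between $\Adm^G(\{\mu\})$ and the admissible set of the (in general non-standard) semistandard Levi subgroup whose root datum is generated by $J$-roots together with $\brT$; the essentially noncentral assumption guarantees this Levi is proper and that the relevant inclusion of admissible sets is nonvacuous. The final geometric input will be G\"ortz's connectedness theorem for classical Deligne--Lusztig varieties, applied in the finite flag variety of the reductive quotient of this Levi, which produces the connecting curve at the Levi level that we then transport back to $X(\{\mu\},\dot\t_{\{\mu\}})$.
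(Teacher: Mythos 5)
Your overall architecture matches the paper's: reduce to the adjoint, $F$-simple case via \S\ref{6.1}; use Corollary \ref{cor1} together with the fact that $\t_{\{\mu\}}$ is the only $\s$-straight element representing $[\dot\t_{\{\mu\}}]$, plus the transitivity of $J_{\dot\t_{\{\mu\}}}$ on $X_{\t_{\{\mu\}}}(\dot\t_{\{\mu\}})$ (Theorem \ref{jb-X}); then use the generators of $J_{\dot\t_{\{\mu\}}}$ from Theorem \ref{Jb} and reduce to connecting $u_{-J}\brI$ to $\brI$. (A side remark: taking the base point $p=\brI$, the generators from $\brI^\t$ contribute nothing since $g\brI=\brI$; no one-parameter family is needed, and your claim that $\tilde g_t\cdot p$ stays in $X(\{\mu\},\dot\t_{\{\mu\}})$ is not a ``direct calculation'' for an arbitrary base point $p$.)

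The gap is in the one step where the real work lies: connecting $u_{-J}\brI$ to $\brI$. Your explicit morphism $\phi:\BA^{1,p^{-\infty}}\to\mathcal{FL}$, $\phi(t)=x(t)\brI$ with $x(t)=\prod_i x_{-\t^i(\a)}(t^{q^i})$, does not stay in $X(\{\mu\},\dot\t_{\{\mu\}})$ in general: one has $x(t)^{-1}\dot\t_{\{\mu\}}\s(x(t))=x(t)^{-1}\t(x(t))\,\dot\t_{\{\mu\}}$, and for generic $t$ the element $x(t)^{-1}\t(x(t))$ is a product of nontrivial elements of the root subgroups $\mathcal{U}_{-\b}$, $\b\in J$, hence lies in a cell $\brI\dot w\brI$ of the parahoric $\brK_J$ with $w$ up to the longest element $w^0_J$ of $W_J$. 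So your curve requires $w^0_J\t_{\{\mu\}}\in\Adm(\{\mu\})$, a length-$|J|$ admissibility statement that you neither state nor prove and that is not established in the paper; your proposed substitute --- comparing with ``the admissible set of the Levi generated by the $J$-roots'' --- is not well posed, since $J$ is a set of affine simple reflections and the group they generate is (the reductive quotient of) the parahoric $\brK_J$, not a Levi subgroup of $G$. The paper's Proposition \ref{uJ} avoids all of this: the connecting variety is not a curve in a root subgroup but the closed twisted Deligne--Lusztig variety $Y=\{g\brI\in\brK_J/\brI:\ g^{-1}\t(g)\in\brI\cup\brI\dot s\brI\}$ for a \emph{single} $s\in J$, which sits inside $X_{\t_{\{\mu\}}}(\dot\t_{\{\mu\}})\cup X_{s\t_{\{\mu\}}}(\dot\t_{\{\mu\}})$ and therefore only needs $s\t_{\{\mu\}}\in\Adm(\{\mu\})$; this is Lemma \ref{s-tau}, and it is exactly where ``essentially noncentral'' enters (the support of $t^{\underline\mu}\t_{\{\mu\}}^{-1}$ must $\Ad(\t_{\{\mu\}})$-generate all of $\brbS$). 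Connectedness of $Y$ is G\"ortz's theorem, using that $\t$ permutes the simple reflections of $W_J$ transitively, and $u_{-J}\brI\in Y$ because $u_{-J}$ is $\t$-fixed. You do invoke G\"ortz at the end, but as a device to produce your $\BA^1$-curve, which does not cohere; to close the gap you should replace the explicit curve by the variety $Y$ and supply Lemma \ref{s-tau}.
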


\begin{remark}
If $\mu$ is central, then $\Adm(\{\mu\})=\{t^{\underline\mu}\}$ and $$X(\{\mu\}, \dot \t_{\{\mu\}})=X_{t^\mu}(\dot \t_{\underline\{\mu\}})=G(F)/\brI(F).$$ 
\end{remark}

We begin with several results concerning the structure of the admissible set $\mbox{Adm}(\{\mu\})$. 

\begin{lemma}\label{Tom}
Let $\CO$ be a(n ordinary) straight conjugacy class of $\brW$. If $\CO \cap \Adm(\{\mu\}) \neq \emptyset$, then all the straight elements in $\CO$ are contained in $\Adm(\{\mu\})$.
\end{lemma}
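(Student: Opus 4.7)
The plan is to reduce the statement to a closure property of $\Adm(\{\mu\})$ under the $\approx$-equivalence relation on $\brW$, and then verify that property by a short case analysis with the lifting property of the Bruhat order.

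First, since $\CO$ is an ordinary (i.e., $\s = \id$) straight conjugacy class of $\brW$, Theorem \ref{HN-min}(2) applied with trivial $\s$ tells us that all straight elements of $\CO$ form a single $\approx$-equivalence class. In particular, if $w \in \CO \cap \Adm(\{\mu\})$ is straight and $w'$ is any other straight element of $\CO$, then there is a sequence $w = w_0, w_1, \dots, w_n = w'$ in $\CO$ and simple reflections $s_1, \dots, s_n \in \brbS$ with $w_i = s_i w_{i-1} s_i$ and $\ell(w_i) = \ell(w_{i-1})$ for every $i$. Thus the lemma reduces to the following claim: if $w \in \Adm(\{\mu\})$, $s \in \brbS$, and $\ell(sws) = \ell(w)$, then $sws \in \Adm(\{\mu\})$.

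To prove the claim, fix $x \in \brW_0$ with $w \le t^{x\underline\mu}$. The condition $\ell(sws) = \ell(w)$ (with $sws \ne w$) forces exactly one of $sw, ws$ to be shorter than $w$; say $ws < w$ (the other case is symmetric, exchanging left and right actions of $s$). Then $ws \le w \le t^{x\underline\mu}$, so $ws \in \Adm(\{\mu\})$, and $sws = s(ws) > ws$. We now apply the lifting property of the Bruhat order to the pair $(ws, t^{x\underline\mu})$ with respect to the left simple reflection $s$: if $s t^{x\underline\mu} < t^{x\underline\mu}$, then $sws \le t^{x\underline\mu}$ and we are done; if $s t^{x\underline\mu} > t^{x\underline\mu}$, then lifting gives $sws \le s t^{x\underline\mu}$, and it remains to produce $y \in \brW_0$ with $s t^{x\underline\mu} \le t^{y\underline\mu}$. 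For this last step, one writes $s = t^\chi v s_0 v^{-1} t^{-\chi}$ in the decomposition $\brW = X_*(\brT)_{\G_0} \rtimes \brW_0$ and uses that the multiplication by the translation part preserves the relevant Bruhat comparison, while on the finite part the $\brW_0$-orbit $\{t^{y\underline\mu}\}_{y \in \brW_0}$ of maximal elements of $\Adm(\{\mu\})$ is closed under the reflection $s_0$; one finds that $s t^{x\underline\mu}$ lies in the Bruhat closure of $t^{s_0 x \underline\mu}$, so we may take $y = s_0 x$.

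The main obstacle is this last bookkeeping: checking in both cases (whether $s$ is a finite simple reflection or an affine simple reflection) that conjugation by $s$ of the ``top'' translation elements $t^{x\underline\mu}$ remains inside $\Adm(\{\mu\})$. Once that is settled, the claim and hence the lemma follow by iterating along the $\approx$-chain provided by Theorem \ref{HN-min}(2).
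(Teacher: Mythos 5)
Your overall architecture matches the paper's: reduce via Theorem \ref{HN-min}(2) to the statement that $\Adm(\{\mu\})$ is stable under length-preserving conjugation by a simple reflection. The paper simply quotes that stability from \cite[Lemma 4.5]{Tom}; you attempt to prove it, and your proof breaks in the second case. There is also a smaller gap at the outset: the hypothesis only provides some element of $\CO\cap\Adm(\{\mu\})$, not a straight one, so you cannot start from ``if $w\in\CO\cap\Adm(\{\mu\})$ is straight''. The paper fills this in with \cite[Corollary 2.6]{He00}, which gives a straight $w'\in\CO$ with $w'\le w$, hence $w'\in\Adm(\{\mu\})$ by Bruhat-closedness. (Alternatively, Theorem \ref{HN-min}(1) together with your claim would suffice, since the strictly length-decreasing steps of $\to$ stay in $\Adm(\{\mu\})$ for Bruhat-order reasons.)

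The serious problem is your Case 2. You have $ws\le t^{x(\underline\mu)}$, $s(ws)>ws$ and $st^{x(\underline\mu)}>t^{x(\underline\mu)}$, so lifting gives $sws\le st^{x(\underline\mu)}$; you then say it remains to find $y\in\brW_0$ with $st^{x(\underline\mu)}\le t^{y(\underline\mu)}$. No such $y$ can exist: all the translations $t^{y(\underline\mu)}$, $y\in\brW_0$, have the same length, while $\ell(st^{x(\underline\mu)})=\ell(t^{x(\underline\mu)})+1$. In particular $st^{x(\underline\mu)}$ does \emph{not} lie in the Bruhat closure of $t^{s_0x(\underline\mu)}$ --- it is too long --- so the ``bookkeeping'' you defer cannot be carried out as described. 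The correct continuation is a second application of the lifting property, this time on the right. Write $st^{x(\underline\mu)}=t^{\bar s(x(\underline\mu))}\,s$, where $\bar s\in\brW_0$ is the linear part of $s$; then $\ell\bigl(t^{\bar s(x(\underline\mu))}s\bigr)=\ell\bigl(t^{\bar s(x(\underline\mu))}\bigr)+1$, so right multiplication by $s$ takes $t^{\bar s(x(\underline\mu))}s$ down. On the other side, $(sws)s=sw>sws$, because in your dichotomy it is $ws$, not $sw$, that is shorter than $w$, so $\ell(sw)=\ell(w)+1=\ell(sws)+1$. The lifting property applied to $sws\le t^{\bar s(x(\underline\mu))}s$ then yields $sws\le t^{\bar s(x(\underline\mu))}=t^{y(\underline\mu)}$ with $y=\bar sx$, as required. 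With these two repairs (the second being essentially the content of \cite[Lemma 4.5]{Tom}) your argument goes through.
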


\begin{proof}
Let $w \in \CO \cap \Adm(\{\mu\})$. By \cite[Corollary 2.6]{He00}, there exists a straight element $w'$ in $\CO$ with $w' \le w$. Since $\Adm(\{\mu\})$ is closed under Bruhat order, we have $w' \in \Adm(\{\mu\})$. In other words, $w' \le t^{x(\underline \mu)}$ for some $x \in \brW_0$. 

Let $w''$ be another straight element in $\CO$. By Theorem \ref{HN-min} (2), $w'' \approx w'$. The statement follows from \cite[Lemma 4.5]{Tom}.
\end{proof}

\begin{lemma}\label{s-tau}
Suppose that $G_{\brF, \text{ad}}$ is simple and $\mu$ is not central. Then $s_j \t_{\{\mu\}} \in \Adm(\{\mu\})$ for all $j \in \brbS$.
\end{lemma}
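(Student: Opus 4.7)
The plan is to convert the assertion $s_j\t_{\{\mu\}} \in \Adm(\{\mu\})$ into a statement about supports of elements of $\brW_a$, and then argue by contradiction using the finiteness of proper parabolic subgroups of the affine Weyl group together with the irreducibility of the relative root system. For each $x \in \brW_0$, since $t^{x(\underline\mu)}$ is conjugate to $t^{\underline\mu}$ and $\brW_a$ is normal in $\brW$, I can write $t^{x(\underline\mu)} = w_x \t_{\{\mu\}}$ with $w_x \in \brW_a$. Right multiplication by the length-zero element $\t_{\{\mu\}} \in \Omega$ preserves the Bruhat order on $\brW$, so the relation $s_j\t_{\{\mu\}} \le t^{x(\underline\mu)}$ is equivalent to $s_j \le w_x$ in the Coxeter group $\brW_a$. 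By the standard description of the Bruhat order on a Coxeter group, this is equivalent to $j \in \supp(w_x)$, or equivalently $w_x \notin W_{\brbS \setminus \{j\}}$, where $W_{\brbS \setminus \{j\}}$ is the parabolic subgroup generated by $\brbS \setminus \{j\}$. Thus the lemma reduces to the combinatorial claim that $\bigcup_{x \in \brW_0} \supp(w_x) = \brbS$.

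Suppose for contradiction that there exists $j \in \brbS$ with $w_x \in W_{\brbS \setminus \{j\}}$ for every $x \in \brW_0$. Since $G_{\brF,\mathrm{ad}}$ is simple, the affine Dynkin diagram is connected, and removing any single vertex produces a finite-type Dynkin diagram; hence $W_{\brbS \setminus \{j\}}$ is a finite Coxeter group, and it is precisely the stabilizer in $\brW_a$ of the vertex $v_j$ of $\breve\fka$ opposite to the wall fixed by $s_j$. Unpacking $w_x v_j = v_j$ with $w_x = t^{x(\underline\mu)}\t_{\{\mu\}}\i$ gives
\[
\t_{\{\mu\}}\i v_j + x(\underline\mu) = v_j, \qquad \text{i.e.} \qquad x(\underline\mu) = v_j - \t_{\{\mu\}}\i v_j.
\]
Since the right-hand side is independent of $x$, specializing to $x = 1$ forces $x(\underline\mu) = \underline\mu$ for every $x \in \brW_0$; that is, $\underline\mu$ lies in the $\brW_0$-fixed subspace of $V = X_*(\brT)_{\Gamma_0} \otimes \BR$.

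The last step is to invoke simplicity of $G_{\brF,\mathrm{ad}}$: the relative root system is then irreducible, so the $\brW_0$-fixed subspace of $V$ is exactly the cocharacter space of the maximal $\brF$-split central torus. Hence $\brW_0$-invariance of $\underline\mu$ forces $\mu$ to be central, contradicting the hypothesis. The main (and really only) delicate step is this conversion of $\brW_0$-invariance of $\underline\mu$ into centrality of $\mu$, which uses in an essential way that $G_{\brF,\mathrm{ad}}$ is simple; the rest of the argument is a matter of choosing the right vertex of the fundamental alcove and tracking its orbit under $w_x$.
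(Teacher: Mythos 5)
Your proof is correct, and it takes a genuinely different route from the one in the paper. The paper's argument uses only the dominant translation: it sets $w=t^{\underline\mu}\t_{\{\mu\}}^{-1}$ and shows that the minimal $\Ad(\t_{\{\mu\}})$-stable subset of $\brbS$ containing $\supp(w)$ is all of $\brbS$ (otherwise that subset $K$ would generate a finite group $W_K$, and a suitable power of $t^{\underline\mu}$ would be forced into $W_K\t_{\{\mu\}}^n$, making $\underline\mu$ fixed by $\brW_0$ and hence $\mu$ central); it then transports the relation $s_{j'}\t_{\{\mu\}}\le t^{\underline\mu}$ for $j'\in\supp(w)$ to an arbitrary $j$ along the chain $s_{j'}\t_{\{\mu\}}\approx \Ad(\t_{\{\mu\}})(s_{j'})\t_{\{\mu\}}\approx\cdots\approx s_j\t_{\{\mu\}}$, invoking Lemma \ref{Tom} (and hence Haines's lemma behind it) to stay inside $\Adm(\{\mu\})$. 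You instead exploit all the translations $t^{x(\underline\mu)}$, $x\in\brW_0$, which lie in $\Adm(\{\mu\})$ by the very definition of the admissible set, reduce the statement to $\bigcup_{x}\supp(w_x)=\brbS$, and rule out a missing letter $j$ by the observation that $W_{\brbS\setminus\{j\}}$ fixes a point of the closure of $\breve\fka$, so that $w_xv_j=v_j$ forces $x(\underline\mu)=v_j-\t_{\{\mu\}}^{-1}v_j$ to be independent of $x$. This bypasses Lemma \ref{Tom} and the paper's power computation entirely, at no extra cost; both arguments bottom out in the same fact, namely that $\brW_0$-invariance of $\underline\mu$ (equivalently $\langle a,\underline\mu\rangle=0$ for all relative roots $a$, which by dominance of $\mu$ gives centrality) contradicts the hypothesis. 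Two small points of wording: when $G$ is not semisimple the base alcove is a simplex only transverse to the central directions, so $v_j$ should be taken to be any point of the face of the closure of $\breve\fka$ cut out by the walls of the $s_i$ with $i\ne j$ --- which is all your computation uses; and the finiteness of $W_{\brbS\setminus\{j\}}$, while true, is never actually needed in your argument, only the existence of the fixed point is.
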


\begin{remark}
Note that $G_{\brF, \text{ad}}$ is simple if and only if the corresponding affine Dynkin diagram is connected. The group $G_{\text{ad}}$ is simple if and only if the action $\s$ on the set of connected components of affine Dynkin diagram is transitive. 
\end{remark}

\begin{proof}
Set $w=t^{\underline \mu} \t_{\{\mu\}} \i \in \brW_a$. Let $K$ be the minimal $\Ad(\t_{\{\mu\}})$-stable subset of $\brbS$ that contains $\text{Supp}(w)$. If $K \neq \brbS$, then $W_K$ is a finite Coxeter group since $G_{\text{ad}}$ is simple. In this case, let $n \in \BN$ such that $n$ is divisible by $|\text{Aut}(W_K)|$ and by the order of $\Ad(\t_{\{\mu\}}) \in \text{Aut}(\brW)$, we have that $t^{n \underline \mu}=(w \t_{\{\mu\}})^n=\t_{\{\mu\}}^n$ is a central element in $\brW$. That is a contradiction.

Hence $K=\brbS$. In particular, there exists $j' \in \text{Supp}(w)$ and $i \in \BN$ such that $\Ad(\t_{\{\mu\}})^i s_{j'}=s_j$. We have $$s_{j'} \t_{\{\mu\}} \approx \t_{\{\mu\}} s_{j'}=\Ad(\t_{\{\mu\}})(s_{j'}) \t_{\{\mu\}} \approx \cdots \approx s_j \t_{\{\mu\}}.$$

Since $s_{j'} \t_{\{\mu\}} \le w \t_{\{\mu\}}$, we have $s_{j'} \t_{\{\mu\}} \ in \Adm(\{\mu\})$. By Lemma \ref{Tom}, $s_j \t_{\{\mu\}} \in \Adm(\{\mu\})$. 
\end{proof}

We will use this lemma to construct curves in $X(\{\mu\},b)$ connecting the elements $u_{-J}$ of section 5.4 to the identity.

\begin{proposition}\label{uJ}Let $\t=\Ad(\dot \t_{\{\mu\}}) \circ \s$. Let $J\in\tilde{\mathbb{S}}(\tau)$ and $u_{-J}$ be the corresponding element of $J_b$ constructed in 5.4. Then $u_{-J} \brI$ and $\brI$ lie in the same connected component of $X(\{\mu\}, \dot \t_{\{\mu\}})$.
\end{proposition}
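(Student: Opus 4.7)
The plan is to build an explicit morphism $\varphi : \mathbb{A}^{1,p^{-\infty}} \to X(\{\mu\}, \dot\tau_{\{\mu\}})$ with $\varphi(0) = \brI$ and $\varphi(t_0) = u_{-J}\brI$ for a suitable parameter $t_0$, via a product construction inside a single affine root subgroup. The basic identity is that, since $b = \dot\tau_{\{\mu\}}$, for any $x \in G(\breve F)$ one has $x^{-1} b \sigma(x) = x^{-1} \tau(x) \cdot \dot\tau_{\{\mu\}}$; if $x(t)^{-1}\tau(x(t))$ can be arranged to lie in a single affine root subgroup $\CU_\beta$, then the right-hand side lies in $\brI \dot\tau_{\{\mu\}} \brI \cup \brI \dot s_\beta \dot\tau_{\{\mu\}} \brI$, and admissibility becomes a statement about a single reflection. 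Up front, the product decomposition $X(\{\mu\}, \dot\tau_{\{\mu\}})_{\mathrm{ad}} \cong \prod_i X(\{\mu_i\}, \dot\tau_{\{\mu_i\}})$, together with the fact that $u_{-J}$ is supported on a unique $\sigma$-simple factor and the essentially-noncentral hypothesis, reduces the problem to the case where $G_{\mathrm{ad}}$ is $\sigma$-simple and $\mu$ is noncentral, which is precisely the setting of Lemma \ref{s-tau}.

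For the type A orbit $J = \{\alpha, \tau(\alpha), \ldots, \tau^{n-1}(\alpha)\}$, pick a parametrization $\tilde{x} : \mathbb{A}^{1,p^{-\infty}} \to \CU_{-\alpha}$ (in the loop-group sense) with $\tilde{x}(0) = 1$ and $\tilde{x}(t_0) = u$, where $u \in \CU_{-\alpha}^{\tau^n}(\mathcal{O}_{F_n})$ is the element used to construct $u_{-J}$ in \S 5.4, and set
\[
x(t) = \tilde{x}(t)\, \tau(\tilde{x}(t)) \cdots \tau^{n-1}(\tilde{x}(t)).
\]
Then $x(0) = 1$ and $x(t_0) = u_{-J}$. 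The factors $\tau^i(\tilde{x}(t)) \in \CU_{-\tau^i(\alpha)}$ pairwise commute by the type A hypothesis that no combination of distinct $\tau^i(\alpha)$ is an affine root, and together with the identity $\tau^n(\alpha) = \alpha$ a short telescoping calculation yields
\[
x(t)^{-1} \tau(x(t)) = \tilde{x}(t)^{-1} \tau^n(\tilde{x}(t)) \in \CU_{-\alpha}.
\]
Consequently $x(t)^{-1} b \sigma(x(t)) \in \brI \dot\tau_{\{\mu\}} \brI \cup \brI \dot s_\alpha \dot\tau_{\{\mu\}} \brI \subset \brI \Adm(\{\mu\}) \brI$ by Lemma \ref{s-tau}, and the morphism $t \mapsto x(t) \brI$ factors through $X(\{\mu\}, \dot\tau_{\{\mu\}})$, providing the desired connection.

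The type B orbit $J = \{\alpha, \tau(\alpha), \ldots, \tau^{2n-1}(\alpha)\}$ is handled by the same recipe, but with $\tilde{x}$ now valued in $\CU_{-\alpha - \tau^n(\alpha)}$ and an $n$-fold (not $2n$-fold) product; commutativity of the $n$ factors follows from the hypothesis that no combination of roots across disjoint pairs is a root, and the analogous telescoping produces $x(t)^{-1}\tau(x(t)) \in \CU_{-\alpha - \tau^n(\alpha)}$. The main obstacle in the proof is precisely the admissibility statement $s_{\alpha + \tau^n(\alpha)} \tau_{\{\mu\}} \in \Adm(\{\mu\})$ this forces: $\alpha + \tau^n(\alpha)$ is not a simple affine root, so Lemma \ref{s-tau} does not apply directly. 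The natural remedy is to work inside the rank-two affine subsystem spanned by $\alpha$ and $\tau^n(\alpha)$, to express $s_{\alpha+\tau^n(\alpha)}$ as a word in $s_\alpha$ and $s_{\tau^n(\alpha)}$ (for instance as $s_\alpha s_{\tau^n(\alpha)} s_\alpha$ in the $A_2$ case), and to bootstrap from the admissibility of $s_\alpha \tau_{\{\mu\}}$ and $s_{\tau^n(\alpha)} \tau_{\{\mu\}}$ (both granted by Lemma \ref{s-tau}) via the $\approx_\sigma$ combinatorics of the admissible set. Once this is verified, the type B argument runs identically to the type A case.
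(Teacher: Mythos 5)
The type A half of your construction is sound and is essentially an explicit, hands-on instance of what the paper does: the telescoping identity $x(t)^{-1}\t(x(t))=\tilde x(t)^{-1}\t^n(\tilde x(t))\in\CU_{-\a}(\CO_{\brF})\subset \brI\cup\brI\dot s_\a\brI$ is correct, and Lemma \ref{s-tau} then puts the whole curve inside $X(\{\mu\},\dot\t_{\{\mu\}})$. The type B half, however, has a genuine gap that your own closing paragraph only gestures at. Your curve now takes values in $\CU_{-\b}$ with $\b=\a+\t^n(\a)$ a \emph{non-simple} positive affine root, so its generic points land in the cell $\brI\dot s_\b\dot\t_{\{\mu\}}\brI$ where $s_\b=s_\a s_{\t^n(\a)}s_\a$ (or longer) has length at least $3$ in $W_J$. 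You therefore need $s_\b\t_{\{\mu\}}\in\Adm(\{\mu\})$, and this does not follow from the admissibility of $s_\a\t_{\{\mu\}}$ and $s_{\t^n(\a)}\t_{\{\mu\}}$: the admissible set is closed \emph{downward} under the Bruhat order, not upward, and $\approx_\s$ preserves length, so no combination of these tools produces a length-$3$ admissible element from length-$1$ ones. Indeed the claim is false in general: whenever $\langle\underline\mu,2\rho\rangle=2$ (e.g. certain minuscule cases), $\Adm(\{\mu\})$ contains no element of length $3$ at all, so $s_\b\t_{\{\mu\}}$ cannot be admissible even though type B orbits can occur. The ``bootstrap via the $\approx_\s$ combinatorics'' is therefore not a remedy but the precise point at which the argument breaks.

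The paper avoids this entirely by never routing the path from $\brI$ to $u_{-J}\brI$ through the cell indexed by $s_\b$. Instead it takes the parahoric $\brK$ attached to the full orbit $J$ and considers $Y=\{g\brI\in\brK/\brI:\,g^{-1}\t(g)\in\brI\cup\brI\dot s\brI\}$ for a \emph{single simple} $s\in J$ with $s\t_{\{\mu\}}\in\Adm(\{\mu\})$ (Lemma \ref{s-tau}). This $Y$ is the closure of the classical Deligne--Lusztig variety $X(s)$ in the finite flag variety $\brK/\brI$; it is connected by G\"ortz's theorem precisely because $\t$ acts transitively on $J$, and it contains both $\brI$ and $u_{-J}\brI$ for the cheap reason that both are $\t$-fixed points of $\brK/\brI$ (one never needs to know which Bruhat cell $u_{-J}^{-1}\,1\,$ ``points toward''). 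Since $Y$ only meets the two cells $\brI\dot\t_{\{\mu\}}\brI$ and $\brI\dot s\dot\t_{\{\mu\}}\brI$, Lemma \ref{s-tau} is the only admissibility input needed, and types A and B are treated uniformly. If you want to salvage your approach, replace the single one-parameter curve in $\CU_{-\b}$ by this classical Deligne--Lusztig variety; the one-parameter curve by itself cannot work in type B.
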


\begin{proof} 
Let $\a$ be an affine root. Then $\mathcal{U}_\alpha$ and $\mathcal{U}_{\alpha+}$ are group schemes associated to finite free modules over $W(\mathbf{k})$ and the quotient $\mathcal{U}_\alpha(\mathcal{O}_{\brF})/\mathcal{U}_{\alpha+}(\mathcal{O}_{\brF})$ is a $1$-dimensional $\mathbf{k}$ vector space. Let $W(\mathbf{k})\rightarrow \mathcal{U}_\alpha(\mathcal{O}_{\brF})$ be any map lifting $\mathbf{k}\cong \mathcal{U}_\alpha(\mathcal{O}_{\brF})/\mathcal{U}_{\alpha+}(\mathcal{O}_{\brF})$. This induces a morphism of $\mathbf{k}$-schemes

$$x_\alpha:\mathbb{A}_{\mathbf{k}}^{1,p^{-\infty}}\rightarrow L^+\mathcal{U}_\alpha,$$ where $\mathbb{A}_{\mathbf{k}}^{1,p^{-\infty}}$ is the perfection of $\mathbb{A}^1/\mathbf{k}$. 

By Lemma \ref{s-tau} and the remark below, $\t$ acts transitively on the set of connected components of the affine Dynkin diagram of $\brbS$ and there exists $s \in J$ such that $s \t_{\{\mu\}} \in \Adm(\{\mu\})$. Let $\breve \CK$ be the standard parahoric subgroup of $G(\brF)$ associated to $J$. Then $\breve \CK/\brI$ is the finite flag variety of the reductive quotient of $\breve \CK$. Set $$Y=\{g \brI; g \i \dot \t_{\{\mu\}} \s(g) \in \brI \dot \t_{\{\mu\}} \cup \brI \dot s \dot \t_{\{\mu\}} \brI=\{g \brI; g \i \t(g) \in \brI \cup \brI \dot s \brI\} \subset \breve \CK/\brI.$$

Since $s \t_{\{\mu\}} \in \Adm(\{\mu\})$, $Y \subset X(\{\mu\}, \dot \t_{\{\mu\}})$. By definition, $u_{-J} \brI$ and $\brI$ are contained in $Y$. By \cite[Theorem 1.1]{Go}, $Y$ is connected. It is a curve in $X(\{\mu\}, \dot \t_{\{\mu\}})$ that connects $u_{-J} \brI$ with $\brI$. 
\end{proof}

\subsection{Proof of Theorem \ref{conn}}
By \S\ref{6.1}, it suffices to consider the case where $G$ is adjoint and simple. It is easy to see that the only $\s$-straight element in $\brW$ that corresponds to $\dot \t_{\{\mu\}}$ is $\t_{\{\mu\}}$. By Theorem \ref{theorem2}, every point of $X(\{\mu\}, \dot \t_{\{\mu\}})$ is connected to a point in $X_{\t_{\{\mu\}}}(\dot \t_{\{\mu\}})$.

By Theorem \ref{jb-X}, $J_b$ acts transitively on $X_{\t_{\{\mu\}}}(\dot \t_{\{\mu\}})$. For any $J \in \brbS(\t)$, $u_{-J}$ is contained in the parahoric subgroup of $G(\brF)$ corresponding to $J$ and thus is contained in the kernel of the Kottwitz map $\tilde{\kappa}$. By Theorem \ref{Jb}, $\tilde{\kappa}: J_{\dot \t_{\{\mu\}}}\rightarrow \pi_1(G)_{\Gamma_0}^\sigma$ is surjective and the kernel is generated by $u_{-J}$ for $J \in \brbS(\t)$ and $\brI^\t$. It remains to show that for any $j \in \ker(\k) \cap J_{\dot \t_{\{\mu\}}}$, $j \brI$ and $\brI$ are in the same connected component of $X(\{\mu\}, \dot \t_{\{\mu\}})$. 

We have $j=j_1 \cdots j_n$ where $j_i\in \brI^\t$ or $j_i=u_{-J}$ for some $J\in \brbS(\tau)$. Set $j'=j_1 \cdots j_{n-1}$. If $j_n \in \brI^\t$, then $j \brI=j' \brI$. If $j_n=u_{-J}$, then by Proposition \ref{uJ},  $j_n \brI$ and $\brI$ lie in the same connected component of $X(\{\mu\},\dot \t_{\{\mu\}})$, and hence $j \brI$ and $j' \brI$ are in the same connected component of $X(\{\mu\},\dot \t_{\{\mu\}})$. The statement follows from induction on $n$.

\section{Second reduction theorem}\label{8}

\subsection{}\label{s-trivial} In this section, we assume that $G$ is residually split, i.e. $\s$ acts trivially on $\brW$. Note that this condition is more general than $G$ being split, however the assumption does imply that $G$ is quasi-split.

Let $[b] \in B(G, \{\mu\})$ and $\CO_{[b]}$ be the unique straight $\sigma$-conjugacy class associated to it in the sense of Theorem \ref{str-bg}. By \cite[Proposition 4.1]{He00}, $\Adm(\{\mu\})$ contains a straight element in $\CO_{[b]}$. By Lemma \ref{Tom}, 

(a) Any straight element in $\CO_{[b]}$ is contained in $\Adm(\{\mu\})$. 

Let $w$ be a straight element with $\dot w \in [b]$. Then by (a), $w \in \Adm(\{\mu\})$. By definition, $w=t^{\l_w} x$ for some $x \in \brW_0$. Let $\overline{\lambda}_w$ denote the dominant representative of $\l_w$, then $t^{\bar \l_w} \in \brW_0 \Adm(\{\mu\}) \brW_0=\Adm(\{\mu\})^{K_0}$. Note that $t^{\bar \l_w} \in {}^{K_0} \brW$. By \cite[Proposition 6.1]{He00}, $t^{\bar \l_w} \in \Adm(\{\mu\})$. Hence 

(b) For any straight element $w \in \brW$ with $\dot w \in [b]$, we have $t^{\lambda_w}\in\Adm(\{\mu\})$. 

Since $\s$ acts trivially on $\brW$, by Theorem \ref{fundamental}, the linear part of $w$ fixes $\nu_w$. Therefore

(c) For any straight element $w$, $\dot w$ is basic in $M_{\nu_w}$.

\subsection{} Recall we have fixed a special vertex $\mathfrak{s}$ lying in the closure of the base alcove $\mathfrak{a}$. This determines the semi-direct product decomposition $\brW\cong X_*(T)_{\Gamma_0}\ltimes \brW_0$. For $w\in\brW$, let $\lambda_w$ denote its $X_*(T)_{\Gamma_0}$-part and let $\l_{w,}\in X_*(T)_{\G_0}$ denote its $M_{\nu_w}$-dominant conjugate which lies $\Adm(\{\mu\})$ by Lemma \ref{Tom}. We let $\tilde{\l}_{w,dom}\in X_*(T)$ denote an $M_{\nu_w}$-dominant lift of $\l_{w,dom}$. Let $M_{\nu_w}$ denote the semistandard Levi subgroup defined in \S\ref{Mv} and $\brI_{M_{\nu_w}}=M_{\nu_w}(\brF)\cap\brI$ be the associated Iwahori subgroup of $M_{\nu_w}$. Let $\{\tilde \l_w\}_{M_{\nu_w}}$ be the $M_{\nu_w}$-conjugacy class of characters containing $\tilde \l_w$. 

We write $M_{[b]}$ for $M_{\bar \nu_b}$. This is a standard Levi subgroup of $G$. We say that $[b]$ is {\it essentially nontrivial} in $M_{[b]}$ if for some (or equivalently, any) straight element $w \in \brW$ with $\dot w \in [b]$, $\tilde \l_w$ is essentially noncentral in $M_{\nu_w}$. 

\subsection{}\label{adm-levi}
Recall that we denote the Bruhat order on $\brW$ by $\le$. Let $\brW_{\nu_w}$ be the Iwahori-Weyl group of $M_{\nu_w}$ and $\le_{\nu_w}$ be the Bruhat order on $M_{\nu_w}$. By definition, for any $x \in \brW_{\nu_w}$ and an affine reflection $r$ of $\brW_{\nu_w}$, we have $x \le_{\nu_w} r x$ if and only if $x \le r x$. Therefore, we have 

(a) Let $x, y \in \brW_{\nu_w}$. If $x \le_{\nu_w} y$, then $x \le y$. 

In particular,

(b) $\Adm^{M_{\nu_w}}(\{\tilde \l_w\}_{M_{\nu_w}}) \subset \Adm(\{\mu\})$. 

The main result of this section is the following

\begin{theorem}\label{nonbasic}
Assume that $G$ is residually split. Then for any $[b] \in B(G, \{\mu\})$, we have a natural morphism 
\[\bigsqcup_{w \in \brW \text{ is a straight element with } \dot w \in [b]} X^{M_{\nu_w}}(\{\tilde \l_w\}_{M_{\nu_w}}, \dot w) \to X(\{\mu\}, b),\] which induces a surjection \[\tag{a}\coprod_{w \in \brW \text{ is a straight element with } \dot w \in [b]} \pi_0(X^{M_{\nu_w}}(\{\tilde \l_w\}_{M_{\nu_w}}, \dot w)) \twoheadrightarrow \pi_0(X(\{\mu\}, b)).\]

If moreover, $[b]$ is essentially nontrivial in the Levi subgroup $M_{[b]}$, then the natural morphism induces a surjection $$\coprod_{w \in \brW \text{ is a straight element with } \dot w \in [b]} \pi_1(M_{v_w})_{\Gamma_0}^\s \twoheadrightarrow \pi_0(X(\{\mu\}, b)).$$ 
\end{theorem}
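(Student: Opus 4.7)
My plan is to construct the natural morphism directly and then leverage the first reduction theorem to bootstrap from the $X_x(b)$-strata, before reducing the source $\pi_0$-computation to the basic case inside each Levi. For each straight $w \in \brW$ with $\dot w \in [b]$, I would fix $h_w \in G(\brF)$ with $h_w^{-1} b \sigma(h_w) = \dot w$ and define the natural morphism on the $w$-component as the composite
\[X^{M_{\nu_w}}(\{\tilde \l_w\}_{M_{\nu_w}}, \dot w) \hookrightarrow X(\{\mu\}, \dot w) \xrightarrow{h_w\,\cdot} X(\{\mu\}, b),\]
where the first arrow comes from the inclusions $M_{\nu_w} \hookrightarrow G$ and $\brI_{M_{\nu_w}} \subset \brI$ together with the containment $\Adm^{M_{\nu_w}}(\{\tilde\l_w\}_{M_{\nu_w}}) \subset \Adm(\{\mu\})$ of \S\ref{adm-levi}(b), and the second is the isomorphism given by left multiplication by $h_w$.

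For the surjection in (a), let $C$ be any connected component of $X(\{\mu\}, b)$. Corollary~\ref{cor1} furnishes a point of $C$ lying in $X_x(b)$ for some $\sigma$-straight $x \in \Adm(\{\mu\})$; by Theorem~\ref{jb-X} the nonemptiness of $X_x(b)$ forces $\dot x \in [b]$ and ensures that $J_b$ acts transitively on $X_x(b)$. Under the conjugation $J_b = h_x J_{\dot x} h_x^{-1}$, with $J_{\dot x} = M_{\nu_x}(\brF)^{\Ad(\dot x)\circ\sigma}$ contained in $M_{\nu_x}(\brF)$, every point of $X_x(b)$ takes the form $h_x j' \brI$ for some $j' \in J_{\dot x}$. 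Since $x$ is $\sigma$-straight in $\brW$ and basic in $M_{\nu_x}$ by \S\ref{s-trivial}(c), it coincides with the length-zero element $\t_{\{\tilde\l_x\}_{M_{\nu_x}}}$ of $\brW_{M_{\nu_x}}$, so $j' \brI_{M_{\nu_x}}$ belongs to $X^{M_{\nu_x}}_{x}(\dot x) \subset X^{M_{\nu_x}}(\{\tilde\l_x\}_{M_{\nu_x}}, \dot x)$ and maps to $h_x j'\brI$ under the natural morphism. Hence $C$ meets the image and the $\pi_0$-surjection follows.

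For the second surjection, I would apply Theorem~\ref{conn} to the triple $(M_{\nu_w}, \{\tilde\l_w\}_{M_{\nu_w}}, \dot w)$ for each straight $w$ with $\dot w \in [b]$. The three hypotheses of that theorem transfer cleanly: $\dot w$ is basic in $M_{\nu_w}$ by \S\ref{s-trivial}(c), so after $\sigma$-conjugation in $M_{\nu_w}$ it represents the basic class $[\dot\t_{\{\tilde\l_w\}_{M_{\nu_w}}}]$; the Levi $M_{\nu_w}$ is residually split because the trivial action of $\sigma$ on $\brW$ restricts to the trivial action on the Iwahori-Weyl group $\brW_{\nu_w}$ of $M_{\nu_w}$; and $\{\tilde\l_w\}_{M_{\nu_w}}$ is essentially noncentral in $M_{\nu_w}$ by the standing hypothesis that $[b]$ is essentially nontrivial in $M_{[b]}$, since $M_{\nu_w}$ is $G(\brF)$-conjugate to $M_{[b]}$. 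Theorem~\ref{conn} then yields $\pi_0(X^{M_{\nu_w}}(\{\tilde\l_w\}_{M_{\nu_w}}, \dot w)) \cong \pi_1(M_{\nu_w})_{\Gamma_0}^\sigma$, and combining with part~(a) produces the stated surjection.

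The most delicate point to verify will be that $x$ indeed lies in $\Adm^{M_{\nu_x}}(\{\tilde\l_x\}_{M_{\nu_x}})$, so that $X^{M_{\nu_x}}_{x}(\dot x)$ genuinely sits inside the relevant ADLV for the Levi; this amounts to identifying $x$ with the length-zero element $\t_{\{\tilde\l_x\}_{M_{\nu_x}}}$ of $\brW_{M_{\nu_x}}$, which I would extract from the $\sigma$-straightness of $x$ combined with basicity in $M_{\nu_x}$ (using that $\ell_{M_{\nu_x}}(x) = \langle \bar\nu_x, 2\rho_{M_{\nu_x}}\rangle = 0$). The remaining technical task is to check that residual splitness and essential noncentrality descend uniformly to every Levi $M_{\nu_w}$ appearing in the indexing set, so that Theorem~\ref{conn} can be invoked $w$-by-$w$ without ambiguity.
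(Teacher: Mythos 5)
Your proof is correct and follows essentially the same route as the paper: build the morphism from $\Adm^{M_{\nu_w}}(\{\tilde\l_w\}_{M_{\nu_w}})\subset\Adm(\{\mu\})$ and translation by $h_w$, invoke Corollary~\ref{cor1} to land in a straight stratum, identify that stratum with (part of) the Levi ADLV, and conclude the ``moreover'' part from Theorem~\ref{conn} applied to $M_{\nu_w}$. The only cosmetic difference is that where the paper cites $X_w(\dot w)\cong X^{M_{\nu_w}}(\dot w)$ from G\"ortz--Haines--Kottwitz--Reuman, you rederive this containment by hand from the transitive $J_{\dot w}$-action of Theorem~\ref{jb-X} together with the observation that a straight $x$ is the length-zero element $\t_{\{\tilde\l_x\}_{M_{\nu_x}}}$ of $\brW_{\nu_x}$; both are valid.
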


\begin{remark}
In general the situation can be much worse. For example, if $\mathcal{O}$ was the conjugacy class of $t_{\underline{\mu}}$, i.e. $\mathcal{O}$ consists of maximal translation elements in $\Adm(\{\mu\})$, then $X(\{\mu\},b)$ is discrete and we have a bijection $$\coprod_{w\in \mathcal{O}, w \ \text{straight}}M_{v_w}(F)/\brI_M^\sigma\cong X(\{\mu\},b).$$ 
\end{remark}

\begin{proof} Let $[b] \in B(G, \{\mu\})$ and $w \in \brW$ is a straight element with $\dot w \in [b]$.  Then by \S\ref{adm-levi} (b), we have $$X^{M_{\nu_w}}(\{\tilde \l_w\}_{M_{\nu_w}}, \dot w) \subset X(\{\mu\}, \dot w) \cong X(\{\mu\}, b).$$ Here the second map is given by $g \brI \mapsto h_w g \brI$, where $h_w$ is an element in $G(\brF)$ with $h_w \i \dot w \s(h_w)=b$. 

This defines a morphism $$\bigsqcup_{w \in \brW \text{ is a straight element with } \dot w \in [b]} X^{M_{\nu_w}}(\{\tilde \l_w\}_{M_{\nu_w}}, \dot w) \to X(\{\mu\}, b)$$ and a map $$\coprod_{w \in \brW \text{ is a straight element with } \dot w \in [b]} \pi_0(X^{M_{\nu_w}}(\{\tilde \l_w\}_{M_{\nu_w}}, \dot w)) \to \pi_0(X(\{\mu\}, b)).$$ Both maps depend on the choice of $h_w$. However, since $J_{\dot w} \subset M_{\nu_w}$, the image of the second map in $\pi_0(X(\{\mu\}, b))$ does not depend on the choice of $h_w$. 

By Corollary \ref{cor1}, every element in $X(\{\mu\}, b)$ is connected to an element in $X_w(b)$ for a straight element $w$ with $\dot w \in [b]$. By \cite[Theorem 2.1.4]{GHKR}, $X_w(\dot w) \cong X^{M_{\nu_w}}(\dot w) \subset X^{M_{\nu_w}}(\{\tilde \l_w\}_{M_{\nu_w}}, \dot w)$. This proves the surjectivity of (a).

The ``moreover'' part follows from the surjectivity of (a) and Theorem \ref{conn}. 
\end{proof}

\section{Passing from Iwahori level to parahoric level}
In this section, we study the connected components of $X(\{\mu\}, b)_K$ for a parahoric subgroup $\brK$. We first describe the basic case. 

\begin{theorem}
Suppose that $\mu$ is essentially nontrivial. Then the Kottwitz homomorphism induces an isomorphism $$\pi_0(X(\{\mu\}, \dot \t_{\{\mu\}})_K) \cong \pi_1(G)^\s_{\G_0}.$$
\end{theorem}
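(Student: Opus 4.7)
The plan is to bootstrap from the Iwahori case (Theorem \ref{conn}) using the surjection from Iwahori-level to parahoric-level affine Deligne-Lusztig varieties provided by Theorem \ref{KR-He}(2), together with the fact that the Kottwitz map is defined at the group level and hence factors through both quotients $G(\brF)/\brI$ and $G(\brF)/\brK$.

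More precisely, I would first observe that the Kottwitz homomorphism $\tilde\kappa\colon G(\brF)\to \pi_1(G)_{\Gamma_0}$ descends to locally constant maps on both $G(\brF)/\brI$ and $G(\brF)/\brK$, and the natural projection $G(\brF)/\brI\twoheadrightarrow G(\brF)/\brK$ intertwines them. Restricting to $X(\{\mu\},\dot\t_{\{\mu\}})$ and applying Theorem \ref{KR-He}(2) yields a commutative diagram
\[
\xymatrix{
\pi_0(X(\{\mu\},\dot\t_{\{\mu\}})) \ar@{->>}[r] \ar[dr]_-{\kappa} & \pi_0(X(\{\mu\},\dot\t_{\{\mu\}})_K) \ar[d]^-{\kappa} \\
& \pi_1(G)_{\Gamma_0}^{\sigma}
}
\]
where the top horizontal arrow is surjective by Theorem \ref{KR-He}(2) and the diagonal arrow is an isomorphism by Theorem \ref{conn}. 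A purely formal argument (a surjection whose composition with another map is an isomorphism is itself an isomorphism) then forces the top horizontal map and the right vertical map to both be isomorphisms.

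The only point that needs verification is that the Kottwitz map on $X(\{\mu\},\dot\t_{\{\mu\}})_K$ takes values in $\pi_1(G)_{\Gamma_0}^{\sigma}$; this is the exact analogue of Lemma \ref{1-obstr} at parahoric level, with $c_{\dot\t_{\{\mu\}},\{\mu\}}=1$, and the same proof applies verbatim (the argument uses only that $\Adm(\{\mu\})$ is stable under conjugation by $\Omega$, which descends from $\brW$ to $W_K\backslash\brW/W_K$). There is really no serious obstacle here; the work was done in proving Theorem \ref{conn}, and the present statement is a clean formal corollary once one has the surjection of Theorem \ref{KR-He}(2). The only subtlety to double-check is that the Kottwitz map indeed parametrizes connected components of $G(\brF)/\brK$ by $\pi_1(G)_{\Gamma_0}$, which follows from \cite[Proposition 1.21]{Zhu} together with the discussion at the beginning of \S\ref{3}.
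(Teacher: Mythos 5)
Your proposal is correct and is essentially the paper's own proof: the paper uses the same commutative diagram (phrased via $\mathcal{FL}\to Gr_{\brK}$ and $\pi_0(\mathcal{FL})\cong\pi_0(Gr_{\brK})\cong\pi_1(G)_{\Gamma_0}$ rather than via the Kottwitz maps directly, but this is the identical argument), deduces surjectivity of $\pi_0(X(\{\mu\},\dot\t_{\{\mu\}}))\to\pi_0(X(\{\mu\},\dot\t_{\{\mu\}})_K)$ from Theorem \ref{KR-He}(2), injectivity from Theorem \ref{conn}, and concludes by the same formal two-out-of-three reasoning.
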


\begin{proof}
We have a commutative diagram 
\[
\xymatrix{X(\{\mu\}, \dot \t_{\{\mu\}}) \ar[r]  \ar@{^{(}->}[d] & X(\{\mu\}, \dot \t_{\{\mu\}})_K  \ar@{^{(}->}[d] \\ \mathcal{FL} \ar[r] & Gr_{\brK}}.
\]

By Theorem \ref{KR-He} (2), the map $X(\{\mu\}, \dot \t_{\{\mu\}}) \to X(\{\mu\}, \dot \t_{\{\mu\}})_K$ is surjective. Hence the induced map $\pi_0(X(\{\mu\}, \dot \t_{\{\mu\}})) \to \pi_0(X(\{\mu\}, \dot \t_{\{\mu\}})_K)$ is also surjective. 

We have $\pi_0(\mathcal{FL}) \cong \pi_0(Gr_{\brK}) \cong \pi_1(G)_{\G_0}$. By Theorem \ref{conn}, the map $$\pi_0(X(\{\mu\}, \dot \t_{\{\mu\}})) \to \pi_0(\mathcal{FL}) \cong \pi_0(Gr_{\brK})$$ is injective. By the commutativity of the above diagram, the map $$\pi_0(X(\{\mu\}, \dot \t_{\{\mu\}})) \to \pi_0(X(\{\mu\}, \dot \t_{\{\mu\}})_K)$$ is also injective. Hence $\pi_0(X(\{\mu\}, \dot \t_{\{\mu\}})_K) \cong \pi_1(G)^\s_{\G_0}$.
\end{proof}

We have the following result in the general case. 

\begin{proposition}
Every point in $X(\{\mu\},b)_K$ is connected to a point in $X_{K, x}(b)$ for some $\s$-straight element $x\in \Adm(\{\mu\}) \cap {}^K \brW$.
\end{proposition}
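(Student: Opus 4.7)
The plan is to establish a parahoric analogue of Theorem \ref{theorem2} and apply it with $C = \mathrm{Adm}(\{\mu\})_K$. The overall strategy mirrors the three ingredients of the Iwahori case: reduction to a minimal stratum, quasi-affineness of irreducible components, and the dimension dichotomy detecting $\sigma$-straightness.

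Let $Y$ be an irreducible component of $X(\{\mu\}, b)_K$, and choose a minimal element $[x] \in \mathrm{Adm}(\{\mu\})_K$ under the partial order inherited from the Bruhat order on $\breve{W}$ such that $Y \cap X_{K, [x]}(b) \neq \emptyset$. Pick an irreducible component $Y_1$ of this intersection. By minimality of $[x]$, no strictly smaller stratum meets $Y_1$, so $Y_1$ is closed in $Y$ and hence projective.

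The core technical step is a parahoric analogue of Proposition \ref{prop1}: irreducible components of $X_{K, [x]}(b)$ are (perfections of) quasi-affine varieties. The Iwahori proof transports to $Gr_{\breve{K}}$ using the description of $\mathrm{Pic}(Gr_{\breve{K}})$ given earlier in the paper and the Cartan-matrix action of $\breve{W}$ on it; as in Proposition \ref{lemma3}, for a suitable choice of $\mathscr{L}$ on $Gr_{\breve{K}}$ the line bundle $p_1^{*}(\dot{x}\mathscr{L}) \otimes p_2^{*}\mathscr{L}^{-1}$ restricts trivially along the relevant graph orbit, and its restriction to $X_{K, [x]}(b)$ becomes ample once $\mathscr{L}$ is chosen so that the twisted difference $(x\sigma x_0^{-1})(\mathscr{L}) - \mathscr{L}$ is dominant-regular. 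Combining projectivity and quasi-affineness forces $Y_1$ to be zero-dimensional. A parahoric analogue of Proposition \ref{prop2} then forces the minimum-length representative $x_0$ of $[x]$ to be $\sigma$-straight, since otherwise the Deligne--Lusztig reduction combined with Theorem \ref{HN-min} would produce a positive-dimensional irreducible component inside $X_{K, [x]}(b)$. Now $x_0 \in {}^K\breve{W}$ automatically as it is a minimum-length double-coset representative, and $x_0 \in \mathrm{Adm}(\{\mu\})$ by Bruhat-closedness of the admissible set, so the proposition follows.

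The main obstacle is the parahoric quasi-affineness step: one must carry the line-bundle construction of Proposition \ref{prop1} through to the partial affine flag variety, respecting the fact that the stratification is now indexed by double cosets rather than elements of $\breve{W}$, and verify ampleness using the Picard group of $Gr_{\breve{K}}$. A secondary but delicate point is the dimension--straightness dichotomy at parahoric level, which must account for the coarser stratification and the corresponding interplay between the minimum-length representative and the Newton point of its double coset.
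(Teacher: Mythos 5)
Your plan — redo Theorem \ref{theorem2} directly on $Gr_{\brK}$ — is not the paper's route, and as written it has two genuine gaps. The paper instead lifts everything back to the Iwahori level: it sets $X(\{\mu\},b)^K=\sqcup_{w\in\Adm(\{\mu\})^K}X_w(b)\subset\mathcal{FL}$, notes that the projection $\mathcal{FL}\to Gr_{\brK}$ restricts to a surjection onto $X(\{\mu\},b)_K$ with connected fibres $\brK/\brI$ (so the two spaces have the same $\pi_0$), applies Theorem \ref{theorem2} to the Bruhat-closed set $C=\Adm(\{\mu\})^K$, and then invokes \cite[Theorem 6.17, Proposition 6.6]{HR} together with the connectedness of $\brK$ to trade the resulting straight $w\in\Adm(\{\mu\})^K$ for a straight $x\in\Adm(\{\mu\})\cap{}^K\brW$ inside the same connected component. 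No new geometry on $Gr_{\brK}$ is needed.

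The first gap in your version is the parahoric quasi-affineness step, which you flag as ``the main obstacle'' but do not resolve: the $\brW$-action on the Picard group is defined on $\Pic\mathcal{FL}$ via the affine Cartan matrix, and a line bundle descends to $Gr_{\brK}$ only when its coefficients on $K$ vanish; the operator $x\s w^{-1}$ does not preserve that sublattice, so one cannot simply demand $\mathscr{L}\in\Pic Gr_{\brK}$ with $(x\s w^{-1})(\mathscr{L})-\mathscr{L}$ dominant regular, nor is the analogue of Proposition \ref{lemma3} for the orbit of $(\dot x,1)$ in $Gr_{\brK}\times Gr_{\brK}$ established. The second, more serious gap is your final step: you assert that the minimal-length representative $x_0$ of the double coset is $\s$-straight. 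Zero-dimensionality of a component of $X_{K,[x]}(b)$ can at best detect that \emph{some} element of $W_Kx_0W_K$ is $\s$-straight, and even that requires a dimension count through the $\brK/\brI$-fibration $\sqcup_{w'\in W_Kx_0W_K}X_{w'}(b)\to X_{K,[x]}(b)$, since the Deligne--Lusztig reduction underlying Proposition \ref{prop2} is an Iwahori-level device. Elements of one $W_K$-double coset need not be $W_K$-$\s$-conjugate, hence can have different Newton points, so there is no reason the minimal representative is straight; producing a straight element of the relevant coset lying in $\Adm(\{\mu\})\cap{}^K\brW$ is precisely the content of \cite[Theorem 6.17]{HR}, for which your argument offers no substitute.
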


\begin{proof}
Let \begin{align*} X(\{\mu\}, b)^K &=\{g \in G(\brF)/\brI; g \i b \s(g) \in \cup_{w \in \Adm(\{\mu\})_K} \brK \dot w \brK\} \\ &=\sqcup_{w \in \Adm(\{\mu\})^K} X_w(b) \subset \mathcal{FL}.\end{align*} Then the projection map $\mathcal{FL} \to Gr_{\brK}$ induces a surjective map $X(\{\mu\}, b)^K \to X(\{\mu\}, b)_K$ and each fiber is isomorphic to $\brK/\brI$. Therefore 

(a) \quad $\pi_0(X(\{\mu\}, b)^K) \cong \pi_0(X(\{\mu\}, b)_K).$

Note that $\Adm(\{\mu\})^K$ is closed under the Bruhat order. By Theorem \ref{theorem2}, every point in $X(\{\mu\},b)^K$ is connected to a point in $X_w(b)$ for a $\s$-straight element $w$ in $\Adm(\{\mu\})^K$. By \cite[Theorem 6.17]{HR}, there exists a $\s$-straight element $x \in \Adm(\{\mu\}) \cap {}^K \brW$ that is $\s$-conjugate to $w$ by an element in $W_K$. By \cite[Proposition 6.6]{HR}, we have 

(b) \quad $\brK_\s (\brI \dot w \brI)=\brK_\s (\brI \dot x \brI),$ where $\brK_\s \subset \brK \times \brK$ is the graph of the Frobenius map $\s$. 

Note that $\cup_{w \in \Adm(\{\mu\})_K} \brK \dot w \brK$ is stable under the action of $\brK \times \brK$ and hence is also stable under the action of $\brK_\s$. Thus $X(\{\mu\}, b)^K$ is stable under the left action of $\brK$. Since $\brK$ is connected, by (b), every point in $X_w(b)$ is connected in $X(\{\mu\}, b)^K$ to a point in $X_x(b)$. Therefore every point in $X(\{\mu\},b)^K$ is connected to a point in $X_x(b)$ for a $\s$-straight element $x$ in $\Adm(\{\mu\}) \cap {}^K \brW$. Now the proposition follows from (a). 
\end{proof}

Similar to the proof in Theorem \ref{nonbasic}, we have

\begin{theorem}\label{nonbasic2}
Assume that $G$ is residually split. Then for any $[b] \in B(G, \{\mu\})$, we have a natural morphism \[\bigsqcup_{w \in {}^K\brW \text{ is a straight element with } \dot w \in [b]} X^{M_{\nu_w}}(\{\tilde \l_w\}_{M_{\nu_w}}, \dot w) \to X(\{\mu\}, b)_K,\] which induces a surjection \[\coprod_{w \in {}^K \brW \text{ is a straight element with } \dot w \in [b]} \pi_0(X^{M_{\nu_w}}(\{\tilde \l_w\}_{M_{\nu_w}}, \dot w)) \twoheadrightarrow \pi_0(X(\{\mu\}, b))_K.\]

If moreover, $[b]$ is essentially nontrivial in the Levi subgroup $M_{[b]}$, then the natural morphism induces a surjection $$\coprod_{w \in {}^K \brW \text{ is a straight element with } \dot w \in [b]} \pi_1(M_{v_w})_{\Gamma_0}^\s \twoheadrightarrow \pi_0(X(\{\mu\}, b)_K).$$ 
\end{theorem}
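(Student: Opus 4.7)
My plan is to mirror the proof of Theorem \ref{nonbasic} at parahoric level, using the Proposition preceding Theorem \ref{nonbasic2} as the parahoric replacement for Corollary \ref{cor1}, and then reduce the ``moreover'' part on each Levi to the basic case Theorem \ref{conn}.

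First I construct the natural morphism. For each straight $w \in {}^K\brW$ with $\dot w \in [b]$, the containment $\Adm^{M_{\nu_w}}(\{\tilde \l_w\}_{M_{\nu_w}}) \subset \Adm(\{\mu\})$ recorded in \S\ref{adm-levi}(b), together with the inclusion $\brI \cap M_{\nu_w}(\brF) \subset \brI$, gives an embedding
\[ X^{M_{\nu_w}}(\{\tilde \l_w\}_{M_{\nu_w}}, \dot w) \hookrightarrow X(\{\mu\}, \dot w) \subset \mathcal{FL}. \]
Left multiplication by $h_w^{-1}$, where $h_w \in G(\brF)$ is chosen with $h_w^{-1}\dot w\sigma(h_w) = b$, composed with the projection $\mathcal{FL} \to Gr_{\brK}$, produces the desired map into $X(\{\mu\}, b)_K$. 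The induced map on $\pi_0$ is independent of the choice of $h_w$: any two choices differ by an element of $J_{\dot w} = M_{\nu_w}(\brF)^\tau$, which already acts on $X^{M_{\nu_w}}(\{\tilde \l_w\}_{M_{\nu_w}}, \dot w)$ by left multiplication and so merely permutes its connected components.

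For the surjectivity of the first arrow, I invoke the preceding Proposition: every point of $X(\{\mu\}, b)_K$ is Zariski-connected to a point of $X_{K, x}(b)$ for some $\sigma$-straight $x \in \Adm(\{\mu\}) \cap {}^K\brW$. Theorem \ref{jb-X} combined with \cite[Theorem 2.1.4]{GHKR} identifies $X_x(\dot x)$ with $X^{M_{\nu_x}}(\dot x) \subset X^{M_{\nu_x}}(\{\tilde \l_x\}_{M_{\nu_x}}, \dot x)$, so after translating by $h_x^{-1}$ and projecting to $Gr_{\brK}$ the chosen point lies in the image of our map. This yields the first surjection.

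The ``moreover'' assertion reduces to Theorem \ref{conn} applied individually to each Levi $M_{\nu_w}$. Essential nontriviality of $[b]$ in $M_{[b]}$ translates into $\tilde \l_w$ being essentially noncentral in $M_{\nu_w}$. By \S\ref{s-trivial}(c) the element $\dot w$ is basic in $M_{\nu_w}$, and the residually split hypothesis on $G$ passes to $M_{\nu_w}$ since $\brW_{\nu_w}$ is a subgroup of $\brW$ on which $\sigma$ still acts trivially. Hence Theorem \ref{conn} gives $\pi_0(X^{M_{\nu_w}}(\{\tilde \l_w\}_{M_{\nu_w}}, \dot w)) \cong \pi_1(M_{\nu_w})_{\Gamma_0}^\sigma$, and substituting this into the first surjection yields the second. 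The only point requiring any real care is ensuring that the connectedness arguments at Iwahori level from the preceding Proposition genuinely pass to the parahoric target; this is handled cleanly by the isomorphism $\pi_0(X(\{\mu\}, b)^K) \cong \pi_0(X(\{\mu\}, b)_K)$ established in that Proposition's proof, which lets the whole argument be carried out on $\mathcal{FL}$ and transported down to $Gr_{\brK}$.
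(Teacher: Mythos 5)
Your proposal is correct and is essentially the paper's intended argument: the paper's proof of Theorem \ref{nonbasic2} is literally "similar to the proof of Theorem \ref{nonbasic}," with the preceding Proposition (and its identification $\pi_0(X(\{\mu\},b)^K)\cong\pi_0(X(\{\mu\},b)_K)$) replacing Corollary \ref{cor1}, exactly as you do. Your filling-in of the details — the embedding via \S\ref{adm-levi}(b), independence of $h_w$ via $J_{\dot w}\subset M_{\nu_w}$, GHKR for the straight strata, and Theorem \ref{conn} on each Levi for the "moreover" part — matches the paper's route.
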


Compared to the Iwahori case, the number of straight elements one needs to consider here could be smaller. For example, if $G$ is residually split, $\brK$ is a special maximal parahoric subgroup and $b$ is a translation element, then there is only one straight element involved in the left hand of the statement. 
\section{Verification of Axioms in \cite{HR} for PEL Shimura varieties}\label{10}
In this section we apply the previous construction to study the connected components in the basic locus of some PEL Shimura varieties. 

\begin{theorem}
The five Axioms in \cite{HR} hold for PEL type Shimura varieties associated to unramified groups of type A and C and to odd ramified unitary groups. 
\end{theorem}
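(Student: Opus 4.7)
The plan is to verify the five axioms of \cite{HR} one by one, noting that most of them are already known or are verified in the literature for the PEL cases at hand (unramified type A and C, and odd ramified unitary), so the bulk of the work will concentrate on the one axiom that genuinely requires the global input of this paper, namely the statement that the minimal Kottwitz--Rapoport stratum meets every connected component of the basic locus. The first axioms concern the existence of the local model diagram and the definition of the Kottwitz--Rapoport, Newton, and Ekedahl--Oort stratifications on the special fiber; for the Shimura varieties in question, integral models with parahoric level structure have been constructed by Rapoport--Zink, G\"ortz, Pappas--Rapoport, and others, and the compatibility of the local model with $\mathrm{Adm}(\{\mu\})_K$ is well documented. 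The axiom asserting that the $\s$-conjugacy class $[b_x]$ attached to a point $x$ only depends on its KR stratum, and the compatibility with the map $\Upsilon: X(\{\mu\},b)_K \to$ (the isogeny class), are established in these cases essentially by combining the Rapoport--Zink uniformization on the basic locus with the existence of a canonical crystalline realization functor on the isogeny class.

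The serious remaining point is the axiom demanding that the minimal KR stratum meet every connected component. I would first reduce this to a question on the basic locus. By the work of Rapoport--Zink, for PEL data of the types in question, the basic locus admits a $p$-adic uniformization of the form $I(\mathbb{Q}) \backslash X(\{\mu\},\dot\tau_{\{\mu\}})_K \times G(\mathbb{A}_f^p)/K^p$. Hence it suffices to show that every point of $X(\{\mu\},\dot\tau_{\{\mu\}})_K$ is connected to a point of $X_{K,\tau_{\{\mu\}}}(\dot\tau_{\{\mu\}})$, which is the local incarnation of the minimal KR stratum. This is Proposition~\ref{10.1}, and it is the place where Theorem~\ref{conn} (together with its parahoric variant in \S\ref{8}) enters: Theorem~\ref{conn} tells us that $\pi_0(X(\{\mu\},\dot\tau_{\{\mu\}})_K) \cong \pi_1(G)_{\Gamma_0}^\sigma$ whenever $\mu$ is essentially noncentral, so the connected components are detected purely by the Kottwitz invariant, and the minimal stratum $X_{K,\tau_{\{\mu\}}}(\dot\tau_{\{\mu\}})$ surjects onto $\pi_1(G)_{\Gamma_0}^\sigma$ by the right action of $\Omega^\sigma$ as in the proof of Lemma~\ref{1-obstr}. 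One needs to check that for the groups at hand $\mu$ is indeed essentially noncentral, which is clear because in the PEL cases considered $\mu$ is a minuscule, non-central cocharacter of each simple factor.

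The last axiom requires compatibility of the map $\Upsilon$ with the adjoint group action and the commutativity of a certain diagram involving $\s$-centralizers; this falls out of the construction of the uniformization isomorphism once one has checked that the groups $I$ and $J_b$ match up via a choice of quasi-isogeny, and this matching is standard for PEL types A and C (unramified) and the odd ramified unitary case. The specific PEL input needed for the odd ramified unitary case is that the local group $G$ is residually split (indeed split after an unramified quadratic extension for the odd unitary groups), so that Theorem~\ref{nonbasic2} is applicable on the non-basic Newton strata where it is invoked in the course of \cite{HR}.

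The main obstacle is the third paragraph, namely pinning down the map $\Upsilon$ for ramified unitary groups at primes where the integral model is not smooth; this is where the existing literature is least explicit and where one must be careful about the definition of isogeny classes in the non-smooth setting. Once this is granted, the crucial new ingredient, and the only one that is essentially local-group-theoretic, is the application of Theorem~\ref{conn} and its parahoric refinement via the Rapoport--Zink uniformization as sketched above.
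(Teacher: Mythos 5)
Your overall division of labour is reasonable, but the route you take for the decisive axiom diverges from the paper's and has two genuine gaps. First, Axiom (5) of \cite{HR} requires that the minimal KR stratum $\lambda_I^{-1}(\tau_{\{\mu\}})$ surject onto $\pi_0(\mathscr{S}_I\otimes\mathbf{k})$, i.e.\ meet every connected component of the \emph{whole} special fiber, not just of the basic locus. Your ``reduction to a question on the basic locus'' is exactly the missing step: it presupposes that the basic locus meets every connected component of the special fiber, which is itself a global statement. The paper proves this $\pi_0$-surjectivity not with Theorem \ref{conn} at all, but by a global argument: the existence of compactifications gives $\pi_0(\mathscr{S}_I\otimes\mathbf{k})\cong\pi_0(Sh_{I\mathrm{K}^p}(G,X)\otimes_E\mathbb{C})$, Deligne's description of the latter plus weak approximation shows that $G(\mathbb{A}^f)$ acts transitively on $\varprojlim_{\mathrm{K}^p}\pi_0$, and one then moves a point of the minimal KR stratum into any component by changing the prime-to-$p$ level structure. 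None of this appears in your proposal. Theorem \ref{conn} (via Theorem \ref{theorem2}) enters only in the finer Proposition \ref{10.1}, which asserts connectedness \emph{within} the basic Newton stratum and is logically separate from Axiom (5).

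Second, your appeal to Rapoport--Zink uniformization cannot be taken off the shelf here. The integral models are flat closures of the generic fiber inside the naive moduli schemes, and for ramified unitary groups the uniformization of these flat-closure models, together with the identification of the reduced special fiber of the relevant RZ space with (a deperfection of) $X(\{\mu\},\dot\tau_{\{\mu\}})_K$, is precisely what is not available in the literature in the needed generality. The paper avoids this entirely: in Proposition \ref{10.1} one transports connectedness from the perfect scheme $X(\{\mu\},\delta)$ to $\mathscr{S}_I^{[b]_{basic}}$ by hand, taking a point $g\in X(\{\mu\},\delta)(\mathscr{R})$ over a frame of a smooth curve, modifying the Dieudonn\'e modules by $g$ to produce a family of polarized $\mathcal{O}_E$-abelian varieties over $R$, and checking the determinant condition pointwise; the Appendix (Theorem \ref{CKV}) is then needed to know that Zariski connected components of the perfect scheme are computed by such chains of curves. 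This deformation-theoretic step and the Appendix comparison are the technical core of the section and are absent from your argument. Minor points: the construction of $\Upsilon$ via Dieudonn\'e theory, which you single out as the main obstacle, is in fact routine and identical to the Siegel case; residual splitness and Theorem \ref{nonbasic2} play no role in the verification of the axioms; and a ramified unitary group splits over the \emph{ramified} quadratic extension, not an unramified one.
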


The Siegel case is already verified in \cite{HR}. In the rest of this section, we focus on odd ramified unitary groups. The same proofs go through for unramified groups as well. In fact, in the unramified cases, the naive local model is flat and hence there is no need to take closures in the construction of the integral models. Therefore one does not need the coherence conjecture in the verification of Axiom (2). 

As to other PEL type, we expect that the axioms in \cite{HR} still hold. However, there are several technical difficulties to overcome. First, the argument we have uses Dieudonne theory, which requires the moduli description of integral models. Second, for even ramified unitary groups and even orthogonal groups, the stabilizer of a chain of lattice might not be connected and thus does not equal to a parahoric subgroup. This gives extra difficulties in understanding the integral models. We do not investigate these cases here.

\subsection{} Let $E$ be a quadratic imaginary field and $\epsilon: E\rightarrow \mathbb{C}$ be a fixed embedding. Let $W$ be an $n$-dimensional $E$-vector space equipped with a Hermitian form $\phi:W\times W\rightarrow E$, we assume $n=2m+1$ is odd. Let $G$ be the unitary similitude group $$GU(\phi)=\{g\in GL_n(E):\phi(gv,gw)=\phi(v,w),\ \forall v,w\in W \}.$$
Then $G$ arises as the $\mathbb{Q}$ points of a reductive group over $\mathbb{Q}$, also denoted $G$. 

Let $r,s$ be integers such that $r+s=n$ with $s\leq r$, then as in \cite{PR1} there is a homomorphism $h_0:\mathbb{S}\rightarrow G_{\mathbb{R}}$ which induces a complex structure on $W\otimes_{\mathbb{Q}}\mathbb{R}$ such that $\text{Tr}(a|W\otimes_{\mathbb{Q}}\mathbb{R})=s\epsilon(a)+r\overline{\epsilon}(a)$ for $a\in E$ with respect to this complex structure. Let $X$ denote the $G(\mathbb{R})$ conjugacy class of $h_0$, we obtain a Shimura datum $(G,X)$ with reflex field $E$. 

Let $\alpha\in E$ be a totally imaginary element, i.e. $\overline{\alpha}=-\alpha$. Then we have an alternating $\mathbb{Q}$-linear form $\psi:W\otimes_{\mathbb{Q}}W\rightarrow \mathbb{Q}$ given by $$\psi(u,v)=\text{Tr}_{E/\mathbb{Q}}(\alpha^{-1}\phi(u,v)).$$
Upon replacing $\alpha$ by $-\alpha$, we may assume the $\mathbb{R}$ bilinear form on $W\otimes_{\mathbb{Q}}(\mathbb{R})$ given by $\psi(u,h_0(i)v)$ is positive definite. We thus obtain an embedding of Shimura data $\iota: (G,X)\hookrightarrow (GSp(W,\psi),S^\pm)$.

Now let $p>2$ be a prime which ramifies in $E$ and assume $\phi$ is split over $F=\mathbb{Q}_p$. Let $F'=E\otimes_{\mathbb{Q}}\mathbb{Q}_p$, then $F'$ is a ramified quadratic extension of $\mathbb{Q}_p$ and we let $\pi\in\mathcal{O}_{F'}$ be a uniformizer with $\overline{\pi}=-\pi$. Let $e_1,\cdots,e_n$ be a basis for $W\otimes_{E}F'$ such that $\phi(e_i,e_{n+1-j})=\delta_{ij}$, we have the maximal torus $T$ given by the elements $$\text{diag}(a_1,\cdots,a_m,a,a\overline{a}\overline{a}_m^{-1},\cdots,a\overline{a}\overline{a}_1^{-1}).$$
We have the associated Iwahori Weyl group $\brW$ and the affine Weyl group $$\brW_a\cong \mathbb{Z}^m\rtimes \brW_0,$$ where $W_0=S_m\times\{\pm1\}$, see \cite[\S2]{PR1} for these calculations.

For $j\in\{0,\cdots,m\}$, we define lattice $\Lambda_j$ by $$\L_j=\text{span}_{\mathcal O_E} \<\pi^{-1}e_1,\cdots,\pi^{-1}e_j,e_{j+1},\cdots,e_n\>.$$ We may extend $\Lambda_j, j\in J$ to a periodic self-dual lattice chain indexed by $\{kn+j| k\in\mathbb{Z},j\in J\}$ as in \cite[\S 1.d]{PR1}. For $J\subset \{0,\cdots,m\}$, we associate the common stabilizer $K_J$ of the lattices $\{\Lambda_j\mid j\in J\}$ in $G(\BQ_p)$. Then $K_J$ is a parahoric subgroup. We write $\mathcal{G}_J$ for the associated group scheme. The parahoric subgroup $K_J$ is special maximal when $J=\{m\}$ and is Iwahori when $J=\{0,\cdots,m\}$, we write $I$ for this Iwahori subgroup. 

We fix $\mathrm{K^p}\subset G(\mathbb{A}_f^p)$ a sufficiently small compact open subgroup. The we have the associated Shimura variety $Sh_{K_J\mathrm{K}^p}(G,X)$ over $E$. Let $v$ be the place of $E$ above $p$, the naive integral model  $\mathscr{S}^{naive}_{K_J}$ over $\mathcal{O}_{E_v}$ is given by the following moduli problem. Let $S$ be an $\mathcal{O}_{E_v}$ scheme, an $S$ point of $\mathscr{S}^{naive}_{K_J}$ is given by the set of isomorphism classes of tripes $(A,\lambda,\eta)$ where:
\begin{itemize}
\item $A=\{A_j\}$, $j\in\{kn+j| k\in\mathbb{Z},j\in J\}$ is an $\mathcal{L}$ set of abelian varieties over $S$ in the sense of \cite[Definition 6.5]{RZ}.  In particular each $A_j$ is equipped with a map $i:\mathcal{O}_E\otimes\mathbb{Z}_{(p)}\rightarrow \text{End}(A)\otimes\mathbb{Z}_{(p)}$.

\item $\lambda$ is a $\mathbb{Q}$-homogeneous polarization of $A$, \cite[Definition 6.6]{RZ}.

\item $\eta$ is a $\mathrm{K}^p$ level structure, i.e. an isomorphism:
$$ \eta:H_1(A,\mathbb{A}_f^p)\cong W\otimes_{\mathbb{Q}}\mathbb{A}^f_p\mod \mathrm{K}^p$$
respecting the bilinear forms on either side up to a scalar. 
\end{itemize}

Moreover $A$ is required to satisfy the determinant condition of \cite[Definition 6.9]{RZ}, i.e. we have $\text{det}_{\mathcal{O}_S}(b;\Lie(A_j)=\text{det}_{E}(b;V_0)$,
see loc. cit. for details.
The integral model $\mathscr{S}_{K_J}$ is defined to be the closure of the generic fiber in $\mathscr{S}_{K_J}^{naive}.$ 

Now if we base change the embedding $\iota:G\rightarrow GSp(W,\psi)$ to $\mathbb{Q}_p$, we obtain a representation of $G$ satisfying the conditions of \cite[1.2.15]{KP}. Let $\mathcal{GL}$ denote the stabilizer in $GL_{2n}(\mathbb{Q}_p)$ of the lattice chain $\Lambda_i$ considered as a chain of $\mathbb{Z}_p$ lattices in $W\otimes_{\mathbb{Q}}\mathbb{Q}_p$ considered as $\mathbb{Q}_p$ vector space. Then $\mathcal{GL}$ is a parahoric subgroup of $GL_{2n}(\mathbb{Q}_p)$ and we have a closed embedding of group schemes $\mathcal{G}_J\rightarrow \mathcal{GL}$, see \cite[Proposition 3.3]{KP}.

\subsection{} We now verify the axioms of \cite{HR} for these models.

\textbf{Axiom (1)} {\it Compatibility with change in parahoric:} As in \cite[\S 7]{HR}, it suffices to prove the desired properties for the morphism $\pi_{K_J, K_{J'}}$, when $J$ arises from  $J'$  by adding a single element $j$. Let $j'\in J'$ be maximal with $j'<j$. 

Using the description of the naive integral models as a moduli space, one sees that there exists a map $\pi_{J,J'}^{naive}:\mathscr{S}^{naive}_{K_J}\rightarrow\mathscr{S}^{naive}_{K_{J'}}$. This induces a maps of integral models $\pi_{J,J'}:\mathscr{S}_{K_J}\rightarrow \mathscr{S}_{K_{J'}}$.

For an $S$ point $(A,\lambda,\eta)$ of $\mathscr{S}_{K_{J'}}^{naive}$, to give a point in the preimage of $\pi_{J,J'}$ is to give a subgroup of $A_{j'}[p]$ satisfying certain properties, hence this is representable by a closed subscheme of a Hilbert scheme. Thus $\pi_{J,J'}^{naive}$ is proper and hence so is $\pi_{J,J'}$. Since $\pi_{J,J'}$ is surjective on the generic fiber, it has dense image and thus $\pi_{J, J'}$ is surjective.

\smallskip

\textbf{Axiom (2)} {\it Local model diagram:} As in \cite[\S 1.e.4]{PR} that there is a diagram 
$$\mathscr{S}^{naive}_{\mathrm{K_J}}\xleftarrow \pi \tilde{\mathscr{S}}^{naive}_{K_J}\xrightarrow q M_{K_J}^{naive},$$
where $\pi$ is a $K_J$-torsor and $q$ is smooth of relative dimension $\dim G$. Here $M_{K_J}^{naive}$ is the naive local model defined in \cite{RZ}. We thus obtain a diagram
$$\mathscr{S}_{\mathrm{K_J}}\xleftarrow \pi \tilde{\mathscr{S}}_{K_J}\xrightarrow q M^{loc}_{K_J},$$ 
where $M^{loc}_{K_J}$ is the closure of the generic fiber of $M_{K_J}^{naive}$. Since the coherence conjecture has been proved in \cite{Zhu1}, the special fiber of $M^{loc}_{K_J}$ has a stratification by $\Adm(\{\mu\})_{K_J}$, see \cite[\S 4.1]{PR1}. 
We have a Cartesian diagram of morphisms of stacks:
\[
\xymatrix{\mathscr{S}_{K_J}\ar[r]  \ar[d] & [M^{loc}_{{K_J}}/K_J]  \ar[d] \\ \mathscr{S}^{naive}_{K_J} \ar[r] & [M^{naive}_{{K_J}}/K_J].}
\]

We obtain a map $$\lambda_{K_J}: \mathscr{S}_{K_J}(\mathbf{k})\rightarrow\brW_{K_J}\backslash\brW/\brW_{K_J},$$ giving the Kottwitz-Rapoport stratification. The closure relations on $M^{loc}_{K_J}\otimes \mathbf{k}$ follows from the embedding of $M^{loc}_{K_J}\otimes \mathbf{k}$ into an appropriate affine flag variety, see \cite[3.c]{PR1}.

\smallskip

\textbf{Axiom (3)} {\it Newton stratification:} The verification of this Axiom is the same as in \cite[\S7]{HR} in the Siegel case. 

\smallskip

\textbf{Axiom (4) }{\it Joint stratification:} 

a) \& c) The verifications are again the same as in \cite[\S7]{HR} in the Siegel case. We recall how the map $\Upsilon:\mathscr{S}_{K_J}(\mathbf{k})\rightarrow G(\brF)/\mathcal{G}_J(\mathcal{O}_{\brF})_\sigma$ is defined. For $x\in\mathscr{S}_{K_J}(\mathbf{k})$ corresponding to $(A,\lambda,\eta)$, we let $\mathbb{D}_i$ be the Dieudonn\'e module of $\mathcal{A}_i[p^\infty]$ and let $N$ be the common rational Dieudonn\'e module. The $\mathbb{D}_i$ form an $\mathcal{O}_{K}\otimes {W(\mathbf{k})}$ lattice chain inside $N$. Then by \cite[App. to Chapter 3]{RZ} there is a $K$-linear isomorphism $$N\cong W\otimes_{\mathbb{Q}}\brF$$ taking $\mathbb{D}_i$ to $\Lambda_i$ for all $i\in\mathbb{Z}$ and respecting the forms up to a scalar. Under this isomorphism, the Frobenius is given by $\delta(\text{id}_W\otimes\sigma)$ for some $\delta\in G(\brF)$ which is well defined up to $\sigma$ conjugation by $\mathcal{G}_J(\mathcal{O}_{\brF})$. 

b) By \cite[Lemma 3.11]{HR}, we may assume $J=\{0,\cdots,m\}$. By Axiom (5) below and \cite[Theorem 4.1]{HR}, the image of $\Lambda_I:\mathscr{S}_{I}(\mathbf{k})\rightarrow \Adm(\{\mu\})$ is surjective. Let $b\in l_I^{-1}(w)$ for some $w\in\Adm(\{\mu\})$, then $b\in \brI\dot{w}\brI$. It follows that $1\in X(\{\mu\},b)$ and so $b\in B(G,\{\mu\})$ by \ref{KR-He}. By \cite[Theorem 5.4]{HR}, $\delta_I$ is surjective, hence there exists $x\in\mathscr{S}_I(\mathbf{k})$ such that the $\delta\in G(\brF)$ associated to $x$ as above satisfies $[\delta]=[b]$ in $B(G)$.

Let $g\in G(\brF)$ such that $g^{-1}\delta\sigma(g)=b$. As in the verification Axiom (5) below, $g$ corresponds to a point $gx\in \mathscr{S}_I(\mathbf{k})$, and we have $\Upsilon(gx)=g \delta \sigma(g) \i=b$ in $G(\brF)/\brI_{\sigma}$.

\smallskip

\textbf{Axiom (5)} {\it Basic non-emptiness:} Recall $I\subset G(\mathbb{Q}_p)$ is the Iwahori subgroup corresponding to $\{0,\cdots,m\}$. By \cite{Ki15}, the basic locus is nonempty. Let $x\in\mathscr{S}_{K_J\mathrm{K}^p}(\mathbf{k})$ be a point which lies in the basic locus. This corresponds to a triple $(A_i,\lambda,\eta)$ as above. As in Axiom 4) we obtain a $\delta\in G(\brF)$ where $[\delta]=[b]_{basic}\in B(G)$ the unique basic $\sigma$-conjugacy class. Recall $\tau_{\{\mu\}}$ is the unique element in $\Omega\cap\Adm(\{\mu\})$. A simple calculation shows that $\tau_{\{\mu\}}$ is represented by the element $\dot{\tau}_{\{\mu\}}=\text{diag}(\pi,\cdots,\pi)\in T(\brF)$.

Let $g\in X_{\tau_{\{\mu\}}}(\delta)$ which is non-empty since $\delta$ is basic,
 then  $g^{-1}\delta\sigma(g)\in \brI\dot{\tau}_{\{\mu\}}\brI$. Since $\dot{\tau}_{\{\mu\}}$ is central, we have $\delta\sigma(g)\in g\dot{\tau}_{\{\mu\}}\brI$. Let $\mathbb{D}_j$ denote the Dieudonn\'e module of $\mathscr{G}_j=A_j[p^\infty]$ for $j\in\mathbb{Z}$,  then we have $$pg\mathbb{D}_j\subset\delta\sigma(g)\mathbb{D}_j=\dot{\tau}_{\{\mu\}}g\mathbb{D}_j\subset g\mathbb{D}_j$$
 Thus $g\mathbb{D}_j$ corresponds to an $p$-divisible $g\mathscr{G}_j$ which is isogenous to $A_j[p^\infty]$. Since $g$ is $K$-linear, the action of $\mathcal{O}_K$ on $\mathscr{G}_j$ extends to an action on $g\mathscr{G}_j$. We thus obtain an $\mathcal{L}$-set of abelian varieties $gA_j, j\in \mathbb{Z}$. Since $G\subset GSp(W,\psi)$, the polarization $\lambda$ extends to a polarization $g\lambda$ of the $\mathcal{L}$-set $A_j$. The prime to $p$ level structure $\eta$ extends to $gA_j$, we thus obtain a triple $(gA_j,g\lambda,\eta)$ as above. The determinant condition holds since $(\text{Lie}A_j)^\vee\cong g\mathbb{D}_j/\dot{\tau}_{\{\mu\}}g\mathbb{D}_j$, we thus obtain a $\mathbf{k}$-point of $gx\in \mathscr{S}_I^{naive}(\mathbf{k})$.  It follows from the pullback diagram in the verification of Axiom (2) that $gx\in \mathscr{S}_I(\mathbf{k})$, and by construction we have $\lambda_I(gx)=\tau_{\{\mu\}}$.
 
 To show $$\lambda_I^{-1}(\tau_{\{\mu\}})\rightarrow \pi_0(\mathscr{S}_I\otimes_{\mathcal{O}_{E_v}}\mathbf{k})$$ is surjective, we follow the same strategy as in \cite{HR}. Indeed, in this case a good theory of compactifications exists by \cite{Mad}, see also \cite{Lan}. We thus have an isomorphism $$\pi_0(\mathscr{S}_I\otimes_{\mathcal{O}_{E_v}}\mathbf{k})\cong\pi_0(Sh_{I\mathrm{K}^p}(G,X)\otimes_E\mathbb{C})$$
 
 By \cite{De} $$\pi_0(Sh_{I\mathrm{K}^p}(G,X)\otimes_E\mathbb{C})=G(\mathbb{Q})_+\backslash G(\mathbb{A}^f)/I\mathrm{K}^p$$ where $G(\mathbb{Q})_+$ denotes the elements of $G(\mathbb{Q})$ which map to the connected component of the identity in $G_{ad}(\mathbb{R})$, hence $\varprojlim_{\mathrm{K}^p}\pi_0(\mathscr{S}_{I}\otimes_{\mathcal{O}_{E_v}}\mathbf{k})=G(\mathbb{Q})_+^-\backslash G(\mathbb{A}^f)/I$ where $G(\mathbb{Q})_+^-$ is the closure of $G(\mathbb{Q})_+$ in $G(\mathbb{A}^f)$. By the weak approximation theorem $G(\mathbb{Q})$ is dense in $G(\mathbb{R})\times G(\mathbb{Q}_p)$, hence $G(\mathbb{Q})_+$ is dense if $G(\mathbb{Q}_p)$. It follows that $G(\mathbb{A}^f)$ acts transitively on $\varprojlim_{\mathrm{K}^p}\pi_0(\mathscr{S}_I\otimes_{\mathcal{O}_{E_v}}\mathbf{k})$. Thus for $x\in\lambda_I(\tau_\{\mu\})$ corresponding to $(A,\lambda,\eta)$ we may modify the prime to $p$ level structure to pass to any connected component.

In fact in this case we have the following stronger result. We write $\mathscr{S}_{K_J}^{[b]_{basic}}=\delta_{K_J}^{-1}([b]_{basic})$, a closed subvariety of the special fiber $\mathscr{S}_{K_J}\otimes\mathbf{k}$.

\begin{proposition}\label{10.1} Let $x\in \mathscr{S}_{K_J}^{basic}(\mathbf{k})$. Then there exists a point $x'$ such that $x$ and $x'$ are connected in $\mathscr{S}_{K_J}^{[b]_{basic}}$ and $\lambda_I(x')=\tau_{\{\mu\}}.$
\end{proposition}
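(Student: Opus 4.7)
The plan is to combine the parahoric analogue of Theorem \ref{conn} (proved at the start of Section 9) with a Rapoport--Zink-style uniformization of the basic locus, of which the construction in the verification of Axiom (5) above is already a partial instance. The key point is that, by that theorem, every connected component of the basic affine Deligne--Lusztig variety $X(\{\mu\},\delta)_{K_J}$ already meets the minimal KR stratum; transferring this statement to $\mathscr{S}_{K_J}^{[b]_{basic}}$ via an explicit isogeny map will then yield the result.

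Following the recipe in the verification of Axiom (5), I first attach to $x$ an element $\delta \in G(\brF)$ (arising from the Frobenius on the common rational Dieudonn\'e module under the fixed isomorphism $N \cong W \otimes_{\mathbb{Q}} \brF$) with $[\delta] = [b]_{basic}$. Next I extend the construction in Axiom (5) from $X_{\tau_{\{\mu\}}}(\delta)$ to all of $X(\{\mu\}, \delta)_{K_J}$: for $g\mathcal{G}_J(\mathcal{O}_{\brF}) \in X(\{\mu\}, \delta)_{K_J}$, the modified chain $\{g \mathbb{D}_j\}_j$ is still an $\mathcal{O}_K$-stable, self-dual lattice chain, the polarization and prime-to-$p$ level structure extend canonically since $g \in G(\brF)$ preserves the form up to a scalar and acts trivially away from $p$, and the determinant condition follows from $g^{-1} \delta \sigma(g) \in \Adm(\{\mu\})_{K_J}$ by the same Grothendieck--Messing-style argument as in the $\tau_{\{\mu\}}$-case. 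This produces a map
$$
\Phi_x \colon X(\{\mu\}, \delta)_{K_J} \longrightarrow \mathscr{S}_{K_J}^{[b]_{basic}}(\mathbf{k}), \qquad g\mathcal{G}_J(\mathcal{O}_{\brF}) \longmapsto gx,
$$
which, by the Rapoport--Zink uniformization of the basic locus, comes from a morphism of perfect schemes and hence sends Zariski-connected subsets to Zariski-connected subsets.

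To finish, I apply the parahoric version of Theorem \ref{conn}: since $\mu$ is essentially noncentral in the PEL setting under consideration, the Kottwitz map yields $\pi_0(X(\{\mu\}, \delta)_{K_J}) \cong \pi_1(G)_{\Gamma_0}^{\sigma}$. The same argument shows that the minimal KR stratum $\lambda_{K_J}^{-1}(\tau_{\{\mu\}}) \subset X(\{\mu\}, \delta)_{K_J}$, which is a single $J_\delta$-orbit, already surjects onto $\pi_1(G)_{\Gamma_0}^{\sigma}$ via the surjection $\tilde{\kappa} \colon J_\delta \twoheadrightarrow \pi_1(G)_{\Gamma_0}^{\sigma}$. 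Hence every connected component meets $\lambda_{K_J}^{-1}(\tau_{\{\mu\}})$. Applied to the component containing the identity coset (which $\Phi_x$ sends to $x$), this yields $g$ in the same component with $g^{-1} \delta \sigma(g)$ in the minimal KR stratum; then $x' := \Phi_x(g) = gx$ is connected to $x$ in $\mathscr{S}_{K_J}^{[b]_{basic}}$ with $\lambda_{K_J}(x') = \tau_{\{\mu\}}$. The Iwahori-level refinement (if that is the intended reading of $\lambda_I(x')$) follows by lifting $x'$ through the surjection $\pi_{K_J, I}$ of Axiom (1) and adjusting within the fibre, using $\tau_{\{\mu\}} \in \Omega$.

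The main obstacle is showing that $\Phi_x$ preserves connected components, i.e.\ making precise the Rapoport--Zink uniformization at parahoric level for our PEL data. In the unramified type A and C cases this is essentially covered by the classical Rapoport--Zink theorem; in the odd ramified unitary case at parahoric level it requires a direct moduli/deformation-theoretic argument using the local model diagram of Axiom (2), but all of the isogeny-theoretic input already appears in the verification of Axiom (5) above.
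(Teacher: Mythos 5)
Your overall strategy --- attach $\delta\in G(\brF)$ to $x$, observe that the connected component of the identity coset in the basic affine Deligne--Lusztig variety meets the minimal KR stratum, and push this into $\mathscr{S}_{K_J}^{[b]_{basic}}$ via the isogeny-twisting map $g\mapsto gx$ --- is indeed the paper's strategy. But the step you yourself label ``the main obstacle,'' namely that $\Phi_x$ preserves Zariski-connectedness, is precisely the mathematical content of the proposition, and you do not supply it. Invoking Rapoport--Zink uniformization of the basic locus is not available here: for odd ramified unitary groups at parahoric level no such uniformization theorem is established in the paper or its references, and the paper is deliberately structured to avoid needing one. As written, your $\Phi_x$ is only a map on $\mathbf{k}$-points, and a bijection (or surjection) on $\mathbf{k}$-points carries no information about connected components. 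A secondary point: you appeal to the parahoric version of Theorem \ref{conn}, which requires $\mu$ essentially noncentral; the paper only needs Theorem \ref{theorem2}, which for basic $\delta$ already says that the component containing $1$ meets $X_{\tau_{\{\mu\}}}(\delta)$ (since $\tau_{\{\mu\}}$ is the unique $\sigma$-straight element in $\Adm(\{\mu\})$ attached to the basic class), with no hypothesis on $\mu$.

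The paper fills the gap as follows. By Theorem \ref{CKV} of the appendix, Zariski-connectedness in $X(\{\mu\},\delta)$ coincides with the relation $\sim$ of Definition \ref{CKV2}: one may connect $1$ to some $g_0\in X_{\tau_{\{\mu\}}}(\delta)$ by finitely many elements $g\in X(\{\mu\},\delta)(\mathscr{R})$, where $\mathscr{R}$ is a frame for a smooth integral $\mathbf{k}$-algebra $R$. For each such $g$, Dieudonn\'e theory over the frame (via \cite{Zhou} and \cite[Lemma 1.4.6]{Ki}) algebraizes the lattice chain $g\mathbb{D}_j$ into a chain of $p$-divisible groups over $R$, hence an $\mathcal{L}$-set of abelian varieties over $R$ quasi-isogenous to $A_j\otimes_{\mathbf{k}}R$ and carrying the $\CO_E$-action, polarization and prime-to-$p$ level structure; the determinant condition is then verified fiberwise, using that $g(s)^{-1}\delta\sigma(g(s))$ lies in $\brI\dot w\brI$ for some $w\in\Adm(\{\mu\})$ and comparing with a point of $\lambda_I^{-1}(w)$. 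This produces an honest morphism $\Spec R\to\mathscr{S}_I^{[b]_{basic}}$ whose images of $s_0,s_1$ connect $x$ to $x'=g_0x$ with $\lambda_I(x')=\tau_{\{\mu\}}$. Some version of this algebraization over frames (or a genuinely proved uniformization statement) is indispensable; without it your argument does not close.
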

 \begin{proof}
 Let $(A,\lambda,\eta)$ denote the triple corresponding to $x$ and $\delta\in G(\brF)$ be the element associated to $x$ in 4). By Theorem \ref{theorem2}, if $Y\subset X(\{\mu\},\delta)$ denotes the connected component containing $1$, we have $Y\cap X_{\tau_{\mu}}(\delta)\neq\emptyset$. By Theorem \ref{CKV}, there exists $g_0\in X_{\tau_{\mu}}(\delta)$ such that $1\sim g_0$ in the sense of $\ref{CKV2}$.
 
 Let $g\in X(\{\mu\},\delta)(\mathscr{R})$,  where $\mathscr{R}$ is a frame for a smooth integral $\mathbf{k}$-algebra $R$, (see Definition \ref{CKV1}). Let $\mathbb{D}_j$ denote the Dieudonn\'e module associated to $\mathscr{G}_j=A_j[p^\infty]$. By \cite{Zhou} (see also \cite[Lemma 1.4.6]{Ki}), upon replacing $\mathscr{R}$ by $\mathscr{R}'_n$, $g\mathbb{D}_j$ corresponds to a chain of $p$-divisible groups $g\mathscr{G}_{j}$ over $R$ with an identification $\mathbb{D}(g\mathscr{G}_{j})(\mathscr{R})\cong g\mathbb{D}_j$. Here, $\mathscr{R}'$ denotes the canonical frame for an \'etale covering $R\rightarrow R'=\mathscr{R}'/p\mathscr{R}'$, and $\mathscr{R}_n'$ denotes the $\mathcal{O}_{\brF}$-algebra with underlying ring $\mathscr{R}'$ and whose structure map is $\sigma^n$. 
 
 The $g\mathscr{G}_j$ correspond to abelian varieties $g{A}_j$ over $R$ together with a quasi isogeny $g{A}_j\rightarrow A_j\otimes_{\mathbf{k}}R$. As before $gA_j$  is equipped with an action of $\mathcal{O}_K$, a polarization $g\lambda$ and  prime to $p$ level structure $\eta$. We thus obtain a triple $(gA,g\lambda,\eta)$ over $R$. To check the determinant condition, we have to check the equality of two polynomials with coefficients over $R$. It thus suffices to check this for all $\mathbf{k}$-points $s:R\rightarrow \mathbf{k}$. 
 
 Let $g(s)\in G(\brF)$ denote the point induced by the unique $\sigma$ equivariant lift $\mathscr{R}\rightarrow \mathcal{O}_{\brF}$ of $s$. The Dieudonn\'e module of $gA_j[p^\infty]$ at $s$ is given by $g(s)\mathbb{D}_j\subset \mathbb{D}_j\otimes\brF$, and the Lie algebra Lie$(g(s)A_j)$ of $gA_j$ at $s$ can be identified with the dual of $\delta\sigma(g(s))\mathbb{D}_j/g(s)\mathbb{D}_j\cong g(s)^{-1}\delta\sigma(g(s))\mathbb{D}_j/\mathbb{D}_j$.
 
 By definition of $g$, we have $g(s)^{-1}\delta\sigma(g(s))\in\brI \dot{w}\brI$ for some $\dot{w}\in\Adm(\{\mu\})$. The isomorphism class of the $\mathcal{O}_E$ module Lie$(g(s)A_j)$ depends only on $\dot{w}$. Write $M_{j,w}$ for $\mathcal{O}_E$-module $\dot{w}(\Lambda_j\otimes_{\mathbb{Z}_p}\mathcal{O}_{\brF})/(\Lambda_j\otimes_{\mathbb{Z}_p}\mathcal{O}_{\brF})$, then we have an isomorphism Lie$(g(s)A_j)^\vee\cong M_{j,w}$. 
 
 Since $\lambda_I$ is surjective, we may take $x'\in \lambda_I^{-1}(w)$ corresponding to a triple $(A',\lambda',\eta')$. Then by definition we have Lie$(A_j')^\vee\cong M_{j,w}$. In particular for any $b\in\mathcal{O}_E$, we have the following equalities of polynomial functions:
 $$\text{det}_{\mathbf{k}}(b;\Lie(g(s)A_j))=\text{det}_{\mathbf{k}}(b;M_{j,w})=\text{det}_{\mathbf{k}}(b;\Lie(A_j'))=\text{det}_{{E}}(b;V_0)$$
 It follows that $\text{det}_R(b,\Lie(gA))=\text{det}_E(b,V_0)$, and so the triple $(gA,\lambda,\eta)$ corresponds to a morphism $$i_R:\Spec R\rightarrow \mathscr{S}_{I}^{naive}$$
 
 For each $s:R\rightarrow \mathbf{k}$ this map factors through $\mathscr{S}_I$, hence $i_R$ factors through $\mathscr{S}_I$ and in fact through $\mathscr{S}_I^{[b]_{basic}}$. Since $1\sim g_0$, we may find a sequence of maps $i_R$ which connects $x$ to a point $x'$ in $\lambda_I^{-1}(\tau_{\{\mu\}})$.
\end{proof}

\appendix

\section{Various definitions of connected components}
 In this section we relate the notions of connected components in this paper to that studied in \cite{CKV}. Although the algebraic structure on the affine Deligne-Lusztig varieties was not known at the time, the authors were still able to define a notion of connected components. That paper deals with the case of unramified groups and hyperspecial level structure, however it is relatively straightforward to generalize their notion of connected components to our context (i.e. parahoric level structure). Although it is much more natural to talk about connected components in the Zariski topology, nevertheless the notion in \cite{CKV} is useful for applications to Shimura varieties and Rapoport-Zink spaces. Therefore it is useful to know that the two notions coincide.
 
We recall some of the definition of \cite{CKV}. Recall that $\mathbf{k}$ is an algebraic closure of $\mathbb{F}_q$. For simplicity we only consider the case of Iwahori level, the arguments however work for any parahoric. Thus let $\mathcal{G}$ denote the group scheme over $\mathcal{O}_F$ associated to the Iwahori $\brI$.

\begin{definition}\label{CKV1}
Let $R$ be  $\mathbf k$ algebra. A frame for $R$ is a $p$ torsion free, $p$-adically complete and separated $\mathcal{O}_{\brF}$ algebra $\mathscr{R}$ equipped with an isomorphism $R\cong \mathscr{R}/p\mathscr{R}$ and a lift (again denoted $\sigma$) of the the Frobenius $\sigma$ on $R$.
 \end{definition}
 
 Let $R$ be as above and fix $\mathscr{R}$ a frame for $R$. We write $\mathscr{R}_{\brF}$ for $\mathscr{R}[\frac{1}{p}]$. If $\kappa$ is any perfect field of characteristic $p$ and $s: R\rightarrow \kappa$ is a map, then there is a unique $\sigma$-equivariant map $\mathscr{R}\rightarrow W(\kappa)$, also denoted $s$. 
 
 Let $g\in G(\mathscr{R}_{\brF})$. For $C\subset\brW$, we write 
 $$S_C(g)=\bigcup_{w\in C}\{s\in\Spec R|s(g^{-1}b\sigma(g))\in\mathcal{G}(W(\mathbf{k}(s)))w\mathcal{G}(W(\mathbf{k}(s)))\}$$
where $\mathbf{k}(s)$ is an algebraic closure of residue field $k(s)$ of $s$. Note that this only depends on the image of $g\in G(\mathscr{R}_{\brF})/\mathcal{G}(\mathscr{R})$, hence we can define $S_C(g)$ for any element of $g\in G(\mathscr{R}_{\brF})/\mathcal{G}(\mathscr{R})$. For $b\in G(\brF)$, we define the set $$X_C(b)(\mathscr{R})=\{g\in G(\mathscr{R}_{\brF})/\mathcal{G}(\mathscr{R})| S_C(g)=\Spec R\}$$
When $C=\Adm(\{\mu\})$ we write $X(\{\mu\},b)(\mathscr{R})$ for $X_C(b)(\mathscr{R})$.
\begin{remark}
This is slightly different to the definition in \cite{CKV}. It is possible to define a version of the mixed characteristic affine flag variety as in \cite{CKV} in our context using $\mathcal{G}$-torsors. However using that $\mathcal{G}$-torsors are \'etale locally trivial, it can be shown that ``\'etale locally" on $\mathscr{R}$, these notions coincide. In particular, the connected components of $X_C(b)$ will be the same.
\end{remark}

For $R=W(\mathbf{k})$, we write $X_C(b)=X_C(b)(W(\mathbf{k}))$, this is compatible with the definitions Section 2.1.

 Note that any $g\in X_C(b)(\mathscr{R})$ defines an $R^{perf}$ point of $X_C(b)$ (considered as a perfect scheme).  Indeed, let $\mathscr{R}^\infty=\lim_{\rightarrow n}\mathscr{R}$, where the transitions maps are given by $\sigma$, thus $\mathscr{R}^\infty$ is a flat $\mathcal{O}_{\brF}$ algebra lifting $R$. Then if we denote by $\widehat{\mathscr{R}^\infty}$ the $p$-adic completion of $\mathscr{R}^\infty$, we have an isomorphism $\widehat{\mathscr{R}^\infty}\cong W(R^{perf})$ since $W(R^{perf})$ is the unique $p$-adically complete flat $\mathbb{Z}_p$ algebra lifting $R^{perf}$ and $\widehat{\mathscr{R}^\infty}$ gives such a lifting. The composition $\mathscr{R}\rightarrow\widehat{\mathscr{R}^\infty}\cong W(R^{perf})$ induces a  point $g\in G(W(R^{perf})[\frac{1}{p}])/\mathcal{G}(W(R^{perf}))$ and hence an element, also denoted $g$, of $\mathcal{FL}(R^{perf})$. The conditions defining $X_C(b)(\mathscr{R})$ imply that the corresponding point in $\mathcal{FL}$ lies in $X_C(b)$, since $R$ and $R^{perf}$ have the same points.

\begin{definition}\label{CKV2} For $g_0,g_1\in X(\{\mu\},b)$ and $R$ a smooth  $\mathbf{k}$-algebra with connected spectrum and frame $\mathscr{R}$, we say $g_0$ is connected to $g_1$ via $R$ if there exists $g\in X(\{\mu\},b)(\mathscr{R})$ and two $\mathbf{k}$-points $s_0,s_1$ of $\Spec R$ such that $s_0(g)=g_0$ and $s_1(g)=g_1$. 

We write $\sim$ for the equivalence relation on $X(\{\mu\},b)$ generated by the relation $g_0\sim g_1$ if $g_0$ is connected to $g_1$ via some $R$ as above, and we write $\pi_0'(X(\{\mu\},b))$ for the set of equivalence classes.
\end{definition}

\begin{theorem}\label{CKV}$g_0\sim g_1$ if and only if $g_0$ is connected to $g_1$ in the Zariski topology. In particular $$\pi_0(X(\{\mu\},b))=\pi_0'(X(\{\mu\},b)).$$
\end{theorem}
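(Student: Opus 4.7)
The forward direction is straightforward. Any $g\in X(\{\mu\},b)(\mathscr{R})$ induces, via the canonical map $\mathscr{R}\hookrightarrow\widehat{\mathscr{R}^{\infty}}=W(R^{perf})$, a morphism of perfect schemes $\Spec R^{perf}\to X(\{\mu\},b)$; its image is connected and contains both $g_0$ and $g_1$, so they lie in the same Zariski connected component. Chaining along the definition of $\sim$ finishes this direction.

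For the converse, I would first reduce to proving that each irreducible component $Y$ of $X(\{\mu\},b)$ lies in a single $\sim$-equivalence class. Note that $X(\{\mu\},b)$ is a closed subset of the union of finitely many Schubert varieties $S_{w_0}$ (with $w_0$ maximal in $\Adm(\{\mu\})$), so its underlying topological space is Noetherian; consequently every Zariski connected component is a finite union of irreducible components, and by a standard graph-connectedness argument any two such components in the same connected component can be linked by a chain of irreducible components with pairwise nonempty intersections, each such intersection containing a $\mathbf{k}$-point. This reduces the problem to $\sim$-equivalence inside a single irreducible component.

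Given $g_0,g_1\in Y(\mathbf{k})$ in such an irreducible component, identify $Y$ with the perfection of an irreducible finite-type $\mathbf{k}$-variety $Y_0$ and produce a smooth connected affine $\mathbf{k}$-curve $C=\Spec R$ with a morphism $C\to Y_0$ whose image contains both points (take an irreducible curve through $g_0,g_1$ via a hyperplane-section argument, normalize, and remove finitely many closed points to ensure affineness). Since $R$ is smooth over $\mathbf{k}$, lift it to a smooth affine $\mathcal{O}_{\brF}$-algebra $\mathscr{R}$ and equip $\mathscr{R}$ with a Frobenius lift $\sigma$, giving a frame.

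The crux is the final step: promoting the morphism $\Spec R^{perf}\to Y\hookrightarrow X(\{\mu\},b)$ to an element $g\in X(\{\mu\},b)(\mathscr{R})$, i.e.\ to an element of $G(\mathscr{R}_{\brF})/\mathcal{G}(\mathscr{R})$ inducing the given morphism after base change along $\mathscr{R}\to W(R^{perf})$. The morphism factors through $X_{\leq w}(b)\subset\mathcal{FL}$ for suitable $w$, hence is bounded inside finitely many Schubert cells. Using $\mathcal{FL}=LG/L^{+}\mathcal{G}$ together with the \'etale-local triviality of $L^{+}\mathcal{G}$-torsors, the morphism lifts \'etale-locally on $R$ to $\tilde g\in LG(R^{perf})=G(W(R^{perf})[1/p])$. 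Combining the identification $W(R^{perf})=\widehat{\mathscr{R}^{\infty}}$ (with the colimit taken along $\sigma$) with the Schubert boundedness of $\tilde g$, a Greenberg-type approximation (modulo the $\mathcal{G}(W(R^{perf}))$-action) produces a representative in $G(\mathscr{R}_{\brF})$, possibly after further \'etale refinement. Transitivity of $\sim$ then allows the chaining of local $\sim$-relations along overlaps of the \'etale cover, yielding $g_0\sim g_1$. The main obstacle is making this approximation/descent step rigorous; it amounts essentially to an Elkik--Greenberg-type statement for perfect Witt-vector rings, which one must verify applies in the present setting of the smooth affine group $\mathcal{G}$ and the reductive group $G$.
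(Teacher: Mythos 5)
Your overall architecture matches the paper's: the forward direction via the $\Spec R^{perf}$-point, the reduction to a smooth affine curve $\Spec R$ through (a chain of points including) $g_0$ and $g_1$, and the recognition that everything hinges on promoting the resulting $W(R^{perf})$-valued point to an element of $G(\mathscr{R}_{\brF})/\mathcal{G}(\mathscr{R})$ for a frame $\mathscr{R}$ of $R$. But that last step is precisely where you stop: you invoke an unverified ``Elkik--Greenberg-type approximation for perfect Witt-vector rings'' and explicitly concede you have not made it rigorous. This is a genuine gap, and it is the entire mathematical content of the converse direction; without it the proof is not complete.

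The paper closes the gap with two ideas you are missing. First, an algebraization statement (Proposition \ref{prop5}): if $\mathcal{R}$ is a flat $\mathcal{O}_{\brF}$-algebra with $\mathcal{R}/p\mathcal{R}$ a domain, such that every element reducing to a unit mod $p$ is a unit, and $\mathscr{R}$ is its $p$-adic completion, then $G(\mathcal{R}_{\brF})/\mathcal{G}(\mathcal{R})\to G(\mathscr{R}_{\brF})/\mathcal{G}(\mathscr{R})$ is a bijection. For $GL_n$ this is an elementary Beauville--Laszlo/$p$-adic approximation argument (injectivity from $\mathcal{R}_{\brF}/\mathcal{R}\cong\mathscr{R}_{\brF}/\mathscr{R}$, surjectivity by approximating $g$ by $h\in\frac{1}{p^s}\mathrm{Mat}_{n,n}(\mathcal{R})$ with $g^{-1}h\in 1+p\,\mathrm{Mat}_{n,n}(\mathscr{R})$); for general $\mathcal{G}$ one reduces to $GL_n$ via the Tannakian description of $\mathcal{G}$-torsors as exact faithful tensor functors. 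Second, and crucially, the proposition is applied not to $\mathscr{R}$ itself but to $\mathcal{R}=\mathscr{R}^{\infty}=\varinjlim_{\sigma}\mathscr{R}$, whose $p$-adic completion is exactly $W(R^{perf})$; this yields a class over $\mathscr{R}^{\infty}_{\brF}$, and since $\mathcal{G}$ and $G$ are of finite type the class descends to a finite stage $\mathscr{R}_n$, which is a frame for the Frobenius twist $R_n$ of $R$ (still smooth), so the definition of $\sim$ applies. Your plan never isolates a statement of this kind, nor the device of passing through the perfection colimit and descending to a finite twist; ``further \'etale refinement'' and ``Greenberg approximation'' do not substitute for it. (A minor further point: a frame must be $p$-adically complete, so you should complete your smooth lift; the paper in fact takes an arbitrary $p$-adically complete lift and notes the obstruction to lifting $\sigma$ vanishes for affine $R$.)
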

\subsection{Proof of  Theorem \ref{CKV}} Clearly, if $g_0\sim g_1$, then $g_0$ and $g_1$ are connected in the Zariski topology.  Indeed wlog. we may assume $g_0$ is connected to $g_1$ via $R$, and the above construction gives a $\Spec R^{perf}$ point in $X(\{\mu\},b)$ which connects the two points. The rest of this section will be devoted to proving the converse.

Suppose $g_0$ and $g_1$ are connected in the Zariski topology. Since $\mathcal{FL}$ is an increasing union of perfections of projective schemes, we may assume $g_0, g_1\in S^{perf}\subset \mathcal{FL}$, where $S^{perf}$ is the perfection of a projective scheme $S$ over $\mathbf{k}$. Considering $g_0$ and $g_1$ as points on $S$, we can find a morphism $C\rightarrow S$, where $C$ is a smooth curve over $\mathbf{k}$, whose image contains $g_0$ and $g_1$, or at least we can connect them up with finitely many such curves. Taking affine open covers of $C$, we may reduce to the case $C=\Spec R$, for a smooth $\mathbf{k}$ algebra $R$, and where $g_0$ and $g_1$ are in the image of $C^{perf}$ in $\mathcal{FL}$.

We need to find a frame $\mathscr{R}$ for $R$, and an element $g\in X(\{\mu\},b)(\mathscr{R})\subset G(\mathscr{R}_{\brF})/\mathcal{G}(\mathscr{R})$. which connects up the two points $g_0$ and $g_1$. By assumption, the curve $C\in X(\{\mu\},b)(R^{perf})\subset\mathcal{FL}(R^{perf})$ connects $g_0$ and $g_1$. Upon passing to an fpqc cover, we may assume $C^{perf}$ comes from a point $g\in G(W(R^{perf})[\frac{1}{p}])$. 

Now let $\mathscr{R}$ be any frame for $R$. Indeed a frame exists since we may choose any $p$-adically complete $W(\mathbf{k})$ algebra $\mathscr{R}$ lifting $R$. Then all obstructions to lifting $\sigma$ lie in positive degree coherent cohomology vanishes. As before we have an isomorphism $\widehat{\mathscr{R}^\infty}\cong W(R^{perf})$. We would like to show $g\in G(\widehat{\mathscr{R}^\infty_{\brF}})/\mathcal{G}(\widehat{\mathscr{R}^\infty})$ arises from an element of $G(\mathscr{R}^\infty_{\brF})/\mathcal{G}(\mathscr{R}^\infty)$. This follows from the following more general Proposition:

\begin{proposition}\label{prop5}
Let $\mathcal{R}$ be a flat $\mathcal{O}_{\brF}$-algebra such that $\mathcal{R}/p\mathcal{R}$ is an integral domain  and let $\mathscr{R}$ be its $p$-adic completion. Suppose every element $r\in\mathcal{R}$ whose reduction mod $p$ is a unit is itself a unit, then for any flat affine algebraic group $\mathcal{G}/\mathcal{O}_{\brF}$ with generic fibre $G$, the natural map $$G(\mathcal{R}_{\brF})/\mathcal{G}(\mathcal{R})\rightarrow G(\mathscr{R}_{\brF})/\mathcal{G}(\mathscr{R})$$ is a bijection. 
\end{proposition}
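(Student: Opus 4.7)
My plan is to treat injectivity and surjectivity separately; in both directions the unit-lifting hypothesis enters through the equivalent reformulation $p \in \operatorname{Jac}(\mathcal{R})$, which activates Nakayama's lemma at the crucial step. For injectivity, it suffices to verify the ring-theoretic identity $\mathcal{R}_{\brF} \cap \mathscr{R} = \mathcal{R}$ inside $\mathscr{R}_{\brF}$: writing $\mathcal{G} = \Spec A$ with $A$ flat over $\mathcal{O}_{\brF}$, any element of $G(\mathcal{R}_{\brF}) \cap \mathcal{G}(\mathscr{R})$ is an $\mathcal{O}_{\brF}$-algebra map $A \to \mathscr{R}_{\brF}$ landing in both subrings, hence in $\mathcal{R}$, giving an element of $\mathcal{G}(\mathcal{R})$. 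The identity itself follows from flatness (so $\mathcal{R} \hookrightarrow \mathcal{R}_{\brF}$) and the canonical isomorphism $\mathcal{R}/p^k\mathcal{R} \xrightarrow{\sim} \mathscr{R}/p^k\mathscr{R}$: if $x = r/p^k \in \mathcal{R}_{\brF}$ also lies in $\mathscr{R}$, then $r \in p^k\mathscr{R} \cap \mathcal{R} = p^k\mathcal{R}$, so $x \in \mathcal{R}$. The same argument yields the Cartesian-square property $\mathcal{R} = \mathcal{R}_{\brF} \times_{\mathscr{R}_{\brF}} \mathscr{R}$, so $X(\mathcal{R}) = X(\mathcal{R}_{\brF}) \times_{X(\mathscr{R}_{\brF})} X(\mathscr{R})$ for any affine $\mathcal{O}_{\brF}$-scheme $X$; this will be used below.

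For surjectivity, I would first treat $\mathcal{G} = GL_n$ via Beauville--Laszlo gluing for modules: given $\phi \in GL_n(\mathscr{R}_{\brF})$, the trivial module $\mathcal{R}_{\brF}^n$ and the $\mathscr{R}$-lattice $\phi \cdot \mathscr{R}^n$ glue to a finitely generated $\mathcal{R}$-module $M$ that becomes free of rank $n$ after base change to $\mathcal{R}_{\brF}$ and to $\mathscr{R}$. Since $M/pM$ is then free of rank $n$ over $\mathcal{R}/p\mathcal{R}$, lifting a basis to $m_1, \ldots, m_n \in M$ produces generators by Nakayama, and the rank-$n$ freeness of both localizations forces these generators to be a basis of $M$, exhibiting $M$ as free of rank $n$ over $\mathcal{R}$. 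Writing $\phi$ in this basis yields $g_1 \in GL_n(\mathcal{R}_{\brF})$ and $g_2 \in GL_n(\mathscr{R})$ with $\phi = g_1 g_2^{-1}$. For general $\mathcal{G}$, I would fix a faithful closed embedding $\mathcal{G} \hookrightarrow GL_n$ over $\mathcal{O}_{\brF}$ (available for flat affine algebraic groups of finite type over a Dedekind base) and apply the $GL_n$ case to obtain such a factorization in $GL_n$.

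The main obstacle is the reduction from this $GL_n$-level factorization to one inside $G(\mathcal{R}_{\brF}) \cdot \mathcal{G}(\mathscr{R})$. The pair $(g_1, g_2)$ is determined modulo the diagonal action of $GL_n(\mathcal{R}) = GL_n(\mathcal{R}_{\brF}) \times_{GL_n(\mathscr{R}_{\brF})} GL_n(\mathscr{R})$, and one must find $h \in GL_n(\mathcal{R})$ with $g_1 h \in G(\mathcal{R}_{\brF})$ and $g_2 h \in \mathcal{G}(\mathscr{R})$ simultaneously. Equivalently, one must show that the $\mathcal{G}$-torsor $T$ on $\Spec \mathcal{R}$ obtained by Beauville--Laszlo gluing of trivial $\mathcal{G}$-torsors via $\phi$ is trivial; the associated $GL_n$-torsor $P = T \times^{\mathcal{G}} GL_n$ is already trivial by the previous paragraph, so the residual question is whether the classifying map $\xi \colon \Spec \mathcal{R} \to GL_n/\mathcal{G}$ of the $\mathcal{G}$-reduction $T \subset P$ lifts along the $\mathcal{G}$-torsor $GL_n \to GL_n/\mathcal{G}$. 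The key observation is that $\xi$ is locally the identity coset on both pieces, because $\phi \in G(\mathscr{R}_{\brF}) = \mathcal{G}(\mathscr{R}_{\brF})$ lies in the smaller subgroup and hence preserves the identity $\mathcal{G}$-coset; applying the equalizer property from the first paragraph to affine opens of the quasi-projective $\mathcal{O}_{\brF}$-scheme $GL_n/\mathcal{G}$ then forces $\xi$ to be globally the identity coset, which lifts tautologically to the identity section $e \in GL_n(\mathcal{R})$ and supplies the desired $h$.
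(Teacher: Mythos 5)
Your injectivity argument (via $\mathcal{R}_{\brF}\cap\mathscr{R}=\mathcal{R}$, which disposes of all affine $\mathcal{G}$ at once) and your Beauville--Laszlo-plus-Nakayama treatment of surjectivity for $GL_n$ are both correct, and they replace the paper's determinant trick and $p$-adic matrix approximation by cleaner module-theoretic arguments. The gap is in your last paragraph, in the passage from the $GL_n$ factorization to the $\mathcal{G}$ factorization. Write $\phi=g_1g_2$ with $g_1\in GL_n(\mathcal{R}_{\brF})$ and $g_2\in GL_n(\mathscr{R})$. The classifying map $\xi\colon \Spec\mathcal{R}\to GL_n/\mathcal{G}$ of the reduction $T\subset P$ only makes sense after you fix the global trivialization of $P$, and in \emph{that} trivialization its restrictions to $\Spec\mathcal{R}_{\brF}$ and $\Spec\mathscr{R}$ are the cosets $[g_1^{-1}]$ and $[g_2]$, not the identity coset. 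The identity-coset description you invoke holds only in the two local tautological trivializations coming from the gluing datum, which differ from the global one exactly by $g_1$ and $g_2$. Hence the equalizer property does not force $\xi=[e]$; it only shows that $[g_1^{-1}]$ and $[g_2]$ glue to some $\eta\in (GL_n/\mathcal{G})(\mathcal{R})$. (If $\xi$ really were the identity coset, taking $h=e$ would give $g_1\in\mathcal{G}(\mathcal{R}_{\brF})$, which is false in general.)

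What remains --- and it is the whole content of the general case --- is to lift $\eta$ along $GL_n\to GL_n/\mathcal{G}$, i.e.\ to show that the $\mathcal{G}$-torsor $\pi^{-1}(\eta)$ over $\Spec\mathcal{R}$ is trivial. That torsor is trivialized over $\Spec\mathcal{R}_{\brF}$ by $g_1^{-1}$ and over $\Spec\mathscr{R}$ by $g_2$, so asserting its triviality over $\mathcal{R}$ is precisely equivalent to the surjectivity statement $\mathcal{G}(\mathscr{R}_{\brF})=\mathcal{G}(\mathcal{R}_{\brF})\cdot\mathcal{G}(\mathscr{R})$ you are trying to prove; as written the argument is circular at this point. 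Some new input is required: the paper supplies it by invoking Broshi's Tannakian description of $\mathcal{G}$-torsors and descending the pair (trivial torsor, generic trivialization) one representation at a time via the $GL_n$ case. If you want to stay with the homogeneous-space picture, you would need an additional argument showing that a $\mathcal{G}$-torsor over $\Spec\mathcal{R}$ that is trivial over both $\Spec\mathcal{R}_{\brF}$ and $\Spec\mathscr{R}$ admits a global section (for instance by choosing the embedding so that $GL_n/\mathcal{G}$ has good properties), and this is exactly the step your sketch currently skips.
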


Granting this proposition we may prove the Theorem. Indeed $\mathscr{R}^\infty$ is flat over $\mathcal{O}_{\brF}$ since it is a direct limit of flat $\mathcal{O}_{\brF}$-algebras. Also, for any $r\in\mathscr{R}^\infty$ which reduces to a unit in $\mathscr{R}^\infty/p\mathscr{R}^\infty$, we have $r\in\mathscr{R}_n$ where $\mathscr{R}_n$ is the ring  $\mathscr{R}$ regarded as an $\mathcal{O}_{\brF}$ algebra via the map $\sigma^n$. Increasing $n$ if necessary, we may assume the image of $r$  in $\mathscr{R}_n/p\mathscr{R}_n$ is a unit, hence since $\mathscr{R}_n$ is $p$-adically complete, $r\in\mathscr{R}_n^\times\subset\mathscr{R}^{\infty,\times}$. Thus we may take $\mathscr{R}^\infty$ as $\mathcal{R}$ in the above proposition and hence $g\in G(\widehat{\mathscr{R}^\infty_{\brF}})/\mathcal{G}(\widehat{\mathscr{R}^\infty})$ arises from an element $h'\in G(\mathscr{R}^\infty_{\brF})$. Since $\mathcal{G}$ is finite type over $\mathcal{O}_{\brF}$, the element $h'$ descends to an element $h\in G(\mathscr{R}_n)$ for some $n$. As $\mathscr{R}_n$ is a frame for the smooth $\mathbf{k}$-algebra $R_n$, the element $h$ connects the points $g_0$ and $g_1$ in $X(\{\mu\},b)$, hence $g_0\sim g_1$. It remains therefore to prove the Proposition.

\begin{proof}[Proof of Proposition \ref{prop5}] We first consider the case of $GL_n$. Since $\mathcal{R}$ is $p$-torsion free, we have $\mathcal{R}_{\brF}/\mathcal{R}\rightarrow\mathscr{R}_{\brF}/\mathscr{R}$ is a bijection (cf. \cite[Theorem 1]{BL}), hence the natural map $$GL_n(\mathcal{R}_{\brF})/GL_n(\mathcal{R})\rightarrow GL_n(\mathscr{R}_{\brF})/GL_n(\mathscr{R})$$ is an injection: If $A,B\in GL_n(\mathcal{R}_{\brF})$ and $C\in GL_n(\mathscr{R})$ is such that $A=BC$, we have $B^{-1}A=C\in GL_n(\mathcal{R}_{\brF})\cap GL_n(\mathscr{R})\subset Mat_{n,n}(\mathcal{R})$. We have $\det(C)\in\mathcal{R}_{\brF}^\times\cap\mathscr{R}^\times=\mathcal{R}^\times$ where the equality follows by our assumption on $\mathcal{R}$, hence $C\in GL_n(\mathcal{R})$.

To show surjectivity, let $g\in GL_n(\mathscr{R}_{\brF})$. Let $s\in\mathbb{N}$ such that $g,g^{-1}\in \frac{1}{p^s}\text{Mat}_{n,n}(\mathscr{R})$. Then for any $m>0$, there exists $h\in\frac{1}{p^s}\text{Mat}_{n,n}(\mathcal{R})$ such that $$g-h=\delta\in p^m\text{Mat}_{n,n}(\mathscr{R}).$$ For $m$ sufficiently large, we have $$g^{-1}h=1-g^{-1}\delta\in1+p\text{Mat}_{n,n}(\mathscr{R})\subset GL_n(\mathscr{R})$$ and hence $h\in GL_n(\mathcal{R}_{\brF})$ since $\det(h)\in \mathcal{R}_{\brF}\cap\mathscr{R}_{\brF}^\times=\mathcal{R}^\times_{\brF}$. We have $g=h(1-h^{-1}\delta)$, and hence for $m$ sufficiently large, the image of $h$ in $GL_n(\mathscr{R}_{\brF})/GL_n(\mathscr{R})$ is equal to $g$.

For the case of general $G$, we will need the following result of \cite[Theorem 2.1.5.5]{Br} 

\begin{lemma}\label{lemma5}
Let $Y$ be a flat $\mathcal{O}_{\brF}$-scheme. Let $\mathcal{F}$ denote the category of exact, faithful tensor functors from representations of $\mathcal{G}$ on finite free $\mathcal{O}_{\brF}$-modules to vector
bundles on $Y$.

If $P$ is a $\mathcal{G}$-bundle on $Y$, and $V$ is a representation of $\mathcal{G}$ on a finite free $\mathcal{O}_{\brF}$ module, write $F_P(V ) = \mathcal{G}\backslash (P \times V )$. Then $P\mapsto F_P$ is an equivalence between the
category of $\mathcal{G}$-bundles on $Y$, and the category $\mathcal{F}$.
\end{lemma}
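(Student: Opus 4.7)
The plan is to establish this Tannakian reconstruction for $\mathcal{G}$-bundles by reducing to the classical equivalence between rank-$n$ vector bundles on $Y$ and $GL_n$-bundles on $Y$, cut out by tensor conditions. The key input is a relative version of Chevalley's theorem, valid over a Dedekind base such as $\mathcal{O}_{\brF}$: any flat affine group scheme $\mathcal{G}$ of finite type over $\mathcal{O}_{\brF}$ admits a faithful representation $\mathcal{G} \hookrightarrow GL(V_0)$ on a finite free $\mathcal{O}_{\brF}$-module, together with a distinguished tensor $s_0$ lying in a direct sum of tensor constructions $V_0^{\otimes a} \otimes (V_0^\vee)^{\otimes b}$, such that $\mathcal{G} = \mathrm{Stab}_{GL(V_0)}(s_0)$ scheme-theoretically, and such that every object of $\mathrm{Rep}(\mathcal{G})$ is a subquotient of some tensor construction on $V_0 \oplus V_0^\vee$.

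First I would check directly that $P \mapsto F_P$ lands in $\mathcal{F}$: exactness of $F_P$ follows from faithful flatness of $P \to Y$, while the tensor and unit structures are preserved because the associated-bundle construction $V \mapsto \mathcal{G}\backslash(P \times V)$ commutes with tensor products, duals, and the trivial representation. For full faithfulness, given $\mathcal{G}$-bundles $P, P'$, a morphism of tensor functors $F_P \to F_{P'}$ can be checked fpqc-locally on $Y$ where $P$ and $P'$ trivialize, reducing it to the classical identity $\mathrm{Aut}^{\otimes}(\omega) = \mathcal{G}$ for the forgetful fiber functor $\omega$ on $\mathrm{Rep}(\mathcal{G})$.

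For essential surjectivity, given $F \in \mathcal{F}$, set $\mathcal{V} = F(V_0)$ and form the $GL(V_0)$-torsor $Q = \mathrm{Isom}_Y(V_0 \otimes_{\mathcal{O}_{\brF}} \mathcal{O}_Y, \mathcal{V})$. The $\mathcal{G}$-invariant tensor $s_0$, regarded as a morphism $\mathbf{1} \to V_0^{\otimes a} \otimes (V_0^\vee)^{\otimes b}$ in $\mathrm{Rep}(\mathcal{G})$, transports under $F$ to a global section $F(s_0)$ of the corresponding tensor construction on $\mathcal{V}$. I would then define $P \subset Q$ as the closed subscheme of trivializations matching $s_0$ with $F(s_0)$; fpqc-locally on $Y$, after choosing a trivialization of $Q$, the subscheme $P$ becomes identified with the coset space $\mathrm{Stab}_{GL(V_0)}(s_0) = \mathcal{G}$, so $P$ is a $\mathcal{G}$-bundle. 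One then checks $F_P(V_0) \cong \mathcal{V} = F(V_0)$ tensor-compatibly, and extends this isomorphism to all of $\mathrm{Rep}(\mathcal{G})$ by exactness together with the fact that every object is a subquotient of a tensor construction on $V_0 \oplus V_0^\vee$.

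The main obstacle is the construction and schematic control of the input datum $(V_0, s_0)$ realizing $\mathcal{G}$ as a tensor-stabilizer inside $GL(V_0)$: over a field in characteristic zero this is essentially classical, but over the DVR $\mathcal{O}_{\brF}$ one must work with schematic stabilizers and verify that $\mathrm{Rep}(\mathcal{G})$ is tensor-generated by $V_0$ under subquotients compatibly with base change. This is precisely the technical content of the Broshi/Thomason-type reconstruction theorem used as input, and its proof requires care with flatness and with the behavior of $\mathcal{G}$ over the closed point of $\Spec \mathcal{O}_{\brF}$, especially in mixed characteristic where the naive stabilizer of a tensor can fail to be flat.
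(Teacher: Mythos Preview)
The paper does not prove this lemma at all: it is stated with an explicit attribution to \cite[Theorem~2.1.5.5]{Br} (Broshi's thesis) and used as a black box in the proof of Proposition~\ref{prop5}. So there is nothing to compare against in the paper itself.

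Your sketch is a reasonable outline of the standard Tannakian reconstruction argument, and is in the spirit of how Broshi proves the result. A few remarks. First, the existence of $(V_0, s_0)$ with $\mathcal{G}$ the scheme-theoretic stabilizer of $s_0$ over a Dedekind base is indeed the delicate point; you correctly flag this, but you should be aware that one typically needs a finite family of tensors $(s_\alpha)$ rather than a single one, and even the existence of a closed embedding $\mathcal{G}\hookrightarrow GL(V_0)$ with $GL(V_0)/\mathcal{G}$ representable and flat requires real work over $\mathcal{O}_{\brF}$. Second, your argument for essential surjectivity assumes $\mathcal{V}=F(V_0)$ is locally free of the correct rank so that $Q=\mathrm{Isom}_Y(V_0\otimes\mathcal{O}_Y,\mathcal{V})$ is a $GL(V_0)$-torsor; this is where faithfulness and exactness of $F$ (together with the flatness of $Y$ over $\mathcal{O}_{\brF}$) are used, and it deserves a sentence. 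Third, the extension of the isomorphism $F_P(V_0)\cong F(V_0)$ to all of $\mathrm{Rep}(\mathcal{G})$ via subquotients of tensor constructions is not automatic: you need that both $F$ and $F_P$ take subobjects to sub-vector-bundles (not merely subsheaves), which again uses exactness and the flatness hypothesis on $Y$. None of these are fatal gaps, but a complete proof over a Dedekind base is genuinely more involved than the characteristic-zero field case, which is why the paper simply cites Broshi.
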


From an element $g\in G(\mathscr{R})$, we can construct the pair $(P,\tau)$ consisting of the trivial  $\mathcal{G}$-torsor $P$ and a trivialization of $\tau$ of $P$ over $\mathscr{R}_{\brF}$. The set of isomorphism classes of such objects can be identified with $G(\mathscr{R}_{\brF})/\mathcal{G}(\mathscr{R})$. We will show that $(P,\tau)$ descends to a $\mathcal{G}$-torsor $X$ over $\mathcal{R}$  equipped with trivialization over $\mathcal{R}_{\brF}$. 

By Lemma \ref{lemma5}, the pair $(P,\tau)$ gives rise to an exact faithful tensor functor $F_P$ which associates to a representation of $\mathcal{G}$ on a finite free $\mathcal{O}_{\brF}$ module $V$, the vector bundle $F_P(V)=\mathcal{G}\backslash P\times V$ on $\mathscr{R}$ ,
 together with an isomorphism $$\tau_V:V\otimes_{\mathcal{O}_{\brF}}\mathscr{R}_{\brF}\cong F_P(V)\otimes_{\mathscr{R}}\mathscr{R}_{\brF}.$$
Since $P$ is trivial $F_P(V)$ is a trivial vector bundle, hence the pair $(F_P(V),\tau_V)$ corresponds to an element $g\in GL_n(\mathscr{R}_{\brF})/GL_n(\mathscr{R})$. By the case of $GL_n$ proved above, this corresponds to an element $h\in GL_n(\mathcal{R}_{\brF})/GL_n(\mathcal{R})$ and hence a pair $(F_P'(V),\tau_V')$ consisting of  the trivial vector bundle over $\mathcal{R}$ together with a trivialization $\tau_V'$ over $\mathcal{R}_{\brF}$. Then $F_P'(V)$ is an exact faithful tensor functor from the category of representations of $\mathcal{G}$ on finite free $\mathcal{O}_{\brF}$-modules to vector bundles on $\mathcal{R}$, hence corresponds to a $\mathcal{G}$ torsor $X$ over $\mathcal{R}$, and $\tau_{V}'$ gives a trivialization of $X$ over $\mathcal{R}_{\brF}$. Since $F_P'(V)$ is trivial for all $V$, we have $X$ is trivial, hence the pair $(X,\tau'_{V})$ corresponds to an element $h\in G(\mathcal{R}_{\brF})/G(\mathcal{R})$ which maps to $g$.
\end{proof}

\end{document}